\setlist[enumerate,1]{label={\roman*)}} %
\tikzset{> =stealth}
\newcommand{\addQEDstyle}[2]{\AtBeginEnvironment{#1}{\pushQED{\qed}\renewcommand{\qedsymbol}{#2}}\AtEndEnvironment{#1}{\popQED}}
\theoremstyle{plain}
\newtheorem{theorem}{Theorem}[section]
\newtheorem{lemma}[theorem]{Lemma}
\newtheorem{proposition}[theorem]{Proposition}
\newtheorem{corollary}[theorem]{Corollary}
\theoremstyle{definition}
\newtheorem{definition}[theorem]{Definition}
\theoremstyle{remark}
\newtheorem{remark}[theorem]{Remark}
\newtheorem{example}[theorem]{Example}
\renewcommand{\epsilon}{\varepsilon}
\renewcommand{\phi}{\varphi}
\mathchardef\mhyphen="2D
\renewcommand{\land}{\mathrel{\wedge}}
\renewcommand{\lor}{\mathrel{\vee}}
\newcommand{\N}{\mathbb{N}}
\newcommand{\Q}{\mathbb{Q}}
\newcommand{\R}{\mathbb{R}}
\newcommand{\C}{\mathbb{C}}
\newcommand{\lowerReals}{\overrightarrow{\R}}
\newcommand{\upperReals}{\overleftarrow{\R}}
\newcommand{\id}{\mathrm{id}}
\newcommand{\Set}{\mathbf{Set}}
\newcommand{\Pos}{\mathbf{Pos}}
\newcommand{\Sup}{\mathbf{Sup}}
\newcommand{\Inf}{\mathbf{Inf}}
\newcommand{\Frm}{\mathbf{Frm}}
\newcommand{\Loc}{\mathbf{Loc}}
\newcommand{\Top}{\mathbf{Top}}
\newcommand*{\SLat}[1]{{#1}\mhyphen\mathbf{SLat}}
\newcommand{\op}{{^\mathrm{\hspace{0.5pt}op}}}
\newcommand{\comp}{{^\mathsf{c}}}
\newcommand{\heyting}{\mathbin{\Rightarrow}}
\DeclareMathOperator{\Hom}{Hom}
\renewcommand{\O}{\mathcal{O}}
\newcommand{\Sh}{\mathrm{Sh}}
\newcommand{\D}{\mathcal{D}}
\newcommand{\Pfin}{\mathcal{P}_\mathrm{fin}}
\newcommand{\Sloc}{\mathcal{S}}
\newcommand{\Idl}{\mathscr{I}}
\newcommand{\llround}{\ensuremath{(\!(}}
\newcommand{\rrround}{\ensuremath{)\!)}}
\begin{document}

\title{Pointfree topology and constructive mathematics}
\author{Graham Manuell\thanks{Department of Mathematical Sciences, Stellenbosch University, South Africa}}
\date{}

\maketitle

Constructive mathematics has many advantages over its classical counterpart: vague promises of existence are replaced with concrete examples and theorems can be unwound to obtain algorithms. Even better, constructive results can be interpreted in any topos, giving many different results for the price of one.

Unfortunately, there is also a cost. We are told that we must give up many familiar results: not only Tychonoff's theorem and the Hahn--Banach theorem, but the intermediate value theorem, the extreme value theorem and the fundamental theorem of algebra.
We can sympathise with David Hilbert who asserted that asking a mathematician to avoid excluded middle is akin to ``proscribing the telescope to the astronomer or to the boxer the use of his fists''.
However, despite what you might have heard, all the results listed above are still true constructively!
They simply need to be formulated in the appropriate way.

The cost is still nonzero, since most mathematicians lack familiarity with constructive techniques, and it can take a little time to understand what is permitted and what is not. Nonetheless, the obstacles are not insurmountable. I hope that these notes will provide some assistance, as well as some motivation for further study.

The solution to the problems above lies in a reformulation of topology. This is perhaps the area where the apparent problems of constructive mathematics are most acute. It is true that point-set topology is very badly behaved constructively, but the problems disappear if we instead use the \emph{pointfree} approach to topology.
Pointfree topology studies spaces through their lattices of opens without reference to a predefined underlying set of points and can also be understood from a logical perspective.
While such an algebraic/logical approach has some advantages even in classical mathematics, it really shines in its smooth generalisation to the constructive setting.

But as we will see, pointfree topology does more than just allow for the constructive development of topology; the topological perspective also sheds light on aspects of constructive mathematics that might at first appear to have little to do with topology. From this point of view even a version of the axiom of choice can be understood as a constructive theorem.

This chapter will provide a gentle introduction to the main aspects of constructive topology and some of its applications.

\section{Motivation and basic concepts}

\subsection{Topology}

In order to reformulate topology it is worthwhile to first understand what topology is actually studying.
Let us forget any preconceived notions might have for a moment and imagine a simple situation.

Suppose we have some physical object that we want to measure. For instance, we might want to determine the length $\ell$ of a steel rod.
In the real world we cannot measure the length of the rod to infinite precision. Perhaps we use a ruler to find that it has a length of $\SI{10.1 \pm 0.3}{cm}$.
If we wish to be more precise we could use a ruler with finer gradations and measure the rod to be $\SI{10.0 \pm 0.1}{cm}$ long.
We might suspect that the rod is exactly $\SI{10}{cm}$ in length, but note that only being able to measure to finite precision we will never be able to be certain that this is true.

On the other hand, suppose we are told that the rod is strictly less than $\SI{10}{cm}$. This is something we will be able to verify for ourselves using arbitrarily, but not infinitely, precise physical measurements.
If $\ell < \SI{10}{cm}$ then there is some $\epsilon > 0$ such that $\ell + \epsilon < \SI{10}{cm}$ and so if our measurement error is less than $\epsilon$ we can check that this is indeed the case. We do not know $\epsilon$ beforehand, but if we keep on taking more and more precise measurements, at some point we will be able to tell.

Do note, however, that if it is not true that $\ell < \SI{10}{cm}$, then we might continue to make finer and finer measurements without ever being certain we have been misled (since perhaps $\ell = \SI{10}{cm}$ exactly). We say that the proposition $\ell < \SI{10}{cm}$ is \emph{verifiable}, but the proposition $\ell \ge \SI{10}{cm}$ is not.

There is a space $\R^+$ of all possible lengths for the rod, and the verifiable properties correspond precisely to the open subsets of $\R^+$.
We will now use verifiability as our \emph{definition} of open sets, even in situations that are less familiar.
For further discussion of this approach see Chapter 2 of \citet*{vickers1989book}.

The power of this approach is that it immediately motivates the definition of a topological space.
Notice that verifiable properties satisfy the following conditions.
\begin{enumerate}
  \item The property that is always true is verifiable.
  \item If $U$ and $V$ are verifiable properties, then so is $U \wedge V$.
  \item If $\mathcal{U}$ is a set of verifiable properties, then $\bigvee \mathcal{U}$ is verifiable.
\end{enumerate}
The first condition holds since if the property is always true, then there is nothing to check. For the second, we can simply check $U$ and $V$ in turn in order to verify their conjunction. Observe there is no apparent way that we could verify an \emph{infinite} conjunction of such properties, since this process would take infinitely long. On the other hand, even infinite disjunctions of verifiable properties are verifiable, since if $\bigvee \mathcal{U}$ holds, so must some $U \in \mathcal{U}$ and this $U$ can then simply be verified in finite time. Note that the empty disjunction $\bot$ is vacuously verifiable, since we only need to be able to verify propositions when they are true.

We obtain the definition of a topological space by starting with a set $X$ of things that might satisfy some properties and identifying verifiable properties on $X$ with the subsets of things that satisfy them.
Furthermore, if $f\colon X \to Y$ is a physically realisable function then for an open $U$ of $Y$ we can verify that $x \in f^{-1}(U)$ by checking that $f(x) \in U$. Thus, $f^{-1}(U)$ should be an open of $X$. This motivates the definition of a continuous map.

\subsubsection{Frames and geometric logic}

Instead of identifying verifiable properties with sets of points we can also treat them more abstractly. The logic of verifiable propositions is called \emph{geometric logic}.
A formula in geometric logic can be built up from `basic propositions' using finite conjunctions or arbitrary disjunctions.
There is no implication connective, since implications are not generally verifiable. However, we can describe top-level implications between formulae by \emph{sequents} $\phi \vdash \psi$, which are interpreted as meaning that $\psi$ holds whenever $\phi$ does.

A \emph{geometric theory} is specified by giving by a set of basic propositions (such as $\ell < \SI{10}{cm}$ in the example above) and a set of axioms (such as $\ell < \SI{10}{cm} \vdash \ell < \SI{11}{cm}$). From these we can derive additional sequents using intuitive logical rules. (For example, if we have $\phi_\alpha \vdash \psi$ for all $\alpha \in I$, then we can conclude that $\bigvee_{\alpha \in I} \phi_\alpha \vdash \psi$. For more details, see Section 4 of \citet*{pittsLogic}, but ignore the rules concerning implication and negation, and change the finite disjunctions to infinite ones.)

Just as classical propositional logic can be studied algebraically using Boolean algebras, geometric logic can be studied using frames.
\begin{definition}
 A \emph{frame} is a poset with finite meets\footnote{It suffices to ask for nullary meets (i.e.\ the top element $1$) and binary meets.} (or infima) and arbitrary joins\footnote{Any poset admitting arbitrary joins also has arbitrary meets by a standard argument and is hence a complete lattice. However, the infinite meets are merely incidental to the structure of a frame. They do not have a logical interpretation and will not be preserved by frame homomorphisms.} (or suprema) satisfying the distributivity condition $a \wedge \bigvee_{\alpha \in I} b_\alpha = \bigvee_{\alpha \in I} a \wedge b_\alpha$,
 where $\wedge$ denotes a binary meet and $\bigvee$ denotes an arbitrary join.
\end{definition}
Elements of a frame correspond to (classes of provably equivalent) geometric formulae. Meets and joins in the frame correspond to the conjunctions and disjunctions in geometric logic and the distributivity condition ensures that the conjunctions and disjunctions interact as we expect them to. A sequent is simply interpreted using the ${\le}$ relation. We say that frames are the \emph{Lindenbaum--Tarski} algebras for geometric logic. We will explore this relationship further in \cref{sec:classifying_locales}.

Note that the lattice of open sets of any topological space forms a frame. Indeed, a topology on $X$ is nothing but a \emph{subframe} of the powerset of $X$ (i.e.\ a subset closed under finite meets and arbitrary joins).
Pointfree topology is the study of topology using frames directly, eschewing the underlying sets of points.

However, while we do not start with a predefined set of points, it is still possible to define points of a frame. From a logical perspective these correspond to \emph{models} of the geometric theory represented by the frame. Here a model is a consistent assignment of truth values to each verifiable property (saying which opens `contain' the point). Explicitly, we have the following conditions.
\begin{itemize}
 \item The top element $1$ must hold in a model.
 \item If $a$ holds and $a \le b$ then $b$ must hold too.
 \item If $a$ and $b$ both hold in the model, then so does $a \wedge b$.
 \item If $\bigvee_{\alpha \in I} a_\alpha$ holds, then $a_\alpha$ holds for some $\alpha \in I$.
\end{itemize}
In order-theoretic terms this defines what is known as a \emph{completely prime filter} on the frame, but we will simply call these \emph{points}.

If $\O X$ is the frame of opens of a topological space $X$, then every point of $X$ gives a point of $\O X$, but they might not be in one-to-one correspondence. For example, if $X$ contains two points that lie in precisely the same open sets, they will both correspond to the same point of $\O X$. However, in most cases of interest the points of $\O X$ can indeed be used to recover $X$ completely. If this is the case, we say that the space $X$ is \emph{sober}. Sobriety is a rather weak condition; classically, every Hausdorff space and every finite $T_0$ space is sober.

Frames can be understood as algebraic structures with operations $1$, $\wedge$ and a proper class of join operations of various arities.
A \emph{frame homomorphism} is simply a function that preserves these operations.

A continuous map of topological spaces induces a frame homomorphism between their frames of open sets \emph{in the opposite direction} by taking preimages.
This motivates the following definition.
\begin{definition}
 The category $\Loc$ of \emph{locales} is the opposite of the category $\Frm$ of frames and frame homomorphisms.
 A locale is simply a frame, but viewed from a topological, instead of an algebraic, perspective.
 
 To avoid confusion we will make a clear notational distinction between frames and locales: if $X$ is a locale we write $\O X$ for its correspond `frame of opens' and if $f\colon X \to Y$ is a morphism of locales we write $f^*\colon \O Y \to \O X$ for the corresponding frame homomorphism.
\end{definition}

Taking open sets gives a functor from $\Top$ to $\Loc$. This has a right adjoint, which sends a locale $X$ to its set of points equipped with the topology consisting of the open sets of the from $O_a = \{P \mid a \in P\}$,
and which sends a locale morphism $f\colon X \to Y$ to a map that acts on completely prime filters by taking preimages under $f^*$.
This is an idempotent adjunction which restricts to an equivalence between sober topological spaces and \emph{spatial} locales.
A locale is spatial if its opens are completely determined by the points they contain.

Not all locales are spatial. In classical mathematics non-spatial locales might appear to be rather unusual, but constructively locales that cannot be proved to be spatial are ubiquitous.

\subsection{Constructive mathematics}

The geometric logic introduced above is useful for reasoning about properties of points within a topological space or locale. In contrast, the classical logic that you are familiar with is more far-reaching, and can be used to reason about all of classical mathematics (which itself involves dealing with many topological spaces). \emph{Intuitionistic logic} is what replaces classical logic in this role when doing constructive mathematics.

As we did with our discussion of topology above, before we take a look at intuitionistic logic, let us take a step back and consider what logic is about.
This will be helpful since it can be difficult for those who are only used to classical mathematics to understand what it could mean to weaken some of the assumptions they feel obviously hold.
The assumption that constructive mathematics weakens is the \emph{law of the excluded middle}: $\phi \lor \neg \phi$, which is deeply ingrained in the way most mathematicians think to the extent that it can be very difficult for them to tell when they are using it.

One way to understand classical logic is as the logic describing Platonic truth. If mathematical objects truly exist, then how could they not have definite properties? Surely a proposition either holds of such an object or does not (though whether we can prove which is the case is another matter entirely). This is fine as far as it goes, but it is not the only thing we might want to use logic to do.

Non-classical logics can be understood as describing something different. We have already seen an example of this: geometric logic describes \emph{verifiable truths}.
There are also non-classical logics for knowledge, possibilities, fuzzy concepts and resources (see \cite{bell2001logical} or Chapter 2 of \cite{gabbay2016elementary} for more details).

Constructive mathematics uses \emph{intuitionistic logic}, which is often understood as describing things \emph{we already know to be true}. From this perspective $\phi \lor \neg \phi$ would mean that either we know that $\phi$ holds, or we know that assuming $\phi$ leads to a contradiction (since $\neg \phi$ means $\phi \heyting \bot$). Now it is not hard to see why it would not be reasonable to accept this as always being true.
Do note that constructive mathematicians do \emph{not} assert that excluded middle is false either, since that would mean that if we ever did find out that $\phi$ was true or $\phi$ was false we would have a contradiction. For further details on constructive mathematics see \citet*{bauer2013intuitionistic,bauer2017stages} and \citet*{vanDalen2001intuitionistic}.

We remark that the law of non-contradiction $\neg (\phi \wedge \neg \phi)$ is still valid constructively, since it just says that from $\phi$ and $\phi \heyting \bot$ we can conclude $\bot$. From a classical perspective one might feel that this is equivalent to excluded middle, but to derive this equivalence would require using either excluded middle itself or some other equivalent axiom.

One logical axiom that is equivalent to excluded middle is double negation elimination: $\neg\neg \phi \implies \phi$. This means that it is difficult to get rid of negations in constructive mathematics once they have been introduced and so it is important to define basic concepts without using negations if possible --- they should be defined by \emph{positive formulae}.

For example, asking that a set $S$ is nonempty ($S \ne \emptyset$) is a rather weak condition constructively and is often not terribly useful. We usually instead ask that the set is \emph{inhabited} --- that is, $\exists x \in S$. Now we can use the element $x$ to more easily prove further results. (On the other hand, if a set is not inhabited then it \emph{is} empty.)

It is often said that proof by contradiction is not permitted in constructive mathematics, but this claim should be interpreted carefully. We can, of course, prove $\neg\phi$ by assuming $\phi$ and deriving a contradiction, since this is the definition of $\neg\phi$. On the other hand, assuming $\neg \phi$ and deriving a contradiction only proves $\neg\neg \phi$ and this is not generally enough to conclude $\phi$.

The \emph{axiom of choice} is another famously nonconstructive axiom due to its claim that choice functions exist without giving any indication of how to actually construct one.
It can be shown that the axiom of choice implies excluded middle (for example, see \cite[Theorem 1.3]{bauer2017stages}), though it is much stronger. On the other hand, countable choice and dependent choice can be assumed without forcing excluded middle to hold, but we do not consider them to be entirely constructive either.
A version of \emph{finite} choice can be proved constructively by induction.

\subsubsection{Some models of constructive logic}

Another good way to understand intuitionistic logic is through its \emph{models}. This can be formalised as interpreting the logic in different toposes, but we can simply understand them as different logical universes in which to do mathematics. In order to make sense of a statement in intuitionistic logic it is often helpful to consider what it might mean in one or more of these different models.

One rather intuitive model corresponds to computability by Turing machines. This is related to the Brouwer–-Heyting–-Kolmogorov interpretation, which asserts that a logical formula is true if it has a \emph{realiser}, which can be defined recursively based on its syntactic form. For example, a realiser for a disjunction is simply a choice of one of the disjuncts together with a realiser for that. A realiser for an implication $\phi \implies \psi$ is given by a procedure that converts a realiser for $\phi$ into a realiser for $\psi$. (See \citet*{bauer2013intuitionistic} for more details.) Here we take these realisers be given by Turing machines.
If $\phi(n)$ means that the $n^\text{th}$ Turing machine halts then the formula $\forall n \in \N.\ \phi(n) \vee \neg\phi(n)$ would be interpreted as saying that the halting problem is decidable, and thus it is false in this model.\footnote{ %
Note that for any \emph{given} proposition $\psi$ we still have $\neg\neg(\psi \vee \neg \psi)$ as we argued above. It is the universally quantified formula involving many different propositions at once that is false.}

Since intuitionistic logic is more general than classical logic our usual understanding of classical logic also gives a model of intuitionistic logic. It can be helpful to bear in mind that constructive theorems cannot contradict classical results. On the other hand, this also exemplifies the fact that one model might not necessarily be enough for a complete understanding.

Our final class of models arises from topology and corresponds to the concept of \emph{local truth}, where the truth value of propositions is allowed to vary from place to place over some space.
In this model truth values correspond to opens in some fixed locale $X$. The proposition $\top$ means true \emph{everywhere}, the proposition $\bot$ means true \emph{nowhere} and conjunctions and disjunctions correspond to meets and joins.

Implication is more subtle. Since operation $a \wedge (-)$ on the frame $\O X$ preserves joins, it has a right adjoint $a \heyting (-)$ making $\O X$ into a Heyting algebra. We use this Heyting implication to interpret logical implication. If $X$ is spatial then $U \heyting V$ is given by the \emph{interior} of the set $\{x \in X \mid x \in U \implies x \in V\}$, which is the closest approximation to what we might naively expect while still giving an open set.
Note that $U \lor \neg U$ is only equal to $\top$ if $U$ has a \emph{complement} --- that is, if $U$ is clopen. Thus, excluded middle seldom holds in such models.

\begin{remark}
 There are some definite similarities between intuitionistic logic as it is used to describe local truth and geometric logic. However, they are not the same.
 Most evidently intuitionistic logic involves an implication connective, but geometric logic has none. This is related to the fact that the Heyting implication in a frame is not preserved by frame homomorphisms.
 We will see that topology does have its own way to deal with negation, however. There the complement of an open set is not another open set, but something new: a \emph{closed} set (corresponding to a \emph{refutable} property).
 Higher-order intuitionistic logic also permits existential and universal quantification over any sets, while the status of quantifiers in geometric logic is more subtle.
 In summary, intuitionistic logic has greater expressive power at the expense of having a weaker link to topological notions.
\end{remark}

\subsubsection{Truth values}

Classically there are precisely two truth values: $\top$ (true) and $\bot$ (false).
Constructively, the situation is more subtle. We write $\Omega$ for the set of truth values. Certainly, we still have $\bot,\top \in \Omega$. If $\phi$ is a proposition with no free variables, we write $\llbracket \phi \rrbracket$ for the truth value of $\phi$. Then $\Omega$ is ordered by $\llbracket \phi \rrbracket \le \llbracket \psi \rrbracket$ if and only if $\phi \implies \psi$ and the logical connectives give it the structure of a Heyting algebra. In particular, binary meet is given by logical conjunction and binary join is given by logical disjunction.

Note that since $\phi \iff (\phi \iff \top)$, we have $p = \llbracket p = \top \rrbracket$ for any $p \in \Omega$.
So truth values are completely determined by `whether they equal $\top$'.
This can be used to show that there is still a sense in which $\Omega$ does not have more than two elements.

\begin{lemma}
 For $p \in \Omega$, if $p \ne \top$ then $p = \bot$.
\end{lemma}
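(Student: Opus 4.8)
The plan is to chase the two facts that the text has just put in place: that the order on $\Omega$ is given by $\llbracket \phi \rrbracket \le \llbracket \psi \rrbracket$ iff $\phi \implies \psi$, and that every truth value satisfies $p = \llbracket p = \top \rrbracket$. First I would spell out the hypothesis: $p \ne \top$ is by definition $\neg(p = \top)$, i.e.\ $(p = \top) \implies \bot$. Combining this with \emph{ex falso} $\bot \implies (p = \top)$, the sentence $p = \top$ is logically equivalent to $\bot$, so $\llbracket p = \top \rrbracket = \llbracket \bot \rrbracket = \bot$. Then the identity $p = \llbracket p = \top \rrbracket$ recorded just above the lemma immediately yields $p = \bot$, which is what we want.

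If one prefers a more order-theoretic phrasing closer to the slogan that ``$\Omega$ has at most two elements'', one can instead observe that $\bot \le p$ always holds, so it suffices to prove $p \le \bot$; unwinding the definition of $\le$ on $\Omega$ together with $p = \llbracket p = \top \rrbracket$ and $\bot = \llbracket \bot \rrbracket$, this reduces to $(p = \top) \implies \bot$, which is precisely the hypothesis. (This in fact also establishes the converse implication, so the lemma is secretly an equivalence.)

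The one thing to watch — and where a classically trained reader is most likely to slip — is that there is no case split on ``whether $p = \top$'' available here; the whole argument must route through the fact that $p$ is \emph{literally} the truth value of the sentence $p = \top$, so that once that sentence has been refuted the element $p$ is pinned down to $\bot$. Past that point there is no real obstacle: each of the two routes above is a couple of lines of definition unwinding.
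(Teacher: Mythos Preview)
Your proof is correct and essentially the same as the paper's. The paper writes $p = \llbracket\phi\rrbracket$, reads $p \ne \top$ as $\neg\phi$, and concludes $\llbracket\phi\rrbracket \le \bot$ via the definition of the order --- this is exactly your second (order-theoretic) route, and your first route is the same argument with the reverse implication folded in.
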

\begin{proof}
 Suppose $\llbracket\phi\rrbracket \ne \top$. This means $\neg(\llbracket\phi\rrbracket = \top)$. Hence, $\neg \phi$ holds and so $\llbracket \phi\rrbracket \le \bot$. Thus, $\llbracket \phi\rrbracket = \bot$.
\end{proof}

On the other hand, if $p \ne \bot$ we can only show $\neg\neg (p = \top)$. We cannot prove that $\Omega$ only contains $\top$ and $\bot$.
\begin{lemma}
 The equality $\Omega = \{\bot, \top\}$ is equivalent to excluded middle. 
\end{lemma}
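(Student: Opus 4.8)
The plan is to prove the two implications separately, in each case first unfolding the statements into their proper constructive readings. Recall that ``$\Omega = \{\bot,\top\}$'' abbreviates $\forall p \in \Omega.\ (p = \bot \lor p = \top)$, while excluded middle asserts that $\phi \lor \neg\phi$ holds for \emph{every} proposition $\phi$. So I would be proving an equivalence between two universally quantified statements, and the whole content is in keeping the quantifiers and the disjunctions honest.

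For the direction from excluded middle to $\Omega = \{\bot,\top\}$, the inclusion $\{\bot,\top\} \subseteq \Omega$ is immediate since $\bot,\top \in \Omega$. For the reverse inclusion, fix $p \in \Omega$ and apply excluded middle to the proposition $p = \top$ (viewing $p$ as $\llbracket p = \top\rrbracket$, as noted above): this gives $p = \top \lor \neg(p = \top)$. In the first case $p \in \{\bot,\top\}$ directly; in the second case the previous lemma yields $p = \bot$, so again $p \in \{\bot,\top\}$. Hence $\Omega \subseteq \{\bot,\top\}$.

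For the converse, I would assume $\Omega = \{\bot,\top\}$ and let $\phi$ be arbitrary. Then $\llbracket\phi\rrbracket \in \Omega$, so by hypothesis $\llbracket\phi\rrbracket = \top$ or $\llbracket\phi\rrbracket = \bot$. Using $\phi \iff (\llbracket\phi\rrbracket = \top)$, the first disjunct gives $\phi$; in the second, $\llbracket\phi\rrbracket = \bot$ unwinds to $\phi \iff \bot$, that is, $\neg\phi$. Either way $\phi \lor \neg\phi$ holds, which is excluded middle.

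The argument is genuinely routine once the statements are read correctly, so the only point I would flag — and take care to state explicitly — is precisely that reading. ``$\Omega = \{\bot,\top\}$'' must be the real disjunction $p = \bot \lor p = \top$ for each $p$, not the weaker $\neg\neg(p = \bot \lor p = \top)$ or $(p \ne \top) \heyting (p = \bot)$ (the latter being provable outright by the previous lemma); and ``excluded middle'' must be the full schema over all $\phi$, since for any fixed $\phi$ we only have $\neg\neg(\phi \lor \neg\phi)$ unconditionally. Getting these quantifier placements right on both sides is what makes the equivalence go through, and it is worth a sentence in the proof.
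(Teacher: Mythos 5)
Your proof is correct and is essentially the same argument as the paper's, which simply states the bi-implication $\llbracket\phi\rrbracket \in \{\bot,\top\} \iff \neg\phi \lor \phi$ in one line; you have unpacked the two directions explicitly, with the only cosmetic difference being that your forward direction routes through the preceding lemma (``$p \ne \top$ implies $p = \bot$'') rather than observing directly that $\neg\phi$ is equivalent to $\llbracket\phi\rrbracket = \bot$. Your closing caution about where the quantifiers and disjunctions sit is a helpful remark but does not change the substance.
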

\begin{proof}
 Simply note that $\llbracket \phi\rrbracket \in \{\bot, \top\} \iff \neg\phi \vee \phi$.
\end{proof}
\begin{remark}
 A proof that a statement implies some nonconstructive principle is called a \emph{Brouwerian counterexample}. While assuming such a statement holds might not lead to a contradiction, it cannot have a constructive proof and will fail in any model of mathematics where the nonconstructive principle in question does not hold.
\end{remark}

A proposition $\phi$ for which $\phi \vee \neg\phi$ is true is said to be \emph{decidable}. Constructively there is still some use for the set $2 = \{\bot, \top\}$ of decidable truth values.

The defining property of $\Omega$ is that it classifies subsets. If $X$ is a set and $\chi\colon X \to \Omega$, then $\{x \in X \mid \chi(x) = \top\}$ defines a subset of $X$.
Conversely, if $S \subseteq X$ then we can define the characteristic map $\chi_S\colon X \to \Omega$ by $x \mapsto \llbracket x \in S\rrbracket$. These correspondences are inverses and so $\Omega^X$ may be identified with the \emph{powerset} of $X$.

\begin{remark}
 In particular, note that functions into $\Omega$ are completely determined by which elements they send to $\top$.
 Furthermore, since the above correspondences are monotone, we have $f \le g$ for two such maps if and only if $f(x) = \top \implies g(x) = \top$.
\end{remark}

On the other hand, the exponential $2^X$ contains only the \emph{decidable subsets of $X$}.
\begin{definition}
 A subset $S \subseteq X$ is said to be \emph{decidable} if $\forall x \in X.\ x \in S \vee x \notin S$.
\end{definition}
Decidable subsets are analogous to clopen subsets in topology or to complemented elements in a frame. Indeed, they are precisely the complemented elements of $\Omega^X$, with the complement being given by $\{x \in X \mid x \notin S\}$.

\begin{definition}
 When the equality relation on a set $X$ is a decidable subset of $X \times X$ we say that $X$ has \emph{decidable equality}.
\end{definition}
The natural numbers $\N$ and the rationals $\Q$ both have decidable equality, but this is not true for $\Omega$ unless excluded middle holds.

The lattice $\Omega$ also plays an important role in pointfree topology.
\begin{lemma}
 The lattice $\Omega$ is frame, with the arbitrary joins given by $\bigvee S = \llbracket \top \in S\rrbracket$. Moreover, it is the \emph{initial} frame: the unique frame map ${!}^*\colon \Omega \to \O X$ sends $p \in \Omega$ to $\bigvee\{1 \mid p = \top\}$.
\end{lemma}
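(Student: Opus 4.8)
The plan is to first pin down the joins and read off the frame axioms, then verify the universal property. For the join formula, given $S \subseteq \Omega$ I would check directly that $\llbracket \top \in S\rrbracket$ is the least upper bound of $S$. It is an upper bound: for $p \in S$ we have $p = \llbracket p = \top\rrbracket$ by the earlier observation, and the implication $p = \top \implies \top \in S$ holds since $p \in S$, so $p \le \llbracket \top \in S\rrbracket$. It is the \emph{least} upper bound: if $q \in \Omega$ satisfies $p \le q$ for all $p \in S$, then assuming $\top \in S$ gives $\top \le q$, hence $q = \top$; thus $\top \in S \implies q = \top$, i.e.\ $\llbracket \top \in S\rrbracket \le \llbracket q = \top\rrbracket = q$. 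So $\Omega$ has all joins. Since it already has finite meets and a top element (conjunction and $\top$), it only remains to check distributivity, and here there are two equally short routes: either observe that $\Omega$ is a Heyting algebra, so each $a \wedge (-)$ is a left adjoint and hence preserves the joins just constructed; or compute directly by unwinding the join formula, $a \wedge \bigvee_{\alpha} b_\alpha = \llbracket a = \top \wedge \exists \alpha.\ b_\alpha = \top \rrbracket = \llbracket \exists\alpha.\ (a \wedge b_\alpha = \top)\rrbracket = \bigvee_\alpha (a \wedge b_\alpha)$, using that intuitionistic conjunction distributes over existential quantification and that $u \wedge v = \top$ iff $u = \top$ and $v = \top$.

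For initiality, fix a frame $\O X$. The key remark is that the join formula gives $p = \bigvee\{1 \mid p = \top\}$ in $\Omega$ for every $p$, since $\top \in \{1 \mid p = \top\}$ is equivalent to $p = \top$. Now if $h\colon \Omega \to \O X$ is any frame homomorphism, then $h(1) = 1$ and, applying $h$ to the displayed join (whose image is the set $\{1 \mid p = \top\}$ in $\O X$), one gets $h(p) = \bigvee\{1 \mid p = \top\}$; so $h$ is forced to be the map $!^*$ of the statement, giving uniqueness. For existence I would take this same formula as the definition of $!^*$ and check the frame homomorphism axioms: it sends $1$ to $\bigvee\{1\} = 1$; it preserves binary meets because $p \wedge q = \top$ iff $p = \top$ and $q = \top$, so that by distributivity in $\O X$ the meet $\bigvee\{1 \mid p = \top\} \wedge \bigvee\{1 \mid q = \top\}$ collapses to $\bigvee\{1 \mid p \wedge q = \top\} = !^*(p\wedge q)$; and it preserves arbitrary joins because, using the join formula in $\Omega$ and associativity of joins in $\O X$, $\bigvee_\alpha !^*(p_\alpha) = \bigvee\{1 \mid \exists\alpha.\ p_\alpha = \top\} = !^*(\bigvee_\alpha p_\alpha)$, the empty case recovering $!^*(\bot) = 0$.

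These computations are all routine; the only real content is the join formula itself, and the one place to take a little care is the bookkeeping around the ``subsingleton'' sets $\{1 \mid p = \top\}$ — in particular spelling out that the image of such a set under a frame homomorphism is again of this shape, and that a join over $\alpha$ of the sets $\{1 \mid p_\alpha = \top\}$ amounts to $\{1 \mid \exists\alpha.\ p_\alpha = \top\}$. Once the join formula is in hand these identifications follow by simply unwinding definitions.
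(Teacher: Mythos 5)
Your proposal is correct and follows essentially the same route as the paper: prove the join formula $\bigvee S = \llbracket \top \in S\rrbracket$ directly, use the Heyting structure to get distributivity, and derive initiality from the identity $p = \bigvee\{1 \mid p = \top\}$ together with a routine check that the resulting formula is a frame homomorphism. The only difference is that the paper dispatches the final homomorphism check with a brief remark while you spell it out, which is harmless.
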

\begin{proof}
 We already know $\Omega$ is a Heyting algebra. So $a \wedge (-)$ has a right adjoint and hence preserves existing suprema. Thus, it to show $\Omega$ is a frame, it suffices to show it admits arbitrary joins.
 We now prove that the join of $S \subseteq \Omega$ is given by $\llbracket \top \in S\rrbracket$.
 
 To see that this is an upper bound for $S$ we take $s \in S$ and show $s \le \llbracket \top \in S\rrbracket$. If $s = \top$ then $\top = s \in S$, so that $s = \top \implies \top \in S$ and the desired inequality holds.
 
 To see that $\llbracket \top \in S\rrbracket$ is the \emph{least} upper bound we must show that for any upper bound $u \in \Omega$, we have $\top \in S \implies u = \top$.
 Suppose $u$ is an upper bound of $S$ and $\top \in S$. Then in particular, $\top \le u$ and so $u = \top$, as required.
 
 Now we show that $\Omega$ is the initial frame.
 Let $\O X$ be a frame. Any frame map from $\Omega$ to $\O X$ must send $\top$ to $1$ and preserve joins. But now for any $p \in \Omega$, we have that $p = \llbracket p = \top\rrbracket = \bigvee\{ \top \mid p = \top\}$\footnote{The set $\{\top \mid p = \top\}$ might seem a little strange from the classical perspective. It is the subset of $\{\top\}$ corresponding to the constant map sending $\top$ to the element $\llbracket p = \top \rrbracket$ in $\Omega$. Classically, we seldom consider subsets defined by propositions with no free variables since any such subset is either the entire set or empty.} and so the only possible map is ${!}^*\colon p \mapsto \bigvee \{ 1 \mid p = \top \}$. Finally, it is not hard to check that this is indeed a frame homomorphism.
\end{proof}

\begin{remark}
 Interestingly, the set $2 = \{0,1\}$ cannot be proved to be a frame constructively. Joins of decidable subsets of $2$ always exist, but joins of arbitrary subsets might not.
 Indeed, if $2$ is complete then $\bigvee\{1\mid \phi\} = 0 \iff {\bigvee\{1\mid \phi\} \le 0} \iff (\phi \implies 1 \le 0) \iff \neg \phi$. But $2$ has decidable equality and so either $\bigvee\{1\mid \phi\} = 0$ or $\bigvee\{1\mid \phi\} \ne 0$. Hence, we conclude that $\neg\phi \vee \neg\neg\phi$. This logical principle is weaker than excluded middle, but it is nevertheless nonconstructive.
\end{remark}

Recall that \emph{points} of a frame $\O X$ are given by consistent assignments of its elements to truth values. We quickly see that these are simply frame homomorphisms from $\O X$ to $\Omega$.
Since $\Omega$ is the initial frame, it corresponds to the \emph{terminal locale} $1$ and so points of a locale $X$ are simply morphisms from $1$ to $X$. Thus, our definition coincides with the usual definition of point from category theory.

Note that $1$ has a unique point by virtue of being terminal and $\Omega$ is isomorphic to the powerset of $1 = \{*\}$ and hence to the frame of opens of the unique one-point topological space.
So we can identify the spatial locale $1$ with the sober space $1$. More generally, we can show that locales faithfully represent sets (viewed as discrete spaces).
The frames of opens of such \emph{discrete locales} are given by powerset lattices.

\subsubsection{Why work constructively?}\label{sec:why_work_constructively}

Constructive proofs often require more care than classical ones would and so it is natural to ask why it is worth the trouble.
Probably the most convincing argument is the much wider applicability of constructive proofs: they hold in any topos (with natural numbers object). Interpreting the results in these models in terms of external concepts then gives rise to new and different results depending on which topos we use.

We stress that it is \emph{not} necessary to have a deep understanding of topos theory to do constructive mathematics, just as one can do classical mathematics without deep knowledge of set theory.
When it comes to interpreting a result in a particular topos, some basic knowledge of that topos and of categorical logic in general is helpful,
though even then, if one knows how to interpret basic structures, it is usually possible to fill in the details. This is the approach we will take in these notes. For further information see \citet*[Part D]{elephant2}.

Le us consider an example of how interpreting results in a topos can be useful. Consider the topos $\Sh(B)$ of sheaves over a locale $B$. The internal logic of this topos corresponds to `local truth' over $B$.
It can be shown that a locale internal to $\Sh(B)$ corresponds externally to a locale morphism into $B$ (see \citet*[Section C1.6]{elephant2}).
Thus, if we interpret a constructive theorem about locales in this topos, we obtain an analogous result about locale morphisms into $B$. Classically, there is an entire field of \emph{fibrewise topology} that studies such a situation (for topological spaces), but by working constructively we can deduce many of these results automatically. We will see a more specific example of this in \cref{sec:fibrewise_application}.

Of course, interpreting the result in the category $\Set$ (in a classical metatheory) simply recovers the classical result.
Some other important examples are shown in \cref{tab:interpretations}. If we are primarily interested in interpreting the results in a particular class of toposes, we might be able to make use of certain classical principles, but for the most general results we should not assume any of these.

\begin{table}[H]
\begin{tabular}{>{\raggedright\arraybackslash}m{2.25cm}>{\raggedright\arraybackslash}m{5.75cm}>{\raggedright\arraybackslash}m{5.5cm}} %
  \toprule
  \textbf{Topos} & \textbf{Interpretation} & \textbf{Permitted classical axioms} \\
  \midrule
  $\Set$ & Classical results & Axiom of choice \\
  \rowcolor{gray!20}
  $G\textsf{-}\Set$ & $G$-equivariant topology & Excluded middle \\
  $\mathrm{Eff}$ & Computable analysis & Dependent choice \\
  \rowcolor{gray!20}
  $\Sh(B)$ & Fibrewise topology over $B$ & --- \\
  \bottomrule
\end{tabular}
\caption{Interpreting constructive topology in different toposes gives results in a number of subfields for free.
Here $B$ is a locale (or sober space) and $G$ is any \'etale-complete localic group (such as a discrete or profinite group, see \cite{moerdijk1988classifying}).} \label{tab:interpretations}
\end{table}

\section{Frame presentations and algebraic techniques}

A convenient aspect of pointfree topology is that frames are simply a kind of algebraic structure.
There is a subtlety in that they have a proper class of operations, but we can nonetheless show that the forgetful functor from $\Frm$ to $\Set$ has a left adjoint and $\Frm$ has coequalisers.\footnote{We remark that the existence of a left adjoint is enough to conclude any equationally presented category is monadic over $\Set$.
However, constructively this alone is not sufficient to deduce cocompleteness. Nonetheless, cocompleteness does follow as long as we have sufficiently many coequalisers. For instance, see \citet*[Proposition 5.6.11]{riehl2017category}.}
Thus, most of the basic ideas from universal algebra are still applicable.
In particular, $\Frm$ is complete and cocomplete and we can talk about subframes and quotient frames in the usual way.

A particularly important consequence is that we can present frames by generators and relations.
A \emph{presentation} has the form $\langle G \mid R\rangle$ where $G$ is a set of generators and $R$ is a set of equalities\footnote{We will also find it convenient to use \emph{inequalities} as relations, but since $a \le b \iff a \wedge b = a$ (or alternatively, $a \le b \iff a \vee b = b$), these can be replaced by equivalent equalities.} between formal combinations of the generators, called relations.
Formally, $R$ can be defined to be a subset of $F G \times F G$ where $F G$ is the free frame on $G$. The frame defined by a presentation can be constructed as the quotient of $F G$ by the congruence generated by $R$.
\par %
The function from $G$ to $\langle G \mid R \rangle$ sending generators to their image in the quotient of $F G$ satisfies the following universal property: any function $f$ from $G$ to a frame $L$ such that the images of the generators under $f$ satisfy the relations in $R$ induces a unique frame homomorphism $\overline{f}\colon \langle G \mid R\rangle \to L$ such that the following diagram commutes.
\begin{center}
 \begin{tikzpicture}[node distance=2.45cm, auto]
  \node (FG) {$\langle G \mid R \rangle$};
  \node (G) [below of=FG] {$G$};
  \node (Y) [right of=G] {$L$};
  \draw[->] (G) to node {} (FG);
  \draw[->, dashed] (FG) to node {$\overline{f}$} (Y);
  \draw[->] (G) to node [swap] {$f$} (Y);
 \end{tikzpicture}
\end{center}

We will see that this provides a very convenient way to construct spaces which is completely unavailable in point-set topology.

\subsection{Classifying locales}\label{sec:classifying_locales}

Presentations can be understood as describing (propositional) geometric theories.
The generators correspond to basic propositions and the relations correspond to axioms.
A sequent $\phi \vdash \psi$ corresponds to the relation $\phi \le \psi$ (or as an equality $\phi \wedge \psi = \psi$).
We call the locale corresponding to this presentation the \emph{classifying locale} of the geometric theory.

As mentioned before, points of the locale correspond to models of the geometric theory. The universal property of the presentation corresponds to the fact that we can show something is a model just by checking the axioms.
A point of $\langle G \mid R\rangle$ is simply a frame homomorphism from $\langle G \mid R\rangle$ to $\Omega$, which corresponds to an assignment of the generators to truth values that respects the relations.

As a very simple example, points of the free frame on one generator correspond to the elements of $\Omega$.
It turns out that this frame is spatial, and the topology on $\Omega$ is generated by the single open $\{\top\}$. This space (as well as the corresponding locale) is called \emph{Sierpiński space}.

We usually conceptualise geometric theories as describing their hypothetical models --- that is, the points of the locale. This is despite the fact that we do not have a completeness theorem for geometric logic in the usual sense and so nontrivial theories might even fail to have any models at all.\footnote{The completeness theorem is restored if we look at models not only in $\Set$, but in all Grothendieck toposes. Indeed, one way to understand locales is as a succinct way to encode what the points are from the perspective of every Grothendieck topos at once.}
Moreover, the theory alone describes not only the points of the locale, but also the `topology'.

It will be helpful to look at an example. We will describe a theory that gives the locale of real numbers. Recall that real numbers can be constructed using Dedekind cuts and so let us consider the theory of Dedekind cuts on the rationals.
Such a \emph{Dedekind cut} is a pair $(L,U)$ of subsets of $\Q$ satisfying a number of conditions. The intuition is that we expect $L = \{q \in \Q \mid q < \rho\}$ and $U = \{q \in \Q \mid q > \rho\}$ for some real number $\rho$. The precise conditions are as follows.
\begin{enumerate}[1)]
 \item $L$ is downward closed: if $q \in L$ and $p \le q$ then $p \in L$
 \item $L$ is \emph{rounded}: if $p \in L$ then $q \in L$ for some $q > p$
 \item $L$ is inhabited: there is some $q \in L$ (to rule out $\rho = -\infty$)
 \item $U$ is upward closed: if $p \in U$ and $p \le q$ then $q \in U$
 \item $U$ is rounded: if $q \in U$ then $p \in U$ for some $p < q$
 \item $U$ is inhabited: there is some $q \in U$ (to rule out $\rho = \infty$)
 \item $L$ and $U$ are disjoint: if $p \in L$ and $q \in U$ then $p < q$
 \item The pair $(L,U)$ is \emph{located}: if $p < q$ then either $p \in L$ or $q \in U$
\end{enumerate}
These can be formulated as a geometric theory. We define for each $q \in \Q$ a basic proposition $\ell_q$, which we interpret to mean ``$q \in L$'', and a basic proposition $u_q$, which we interpret to mean ``$q \in U$''.
In terms of these the conditions become the following axioms.
\begin{displaymath}
\begin{array}{l@{\qquad\quad}r@{\hspace{1.5ex}}c@{\hspace{1.5ex}}l@{\quad}@{}l@{\qquad\quad}r@{}}
 (1) & \ell_q &\vdash& \ell_p & \text{ for $p \le q$} & \text{($L$ downward closed)} \\
 (2) & \ell_p &\vdash& \bigvee_{q > p} \ell_q & \text{ for $p \in \Q$} & \text{($L$ rounded)} \\
 (3) & \top &\vdash& \bigvee_{q \in \Q} \ell_q && \text{($L$ inhabited)} \\
 (4) & u_p &\vdash& u_q & \text{ for $p \le q$} & \text{($U$ upward closed)} \\
 (5) & u_q &\vdash& \bigvee_{p < q} u_p & \text{ for $q \in \Q$} & \text{($U$ rounded)} \\
 (6) & \top &\vdash& \bigvee_{q \in \Q} u_q && \text{($U$ inhabited)} \\
 (7) & \ell_p \land u_q &\vdash& p < q & \text{ for $p,q \in \Q$} & \text{($L$ and $U$ disjoint)} \\
 (8) & \top &\vdash& \ell_p \lor u_q & \text{ for $p < q$} & \text{(locatedness)}
\end{array}
\end{displaymath}
Note that the models of this theory are indeed the Dedekind cuts of $\Q$ (if for each model we take $L$ to be the set of $q \in \Q$ such that $\ell_q$ holds and $U$ to be the set of $q \in \Q$ such that $u_q$ holds).

This (description of the) geometric theory can then be translated to give a frame presentation as above. The basic propositions $\ell_q$ and $u_q$ become generators and the axioms become relations. For instance, $\top \vdash \bigvee_{q \in \Q} \ell_q$ gives the relation $1 \le \bigvee_{q \in \Q} \ell_q$. Axiom schema (7) deserves particular mention; it involves the metatheoretical truth value $p < q$, which can be understood in geometric logic by interpreting it as a shorthand for the disjunction $\bigvee\{\top \mid p < q\}$. In this way we see the corresponding relations are $\ell_p \land u_q \le {\bigvee\{1 \mid p < q\}}$. Note that in the resulting frame $p < q$ becomes the open ${!}^*(\llbracket p < q \rrbracket)$ where ${!}^*$ is the unique frame map from $\Omega$.

These relations can be simplified a little. For example, (1) and (2) together are equivalent to the equality $\ell_p = \bigvee_{q > p} \ell_q$. Also, note that since inequality of rational numbers is decidable, we can equivalently express the relations for (7) as $\ell_p \land u_q = 0$ for $p \ge q$ (while the condition is trivial in the case $p < q$). We arrive at the following proposition.
\begin{proposition}
 The classifying locale for the theory of Dedekind cuts has underlying frame
 \begin{align*}
  \O \R = \langle \ell_q, u_q,\ q \in \Q \mid {} & \ell_p = \bigvee_{q > p} \ell_q,\ u_q = \bigvee_{p < q} u_p, \\
                                                 & \bigvee_{q \in \Q} \ell_q = 1,\ \bigvee_{q \in \Q} u_q = 1, \\
                                                 & \ell_p \wedge u_q = 0\, \textnormal{ for $p \ge q$},\\
                                                 & \ell_p \vee u_q = 1\, \textnormal{ for $p < q$} \rangle.
 \end{align*}
\end{proposition}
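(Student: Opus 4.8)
The plan is to obtain the presentation in two moves: first write down the mechanical translation of the eight geometric axioms into frame relations, exactly as in the preceding discussion, and then show that this raw presentation and the one in the statement generate the same congruence on the free frame on the generators $\{\ell_q, u_q \mid q \in \Q\}$, so that they present isomorphic frames. Concretely, the raw presentation has relations $\ell_q \le \ell_p$ for $p \le q$ (axiom (1)), $\ell_p \le \bigvee_{q>p}\ell_q$ (axiom (2)), $1 \le \bigvee_{q\in\Q}\ell_q$ (axiom (3)), the three mirror-image relations for $u$ (axioms (4)--(6)), $\ell_p \wedge u_q \le {!}^*(\llbracket p<q\rrbracket)$ for $p,q \in \Q$ (axiom (7)), and $1 \le \ell_p \vee u_q$ for $p<q$ (axiom (8)). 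I would then verify that every relation of either presentation is derivable from the other.

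For axioms (1) and (2), I would note that $\bigvee_{q>p}\ell_q \le \ell_p$ follows from (1) (each summand $\ell_q$ has $p < q$, hence $p \le q$), so together with (2) this gives the equality $\ell_p = \bigvee_{q>p}\ell_q$; conversely, from this equality the inequality (2) is immediate, and for $p \le q$ the inclusion of index sets $\{r \mid r > q\} \subseteq \{r \mid r > p\}$ gives $\ell_q \le \bigvee_{r>q}\ell_r \le \bigvee_{r>p}\ell_r = \ell_p$, which is (1). The argument for $u_q = \bigvee_{p<q}u_p$ is symmetric, and the relations from (3) and (6) are already in the form $\bigvee_{q\in\Q}\ell_q = 1$, $\bigvee_{q\in\Q}u_q = 1$, while (8) is already $\ell_p \vee u_q = 1$ for $p<q$.

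The one step that needs a little more care, and which I expect to be the main (if modest) obstacle, is the reformulation of axiom (7). Here I would use that ${!}^*(\llbracket p<q\rrbracket) = \bigvee\{1 \mid p<q\}$ by the description of the initial frame map, and that $p < q$ is a decidable proposition because the order on $\Q$ is decidable; hence this open is $1$ when $p < q$ and $0$ when $p \ge q$. So the relation from (7) is trivial whenever $p < q$ and reads $\ell_p \wedge u_q = 0$ whenever $p \ge q$, while conversely the family $\{\ell_p \wedge u_q = 0 \mid p \ge q\}$ clearly recovers $\ell_p \wedge u_q \le {!}^*(\llbracket p<q\rrbracket)$ for all $p,q$. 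Assembling these equivalences shows the two presentations are interderivable and hence present the same frame. The only things to watch are exactly where decidability of $<$ on $\Q$ is invoked and that no relation is quietly lost when moving between the two presentations; everything else is routine lattice manipulation.
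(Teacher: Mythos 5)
Your proof is correct and takes essentially the same route as the paper: mechanically translate the eight geometric axioms into frame relations over the free frame on $\{\ell_q, u_q \mid q \in \Q\}$, then observe that this raw presentation and the one in the statement generate the same congruence, the two nontrivial points being that (1)\&(2) (resp.\ (4)\&(5)) collapse to the single rounding equality, and that decidability of $<$ on $\Q$ lets one rewrite $\ell_p \wedge u_q \le \bigvee\{1 \mid p<q\}$ as $\ell_p \wedge u_q = 0$ for $p \ge q$ and drop it for $p<q$. You simply make explicit the interderivability argument that the paper presents as informal prose leading up to the proposition; no gap, and the care about where decidability of $<$ enters is exactly the right thing to watch.
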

This is called the \emph{locale of reals}. The elements $\ell_q$ and $u_p$ can be interpreted as the subbasic open sets $(q, \infty)$ and $(-\infty, p)$. The points of the locale are real numbers by construction: if $p\colon 1 \to \R$ is a locale map then $L = \{q \in \Q \mid p^*(\ell_q) = \top\}$ and $U = \{q \in \Q \mid p^*(u_q) = \top\}$ define a Dedekind cut.
The induced topology on this set of points is the usual one.
However, the locale of reals is not necessarily spatial constructively!\footnote{An example of a model of constructive mathematics where $\R$ fails to be spatial is the effective topos $\mathrm{Eff}$ from computability theory. See \cref{rem:spatial_interval_noncompact} and \citet*{bauer2006konig} for more information.} The failure of such an important locale to be spatial is a big reason why the pointfree approach is so important in constructive mathematics.

Note that by describing the Dedekind cuts as a geometric theory and finding the classifying locale, we didn't just get the correct set of points; we also got the right topology `for free'! Moreover, we obtained the full locale even though it is not necessarily determined by its (global) points.

We end this subsection with a few additional examples.
\begin{example}[Cantor space]\label{ex:cantor_space}
Cantor space is the locale of binary sequences. We can verify if the bit at any given index is $0$ or $1$, giving generators $z_n$ and $u_n$ respectively.
Exactly one of these two options holds, and so we have the presentation \[\O(2^\N) \cong \langle z_n, u_n,\ n \in \N \mid z_n \wedge u_n = 0, z_n \vee u_n = 1\rangle.\]
As expected, the points of this locale correspond to decidable subsets of $\N$.
\end{example}

\begin{example}[The Stone spectrum]\label{ex:stone_spectrum}
Let $L$ be a bounded distributive lattice. The Stone spectrum of $L$ is the space of \emph{prime filters} of $L$.
A prime filter is a subset $F \subseteq L$ such that
\begin{itemize}
 \item if $a \in F$ and $a \le b$ then $b \in F$,
 \item $1 \in F$,
 \item if $a \in F$ and $b \in F$ then $a \wedge b \in F$,
 \item $0 \notin F$,
 \item if $a \vee b \in F$ then $a \in F$ or $b \in F$.
\end{itemize}
Writing $a^f$ for a basic proposition meaning $a \in F$ this can be expressed as a geometric theory.
The resulting frame presentation is then
\[\langle a^f,\ a \in L \mid 1^f = 1, (a \wedge b)^f = a^f \wedge b^f, 0^f = 0, (a \vee b)^f = a^f \vee b^f\rangle.\]
The Zariski spectrum of a commutative ring can be defined similarly.
In classical mathematics the spatiality of all Stone or Zariski spectra is equivalent to the Boolean prime ideal theorem (a consequence of the axiom of choice), but we can always define the appropriate locale. With the pointfree approach Stone duality is constructively valid.
\end{example}
\begin{remark}
 Analogously to the above example, if $L$ is a frame then we can recover $L$ as the (frame of opens of) the classifying locale of the theory of completely prime ideals of $L$.
\end{remark}

\begin{example}[Surjections from $\N$ to $X$]
Fix an inhabited set $X$ and consider the following geometric theory. There is a basic proposition denoted by $[f(n) = x]$ for each $n \in \N$ and $x \in X$. The axioms are:
\begin{itemize}
 \item $[f(n) = x] \wedge [f(n) = y] \vdash x = y$ for $n \in \N$ and $x,y \in X$,
 \item $\top \vdash \bigvee_{x \in X} [f(n) = x]$ for $n \in \N$,
 \item $\top \vdash \bigvee_{n \in \N} [f(n) = x]$ for $x \in X$.
 \end{itemize}
The models of such a theory are given by surjective functions $f\colon \N \twoheadrightarrow X$ where the proposition $[f(n) = x]$ is interpreted to mean that $f(n) = x$.
The first axiom schema ensures the relation is functional, the second that it is total, and the third imposes surjectivity.

This locale can be proved to be nontrivial (see \citet*[Example C1.2.8]{elephant2}).
However, if we take $X = \Omega^\N$ then there are no surjections from $\N$ to $X$ by Cantor's theorem and thus the locale has no points whatsoever!
The nontriviality of this locale is related to the fact that such surjections can appear after passing to a forcing extension.
This example is arguably rather pathological, but a similar locale has found use in topos theory (see \citet*{galoisTheoryGrothendieck}).
It is remarkable that an intuitive axiomatisation of a theory that has no models has lead to a complete characterisation of a nontrivial locale.
\end{example}

\subsection{Free frames}\label{sec:free_frames}

To show that frames are algebraic, and in particular to be able to present them by generators and relations, we must prove the existence of free frames.
This means constructing a left adjoint to the forgetful functor $U\colon \Frm \to \Set$.

It will be helpful to proceed in two steps. The forgetful functor $U$ factors through the category $\SLat{\wedge}$ of $\wedge$-semilattices.
\begin{definition}
 Recall that a \emph{semilattice} is simply a commutative idempotent monoid. Such a structure induces a natural order structure by $a \le b \iff ab = b$
 and with respect to this order $ab$ gives the greatest lower bound of $\{a,b\}$. To emphasise this viewpoint we write $ab$ as $a \wedge b$ and call the structure a $\wedge$-semilattice.
 We could have equally defined the order in the reverse direction, in which case we write $ab$ as $a \vee b$ and call it a $\vee$-semilattice.
\end{definition}

Any frame can be viewed as a $\wedge$-semilattice by simply forgetting the join operations. Since taking adjoints respects composition, the adjoint to $U\colon \Frm \to \Set$ can be constructed as the composition of the adjoint $\D\colon \SLat{\wedge} \to \Frm$ of this forgetful functor and the free semilattice functor, which can be constructed just as for any other finitary algebra.
Thus, it just remains to construct the free frame on a $\wedge$-semilattice.

\subsubsection{Frames of downsets}

We need to freely add joins to a $\wedge$-semilattice $M$. A formal join of elements of $M$ will be specified by some subset $S \subseteq M$. By idempotence, a join of such formal joins is again one of these formal joins, and we do not need to iterate this procedure. But distinct subsets of $M$ do not always give distinct formal joins. For instance, if $a \le b$ then $\bigvee\{a,b\}$ should be the same as $\bigvee\{b\} = b$.

\begin{definition}
 A subset $S$ of a poset $P$ is called a \emph{downset} if it is downward closed. If $a \in P$ we write ${\downarrow} a$ for the principal downset generated by $a$, which is given by $\{b \in P \mid b \le a\}$.
\end{definition}
We expect that the join of a subset $S$ should be completely specified by the downset generated by $S$. It turns out there are no additional conditions and we have the following result.

\begin{proposition}
 The free frame on a $\wedge$-semilattice $M$ is given by the frame $\D M$ of downsets of $M$ (ordered by subset inclusion).
\end{proposition}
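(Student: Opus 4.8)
The plan is to verify directly that $\D M$, together with the map $\eta\colon M \to \D M$ given by $a \mapsto {\downarrow}a$, has the universal property of the free frame on the $\wedge$-semilattice $M$: that $\eta$ is a $\wedge$-semilattice homomorphism, and that every $\wedge$-semilattice homomorphism $f\colon M \to L$ into a frame factors as $\overline{f} \circ \eta$ for a \emph{unique} frame homomorphism $\overline{f}\colon \D M \to L$.

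First I would check the ingredients that do not involve $f$. Downsets of $M$ are closed under arbitrary unions and arbitrary intersections, so $\D M$ is a complete lattice in which joins are unions and finite meets are intersections, and the frame distributivity law is inherited from the powerset of $M$. The map $\eta$ preserves the top element since ${\downarrow}1 = M$, and preserves binary meets since ${\downarrow}a \cap {\downarrow}b = {\downarrow}(a \wedge b)$ --- this last equality is exactly the statement that $a \wedge b$ is the greatest lower bound of $\{a,b\}$ in $M$.

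Next, given a $\wedge$-semilattice homomorphism $f\colon M \to L$, I would set $\overline{f}(S) = \bigvee_{a \in S} f(a)$. This visibly preserves arbitrary joins, and it sends $M$ to $1$ because $1 \in M$ and $f(1) = 1$. The triangle identity $\overline{f}(\,{\downarrow}a) = f(a)$ holds because $f$ is monotone (being a $\wedge$-semilattice homomorphism) and $a$ is the largest element of ${\downarrow}a$. The one step requiring real work --- and the step I expect to be the main obstacle --- is that $\overline{f}$ preserves binary meets. Applying the frame distributivity law in $L$ twice gives
\[
  \overline{f}(S) \wedge \overline{f}(T) \;=\; \bigvee_{a \in S,\ b \in T} f(a) \wedge f(b) \;=\; \bigvee_{a \in S,\ b \in T} f(a \wedge b),
\]
and here the downset hypothesis is what closes the argument: when $a \in S$ and $b \in T$ the element $a \wedge b$ lies in both $S$ and $T$, so each term $f(a \wedge b)$ is ${\le}\, \overline{f}(S \cap T)$; conversely every $c \in S \cap T$ contributes $f(c) = f(c \wedge c)$, which appears among the terms of the double join. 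Hence $\overline{f}(S) \wedge \overline{f}(T) = \overline{f}(S \cap T)$.

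Finally, uniqueness is forced by join-preservation: every downset satisfies $S = \bigcup_{a \in S} {\downarrow}a$, so any frame homomorphism $g$ with $g \circ \eta = f$ must satisfy $g(S) = \bigvee_{a \in S} g(\,{\downarrow}a) = \bigvee_{a \in S} f(a) = \overline{f}(S)$. Every verification above uses only closure of downsets under unions and intersections, the universal property of binary meets in $M$, and the frame law in $L$, so the argument is valid constructively with no changes.
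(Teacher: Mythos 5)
Your proof is correct and takes essentially the same route as the paper: define $\eta\colon a\mapsto{\downarrow}a$, show uniqueness is forced by join-preservation so that $\overline{f}(S)=\bigvee_{a\in S}f(a)$ is the only candidate, and then verify that this candidate preserves binary meets by using the fact that $a\wedge b\in S\cap T$ whenever $a\in S$ and $b\in T$. The only cosmetic difference is that you spell out the reverse inequality by exhibiting $f(c)=f(c\wedge c)$ among the terms of the double join, whereas the paper simply invokes monotonicity of $\overline{f}$.
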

\begin{proof}
 We first note that $\D M$ is indeed a frame, since it is a subframe of the powerset of $M$.
 To show it is a free frame on $M$ we must show there is a $\wedge$-semilattice homomorphism $\eta_M \colon M \to \D M$ such that for any $\wedge$-semilattice homomorphism $f$ from $M$ to a frame $L$ there is a unique frame homomorphism $f^\flat\colon \D M \to L$ for which the following diagram commutes.
 \begin{center}
   \begin{tikzpicture}[node distance=2.5cm, auto]
    \node (FM) {$\D M$};
    \node (M) [below of=FM] {$M$};
    \node (L) [right of=M] {$L$};
    \draw[->] (M) to node {$\eta_M$} (FM);
    \draw[->, dashed] (FM) to node {$f^\flat$} (L);
    \draw[->] (M) to node [swap] {$f$} (L);
   \end{tikzpicture}
 \end{center}
 The map $\eta_M$ sends $m$ to ${\downarrow} m$, which is quickly seen to preserve finite meets.
 Any map $f^\flat$ as above must send ${\downarrow} m$ to $f(m)$ and so by join-preservation we have \[f(D) = f^\flat(\bigcup_{d \in D} {\downarrow} d) = \bigvee_{d \in D} f(d).\]
 Thus, this condition specifies $f^\flat$ uniquely. It remains to show $f^\flat$ so defined is indeed a frame homomorphism.
 
 It is easy to see it preserves joins and $1$. Now consider $D, E \in \D M$. We have
 \begin{align*}
  f^\flat(D) \wedge f^\flat(E) &= \ \bigvee_{\mathclap{d \in D, e \in E}} f(d) \wedge f(e) \\
                               &= \ \bigvee_{\mathclap{d \in D, e \in E}} f(d \wedge e) \\
                               &\le \ \bigvee_{\mathclap{a \in D \cap E}} f(a) \\
                               &= f^\flat(D \cap E),
 \end{align*}
 where the second last inequality holds since $d \wedge e \le d \in D$ and $d \wedge e \le e \in E$ give that $d \wedge e \in D \cap E$.
 The reverse inequality holds by monotonicity and so $f^\flat$ preserves binary meets.
\end{proof}

\subsubsection{Free semilattices and finiteness}

The above argument suffices to prove that the free frame on a set exists. However, to better understand the structure of free frames it will be useful to have a more concrete description of the free $\wedge$-semilattice on a set.
We will actually find it more convenient to work with $\vee$-semilattices, but of course, we obtain the corresponding $\wedge$-semilattices by simply reversing the order.

The free semilattice on a set $G$ should be the quotient of the free commutative monoid on $G$ by a congruence enforcing idempotence. Classically, it is easy to see that this is isomorphic to the finite powerset $\Pfin(G)$ under union. Constructively, finiteness can be quite subtle, bifurcating into a number of different inequivalent notions. Let us take this opportunity to discuss some of them.

We start with what is perhaps the most obvious notion of finiteness.
\begin{definition}
 A \emph{finite cardinal} is a set of the form $[n] \coloneqq \{i \in \N \mid i < n\}$ for some $n \in \N$.
 A set is said to be \emph{Bishop-finite} if it is isomorphic\footnote{Here by isomorphic, we mean that ``there exists an isomorphism'' in the sense of higher-order intuitionistic logic. In type theory one can express a more restrictive definition of isomorphism, but this is beyond the scope of these notes.} to a finite cardinal.
\end{definition}
Bishop-finite sets have decidable equality and are closed under binary (and even Bishop-finite) products and coproducts, as well as under taking decidable subsets.

However, they are \emph{not} necessarily closed under arbitrary subsets.
Indeed, a subsingleton $\{{*} \mid p = \top\}$ is only Bishop-finite if $p = \bot \lor p = \top$.
This can be quite upsetting for classical mathematicians, though arguably their intuition only applies to decidable subsets.
Perhaps it helps to consider a model: in the topos $\Sh(B)$ of sheaves over a locale $B$ the Bishop-finite sets correspond to finite covering spaces over $B$ (i.e.\ covering spaces with finite fibres). Then the failure of closure under subsets simply means that an open subspace of a finite covering space can fail to be a finite covering space.

Surprising aspects of constructive mathematics can often be understood by generalising from sets to locales. We will see that finiteness is closely related to compactness, and it is no surprise that (open) subspaces of compact spaces might fail to be compact.

Most importantly for us, Bishop-finite subsets are not closed under taking unions.
For example, we cannot decide if the union of $\{\top\}$ and $\{p\}$ in $\Omega$ has two elements or not unless $p = \top \lor p = \bot$.
For a good notion of finite powerset we will need a different definition.
\begin{definition}
 A set $X$ is said to be \emph{Kuratowski-finite} or \emph{finitely indexed} if it is a quotient of a finite cardinal --- that is, if there is a natural number $n$ and a surjection $e\colon [n] \twoheadrightarrow X$.
\end{definition}
Kuratowski-finite sets have better closure properties than Bishop-finite sets. In addition to being closed under decidable subsets, Bishop-finite products and Bishop-finite coproducts,
they are also closed under quotients (by construction) and under Kuratowski-finite unions. We give the proof of one these claims for illustrative purposes. The others are not difficult and make for good exercises.
\begin{lemma}
 A decidable subset of a Kuratowski-finite set is Kuratowski-finite.
\end{lemma}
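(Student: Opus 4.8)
The plan is to unwind the definitions and produce an explicit surjection from a finite cardinal onto the decidable subset. Suppose $X$ is Kuratowski-finite, witnessed by a surjection $e\colon [n] \twoheadrightarrow X$, and let $S \subseteq X$ be decidable, so that for every $x \in X$ we have $x \in S \lor x \notin S$. The key observation is that the preimage $e^{-1}(S) \subseteq [n]$ is a decidable subset of the finite cardinal $[n]$: indeed, for each $i \in [n]$ we can test whether $e(i) \in S$ using decidability of $S$. So the first step is to reduce to the statement that a decidable subset of a finite cardinal $[n]$ is Kuratowski-finite; once we have a surjection $[m] \twoheadrightarrow e^{-1}(S)$, composing with $e$ (restricted appropriately) gives a surjection $[m] \twoheadrightarrow S$, since $e$ being surjective onto $X$ makes its restriction surjective onto $S$.

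Next I would handle the reduced claim. A decidable subset $D \subseteq [n]$ is in fact Bishop-finite --- this is exactly one of the closure properties of Bishop-finite sets noted just before the lemma (``closed under taking decidable subsets'') --- and every Bishop-finite set is Kuratowski-finite, being by definition isomorphic to, hence a quotient of, a finite cardinal. If one prefers a self-contained argument rather than citing the Bishop-finite case, one can proceed by induction on $n$: for $n = 0$ the subset is empty and $[0] \twoheadrightarrow \emptyset$ works vacuously; for the inductive step, split $[n+1]$ as $[n] \cup \{n\}$, apply decidability at the point $n$ to determine whether $n \in D$, use the inductive hypothesis to get a surjection onto $D \cap [n]$, and then either keep that surjection (if $n \notin D$) or extend it by one more element of the domain mapping to $n$ (if $n \in D$). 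The case split here is legitimate precisely because $D$ is decidable.

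Assembling the two steps: given $e\colon [n]\twoheadrightarrow X$ and decidable $S\subseteq X$, form the decidable subset $e^{-1}(S)\subseteq[n]$, obtain a surjection $[m]\twoheadrightarrow e^{-1}(S)$ by the reduced claim, and compose with $e$ to get $[m]\twoheadrightarrow S$; the composite is a surjection because every $s\in S$ has some preimage $i\in[n]$ under $e$, and that $i$ lies in $e^{-1}(S)$, hence is hit by the surjection onto $e^{-1}(S)$.

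I do not expect any serious obstacle here; the only point requiring care is to ensure that every case split invoked (both ``is $e(i)\in S$?'' and ``is $n\in D$?'') is justified by the decidability hypothesis, rather than by an illegitimate appeal to excluded middle. That is the whole content of the lemma: decidability is exactly what licenses the finitely many binary decisions needed to carve $S$ out of $X$.
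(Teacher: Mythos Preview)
Your proposal is correct and follows essentially the same approach as the paper: first prove by induction on $n$ that decidable subsets of $[n]$ are (Bishop-)finite, then pull $S$ back along the surjection $e$ to reduce to that case and push forward again. The paper phrases the last step as $S = e(e^{-1}(S))$ being the image of a Bishop-finite set, which is exactly your composition argument.
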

\begin{proof}
 Let us first show that decidable subsets of $[n]$ are Bishop-finite by induction on $n$.
 This is trivial for $n = 0$. Now suppose $n = k+1$ and that $S$ is a decidable subset of $[n]$.
 Note that the set $S\setminus \{k\}$ is a decidable subset of $[k]$. Thus, by the inductive hypothesis we have an isomorphism $\phi\colon [m] \to S \setminus \{k\}$.
 Now by decidability, either $k \in S$ or $k \notin S$. If $k \notin S$ then $S = S\setminus\{k\}$ and we are done. Otherwise,
 we can construct a map $\phi'\colon [m+1] \to S$ by
 \[\phi'(x) = \begin{cases}
               \phi(x) & \text{if $x < m$} \\
               k       & \text{if $x = m$}
              \end{cases},
 \]
 which is well-defined since $\N$ has decidable equality and is easily seen to be an isomorphism.
 
 Now consider a decidable subset $S$ of an arbitrary Kuratowski-finite set $X$. There is a surjection $e\colon [n] \twoheadrightarrow X$ for some $n \in \N$. Observe that $e^{-1}(S)$ is decidable subset of $[n]$ and hence Bishop-finite by the previous argument. Finally, we have $S = e(e^{-1}(S))$ by surjectivity. Hence, $S$ is the image of a Bishop-finite set and is therefore Kuratowski-finite itself.
\end{proof}
\begin{remark}
 In the above proof we defined the map $\phi$ by cases. There is some subtlety in ensuring such definitions are well-defined in constructive mathematics. The branches must be exhaustive and agree on any overlap. In this case the branching condition is decidable and so there is no problem, but we could not case match on either an arbitrary proposition or its negation holding, since this is not necessarily exhaustive.
\end{remark}

However, Kuratowski-finite sets are still not closed under arbitrary subsets. Indeed, the following result implies that if every subsingleton (i.e. set of the from $\{{*} \mid p = \top\}$) is Kuratowski-finite then we can prove excluded middle.
\begin{lemma}
 A Kuratowski-finite set $X$ is either empty or inhabited.
\end{lemma}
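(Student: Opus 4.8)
The plan is to peel off the defining data of Kuratowski-finiteness and then do a single decidable case split. By hypothesis there is a natural number $n$ and a surjection $e\colon [n] \twoheadrightarrow X$. Since $\N$ has decidable equality — equivalently, since every natural number is either $0$ or of the form $k+1$, which one proves by induction — we may decide whether $n = 0$ or $n \ge 1$, and I would branch on this.

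In the first case $n = 0$, so $[n] = [0] = \emptyset$. Because $e$ is surjective, every element of $X$ has a preimage in $[0]$, which is impossible; hence $X = e(\emptyset) = \emptyset$, so $X$ is empty. In the second case $n = k+1$ for some $k$, so $0 < n$ and thus $0 \in [n]$. Then $e(0) \in X$ exhibits $X$ as inhabited. Combining the two cases gives that $X$ is either empty or inhabited, as required.

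I do not anticipate a genuine obstacle: the argument is short and the only subtlety is making sure the case split is permitted. It is, because the branching condition (whether $n$ equals $0$) is decidable on $\N$, so this is exactly the kind of legitimate decidable definition-by-cases mentioned in the remark following the preceding lemma, and no appeal to excluded middle or any other nonconstructive principle is needed. If anything, the ``hard'' part is conceptual rather than technical — recognising that surjectivity is what transfers inhabitation (or its absence) from the finite cardinal to its quotient.
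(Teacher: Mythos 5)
Your proof is correct and follows essentially the same route as the paper: take the surjection $e\colon [n] \twoheadrightarrow X$, case-split on whether $n = 0$ or $n \ge 1$ (decidable on $\N$), and conclude emptiness or inhabitation respectively. The extra care you take to justify the decidability of the case split and the role of surjectivity in transferring emptiness is sound and matches the intent of the paper's terse argument.
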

\begin{proof}
 We have a surjection $e\colon [n] \twoheadrightarrow X$ for some $n \in \N$. Either $n = 0$ or $n \ge 1$. In the former case, $[n] = [0]$ is empty and hence so is $X$. In the latter case, $e(0) \in X$ and so $X$ is inhabited.
\end{proof}

\begin{remark}
Kuratowski-finite sets are `finitely listable'. The difference between them and Bishop-finite sets is that Bishop-finite sets may be listed \emph{without repeats}. The inability to resolve repeats indicates one disadvantage of Kuratowski-finite sets: they might not have decidable equality (as with the set $\{\top, p\}$ we considered before). Indeed, this idea can be used to show that a set is Bishop-finite if and only if it is Kuratowski-finite and has decidable equality.
\end{remark}

We are now in position to link this discussion back to semilattices. We start by noting that $\vee$-semilattices admit all Kuratowski-finite joins.
\begin{lemma}
 Let $L$ be a $\vee$-semilattice and let $S \subseteq L$ be a Kuratowski-finite subset. Then $\bigvee S$ exists and is equal to $\bigvee_{i = 0}^{n-1} e(i)$ for any surjection $e\colon [n] \twoheadrightarrow S$.
\end{lemma}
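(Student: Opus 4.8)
The plan is to show that $\bigvee_{i=0}^{n-1} e(i)$ is the least upper bound of $S$, and that it does not depend on the chosen surjection $e$. First I would note that a finite join $\bigvee_{i=0}^{n-1} e(i)$ over a finite cardinal $[n]$ always exists in a $\vee$-semilattice: this is an easy induction on $n$, using the binary join and the bottom element (the empty join, which exists since we are taking $n = 0$ to give the bottom of the semilattice --- wait, a bare semilattice need not have a bottom, so the case $n = 0$ forces $S = \emptyset$ and the statement should be read as asserting $\bigvee\emptyset$ exists only when it does; more carefully, for $n \ge 1$ we fold the binary join, and for $n = 0$ the surjection forces $S$ empty and there is nothing to prove beyond the convention). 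So for $S$ inhabited via $e\colon[n]\twoheadrightarrow S$ with $n \ge 1$, set $s \coloneqq \bigvee_{i=0}^{n-1} e(i)$.

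Next I would check $s$ is an upper bound of $S$: given $x \in S$, surjectivity gives some $i \in [n]$ with $e(i) = x$, and $e(i) \le \bigvee_{j=0}^{n-1} e(j) = s$ by the standard property of finite joins in a semilattice. Then I would check $s$ is the least upper bound: if $u$ is any upper bound of $S$, then $e(i) \in S$ gives $e(i) \le u$ for every $i \in [n]$, and hence $s = \bigvee_{i=0}^{n-1} e(i) \le u$ since a finite join is below any common upper bound of its summands. Therefore $s = \bigvee S$, which in particular shows the supremum exists.

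Finally, independence of the surjection is automatic: $\bigvee S$ is characterised by a universal property (being the least upper bound of $S$), so if $e'\colon[m]\twoheadrightarrow S$ is another surjection, the same argument shows $\bigvee_{i=0}^{m-1} e'(i) = \bigvee S = \bigvee_{i=0}^{n-1} e(i)$. Note that none of this requires decidable equality on $S$ or a choice of representatives, so the argument is fully constructive. The only mild subtlety --- and the one place to be careful --- is the degenerate case $n = 0$: a surjection $[0]\twoheadrightarrow S$ forces $S = \emptyset$, and a general $\vee$-semilattice has no least element, so the statement is to be understood as asserting the existence of $\bigvee S$ precisely when $S$ is inhabited (equivalently, when $n \ge 1$), matching the earlier lemma that a Kuratowski-finite set is either empty or inhabited. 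I expect this edge case, rather than any real mathematical content, to be the main thing needing a clean phrasing.
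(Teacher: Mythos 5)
Your core argument --- showing $\bigvee_{i=0}^{n-1} e(i)$ is an upper bound of $S$ via surjectivity and the least such upper bound via the defining property of finite joins --- is exactly the paper's proof, and the remark that nothing here needs decidable equality on $S$ is correct. However, the extended hedging about the case $n = 0$ rests on a misreading of the paper's conventions. The paper defines a semilattice to be a commutative idempotent \emph{monoid}, so it always has a unit. For a $\vee$-semilattice the unit is the least element: from $a \vee 0 = a$ for all $a$ we get $0 \le a$. Hence $\bigvee\emptyset = 0$ exists, there is no degenerate case to exclude, and the lemma genuinely holds for $S = \emptyset$. This is not a pedantic point: the immediately following corollary (that $\vee$-semilattice homomorphisms preserve Kuratowski-finite joins) needs the empty case, which amounts to preservation of the unit, and the later identification of the free $\vee$-semilattice on $G$ with $\Pfin(G)$ has $\emptyset$ as its unit. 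Your sentence claiming ``the statement is to be understood as asserting the existence of $\bigvee S$ precisely when $S$ is inhabited'' is therefore false under the paper's definitions; dropping that paragraph entirely would leave a proof that is both shorter and correct.
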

\begin{proof}
 For each $s \in S$ there is a $j \in [n]$ such that $s = e(j) \le \bigvee_{i = 0}^{n-1} e(i)$. Hence $\bigvee_{i = 0}^{n-1} e(i)$ is an upper bound of $S$.
 
 Now suppose $u \in L$ is an upper bound of $S$. Then for each $j \in [n]$, we have $e(j) \le u$ and hence $\bigvee_{i = 0}^{n-1} e(i) \le u$. Thus, this is the least upper bound of $S$.
\end{proof}
\begin{corollary}\label{cor:semilattice_maps_preserve_K_finite_joins}
 A $\vee$-semilattice homomorphism preserves Kuratowski-finite joins.
\end{corollary}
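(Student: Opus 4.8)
The plan is to reduce everything to the lemma immediately preceding this corollary, which expresses a Kuratowski-finite join as a join indexed by a finite cardinal, together with the fact that semilattice homomorphisms preserve joins indexed by finite cardinals.

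First I would set up notation: let $g\colon L \to M$ be a $\vee$-semilattice homomorphism and let $S \subseteq L$ be Kuratowski-finite, witnessed by a surjection $e\colon [n] \twoheadrightarrow S$. By the previous lemma, $\bigvee S$ exists and equals $\bigvee_{i=0}^{n-1} e(i)$. Next I would observe that $g$ preserves joins indexed by any finite cardinal: since a $\vee$-semilattice is a commutative idempotent monoid with unit $0$ (the bottom element), a homomorphism preserves $0$ (the empty join) and binary joins by definition, and then an easy induction on $n$ gives $g\bigl(\bigvee_{i=0}^{n-1} e(i)\bigr) = \bigvee_{i=0}^{n-1} g(e(i))$.

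Then I would note that $g \circ e\colon [n] \to M$ is a surjection onto $g(S)$ (the image of a surjection composed with $g$ surjects onto the $g$-image of the target), so $g(S)$ is itself Kuratowski-finite, and applying the previous lemma once more gives that $\bigvee g(S)$ exists and equals $\bigvee_{i=0}^{n-1} g(e(i))$. Combining the three displayed equalities yields $g\bigl(\bigvee S\bigr) = \bigvee g(S)$, as required.

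I do not expect a genuine obstacle here; the only point requiring a little care is the base case of the induction, namely that a $\vee$-semilattice homomorphism preserves the empty join $0$ — this is automatic from the monoid-homomorphism definition but is worth stating explicitly, since without it the case $n = 0$ (i.e.\ $S = \emptyset$) would not be covered. Everything else is a direct appeal to the lemma and the definition of a homomorphism.
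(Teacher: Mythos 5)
Your proof is correct and is essentially the argument the paper leaves implicit: the corollary is stated without proof as a direct consequence of the preceding lemma, and you have simply unwound it by expressing the join over a surjection $e\colon [n]\twoheadrightarrow S$, using induction (with the $n=0$ base case handled by preservation of the monoid unit $0$) to push the homomorphism through the finite-cardinal-indexed join, and observing that $g\circ e$ witnesses Kuratowski-finiteness of $g(S)$ so the lemma applies again on the codomain side.
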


Now as might be expected, we can show that a free semilattice is given by the set of Kuratowski-finite subsets.

\begin{proposition}
 The free semilattice on a set $G$ is given by the set $\Pfin(G)$ of Kuratowski-finite subsets of $G$ under union.
\end{proposition}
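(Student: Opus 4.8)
The plan is to verify the universal property of $\Pfin(G)$ by hand, leaning on the two facts just established: that every $\vee$-semilattice admits Kuratowski-finite joins, and (\cref{cor:semilattice_maps_preserve_K_finite_joins}) that $\vee$-semilattice homomorphisms preserve them.

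First I would check that $(\Pfin(G), \cup)$ really is a $\vee$-semilattice. Union is associative, commutative and idempotent, and $\emptyset$ --- Kuratowski-finite as the image of $[0]$ --- is a unit; moreover $\Pfin(G)$ is closed under binary union, since gluing two surjections $[n] \twoheadrightarrow S$ and $[m] \twoheadrightarrow T$ yields a surjection $[n+m] \twoheadrightarrow S \cup T$. The unit map is $\eta_G \colon G \to \Pfin(G)$, $g \mapsto \{g\}$, with $\{g\}$ Kuratowski-finite as a quotient of $[1]$.

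Next, given a $\vee$-semilattice $L$ and a function $f \colon G \to L$, I would define $\overline{f}(S) = \bigvee_{s \in S} f(s)$, i.e.\ the join in $L$ of the image set $f[S]$. This is well-defined because $f[S]$, being a quotient of the Kuratowski-finite set $S$, is itself Kuratowski-finite, so the join exists by the lemma on Kuratowski-finite joins. Then $\overline{f}(\{g\}) = f(g)$, so $\overline{f} \circ \eta_G = f$; the value $\overline{f}(\emptyset)$ is the empty join, namely the bottom element of $L$; and $\overline{f}(S \cup T) = \bigvee(f[S] \cup f[T]) = (\bigvee f[S]) \vee (\bigvee f[T]) = \overline{f}(S) \vee \overline{f}(T)$, using the standard fact that a least upper bound of a union of two subsets each possessing one is the join of the two least upper bounds. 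Hence $\overline{f}$ is a $\vee$-semilattice homomorphism extending $f$.

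For uniqueness, suppose $h \colon \Pfin(G) \to L$ is any $\vee$-semilattice homomorphism with $h \circ \eta_G = f$. Choosing a surjection $e \colon [n] \twoheadrightarrow S$ we have $S = \bigvee_{i=0}^{n-1} \{e(i)\}$ in $\Pfin(G)$, a Kuratowski-finite join, so \cref{cor:semilattice_maps_preserve_K_finite_joins} gives $h(S) = \bigvee_{i=0}^{n-1} h(\{e(i)\}) = \bigvee_{i=0}^{n-1} f(e(i)) = \bigvee f[S] = \overline{f}(S)$, whence $h = \overline{f}$. I expect no serious obstacle; the only point requiring genuine constructive care is the existence of the joins defining $\overline{f}$, which is precisely why it was worth isolating Kuratowski-finiteness and the lemma that such joins always exist --- one must resist the temptation to define $\overline{f}$ by enumerating the elements of $S$, and instead appeal to the already-established independence of Kuratowski-finite joins from the chosen surjection.
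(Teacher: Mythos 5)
Your proposal is correct and takes essentially the same approach as the paper: define the extension by $\overline{f}(S) = \bigvee_{s \in S} f(s)$, justify existence of the join via Kuratowski-finiteness, and obtain uniqueness from \cref{cor:semilattice_maps_preserve_K_finite_joins}. You simply spell out the verifications (closure of $\Pfin(G)$ under union, well-definedness of $\overline{f}$, and preservation of $0$ and $\vee$) that the paper dismisses as ``easy to check''.
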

\begin{proof}
 Certainly, $\Pfin(G)$ is a $\vee$-semilattice.
 Now let $L$ be a $\vee$-semilattice and consider a function $f\colon G \to L$.
 This should correspond to a unique $\vee$-semilattice homomorphism $f^\flat \colon \Pfin(G) \to L$ such that $f^\flat(\{g\}) = f(g)$ for all $g \in G$.
 
 By \cref{cor:semilattice_maps_preserve_K_finite_joins} such a map must satisfy $f^\flat(S) = \bigvee_{s \in S} f(s)$.
 It remains to show that this indeed defines a map which preserves $0$ and $\vee$, but this easy to check.
\end{proof}

In summary, we obtain the following description of free frames.
\begin{proposition}
 The free frame on a set $G$ is given by $\D(\Pfin(G)\op)$.
\end{proposition}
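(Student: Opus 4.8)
The plan is to assemble the free frame on $G$ out of the two free constructions already established, using the fact that left adjoints compose. Recall that the forgetful functor $U\colon \Frm \to \Set$ factors as $\Frm \to \SLat{\wedge} \to \Set$, where the first functor forgets the joins and the second forgets the meet and top element. A left adjoint to $U$ is therefore the composite of a left adjoint to $\SLat{\wedge} \to \Set$ — the free $\wedge$-semilattice functor — with the left adjoint $\D\colon \SLat{\wedge} \to \Frm$ to $\Frm \to \SLat{\wedge}$, which the earlier proposition showed sends a $\wedge$-semilattice $M$ to its frame of downsets $\D M$. So the only thing left to pin down is the free $\wedge$-semilattice on $G$.

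To identify it, I would observe that $\wedge$-semilattices and $\vee$-semilattices are literally the same algebraic structure — commutative idempotent monoids — differing only in whether one reads the induced order or its opposite. Hence the free $\wedge$-semilattice on $G$ has the same underlying monoid as the free $\vee$-semilattice, namely $(\Pfin(G), \cup, \emptyset)$ by the preceding proposition, but carries the reverse order; as a poset this is $\Pfin(G)\op$, and with respect to this order the monoid operation (union) is indeed the binary meet and $\emptyset$ the top element.

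Putting the pieces together, the free frame on $G$ is $\D$ applied to the free $\wedge$-semilattice on $G$, that is $\D(\Pfin(G)\op)$. The only point that genuinely requires care is the bookkeeping of the order-reversal when passing between $\vee$- and $\wedge$-semilattices; the rest is the purely formal observation that a composite of adjunctions is an adjunction, so no new universal property needs to be checked by hand.
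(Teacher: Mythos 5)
Your proof is correct and follows exactly the paper's route: factor the forgetful functor $\Frm \to \Set$ through $\SLat{\wedge}$, compose the two left adjoints already established (downsets $\D$ on semilattices, Kuratowski-finite powerset $\Pfin$ on sets), and handle the $\vee$-to-$\wedge$ conversion by passing to the opposite order. The paper treats the composition as immediate after proving the two constituent adjunctions; you spell out the same reasoning a bit more explicitly, but there is no substantive difference.
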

So an element of a free frame is a set of finite subsets of the generating set. Interpreting the inner sets as formal finite meets and the outer sets as formal joins, we see that general elements are given by formal joins of formal finite meets. Indeed, any more complicated expression using the frame operations can be brought into this form using the frame distributivity law.

\subsection{Sublocales and frame quotients}\label{sec:sublocales}

To show that frame presentations exist, we require not only free frames, but also coequalisers.

\begin{definition}
 A \emph{congruence} on a frame $L$ is an equivalence relation on $L$ that is also a subframe of $L \times L$.
\end{definition}

The kernel equivalence relation $\{(u,v) \in L \times L \mid h(u) = h(v)\}$ of any frame homomorphism $h\colon L \to M$ is a congruence relation.
On the other hand, the quotient of a frame by a congruence is again a frame, and the quotient map is a frame homomorphism. The frame structure on the quotient requires some explanation.
It is easy to define finitary operations on a quotient by picking representatives as necessary, but in general this does not work for infinitary operations since we would need to use the axiom of choice to choose representatives. However, every congruence class of a frame congruence has a largest element (by taking the join of all the elements) and hence in this case we can pick canonical representatives constructively.

Congruences are closed under intersections and hence form a complete lattice. We say the smallest congruence containing a relation $R$ is the congruence generated by $R$ and denote it by $\langle R\rangle$. These can be used to construct coequalisers in $\Frm$. In particular, the frame presentation $\langle G \mid R\rangle$ can be constructed as the quotient $\D(\Pfin(G)\op) / \langle R\rangle$.
From presentations we can then construct arbitrary colimits in the usual way for algebraic structures.

\subsubsection{Sublocales}

Let us now consider how to understand quotient frames from a more geometric perspective.
Frame surjections correspond to regular epimorphisms in $\Frm$ and hence regular monomorphisms in $\Loc$.
Thus, a quotient frame is a kind of sub-locale.
Indeed, subspaces are precisely the regular subobjects in $\Top$.
Furthermore, if $S$ is a subspace of a topological space $X$, then restricting opens to $S$ gives a surjective frame homomorphism from $\O X$ to $\O S$.
Thus, it seems appropriate to call these \emph{sublocales}.

From a logical point of view, quotient frames correspond to adding additional axioms to the geometric theory.
These axioms are simply a generating set for the congruence associated to the quotient.
They add additional restrictions and hence reduce the number of models (i.e.\ points), which helps explain why sublocales act like subspaces.

\begin{remark}
 Instead of using frame congruences it is also common to represent sublocales using certain closure operators called \emph{nuclei} (obtained by composing a quotient map with its right adjoint) or certain subsets of the frame called \emph{sublocale sets} (which are the fixed points of nuclei). For more details on these alternative approaches see \citet*{picado2012book}.
\end{remark}

Classical topology suggests some important classes of sublocales. Firstly, for each element $a \in \O X$ we expect there to be an \emph{open sublocale}.
\begin{definition}\label{def:open_sublocales}
 Let $X$ be a locale and consider $a \in \O X$. The \emph{open sublocale} corresponding to $a$ is given by the frame quotient from $\O X$ to ${\downarrow} a$ sending $u$ to $u \wedge a$.
\end{definition}
Note that this accords with our topological intuition: taking the meet with $a$ restricts the open to the open subspace and opens in the subspace correspond to the opens in $X$ which are contained in $a$.

The kernel equivalence relation of such an open frame quotient is given by $\Delta_a = \{(u,v) \mid u \wedge a = v \wedge a\}$.
Logically, in the open sublocale the proposition $a$ becomes true. Thus, we should expect $\Delta_a$ to be generated by the pair $(a,1)$. This is indeed the case.
\begin{lemma}
 The open congruence $\Delta_a = \langle(a,1)\rangle$.
\end{lemma}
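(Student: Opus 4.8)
The plan is to prove the two inclusions separately. One direction is essentially immediate; the other requires a short argument exploiting the fact that a congruence is a \emph{subframe} of $L\times L$, not merely an equivalence relation.

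For the inclusion $\langle(a,1)\rangle \subseteq \Delta_a$: since $a\wedge a = a = 1\wedge a$, the pair $(a,1)$ belongs to $\Delta_a$; and $\Delta_a$ is a congruence, being the kernel equivalence relation of the frame homomorphism ${-}\wedge a\colon \O X \to {\downarrow}a$ of \cref{def:open_sublocales}. Hence $\Delta_a$ contains the smallest congruence containing $(a,1)$, which is exactly $\langle(a,1)\rangle$.

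For the reverse inclusion $\Delta_a \subseteq \langle(a,1)\rangle$, suppose $u\wedge a = v\wedge a$. Since $\langle(a,1)\rangle$ is an equivalence relation it is reflexive, so it contains the diagonal of $\O X$; in particular $(u,u)$ and $(v,v)$ lie in it. Since $\langle(a,1)\rangle$ is also a subframe of $\O X\times\O X$, it is closed under binary meets, so $(a,1)\wedge(u,u) = (a\wedge u,\, u)$ and $(a,1)\wedge(v,v) = (a\wedge v,\, v)$ both belong to $\langle(a,1)\rangle$. By hypothesis $a\wedge u = a\wedge v$, so these pairs are $(a\wedge u,\, u)$ and $(a\wedge u,\, v)$; using symmetry and transitivity of $\langle(a,1)\rangle$ we conclude $(u,v)\in\langle(a,1)\rangle$, as required.

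I expect the only subtlety is remembering to use that a congruence is closed under meets with diagonal elements — a purely order-theoretic move — rather than attempting to describe $\langle(a,1)\rangle$ explicitly. (Alternatively, one could argue abstractly that ${-}\wedge a$ is the universal frame homomorphism out of $\O X$ inverting $a$, since any frame map $h$ with $h(a)=1$ factors uniquely through it as $h\restriction_{{\downarrow}a}\circ({-}\wedge a)$, and the quotient by $\langle(a,1)\rangle$ has the same universal property, forcing the two kernel congruences to agree; but the elementary argument above is shorter.)
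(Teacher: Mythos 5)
Your proof is correct and follows essentially the same argument as the paper: show $(a,1)\in\Delta_a$ to get one inclusion, then for the converse meet the reflexivity pairs $(u,u)$ and $(v,v)$ with $(a,1)$ and use symmetry and transitivity. The paper spells this out a bit more tersely in the $\sim$ notation, but the mechanism is identical; your parenthetical universal-property alternative is also a valid (if unnecessary) variant.
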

\begin{proof}
 Certainly, $(a,1) \in \Delta_a$ since $a \wedge a = a = 1 \wedge a$. Thus, $\langle (a,1) \rangle \subseteq \Delta_a$.
 
 Now suppose $u \wedge a = v \wedge a$. In the congruence $\langle (a,1)\rangle$, we have $u \sim u$ and $a \sim 1$ so that $u \wedge a \sim u \wedge 1 = u$ by closure under binary meets.
 Similarly, $v \wedge a \sim v$. But $u \wedge a = v \wedge a$ by assumption and so $u \sim v $ by transitivity (and symmetry). Therefore, $\Delta_a \subseteq \langle (a,1) \rangle$ and the claim follows.
\end{proof}

By considering the least element in the equivalence class of $1$ with respect to an open congruence $\Delta_a$ we can recover the element $a$ and hence the above assignment of opens to (open) sublocales is injective. Therefore, we can identify opens and open sublocales without causing confusion.

There is also a notion of closed sublocale. For each $a \in \O X$ the corresponding closed sublocale is given by setting the proposition $a$ to be \emph{false}.
\begin{definition}\label{def:closed_sublocales}
 Let $X$ be a locale and consider $a \in \O X$. The \emph{closed sublocale} corresponding to $a$ is given by the frame quotient from $\O X$ to ${\uparrow} a$ sending $u$ to $u \vee a$.
\end{definition}
As above, we can prove that the kernel $\nabla_a$ of this map is indeed given by $\langle (0,a)\rangle$.

This definition of closed sublocales also makes sense intuitively. If we think of closed `sets' as formal complements of opens, then $(\O X)\op$ can be thought of as the lattice of closed sets of $X$. With respect to this reversed order we see that closed sets of $X$ restrict to closed sublocales by taking an intersection, as we would expect.

Now let $\Sloc X$ denote the lattice of all sublocales of $X$. This is isomorphic to the lattice of congruences on $\O X$ under the \emph{reverse order}. We will not need this, but it is notable that $\Sloc X$ is a distributive lattice. In fact, $(\Sloc X)\op$  is a frame! (See \citet*[Section IV.5]{galoisTheoryGrothendieck} for details.)
The open and closed sublocales behave as we might expect inside $\Sloc X$. Most importantly, we have the following.
\begin{lemma}
 The open and closed sublocales induced by the element $a$ are mutual complements in the lattice of sublocales.\footnote{We remark that since $\Sloc X$ is a distributive lattice, the complement of a sublocale is unique when it exists.}
\end{lemma}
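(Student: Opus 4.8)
The plan is to translate the statement into the lattice of frame congruences on $\O X$ and there verify the two equations characterising a complement. Recall that $\Sloc X$ is the congruence lattice ordered by \emph{reverse} inclusion; its top element, the whole locale $X$, corresponds to the diagonal congruence $\Delta = \{(u,u) \mid u \in \O X\}$ (the least congruence), and its bottom element, the empty sublocale, corresponds to the total congruence $\O X \times \O X$ (the kernel of the quotient of $\O X$ onto the one-element frame). By the lemmas above, the open sublocale $O_a$ of $a$ carries the congruence $\Delta_a = \langle(a,1)\rangle = \{(u,v) \mid u \wedge a = v \wedge a\}$ and the closed sublocale $C_a$ carries $\nabla_a = \langle(0,a)\rangle = \{(u,v) \mid u \vee a = v \vee a\}$. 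Since joins of sublocales are computed as intersections of congruences (congruences being closed under intersection) while meets are the congruences generated by the corresponding unions, the lemma reduces to showing $\Delta_a \cap \nabla_a = \Delta$, which says $O_a \vee C_a = X$, and $\langle \Delta_a \cup \nabla_a\rangle = \O X \times \O X$, which says $O_a \wedge C_a$ is the empty sublocale.

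The second identity is straightforward: the generated congruence contains $(a,1) \in \Delta_a$ and $(0,a) \in \nabla_a$, hence contains $(0,1)$ by transitivity, and any congruence containing $(0,1)$ is the total one because closure under meeting with an arbitrary $u \in \O X$ forces $u = u \wedge 1 \sim u \wedge 0 = 0$, collapsing every element. For the first identity the real input is the standard distributive-lattice fact that the maps $u \mapsto u \wedge a$ and $u \mapsto u \vee a$ are jointly injective: if $u \wedge a = v \wedge a$ and $u \vee a = v \vee a$, then absorption and distributivity give $u = u \wedge (u \vee a) = u \wedge (v \vee a) = (u \wedge v) \vee (u \wedge a) = (u \wedge v) \vee (v \wedge a)$, and the symmetric computation yields the same expression for $v$, so $u = v$; the reverse inclusion $\Delta \subseteq \Delta_a \cap \nabla_a$ is immediate. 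Invoking the footnote (complements in the distributive lattice $\Sloc X$ are unique) then completes the proof.

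I do not anticipate any serious obstacle. The only genuine computation is the distributive-lattice identity just sketched, which is routine, and the one delicate point is the order-reversal in moving between sublocales and congruences --- in particular, that the \emph{meet} of sublocales is realised by the congruence \emph{generated by the union} of their congruences --- along with the bookkeeping of the explicit generators $(a,1)$ and $(0,a)$ recorded above.
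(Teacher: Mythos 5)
Your proposal is correct and takes essentially the same approach as the paper: both pass to the lattice of congruences (with the order reversal noted), establish $\nabla_a \vee \Delta_a \supseteq \langle(0,1)\rangle = \O X \times \O X$ via transitivity on the generators, and show $\nabla_a \cap \Delta_a$ is the diagonal via the identical distributive-lattice computation $u = u \wedge (v \vee a) = (u \wedge v) \vee (u \wedge a)$. You spell out a few steps the paper leaves as remarks (that $\langle(0,1)\rangle$ is the total congruence, and that meet/join of sublocales correspond to generated-union/intersection of congruences), but the mathematical content is the same.
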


\begin{proof}
 We will work in the lattice of congruences.
 Firstly, note that $\nabla_a \vee \Delta_a = \langle (0,a) \rangle \vee \langle (a,1) \rangle \supseteq \langle (0,1) \rangle$, which is easily seen to be the largest congruence $\O X \times \O X$.
 
 Now take $(u,v) \in \nabla_a \cap \Delta_a$. This means $u \vee a = v \vee a$ and $u \wedge a = v \wedge a$. We wish to show $u = v$ so that this intersection is the equality relation, the least congruence on $\O X$. This is a standard result about distributive lattices, but we give the full proof for completeness.
 
 Consider $(u \wedge v) \vee (u \wedge a) = u \wedge (v \vee a)$.
 Since $(u,v) \in \nabla_a$, we know $v \vee a = u \vee a$ and so $u \wedge (v \vee a) = u$.
 Similarly, $(u \wedge v) \vee (v \wedge a) = v$.
 But also $u \wedge a = v \wedge a$ since $(u,v) \in \Delta_a$ and hence these are all equal.
\end{proof}
\begin{remark}
 If open sublocales correspond to verifiable properties, closed sublocales correspond to \emph{refutable} properties. These are properties for which we can ascertain their falsehood whenever they are false.
\end{remark}

We can also show that open sublocales are closed under finite meets and arbitrary intersections in $\Sloc X$ (and dually for closed sublocales). This follows from the following result about closed congruences. (Recall that congruences have the reverse order to sublocales.)
\begin{lemma}
 The map $a \mapsto \nabla_a$ preserves finite meets and arbitrary joins.
\end{lemma}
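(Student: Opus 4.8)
The plan is to work throughout in the lattice of congruences on $\O X$ — recalling that there the meet of a family is its intersection and the join is the congruence it generates — and to exploit the two descriptions already available: explicitly $\nabla_a = \{(u,v) \in \O X \times \O X \mid u \vee a = v \vee a\}$, and by generators $\nabla_a = \langle (0,a)\rangle$.

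For finite meets, I would first observe that $\nabla_1 = \O X \times \O X$ (the top congruence), since $u \vee 1 = 1 = v \vee 1$ for all $u,v$. For binary meets I would show $\nabla_{a \wedge b} = \nabla_a \cap \nabla_b$. The inclusion from left to right is immediate: from $u \vee (a \wedge b) = v \vee (a \wedge b)$ one gets $u \vee a = v \vee a$ by joining both sides with $a$, and likewise $u \vee b = v \vee b$. The reverse inclusion is where the frame distributive law enters: if $u \vee a = v \vee a$ and $u \vee b = v \vee b$, then $u \vee (a \wedge b) = (u \vee a) \wedge (u \vee b) = (v \vee a) \wedge (v \vee b) = v \vee (a \wedge b)$.

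For arbitrary joins, given a family $(a_i)_{i \in I}$ let $c$ denote $\bigvee_i a_i$ and let $\theta$ denote the join $\bigvee_i \nabla_{a_i}$ in the congruence lattice. Since $a_i \le c$ implies $(0,a_i) \in \nabla_c$, each $\nabla_{a_i} = \langle (0,a_i)\rangle$ is contained in $\nabla_c$, so $\theta \subseteq \nabla_c$. Conversely, $\theta$ contains every pair $(0,a_i)$, and being a subframe of $\O X \times \O X$ it is closed under arbitrary joins; hence it contains $\bigl(\bigvee_i 0,\ \bigvee_i a_i\bigr) = (0,c)$, and therefore $\nabla_c = \langle (0,c)\rangle \subseteq \theta$. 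The two inclusions give $\nabla_c = \theta$, as required.

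I do not expect a genuine obstacle: the content of the argument is just the distributive law $u \vee (a \wedge b) = (u \vee a) \wedge (u \vee b)$ (valid since a frame is a distributive lattice) for the meet part, and the closure of a frame congruence under arbitrary joins — so that joins of the canonical generating pairs $(0,a_i)$ behave as expected — for the join part. The only point deserving a moment's care is remembering that joins in the congruence lattice are generated congruences rather than set-theoretic unions, which is precisely why the generators-and-relations description $\nabla_a = \langle (0,a)\rangle$ is the convenient tool for the second half.
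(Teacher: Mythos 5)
Your proof is correct and takes essentially the same route as the paper: both directions of the meet-preservation use monotonicity together with the distributive law $u \vee (a \wedge b) = (u \vee a) \wedge (u \vee b)$, and the join-preservation argument uses the generators description $\nabla_a = \langle (0,a)\rangle$ plus closure of a congruence under arbitrary joins to obtain $(0,\bigvee_i a_i)$. You are slightly more explicit than the paper (spelling out the nullary case and deriving the easy inclusion directly rather than citing monotonicity), but the content is identical.
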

\begin{proof}
 The assignment is easily seen to be monotone. Thus, $\nabla_{a \wedge b} \subseteq \nabla_a \cap \nabla_b$. Conversely, suppose $(u,v) \in \nabla_a \cap \nabla_b$.
 Then $u \vee (a \wedge b) = (u \vee a) \wedge (u \vee b) = (v \vee a) \wedge (v \vee b) = v \vee (a \wedge b)$ and so $(u,v) \in \nabla_{a \wedge v}$, as required.
 
 As for join preservation, monotonicity gives $\bigvee_{a \in A} \nabla_a \subseteq \nabla_{\bigvee A}$. On the other hand, we have $(0,\bigvee A) \in \bigvee_{a \in A} \langle (0, a)\rangle$ by closure under joins and hence $\nabla_{\bigvee A} = \langle (0, \bigvee A)\rangle \subseteq \bigvee_{a \in A} \nabla_a$.
\end{proof}

\begin{corollary}\label{cor:clopen_sublocale}
 The open sublocale given by an element $a \in \O X$ is closed if and only if $a$ is complemented in $\O X$.
\end{corollary}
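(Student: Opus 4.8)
The plan is to unpack the phrase ``the open sublocale given by $a$ is closed'' as the assertion that the open congruence $\Delta_a$ coincides with a closed congruence $\nabla_b$ for some $b \in \O X$, and then to show that this happens exactly when $a$ admits a complement in $\O X$, the witnessing element being $b = a\comp$. Throughout I would work with the explicit descriptions established above, namely $\Delta_a = \{(u,v) \mid u \wedge a = v \wedge a\}$ (the kernel of $u \mapsto u \wedge a$) and $\nabla_a = \{(u,v) \mid u \vee a = v \vee a\}$ (the kernel of $u \mapsto u \vee a$), rather than the abstract generator descriptions $\langle(a,1)\rangle$ and $\langle(0,a)\rangle$.

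For the ``if'' direction, suppose $a$ has a complement $b$, so $a \wedge b = 0$ and $a \vee b = 1$. I would check $\Delta_a = \nabla_b$ by a short distributivity computation. If $u \wedge a = v \wedge a$, then $u \vee b = (u \wedge (a \vee b)) \vee b = (u \wedge a) \vee b = (v \wedge a) \vee b = v \vee b$, giving $\Delta_a \subseteq \nabla_b$; conversely, if $u \vee b = v \vee b$, then meeting with $a$ and using $a \wedge b = 0$ gives $u \wedge a = (u \vee b) \wedge a = (v \vee b) \wedge a = v \wedge a$, giving $\nabla_b \subseteq \Delta_a$. Hence the open sublocale of $a$ coincides with the closed sublocale of $b$ and is in particular closed.

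For the ``only if'' direction, suppose $\Delta_a = \nabla_b$ for some $b \in \O X$. I would read off two equations from the canonical pairs belonging to these congruences. Since $(a,1) \in \Delta_a = \nabla_b$, we obtain $a \vee b = 1 \vee b = 1$. Since $(0,b) \in \nabla_b = \Delta_a$, we obtain $0 \wedge a = b \wedge a$, i.e.\ $a \wedge b = 0$. Thus $b$ is a complement of $a$, so $a$ is complemented in $\O X$.

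I do not expect any substantial obstacle: the whole argument is a couple of lines of lattice manipulation resting only on the concrete forms of $\Delta_a$ and $\nabla_a$ from the preceding lemmas. The only point meriting a little care is to use exactly those concrete forms, so that membership of the pairs $(a,1)$ and $(0,b)$ immediately delivers $a \vee b = 1$ and $a \wedge b = 0$; as an alternative route one could instead combine the lemma that $c \mapsto \nabla_c$ preserves finite meets and arbitrary joins with the lemma that the open and closed sublocales induced by $a$ are mutual complements in $\Sloc X$, translating the complementarity conditions $\nabla_a \cap \Delta_a = {}$ equality and $\nabla_a \vee \Delta_a = {}$ full relation into equations on elements.
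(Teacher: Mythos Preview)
Your argument is correct. Both directions are clean: the ``if'' direction is a two-line distributivity calculation, and the ``only if'' direction extracts $a \vee b = 1$ and $a \wedge b = 0$ directly from the canonical pairs $(a,1)$ and $(0,b)$.

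The paper, however, states this as a \emph{corollary} with no proof, intending it to follow from the two lemmas immediately preceding it: that the open and closed sublocales of $a$ are mutual complements in $\Sloc X$, and that $a \mapsto \nabla_a$ preserves finite meets and arbitrary joins. The implicit argument runs: if $a$ has complement $b$ in $\O X$, then by preservation $\nabla_a$ and $\nabla_b$ are complements in the congruence lattice; but $\Delta_a$ is also the complement of $\nabla_a$, and complements are unique in a distributive lattice (as the paper notes in a footnote), so $\Delta_a = \nabla_b$. Conversely, if $\Delta_a = \nabla_b$ then $\nabla_a$ and $\nabla_b$ are complements, whence (using preservation and the easy injectivity of $c \mapsto \nabla_c$ at $0$ and $1$) one recovers $a \wedge b = 0$ and $a \vee b = 1$. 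You in fact mention exactly this alternative route at the end of your proposal. Your direct approach has the advantage of being entirely self-contained, not relying on uniqueness of complements in $\Sloc X$ or on the preservation lemma; the paper's route has the advantage of making clear why the result is an immediate consequence of the structural facts already established about $\Sloc X$.
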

Thus, these `clopen' sublocales correspond to decidable properties, in the sense that we can verify whether they are true or false.

It is enlightening to consider the case of open and closed sublocales of discrete locales.
The open sublocales of a set $X$ viewed as discrete locale simply correspond to subsets of $X$.
Then by \cref{cor:clopen_sublocale} the clopen sublocales are precisely the decidable subsets.
On the other hand, if \emph{every} sublocale of the terminal locale $1$ is open, then in particular the closed complement of each open sublocale is open, so that all subsets are decidable and excluded middle holds.

General closed sublocales of discrete locales are more subtle. They can be understood as formal complements of subsets. This explains some strange definitions that sometimes appear in constructive mathematics. For example, consider the following definition from constructive algebra.
\begin{definition}\label{def:anti-ideal}
 An \emph{anti-ideal} of a commutative ring $R$ is a subset $A \subseteq R$ such that
 \begin{itemize}
  \item $0 \notin A$,
  \item if $x + y \in A$ then $x \in A$ or $y \in A$,
  \item if $xy \in A$ then $y \in A$.
 \end{itemize}
\end{definition}
Classically such a set is simply the complement of an ideal, but this is not the case constructively since negation is badly behaved.
The localic perspective clarifies things. The usual notion of ideal is an open sublocale satisfying certain conditions, while an anti-ideal is a \emph{closed} sublocale satisfying the same conditions.
When writing these conditions in terms of the complementary open sublocale (i.e.\ a subset of $R$) we recover the above definition (see \citet*[Section 1.7 and Proposition 1.3.8]{manuell2019thesis}).
One would not be surprised to see closed ideals appearing in topological algebra and so it is not actually too surprising to see them show up when studying discrete rings.

\subsubsection{Images and preimages}

Since frames are well-behaved algebraic structures, $\Frm$ is a regular category and so has (regular epi, mono) factorisations (given by the usual factorisation of maps into a surjection followed by an injection). Hence, $\Loc$ has (epi, regular mono) factorisations and regular monomorphisms in $\Loc$ are stable under pullback.
Therefore, we can use pullbacks to take preimages of sublocales along locale morphisms as shown in the following diagram.
This makes the sublocale lattice into a functor $\Sloc\colon \Loc\op \to \Pos$.
\begin{center}
 \begin{tikzpicture}[node distance=2.5cm, auto]
  \node (A) {${\Sloc f}^*(S)$};
  \node (B) [below of=A] {$X$};
  \node (C) [right of=A] {$S$};
  \node (D) [right of=B] {$Y$};
  \draw[right hook->] (A) to node [swap] {} (B);
  \draw[->] (A) to node {} (C);
  \draw[->] (B) to node [swap] {$f$} (D);
  \draw[right hook->] (C) to node {} (D);
  \begin{scope}[shift=({A})]
   \draw +(0.3,-0.6) -- +(0.6,-0.6) -- +(0.6,-0.3);
  \end{scope}
 \end{tikzpicture}
\end{center}

By standard algebraic arguments, if $C_S$ is the frame congruence associated to $S$ then the congruence associated to ${\Sloc f}^*(S)$ is given by $\langle (f^* \times f^*)(C_S)\rangle$.
Thus, preimages of open/closed sublocales are open/closed (by their descriptions in terms of generators).

On the other hand, the (epi, regular mono) factorisation can be used directly to define \emph{images} of sublocales.
\begin{center}
 \begin{tikzpicture}[node distance=2.5cm, auto]
  \node (A) {$S$};
  \node (B) [below of=A] {$X$};
  \node (C) [right of=A] {${\Sloc f}_!(S)$};
  \node (D) [right of=B] {$Y$};
  \draw[right hook->] (A) to node [swap] {} (B);
  \draw[->>] (A) to node {} (C);
  \draw[->] (B) to node [swap] {$f$} (D);
  \draw[right hook->] (C) to node {} (D);
 \end{tikzpicture}
\end{center}
By calculating the image factorisation in $\Frm$ we easily find that the congruence corresponding to the image ${\Sloc f}_!(S)$ is given by $(f^* \times f^*)^{-1}(C_S)$.

Using the similar adjunction between images and preimages of sets (and remembering that congruences are ordered in the reverse direction to sublocales) we can see that ${\Sloc f}_!$ is left adjoint to ${\Sloc f}^*$.

\subsection{Frame coproducts and suplattices}

As mentioned above, we can use frame presentations to construct arbitrary colimits. Let us look at the case of frame coproducts in more detail.
These correspond to \emph{products} of locales.

If $(X_\alpha)_{\alpha \in I}$ is a family of locales, then the frame of the product $\prod_{\alpha \in I} X_\alpha$ has a presentation with a generator $\iota_\alpha(u)$ for each $\alpha \in I$ and $u \in \O X_\alpha$ and relations making the $\iota_\alpha$ maps into frame homomorphisms. The maps $\iota_\beta\colon \O X_\beta \to \coprod_{\alpha \in I}  \O X_\alpha$ are then the coproduct injections, though going forward we will instead refer to these as the frame maps associated to the product projections $\pi_\beta\colon \prod_{\alpha \in I} X_\alpha \to X_\beta$.

Furthermore, if each $\O X_\alpha$ is itself given by a presentation $\langle G_\alpha \mid R_\alpha\rangle$, then the product locale has a presentation $\langle \bigsqcup_{\alpha \in I} G_\alpha \mid \bigsqcup_{\alpha \in I} R_\alpha\rangle$, where $\bigsqcup_{\alpha \in I} R_\alpha$ is understood in the obvious way.

Logically, we see that the theory given by the coproduct frame describes something with aspects that are modelled by each $\O X_\alpha$. Indeed, the points of product are simply families of points from each $\O X_\alpha$, as we would expect.
The opens in the $\prod_{\alpha \in I} X_\alpha$ are given by joins of finite meets of elements of the form $\pi_\alpha^*(u)$. These finite meets are analogous to the basic open rectangles of the Tychonoff topology.
From the perspective of verifiability, it is intuitive that we can only constrain finitely many indices with such a basic open, since verifying something in the product involves checking each component in turn, and we can only carry out finitely many such checks.

\subsubsection{Finite products}

We now look at the case of finite products in more detail.
Here by `finite' we mean `Bishop-finite', since this is the most natural version of finiteness to consider for non-idempotent operations.
A general Bishop-finite product can be built up from empty products and binary products.
The empty product is the terminal locale $1$, whose frame of opens is given by $\Omega$.

For a binary product $X \times Y$, we write the corresponding frame coproduct as $\O X \oplus \O Y$. It will be convenient to define the notation $u \oplus v = \pi_1^*(u) \wedge \pi_2^*(v)$ for the basic rectangles. These are basic opens for the product in the following sense.

\begin{definition}
 We say a subset $B$ of a frame $\O X$ is a \emph{base} if every element of $\O X$ is a join of elements from $B$.
 If $G$ is a generating set, then the finite meets of elements of $G$ form a base.
\end{definition}

If $f\colon Z \to X$ and $g\colon Z \to Y$ are locale maps, the map $(f,g)\colon Z \to X \times Y$ obtained from the universal property of the product is defined by $(f,g)^*(u \oplus v) = f^*(u) \wedge g^*(v)$.

We have argued that it can be useful to treat set-based structures as discrete locales (for example, in the discussion after \cref{def:anti-ideal}).
The following result is important for allowing a seamless transition between these perspectives when only finite products are involved.
\begin{proposition}\label{lem:finite_product_of_sets}
 Bishop-finite products of discrete locales are discrete and agree with the products of the underlying sets.
\end{proposition}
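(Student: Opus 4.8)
The plan is to reduce everything to the binary and nullary cases and then verify the universal property of the frame coproduct by hand. The nullary case is already recorded above: the empty product is the terminal locale $1$, with $\O 1 = \Omega = \Omega^1$, so $1$ is the discrete locale on the one-point set. For a general Bishop-finite family $(X_k)_{k \in K}$ one picks a bijection $[n] \cong K$ and reindexes --- products of locales (which are frame coproducts) and products of sets are both invariant under such reindexing --- and then argues by induction on $n$, the base case $n = 0$ being the nullary product just discussed. At the successor step one uses the decomposition $[n+1] = \{0\} \sqcup \{i \in [n+1] \mid i \geq 1\}$, which is a genuine coproduct of sets because equality in $\N$ is decidable, together with the fact that the second piece is again Bishop-finite, to write $\prod_{i \in [n+1]} X_i \cong X_0 \times \prod_{i \in [n+1],\, i \geq 1} X_i$ and likewise for the underlying sets. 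So everything comes down to binary products.

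For the binary case, let $\O X = \Omega^S$ and $\O Y = \Omega^T$. The claim is that $\Omega^{S \times T}$, equipped with the frame homomorphisms $\pi_1^* \colon \Omega^S \to \Omega^{S \times T}$, $U \mapsto U \times T$ and $\pi_2^* \colon \Omega^T \to \Omega^{S \times T}$, $V \mapsto S \times V$ --- which are exactly the frame maps of the two projections out of the discrete locale on $S \times T$ --- is the coproduct $\O X \oplus \O Y$. Given frame homomorphisms $f \colon \Omega^S \to L$ and $g \colon \Omega^T \to L$, note that $\{s\} \oplus \{t\} = \pi_1^*(\{s\}) \wedge \pi_2^*(\{t\}) = \{(s,t)\}$ and that every $W \subseteq S \times T$ is the join of its singletons; hence any mediating homomorphism must send $W$ to $\bigvee_{(s,t) \in W} f(\{s\}) \wedge g(\{t\})$, which gives uniqueness. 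For existence one takes this as the definition of $h \colon \Omega^{S \times T} \to L$. Preservation of arbitrary joins is immediate, and preservation of the top element together with the identities $h \pi_1^* = f$, $h \pi_2^* = g$ follow from a short computation using that $f$ and $g$ preserve joins and tops. The resulting isomorphism of coproducts is compatible with the injections, so $X \times Y$ is indeed the discrete locale on the product set $S \times T$, completing the induction.

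The one step that needs genuine care --- and the main obstacle --- is checking that $h$ preserves binary meets. Distributing $h(W_1) \wedge h(W_2)$ and using that $f, g$ are meet-preserving reduces this to a join, over $(s,t) \in W_1$ and $(s',t') \in W_2$, of terms $f(\{s\} \cap \{s'\}) \wedge g(\{t\} \cap \{t'\})$. Constructively one must resist the temptation to case-split on whether $\{s\} \cap \{s'\}$ equals $\{s\}$ or is empty; instead one again writes $\{s\} \cap \{s'\}$ as the join of its singletons, so that $f(\{s\} \cap \{s'\}) = \bigvee\{ f(\{x\}) \mid x = s \text{ and } x = s'\}$, and similarly for $g$. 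Distributing once more and simplifying the nested existential quantifiers collapses the double sum to $\bigvee\{ f(\{x\}) \wedge g(\{y\}) \mid (x,y) \in W_1 \cap W_2\} = h(W_1 \cap W_2)$, as required. (One could instead hope to run this argument through frame presentations, but producing a usable presentation of $\Omega^S$ for a set $S$ without decidable equality is itself delicate, so the direct verification above is the cleaner route.)
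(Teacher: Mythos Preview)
Your proof is correct, but it follows a genuinely different route from the paper's argument. The paper reduces to the binary case in the same way, but then instead of verifying the universal property of the frame coproduct directly, it uses that the points functor $\Loc \to \Top$ is a right adjoint and hence preserves products; it therefore suffices to show that the product locale is \emph{spatial}, which is done by a short argument that any element $\bigvee_\alpha S_\alpha \oplus T_\alpha$ of $\Omega^X \oplus \Omega^Y$ is determined by the singleton rectangles $\{x\}\oplus\{y\}$ below it. Your approach avoids the detour through $\Top$ and spatiality entirely, building the mediating map $h$ by hand and checking the frame axioms --- more elementary and self-contained, at the cost of the explicit binary-meet computation you had to work through. (Later in the paper there is a second, one-line proof via the monoidal closed structure of $\Sup$: $\Omega^S \otimes \Omega^T \cong (\coprod_S \Omega)\otimes(\coprod_T \Omega) \cong \coprod_{S\times T}\Omega \cong \Omega^{S\times T}$; your argument is essentially the unpacking of this isomorphism without invoking the tensor machinery.)
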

\begin{proof}
 Certainly the empty product $1$ is discrete. The result will then follow if we prove it for binary products. As right adjoint, the functor from $\Loc$ to $\Top$ preserves products, and so it suffices to show that the product locale in question is spatial.
 
 We must show that elements of $\Omega^X \oplus \Omega^Y$ are determined by the points they contain. A general such element is of the form $u = \bigvee_{\alpha \in I} S_\alpha \oplus T_\alpha$.
 We have $(x,y) \in u$ if and only if $(x,y) \in S_\alpha \oplus T_\alpha$ for some $\alpha \in I$ (where $(x,y)$ corresponds to a locale map $(x,y)\colon 1 \to X \times Y$ that decomposes into $x\colon 1 \to X$ and $y\colon 1 \to Y$).
 Note that $(x,y) \in S_\alpha \oplus T_\alpha = \pi_1^*(S_\alpha) \wedge \pi_2^*(T_\alpha) \iff x \in S_\alpha \land y \in T_\alpha$.
 But $x \in S_\alpha \iff \{x\} \subseteq S_\alpha$ and similarly for $T_\alpha$. Hence, $(x,y) \in u \iff \{x\} \oplus \{y\} \le u$.
 Finally, since singletons generate the frames $\Omega^X$ and $\Omega^Y$ under joins, we can conclude that $u$ is determined by the points it contains.
\end{proof}

\subsubsection{Suplattices}

We can draw an analogy to binary coproducts of commutative rings.
In that setting, one can first construct the underlying abelian group structure of the coproduct as a tensor product of the additive groups of the constituents, before defining the appropriate multiplication operation.
Something similar happens with frame coproducts if we view the joins as a kind of addition and meets as a multiplication.
The analogue of abelian groups is then played by \emph{suplattices}.

\begin{definition}
 A \emph{suplattice} is a poset which has arbitrary joins.\footnote{This is the same thing as a complete lattice, but we use the term \emph{suplattice} to emphasise that it is the structure given by the joins which is relevant for this discussion.} A \emph{suplattice homomorphism} is a join-preserving map between suplattices.
 We denote the category of suplattices and suplattice homomorphisms by $\Sup$.
\end{definition}
Like frames, suplattices are (large) algebraic structures.
\begin{lemma}
 The free suplattice of a set $G$ is given by the powerset $\Omega^G$.
\end{lemma}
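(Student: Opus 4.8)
The plan is to verify the universal property of the free suplattice directly, in the same spirit as the proof for free $\vee$-semilattices but without the finiteness complications. First I would note that $\Omega^G$, which we identify with the powerset of $G$ ordered by inclusion, is indeed a suplattice: being a powerset it admits all joins, computed as unions. The candidate unit is the singleton map $\eta_G\colon G \to \Omega^G$, $g \mapsto \{g\}$, which is a function of sets (no structure to preserve).

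Next I would handle uniqueness and existence of the extension. Given a suplattice $L$ and a function $f\colon G \to L$, the crucial observation is that singletons generate $\Omega^G$ under joins in the strongest sense: every $S \in \Omega^G$ satisfies $S = \bigcup_{g \in S}\{g\}$, and this needs no decidable equality, since $x \in \bigcup_{g \in S}\{g\}$ unfolds to $\exists g \in S.\ x = g$, i.e.\ to $x \in S$. Hence any suplattice homomorphism $f^\flat\colon \Omega^G \to L$ with $f^\flat(\{g\}) = f(g)$ must satisfy $f^\flat(S) = \bigvee_{g \in S} f^\flat(\{g\}) = \bigvee_{g \in S} f(g)$, which pins it down uniquely. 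For existence I would take this formula as the definition of $f^\flat$; it sends $\{g\}$ to $f(g)$, and it preserves the empty join since $f^\flat(\emptyset) = \bigvee\emptyset = \bot$. The one step deserving a short calculation is preservation of arbitrary joins: for a family $(S_\alpha)_{\alpha \in I}$ in $\Omega^G$ one has $f^\flat\bigl(\bigcup_\alpha S_\alpha\bigr) = \bigvee\{f(g) \mid \exists \alpha.\ g \in S_\alpha\} = \bigvee_\alpha \bigvee\{f(g) \mid g \in S_\alpha\} = \bigvee_\alpha f^\flat(S_\alpha)$, where the middle equality is the standard fact that the supremum of a union of sets is the join of their suprema (valid constructively, as it is just a statement about upper bounds), and all the suprema in sight exist because $L$ is a suplattice.

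I do not expect a genuine obstacle here. Unlike the semilattice case, where one must restrict to Kuratowski-finite subsets and invoke \cref{cor:semilattice_maps_preserve_K_finite_joins} to force the value of the extension, every join is already available in a suplattice, so the ``generated under joins'' step is immediate once it is phrased positively as $S = \bigcup_{g \in S}\{g\}$. The only points requiring any care are that this phrasing is constructively harmless and that the iterated-join manipulation is justified, both of which are routine.
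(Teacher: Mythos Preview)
Your proposal is correct and follows essentially the same approach as the paper: define the extension by $f^\flat(S) = \bigvee_{g \in S} f(g)$ and verify the universal property. The paper's proof simply asserts that this is a suplattice homomorphism and the unique one making the triangle commute, whereas you spell out these checks explicitly.
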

\begin{proof}
 If $L$ is a suplattice and $f\colon G \to L$ is a function then we can define a suplattice homomorphism from $\Omega^G$ by sending $S \subseteq G$ to $\bigvee_{s \in S} f(s)$. It is easy to check that this is indeed a suplattice homomorphism, and in fact, the unique one making the relevant diagram commute.
\end{proof}

\begin{lemma}
 The category $\Sup$ is complete and cocomplete.
\end{lemma}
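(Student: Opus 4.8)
The plan is to prove completeness directly and then obtain cocompleteness from a self-duality of $\Sup$, which conveniently sidesteps the constructive subtlety noted earlier for frames: the existence of free algebras over $\Set$ does not by itself guarantee cocompleteness, so we cannot simply appeal to monadicity.

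First I would establish completeness by exhibiting products and equalisers. Small products are computed pointwise: for a set-indexed family $(L_\alpha)_{\alpha \in I}$ of suplattices, the poset $\prod_{\alpha \in I} L_\alpha$ with componentwise order has joins computed componentwise, the projections are join-preserving, and any cone of suplattice homomorphisms assembles into a unique suplattice homomorphism into the product. For equalisers, given $f, g\colon L \to M$ in $\Sup$, the subset $E = \{x \in L \mid f(x) = g(x)\}$ is closed under joins, since for $S \subseteq E$ we have $f(\bigvee S) = \bigvee_{s \in S} f(s) = \bigvee_{s \in S} g(s) = g(\bigvee S)$; hence $E$ is a sub-suplattice and its inclusion is readily checked to be the equaliser. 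Products together with equalisers yield all small limits in the usual way.

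Next I would exhibit a dual equivalence $\Sup \simeq \Sup\op$. Every suplattice $L$ also admits all meets (the meet of $S$ being the join of its set of lower bounds), so the opposite poset $L\op$ is again a suplattice; and every suplattice homomorphism $f\colon L \to M$ has a right adjoint $f_*\colon M \to L$ given by $f_*(m) = \bigvee\{x \in L \mid f(x) \le m\}$. This is constructively unproblematic: the defining subset and its join both exist in a suplattice, and the two adjunction inequalities are immediate. A right adjoint preserves meets, so $f_*$ is join-preserving as a map $M\op \to L\op$, and one checks that $(g \circ f)_* = f_* \circ g_*$ and that $(f_*)_* = f$ (and $(L\op)\op = L$). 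Thus $L \mapsto L\op$, $f \mapsto f_*$ defines a (strictly involutive) contravariant functor witnessing $\Sup \cong \Sup\op$.

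Cocompleteness then follows formally: a colimit of a diagram $D\colon J \to \Sup$ is the opposite of the limit of the corresponding diagram $J\op \to \Sup\op \cong \Sup$, which exists by the completeness just established. I expect the only mildly fiddly point to be the verification that $f \mapsto f_*$ is functorial and involutive up to the evident coherent isomorphisms; this is routine. If one instead prefers a concrete construction, coequalisers in $\Sup$ can be built exactly as for frames --- quotienting by the smallest suplattice congruence (an equivalence relation that is a sub-suplattice of $M \times M$) identifying the relevant pairs, using that each congruence class has a largest element to choose canonical representatives --- and cocompleteness then follows from the result cited earlier, but the self-duality argument is cleaner and yields both halves at once.
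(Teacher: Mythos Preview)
Your proposal is correct and follows essentially the same approach as the paper: completeness is established directly (the paper just says ``limits are computed as for any algebraic structure'' while you spell out products and equalisers), and cocompleteness is obtained from the self-duality of $\Sup$ given by $L \mapsto L\op$, $f \mapsto f_*$. The only cosmetic difference is that the paper factors this self-duality through the category $\Inf$ of inflattices (order-reversal gives $\Sup \simeq \Inf$, and taking right adjoints gives $\Sup\op \simeq \Inf$), whereas you compose these directly.
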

\begin{proof}
 Limits are computed as for any algebraic structure. Now note that reversing the order gives an equivalence between $\Sup$ and the category $\Inf$ of inf{}lattices (which are defined dually in terms of meets). On the other hand, there is a \emph{dual} equivalence between $\Sup$ and $\Inf$ given by leaving the objects alone and taking the right adjoints of the suplattice homomorphisms. Composing these gives an auto-duality of $\Sup$.
 Thus, we can compute colimits in $\Sup$ by taking the right adjoints of all the morphisms involved, reversing the order, and finding the limit in the usual way.
\end{proof}

In particular, we can define suplattices by generators and relations.
Also note that the adjunction between $\Sup$ and $\Set$ factors through the category of posets $\Pos$, and the free suplattice on a poset $P$ can easily be shown to be the lattice of downsets $\D P$.
We can use this to define suplattice presentations given by a \emph{poset} of generators.
The convenient aspects of the category $\Sup$ allow us to give a rather explicit construction of the suplattice described by a presentation.
\begin{proposition}\label{prop:explicit_suplattice_presentation}
 If $G$ is a poset and $R$ is a set of relations of the form $\bigvee S \le \bigvee T$, then $\langle G \textup{ poset} \mid R\rangle_\Sup$ is order-isomorphic to the subset of $\D G$ consisting of downsets $D$ such that for each relation $\bigvee S \le \bigvee T$ in $R$, we have $S \subseteq D$ whenever $T \subseteq D$.
\end{proposition}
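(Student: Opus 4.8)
The plan is to identify the presented suplattice with the poset $\mathcal{A}$ of \emph{admissible} downsets — those $D \in \D G$ with $S \subseteq D$ whenever $T \subseteq D$, for every relation $\bigvee S \le \bigvee T$ in $R$ — by showing that $\mathcal{A}$, equipped with a suitable monotone map out of $G$, satisfies the universal property defining $\langle G \textup{ poset} \mid R\rangle_\Sup$. Recall that this presented suplattice is a quotient of the free suplattice on the poset $G$, which is $\D G$; verifying the universal property directly is cleaner than appealing to a quotient--closure correspondence.

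First I would record the easy structural facts. The collection $\mathcal{A}$ is closed under arbitrary intersections in $\D G$: if $T \subseteq \bigcap_i D_i$ then $T \subseteq D_i$ and hence $S \subseteq D_i$ for each $i$, so $S \subseteq \bigcap_i D_i$; and $G$ itself is admissible. Consequently $c(D) \coloneqq \bigcap\{E \in \mathcal{A} \mid D \subseteq E\}$ is a closure operator on $\D G$ whose fixed points are exactly the admissible downsets, and $\mathcal{A}$ becomes a suplattice with $\bigvee^{\mathcal{A}}_i E_i = c(\bigcup_i E_i)$. Define $\gamma \colon G \to \mathcal{A}$ by $g \mapsto c({\downarrow} g)$; this is monotone, and for a relation $\bigvee S \le \bigvee T$ in $R$ one computes $\bigvee^{\mathcal{A}}_{s \in S}\gamma(s) = c({\downarrow} S)$ and $\bigvee^{\mathcal{A}}_{t\in T}\gamma(t) = c({\downarrow} T)$ using standard closure-operator identities. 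Since $c({\downarrow}T)$ is admissible and contains $T$, the relation forces $S \subseteq c({\downarrow}T)$, hence ${\downarrow}S \subseteq c({\downarrow}T)$ and $c({\downarrow}S) \subseteq c({\downarrow}T)$; so the images of the generators under $\gamma$ satisfy $R$.

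The heart of the argument is the universal property. Given a suplattice $L$ and a monotone $f\colon G \to L$ whose generator-images satisfy $R$, extend $f$ to the unique suplattice homomorphism $f^\flat\colon \D G \to L$, $f^\flat(D) = \bigvee_{d\in D}f(d)$. The key lemma is that $f^\flat \circ c = f^\flat$. Inflationarity of $c$ gives one inequality; for the other, the downset $E_D = \{g \in G \mid f(g) \le f^\flat(D)\}$ contains $D$ and is admissible — if $T \subseteq E_D$ then $\bigvee_{t\in T}f(t) \le f^\flat(D)$, so by the relation $\bigvee_{s\in S}f(s) \le \bigvee_{t\in T}f(t) \le f^\flat(D)$ and thus $S \subseteq E_D$ — whence $c(D) \subseteq E_D$ and $f^\flat(c(D)) \le f^\flat(D)$. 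It follows that $\hat f \coloneqq f^\flat|_{\mathcal A}$ preserves arbitrary joins (in particular the bottom element), so it is a suplattice homomorphism with $\hat f \circ \gamma = f$; and any suplattice homomorphism $h$ with $h\circ\gamma = f$ must agree with $\hat f$, since every admissible $E$ satisfies $E = c(E) = \bigvee^{\mathcal A}_{g \in E}\gamma(g)$, forcing $h(E) = \bigvee_{g\in E}f(g)$. Thus $\mathcal A$ with $\gamma$ enjoys the universal property of $\langle G \textup{ poset} \mid R\rangle_\Sup$, so the two are isomorphic as suplattices and hence order-isomorphic.

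I expect the main obstacle to be the lemma $f^\flat \circ c = f^\flat$: it is precisely the statement that the admissibility condition captures exactly the relations in $R$, and the slightly non-obvious move is to test $c(D)$ against the auxiliary downset $E_D$ rather than arguing with $c$ directly. A secondary point requiring care is that the joins in $\mathcal A$ are closures of unions rather than unions, so the closure operator must be carried along throughout; everything else reduces to routine manipulation of downsets and closure operators.
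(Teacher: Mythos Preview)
Your proof is correct but follows a genuinely different route from the paper. The paper expresses the presented suplattice as a coequaliser in $\Sup$ of two maps $\Omega^R \rightrightarrows \D G$ encoding the relations, then invokes the self-duality of $\Sup$ (established just before the proposition) to compute this coequaliser as the equaliser in $\Inf$ of the right adjoints; working out those right adjoints explicitly yields the admissibility condition directly. Your argument instead stays inside $\D G$: you identify the admissible downsets as the fixed points of a closure operator and verify the universal property of the presentation by hand, the key step being the lemma $f^\flat \circ c = f^\flat$, proved by testing against the auxiliary downset $E_D$. The paper's approach is shorter once the duality machinery is in place and makes the result a formal consequence of that machinery, while yours is more self-contained and elementary, requiring no appeal to the $\Sup$--$\Inf$ duality or to right adjoints at all.
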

\begin{proof}
 It will be convenient to view $R$ as indexing set with the functions $s,t\colon R \to \Omega^G$ picking out the sets $S$ and $T$ respectively for each relation.
 We now define suplattice homomorphisms $\overline{s},\overline{t}\colon \Omega^R \to \D G$ so that $\overline{s}(\{r\})$ and $\overline{t}(\{r\})$ are the downsets generated by $s(r)$ and $t(r)$, respectively.
 
 The presented suplattice can then be expressed by the coequaliser
 \begin{center}
  \begin{tikzpicture}
   \node (A) {$\Omega^R$};
   \node [right=1.0cm of A] (B) {$\D G$};
   \node [right=1.0cm of B] (C) {$\langle G \text{ poset} \mid R\rangle_\Sup$};
   \draw[transform canvas={yshift=0.5ex},->] (A) to node [above] {$\overline{t}$} (B);
   \draw[transform canvas={yshift=-0.5ex},->] (A) to node [below] {$\overline{s} \vee \overline{t}$} (B);
   \draw[->>] (B) to node [above] {$q$} (C);
  \end{tikzpicture}
 \end{center}
 where the suplattice homomorphism $\overline{s} \vee \overline{t}$ is given by the pointwise join of $\overline{s}$ and $\overline{t}$.
 
 The right adjoint $\overline{t}_*$ of $\overline{t}$ is given by
 \begin{align*}
  \overline{t}_*(D) &= \bigvee\{ U \in \Omega^R \mid \overline{t}(U) \subseteq D \} \\
                    &= \{r \in R \mid t(r) \subseteq D\}.
 \end{align*}
 Similarly, $(\overline{s} \vee \overline{t})_*(D) = \{r \in R \mid s(r) \cup t(r) \subseteq D\}$.
 Taking the equaliser of these in $\Inf$ we obtain the set of $D \in \D G$ such that $\overline{t}_*(D) = (\overline{s} \vee \overline{t})_*(D)$, which is to say those $D$ such that $t(r) \subseteq D \iff s(r) \cup t(r) \subseteq D$ for all $r \in R$. The result follows.
\end{proof}

There is a strong link between frame presentations and suplattice presentations. Just as we can present suplattices using a poset of generators, we can present frames with a $\wedge$-semilattice of generators. We then have the following theorem (see \citet*{abramsky1993quantales}).
\begin{theorem}[Coverage theorem]\label{thm:suplattice_coverage}
 Let $G$ be a $\wedge$-semilattice and let $R$ be a set of relations of the form $\bigvee S \le \bigvee T$ such that whenever $\bigvee S \le \bigvee T$ is in $R$, then so is $\bigvee_{s \in S} u \wedge s \le \bigvee_{t \in T} u \wedge t$ for each $u \in G$. Then there is an order isomorphism \[\langle G \textup{ $\wedge$-semilattice} \mid R\rangle_\Frm \cong \langle G \textup{ poset} \mid R\rangle_\Sup.\]
\end{theorem}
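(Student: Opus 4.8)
The idea is to realise both presented structures as subobjects of the downset frame $\D G$. Recall that $\D G$ is simultaneously the free frame on the $\wedge$-semilattice $G$ and the free suplattice on the poset $G$, so both sides of the asserted isomorphism are quotients of $\D G$. By \cref{prop:explicit_suplattice_presentation}, $\langle G \textup{ poset} \mid R\rangle_\Sup$ is order-isomorphic to the set $C$ of downsets $D \in \D G$ with the property that, for each relation $\bigvee S \le \bigvee T$ in $R$, one has $S \subseteq D$ whenever $T \subseteq D$. The plan is then to equip $C$ with a frame structure and to show that, with respect to the map $G \to C$ induced by $g \mapsto {\downarrow}g$, it satisfies the universal property of $\langle G \textup{ $\wedge$-semilattice} \mid R\rangle_\Frm$; together with \cref{prop:explicit_suplattice_presentation} this yields the claimed order isomorphism.

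I would first note that $C$ is closed under arbitrary intersections (if $T$ is contained in every $D_i$ then so is $S$), so the inclusion $\iota \colon C \hookrightarrow \D G$ preserves arbitrary meets and hence has a left adjoint $c \colon \D G \to C$, namely $c(D) = \bigcap\{D' \in C \mid D \subseteq D'\}$, the least member of $C$ containing $D$. This makes $C$ a complete lattice with meets computed as in $\D G$ and joins $\bigvee_i D_i = c\bigl(\bigcup_i D_i\bigr)$, and $j \coloneqq \iota c$ a closure operator on $\D G$. The main step is to show that $j$ is a \emph{nucleus}, i.e.\ that $j(D) \cap j(E) \subseteq j(D \cap E)$; it is then standard (see \citet{picado2012book}) that $C$ is a frame and $c$ a surjective frame homomorphism. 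A routine argument, using that every downset is a union of principal downsets, reduces this to proving $j(E) \cap {\downarrow}u \subseteq j(E \cap {\downarrow}u)$ for every downset $E$ and every $u \in G$; since every element of $j(E)$ below $u$ is of the form $g \wedge u$ with $g \in j(E)$, this amounts to showing $j(E) \subseteq E^\ast$ where $E^\ast \coloneqq \{g \in G \mid g \wedge u \in j(E \cap {\downarrow}u)\}$. Now $E^\ast$ is manifestly a downset and contains $E$ (for $e \in E$ we have $e \wedge u \in E \cap {\downarrow}u$), so it suffices to show $E^\ast \in C$. This is exactly where the hypothesis is used: given a relation $\bigvee S \le \bigvee T$ in $R$ with $T \subseteq E^\ast$, the coverage condition furnishes the relation $\bigvee_{s \in S} u \wedge s \le \bigvee_{t \in T} u \wedge t$ in $R$; since $j(E \cap {\downarrow}u)$ lies in $C$ it respects this relation, and as it contains $\{u \wedge t \mid t \in T\}$ (which is exactly the assumption $T \subseteq E^\ast$) it must also contain $\{u \wedge s \mid s \in S\}$, i.e.\ $S \subseteq E^\ast$. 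As $j(E)$ is the least member of $C$ above $E$, we get $j(E) \subseteq E^\ast$, so $j$ is a nucleus and $C$ is a frame.

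Finally I would verify the universal property. The composite $G \to \D G \xrightarrow{c} C$, $g \mapsto c({\downarrow}g)$, is a $\wedge$-semilattice homomorphism (both maps preserve finite meets), and it satisfies the relations: for $\bigvee S \le \bigvee T$ in $R$ the element $c({\downarrow}T)$ belongs to $C$ and contains $T$, hence contains $S$, so $\bigvee_{s \in S} c({\downarrow}s) \le \bigvee_{t \in T} c({\downarrow}t)$ in $C$. Conversely, given a frame $L$ and a $\wedge$-semilattice homomorphism $f \colon G \to L$ satisfying the relations, freeness of $\D G$ gives a frame homomorphism $f^\flat \colon \D G \to L$, $D \mapsto \bigvee_{d \in D} f(d)$. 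It factors through $c$ because the downset $\{g \in G \mid f(g) \le f^\flat(D)\}$ lies in $C$ (again using that $f$ respects the relations) and contains $D$, hence contains $j(D)$, which forces $f^\flat(j(D)) = f^\flat(D)$; thus $f^\flat = \overline{f} \circ c$ for a unique frame homomorphism $\overline{f} \colon C \to L$ with $\overline{f}(c({\downarrow}g)) = f(g)$, uniqueness holding because the $c({\downarrow}g)$ generate $C$ as a frame. Hence $C$ carries the universal property of $\langle G \textup{ $\wedge$-semilattice} \mid R\rangle_\Frm$, and combining with \cref{prop:explicit_suplattice_presentation} completes the proof.

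The main obstacle is the nucleus step: one must spot that the distributivity inequality reduces to principal downsets on the right, and then recognise that the coverage hypothesis on $R$ is precisely what guarantees that the auxiliary downset $E^\ast$ lies in $C$. Everything else is a routine manipulation of adjunctions and universal properties.
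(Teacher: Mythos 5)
Your proof is correct, and it follows the same overall strategy as the paper: both sides are realised inside the downset frame $\D G$, the presented suplattice is identified via \cref{prop:explicit_suplattice_presentation} with the set $C$ of downsets respecting the relations, and the coverage hypothesis is used to show that $C$ carries a frame structure for which the universal property of the frame presentation holds. The one place where you diverge in the details is how you show $C$ is a frame: the paper observes that $C$ is closed under the Heyting implication $D \heyting (-)$ of $\D G$ (using that for each $d$ the coverage hypothesis gives the relation $\bigvee(S\cap{\downarrow}d)\le\bigvee(T\cap{\downarrow}d)$, so that $T\subseteq D\heyting E$ forces $S\subseteq D\heyting E$), whereas you verify directly that the closure operator $j$ is a nucleus. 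These are two well-known equivalent descriptions of a sublocale of $\D G$, and your auxiliary downset $E^\ast=\{g\mid g\wedge u\in j(E\cap{\downarrow}u)\}$ is precisely ${\downarrow}u\heyting j(E\cap{\downarrow}u)$, so the computation is the same one in different notation. The universal property check also differs slightly in route: the paper invokes the universal property of the suplattice presentation and then checks finite-meet preservation on the base $c({\downarrow}g)$, while you build $f^\flat$ on the free frame $\D G$ and factor it through the nucleus surjection $c$; both arguments are standard and equally short. So this is a correct proof and conceptually the same, just packaged through the nucleus characterisation rather than the Heyting one.
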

Note that \emph{every} frame presentation can be brought into the above form by taking formal finite meets of the generators and adding the appropriate relations.
\begin{proof}
 We first show that $\langle G \textup{ poset} \mid R\rangle_\Sup$ is a frame. It suffices to prove it is a Heyting algebra.
 We use the construction from \cref{prop:explicit_suplattice_presentation}. Recall that $\D G$ is a frame and hence a Heyting algebra.
 In there we have $g \in D \heyting E \iff {\downarrow} g \cap D \subseteq E$. We will show that if $E$ satisfies the conditions of \cref{prop:explicit_suplattice_presentation}, then so does $D \heyting E$ and hence this also gives the Heyting implication in the sub-inf{}lattice.
 
 Suppose $\bigvee S \le \bigvee T$ is a basic relation. Then by assumption, for each $d \in G$ we have a basic relation $\bigvee (S \cap {\downarrow} d) \le \bigvee (T \cap {\downarrow} d)$
 and hence $T \cap {\downarrow} d \subseteq E$ implies $S \cap {\downarrow} d \subseteq E$. Taking the join over all $d \in D$ and applying the defining adjunction of Heyting implication we see that $T \subseteq D \heyting E$ implies $S \subseteq D \heyting E$, as required.
 
 We now argue that the canonical map from $G$ to $\langle G \textup{ poset} \mid R\rangle_\Sup$ satisfies the necessary universal property for $\langle G \textup{ $\wedge$-semilattice} \mid R\rangle_\Frm$. Note that it preserves finite meets.
 Now let $L$ be a frame and let $f\colon G \to L$ be a $\wedge$-semilattice homomorphism such that $\bigvee_{s \in S} f(s) \le \bigvee_{t \in T} f(t)$ for each basic relation $\bigvee S \le \bigvee T$.
 Since $f$ is in particular monotone, there is a unique suplattice homomorphism $\overline{f}\colon \langle G \textup{ poset} \mid R\rangle_\Sup \to L$ such that $\overline{f}$ agrees with $f$ on generators. It just remains to show that $\overline{f}$ preserves finite meets. Since $f$ is a $\wedge$-semilattice homomorphism, $\overline{f}$ preserves finite meets on a base. The result then follows by taking joins.
\end{proof}

Now we would like to define tensor products of suplattices. Recall that the tensor product $A \otimes B$ of abelian groups is given by the universal bilinear map in the sense that bilinear maps from $A \times B$ to $C$ are in bijection with linear maps (i.e.\ group homomorphisms) from $A \otimes B$ to $C$, naturally in $C$.

\begin{definition}
 If $L, M$ and $N$ are suplattices, we say a function $f\colon L \times M \to N$ is \emph{bilinear} in $L$ and $M$ if $f(\bigvee_{s \in S} s, t) = \bigvee_{s\in S} f(s,t)$
 and $f(s, \bigvee_{t \in T} t) = \bigvee_{t\in T} f(s,t)$.
 
 The tensor product $L \otimes M$ is defined by the following universal property: there is a bilinear map $\beta\colon L \times M \to L \otimes M$ such that for every bilinear map $f\colon L \times M \to N$ there is a unique suplattice homomorphism $\widetilde{f}\colon L \otimes M \to N$ such that $f = \widetilde{f}\beta$.
\end{definition}

The tensor product $L \otimes M$ is easily seen to be given by the free suplattice on the generators $\ell \otimes m$ for $\ell \in L$ and $m \in M$, subject to the relations that make the map $\beta\colon (\ell,m) \mapsto \ell \otimes m$ bilinear.
\begin{remark}
We note that as a corollary of \cref{prop:explicit_suplattice_presentation}, $L \otimes M$ can be expressed as the set of downsets $D$ of $L \times M$ such that whenever $(\ell_\alpha, m) \in D$ for all $\alpha \in I$ we have $(\bigvee_{\alpha \in I} \ell_\alpha, m) \in D$ and whenever $(\ell, m_\beta) \in D$ for all $\beta \in J$ we have $(\ell, \bigvee_{\beta \in J} m_\beta) \in D$.
\end{remark}

We can now finally show that frame coproducts can be expressed in terms of suplattice tensor products.
\begin{proposition}\label{lem:frame_coproduct_as_tensor_product}
 If $X$ and $Y$ are locales, then $\O X \oplus \O Y$ is order-isomorphic to $\O X \otimes \O Y$.
\end{proposition}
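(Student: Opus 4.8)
The plan is to exhibit both $\O X \oplus \O Y$ and $\O X \otimes \O Y$ as presentations over the \emph{same} $\wedge$-semilattice of generators and then close the gap with the coverage theorem (\cref{thm:suplattice_coverage}). Take $G = \O X \times \O Y$ with componentwise meet, so $G$ is a $\wedge$-semilattice with top $(1,1)$, and let $R = R_1 \cup R_2$, where $R_1$ collects the relations $(\bigvee_{\alpha \in I} u_\alpha,\, b) \le \bigvee_{\alpha \in I} (u_\alpha,\, b)$ for every family $(u_\alpha)_{\alpha \in I}$ in $\O X$ and every $b \in \O Y$, and $R_2$ collects the symmetric relations $(a,\, \bigvee_{\beta \in J} v_\beta) \le \bigvee_{\beta \in J} (a,\, v_\beta)$. (The reverse inequalities hold automatically by monotonicity of the generator map, so $R$ simply forces that map to preserve joins in each variable separately.)

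First I would check that $\langle G \textup{ $\wedge$-semilattice} \mid R\rangle_\Frm \cong \O X \oplus \O Y$ by comparing universal properties. A $\wedge$-semilattice homomorphism $g\colon G \to L$ into a frame factors as $g(u,v) = g(u,1) \wedge g(1,v)$; it satisfies $R$ exactly when $u \mapsto g(u,1)$ and $v \mapsto g(1,v)$ preserve all joins (the empty join included), i.e.\ exactly when this pair consists of frame homomorphisms $\O X \to L$ and $\O Y \to L$. Conversely any such pair $(\phi,\psi)$ gives $g = \phi(-) \wedge \psi(-)$, which satisfies $R$ by distributivity in $L$. So both frames classify the same data --- a pair of frame maps out of $\O X$ and $\O Y$ --- and the isomorphism carries the generator $(u,v)$ to $u \oplus v$.

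Next I would verify the hypothesis of \cref{thm:suplattice_coverage}: meeting an $R_1$-relation by a generator $w = (c,d) \in G$ yields, after rewriting $c \wedge \bigvee_\alpha u_\alpha$ as $\bigvee_\alpha (c \wedge u_\alpha)$ via distributivity in $\O X$, exactly the $R_1$-relation for the family $(c \wedge u_\alpha)_\alpha$ and the element $d \wedge b$; symmetrically for $R_2$, and neither kind of relation spills into the other. Hence the coverage theorem gives an order isomorphism $\O X \oplus \O Y \cong \langle G \textup{ poset} \mid R\rangle_\Sup$. Finally, \cref{prop:explicit_suplattice_presentation} identifies the right-hand side with the poset of downsets $D$ of $\O X \times \O Y$ satisfying $(\bigvee_\alpha u_\alpha, b) \in D$ whenever every $(u_\alpha, b) \in D$, and $(a, \bigvee_\beta v_\beta) \in D$ whenever every $(a, v_\beta) \in D$ --- precisely the downset description of $\O X \otimes \O Y$ recorded just after its definition. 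Composing the isomorphisms sends $u \oplus v$ to $u \otimes v$.

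The point requiring the most care is the choice of $R$: the relations of $R_1$ must be imposed for an arbitrary $b \in \O Y$, not merely $b = 1$ (which would already suffice to present the coproduct), precisely so that $R$ is stable under meeting with generators, as the coverage theorem demands. Getting this stability right --- and applying frame distributivity in $\O X$ and $\O Y$ at the right moments --- is the crux; the rest is bookkeeping with universal properties and the explicit downset presentations already in hand.
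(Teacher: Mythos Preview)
Your proof is correct and follows the same route as the paper, which dispatches the result in a single line: ``This is an immediate consequence of the coverage theorem applied to the above presentation.'' You have simply unpacked what that line entails --- identifying the frame presentation with the coproduct via its universal property, checking meet-stability of the bilinearity relations (your closing remark about needing all $b \in \O Y$, not just $b=1$, is exactly the point), and matching the resulting suplattice presentation against the downset description of the tensor product already recorded in the paper.
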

\begin{proof}
 This is an immediate consequence of the coverage theorem applied to the above presentation.
\end{proof}

The tensor product can be extended to a functor ${\otimes}\colon \Sup \times \Sup \to \Sup$ by defining $f \otimes g\colon \ell \otimes m \mapsto f(\ell) \otimes g(m)$.
We also note that the lattice of truth values acts as a unit with respect to tensor product. There is a canonical isomorphism $\lambda\colon \Omega \otimes L \cong L$ defined by $\lambda\colon p \otimes \ell \mapsto \bigvee\{\ell \mid p\}$, and the inverse is given by $\ell \mapsto \top \otimes \ell$.

The tensor product functor $\otimes$ and the unit $\Omega$, together with the canonical unit, associativity and symmetry isomorphisms, give $\Sup$ the structure of a symmetric monoidal category.
In fact, $L \otimes (-)$ is left adjoint to the functor $\hom(L, (-))$, which sends a suplattice $M$ to the set $\Hom(L,M)$ of suplattice homomorphisms from $L$ to $M$ equipped with the suplattice structure coming from the pointwise ordering.
This makes $\Sup$ into a symmetric monoidal closed category.

We now obtain a very easy alternative proof of \cref{lem:finite_product_of_sets}: we have $\Omega^{X} \otimes \Omega^{Y} \cong (\coprod_{x \in X} \Omega) \otimes (\coprod_{y \in Y} \Omega) \cong (\coprod_{(x,y) \in X \times Y} \Omega) \cong \Omega^{X \times Y}$, since the left adjoint $L \otimes (-)$ preserves coproducts.

\section{Topological notions}

We are now in a position to discuss standard topological concepts such as the Hausdorff separation axiom and compactness. We will also see some other topological properties that are most relevant in the constructive setting.

\subsection{Diagonal conditions}

Recall that classically a space $X$ is Hausdorff if and only if the diagonal of $X \times X$ is closed.
We will define Hausdorff locales similarly, but let us first briefly consider the dual condition of having an \emph{open} diagonal.

\begin{lemma}\label{cor:open_diagonal_in_discrete}
 The diagonal of a discrete locale is open.
\end{lemma}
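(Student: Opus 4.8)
The plan is to identify the diagonal sublocale with the open sublocale attached to the diagonal \emph{subset} of $X \times X$. By \cref{lem:finite_product_of_sets}, if $X$ is discrete then so is $X \times X$, and $\O(X \times X) = \Omega^{X \times X}$ is the full powerset of $X \times X$. In particular the diagonal subset $d \coloneqq \{(x,y) \in X \times X \mid x = y\}$ is a genuine element of $\O(X \times X)$ --- here we use only that $\Omega$ classifies \emph{all} subsets, with no decidability hypothesis on equality in $X$ --- so by \cref{def:open_sublocales} it determines an open sublocale of $X \times X$. I claim this is exactly the diagonal sublocale, i.e.\ the one carried by $\delta \coloneqq (\id_X, \id_X)\colon X \to X \times X$.

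First I would compute $\delta$ on the frame side. Being the pairing of two identity maps, $\delta^*\colon \Omega^{X \times X} \to \Omega^X$ acts by preimage along the set map $x \mapsto (x,x)$, that is $\delta^*(S) = \{x \in X \mid (x,x) \in S\}$. This is surjective: given any $S' \subseteq X$, the subset $\{(x,x) \mid x \in S'\} \subseteq X \times X$ is sent to $S'$. Since a surjective frame homomorphism corresponds to a regular monomorphism of locales, $\delta$ is itself a sublocale of $X \times X$ --- the diagonal sublocale.

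Next I would compare the two congruences. The kernel congruence of $\delta^*$ is $\{(S,T) \mid \delta^*(S) = \delta^*(T)\} = \{(S,T) \mid S \cap d = T \cap d\}$, because $(x,x) \in S \iff (x,x) \in T$ holding for all $x$ is precisely the condition that $S$ and $T$ meet the diagonal $d$ in the same set. But in the powerset frame $\O(X \times X)$ the binary meet is intersection, so this is exactly the open congruence $\Delta_d = \{(S,T) \mid S \land d = T \land d\}$ associated to the element $d$. Hence the diagonal sublocale is the open sublocale determined by $d$, and in particular it is open.

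I do not anticipate a genuine obstacle: the single point requiring care is the one flagged above --- that the diagonal subset really is a legitimate open of $X \times X$ even when $X$ has undecidable equality --- and this is exactly where discreteness of $X$ enters, via the powerset description $\O(X\times X) = \Omega^{X\times X}$. Everything else is a routine unwinding of \cref{def:open_sublocales} together with the formula $(f,g)^*(u \oplus v) = f^*(u) \wedge g^*(v)$ for the frame map of a pairing. It is perhaps worth remarking that an open diagonal is, conversely, one of the conditions in a pointfree notion of discreteness, dual in spirit to the closed-diagonal condition that will define Hausdorffness.
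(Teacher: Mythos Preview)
Your proposal is correct and follows essentially the same approach as the paper: both use \cref{lem:finite_product_of_sets} to identify $\O(X\times X)$ with the full powerset $\Omega^{X\times X}$, so that the diagonal subset is a bona fide open, and then observe that the diagonal sublocale coincides with this open sublocale. The paper's proof is a one-liner (``the diagonal is computed as in $\Set$ and hence given by a subset, which is open in the discrete topology''), whereas you have carefully unpacked this by explicitly computing $\delta^*$ and matching its kernel congruence with the open congruence $\Delta_d$ --- but the underlying idea is identical.
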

\begin{proof}
 By \cref{lem:finite_product_of_sets} the diagonal of a discrete locale is computed as in $\Set$ and hence given by subset, which is, of course, an open set in the discrete topology.
\end{proof}

So in a discrete locale equality is verifiable.
This is not quite a characterisation of discrete locales, since for instance, it is preserved by arbitrary sublocales.
We will give the precise characterisation in \cref{prop:discreteness_characterisation}.

A Hausdorff locale has \emph{refutable} equality --- that is, verifiable \emph{inequality}.
\begin{definition}
 We say a locale $X$ is \emph{Hausdorff} if the diagonal of $X \times X$ is closed.
\end{definition}

Note that a set is Hausdorff as a discrete locale precisely when it has decidable equality.
So while decidable equality is not something we worry about classically, it is a manifestation of a familiar topological concept.

The diagonal map $\Delta\colon X \to X \times X$ (obtained from applying the universal property of the product to $\id_X$ and $\id_X$) is given by the frame homomorphism $u \oplus v \mapsto u \wedge v$.
This is clearly surjective, and it being closed means that $\Delta^*(a) = \Delta^*(b) \iff a \vee d = b \vee d$ for some $d \in \O X \oplus \O X$. Of course, here $d$ is the open complement of the diagonal.
The following technical lemma can be helpful for proving Hausdorffness.
\begin{lemma}
 A locale $X$ is Hausdorff if and only if for all $u, v \in \O X$ we have $u \oplus v \le (u \wedge v) \oplus (u \wedge v) \vee d$ for some $d \in \O X \oplus \O X$ such that $\Delta^*(d) = 0$.
\end{lemma}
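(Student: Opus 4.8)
The plan is to translate the condition ``the diagonal $\Delta\colon X\to X\times X$ is closed'' into a statement about the kernel congruence of the frame surjection $\Delta^*\colon\O X\oplus\O X\to\O X$, $u\oplus v\mapsto u\wedge v$, and then compare it with the stated inequality. First I would note that $\Delta^*$ is surjective (e.g.\ $\Delta^*(u\oplus 1)=u$), so $\Delta$ is genuinely a sublocale, and that it is closed precisely when its kernel congruence $\ker\Delta^*=\{(w,w')\mid\Delta^* w=\Delta^* w'\}$ equals $\nabla_d=\langle(0,d)\rangle=\{(w,w')\mid w\vee d=w'\vee d\}$ for some $d\in\O X\oplus\O X$. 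Since for any $d$ one has $\nabla_d\subseteq\ker\Delta^*$ iff $(0,d)\in\ker\Delta^*$ iff $\Delta^*(d)=0$, Hausdorffness of $X$ is equivalent to the existence of a $d$ with $\Delta^*(d)=0$ and $\ker\Delta^*\subseteq\nabla_d$. (It is also worth remarking that the ``for some $d$'' in the statement can be read as a single $d$ working for all $u,v$: replacing a family of witnesses by their join preserves $\Delta^*(d)=0$, as $\Delta^*$ preserves joins, and only strengthens the inequality.)

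The heart of the proof is to show that, for a fixed $d$ with $\Delta^*(d)=0$, the inclusion $\ker\Delta^*\subseteq\nabla_d$ is equivalent to the stated inequality $u\oplus v\le((u\wedge v)\oplus(u\wedge v))\vee d$ for all $u,v\in\O X$. One direction is immediate: since $\Delta^*(u\oplus v)=u\wedge v=\Delta^*((u\wedge v)\oplus(u\wedge v))$, the pair $(u\oplus v,(u\wedge v)\oplus(u\wedge v))$ lies in $\ker\Delta^*\subseteq\nabla_d$, which gives $(u\oplus v)\vee d=((u\wedge v)\oplus(u\wedge v))\vee d$ and hence the inequality. For the converse I would establish the stronger identity
\[ w\vee d=(\Delta^* w\oplus\Delta^* w)\vee d\qquad\text{for every }w\in\O X\oplus\O X, \]
after which $\Delta^* w=\Delta^* w'$ forces $w\vee d=w'\vee d$, i.e.\ $\ker\Delta^*\subseteq\nabla_d$. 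Writing $w=\bigvee_\alpha u_\alpha\oplus v_\alpha$, the inequality ``$\le$'' follows by joining the hypothesis over $\alpha$ and using $(u_\alpha\wedge v_\alpha)\oplus(u_\alpha\wedge v_\alpha)\le\Delta^* w\oplus\Delta^* w$; for ``$\ge$'' one expands $\Delta^* w\oplus\Delta^* w=\bigvee_{\alpha,\beta}(u_\alpha\wedge v_\alpha)\oplus(u_\beta\wedge v_\beta)$ (using that $\oplus$ preserves joins in each variable) and applies the hypothesis with $u:=u_\alpha\wedge v_\alpha$, $v:=u_\beta\wedge v_\beta$, so that each summand is bounded by $((u_\alpha\wedge v_\alpha\wedge u_\beta\wedge v_\beta)\oplus(u_\alpha\wedge v_\alpha\wedge u_\beta\wedge v_\beta))\vee d\le(u_\alpha\oplus v_\alpha)\vee d\le w\vee d$.

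Assembling these pieces finishes both directions: if $X$ is Hausdorff, pick $d$ with $\ker\Delta^*=\nabla_d$, so $\Delta^*(d)=0$ (because $(0,d)\in\nabla_d$) and the inequality holds by the easy direction; conversely, if the inequality holds for some $d$ with $\Delta^*(d)=0$, then $\ker\Delta^*=\nabla_d$ by the two inclusions above, so $\Delta$ is closed. I expect the only real work to be the ``$\ge$'' half of the displayed identity: passing from basic rectangles to a general $w$ introduces the cross terms $(u_\alpha\wedge v_\alpha)\oplus(u_\beta\wedge v_\beta)$ in $\Delta^* w\oplus\Delta^* w$, and the key point is that the hypothesis, applied with \emph{both} of its arguments taken to be ``diagonal'' elements, is precisely strong enough to push these below $w\vee d$. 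The rest is routine manipulation of frame coproducts together with the descriptions of open and closed congruences recalled earlier in the section.
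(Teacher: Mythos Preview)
Your proposal is correct and follows essentially the same route as the paper: both directions hinge on the same computation, namely applying the hypothesis a second time to the cross terms $(u_\alpha\wedge v_\alpha)\oplus(u_\beta\wedge v_\beta)$ arising from $\Delta^*w\oplus\Delta^*w$ in order to push them below $w\vee d$. Your packaging via the identity $w\vee d=(\Delta^*w\oplus\Delta^*w)\vee d$ is a mild reorganisation of the paper's argument (which instead shows $\Delta^*(a)\le\Delta^*(b)\implies a\le b\vee d$ directly), and your remark that the family of witnesses can be replaced by a single $d$ is a useful clarification the paper leaves implicit.
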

\begin{proof}
 Suppose $X$ is Hausdorff and the diagonal sublocale is represented by the congruence $\nabla_d$ for some $d \in \O X \oplus \O X$.
 Then $(d, 0) \in \nabla_d$, so that $\Delta^*(d) = 0$. Now since $\Delta^*(u \oplus v) = u \wedge v = (u \wedge v) \wedge (u \wedge v) = \Delta^*((u \wedge v) \oplus (u \wedge v))$, we know $u \oplus v \le u \oplus v \vee d = (u \wedge v) \oplus (u \wedge v) \vee d$, as required.
 
 On the other hand, suppose we have the above inequality. We must show $\Delta^*(a) = \Delta^*(b) \iff a \vee d = b \vee d$. The reverse implication is immediate after applying $\Delta^*$ and using $\Delta^*(d) = 0$. For the forward implication, observe that if $u \wedge v \le \Delta^*(b)$, then $u \oplus v \le (u \wedge v) \oplus (u \wedge v) \vee d \le \Delta^*(b) \oplus \Delta^*(b) \vee d$. Expressing $b$ as $\bigvee_{\alpha \in I} u'_\alpha \oplus v'_\alpha$ we have $\Delta^*(b) = \bigvee_{\alpha \in I} u'_\alpha \wedge v'_\alpha$ and hence $\Delta^*(b) \oplus \Delta^*(b) = \bigvee_{\alpha,\beta \in I} (u'_\alpha \wedge v'_\alpha) \oplus (u'_\beta \wedge v'_\beta) \le \bigvee_{\alpha,\beta \in I} (u'_\alpha \wedge v'_\alpha \wedge u'_\beta \wedge v'_\beta) \oplus (u'_\alpha \wedge v'_\alpha \wedge u'_\beta \wedge v'_\beta) \vee d \le b \vee d$,
 where the first inequality is by applying the hypothesis to each pair of $u'_\alpha \wedge v'_\alpha$ and $u'_\beta \wedge v'_\beta$.
 So $u \oplus v \le b \vee d$. Now taking the join over all $u \oplus v \le a$, we find that whenever $\Delta^*(a) \le \Delta^*(b)$ we have $a \le b \vee d$. So if $\Delta^*(a) = \Delta^*(b)$ then $a \le b \vee d$ and $b \le a \vee d$.
 Thus, $a \vee d = b \vee d$ and we have shown that $X$ is Hausdorff.
\end{proof}
\begin{remark}
 In fact, it suffices to only check the above for all $u,v$ in some generating set for $\O X$, since then it follows for general elements by taking finite meets and arbitrary joins.
\end{remark}

A very important example of a Hausdorff locale is given by the locale $\R$ of reals.
\begin{proposition}
 The locale $\R$ is Hausdorff.
\end{proposition}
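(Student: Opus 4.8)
The plan is to apply the preceding lemma together with the remark after it, so that it suffices to produce a single $d \in \O\R \oplus \O\R$ with $\Delta^*(d) = 0$ for which $g \oplus h \le (g \wedge h) \oplus (g \wedge h) \vee d$ holds whenever $g, h$ range over the subbasic generators $\ell_q$ and $u_q$ ($q \in \Q$). I would take $d \coloneqq \bigvee_{a \in \Q} \bigl((\ell_a \oplus u_a) \vee (u_a \oplus \ell_a)\bigr)$, the open sublocale of ``apartness''. Since $\Delta^*$ is a frame homomorphism and the relation $\ell_p \wedge u_q = 0$ for $p \ge q$ gives $\ell_a \wedge u_a = 0$, we get $\Delta^*(d) = \bigvee_a 0 = 0$. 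Note also that $d$ is fixed by the symmetry isomorphism of $\O\R \oplus \O\R$ interchanging the two factors, which sends $g \oplus h \mapsto h \oplus g$ and fixes each $(g \wedge h)\oplus(g\wedge h)$; so it is enough to treat the pairs $(\ell_p, \ell_q)$, $(u_p, u_q)$ and $(\ell_p, u_q)$, and in the first two we may even assume $p \le q$.

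For $(\ell_p, \ell_q)$ with $p \le q$ we have $\ell_p \wedge \ell_q = \ell_q$. By roundedness $\ell_q = \bigvee_{t > q} \ell_t$, and since $q < t$ locatedness gives $\ell_q \vee u_t = 1$, hence $\ell_p = \ell_q \vee (\ell_p \wedge u_t)$. Therefore, for each $t > q$,
\[ \ell_p \oplus \ell_t = (\ell_q \oplus \ell_t) \vee \bigl((\ell_p \wedge u_t) \oplus \ell_t\bigr) \le (\ell_q \oplus \ell_q) \vee (u_t \oplus \ell_t) \le (\ell_p \wedge \ell_q) \oplus (\ell_p \wedge \ell_q) \vee d, \]
and joining over $t > q$ yields the desired inequality. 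The pair $(u_p, u_q)$ with $p \le q$ is dual, using $u_p = \bigvee_{t < p} u_t$ and $\ell_t \vee u_p = 1$ for $t < p$.

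The pair $(\ell_p, u_q)$ is the crux. If $p \ge q$ then $\ell_p \wedge u_q = 0$ and $\ell_p \oplus u_q \le \ell_q \oplus u_q \le d$ (as $\ell_p \le \ell_q$), so suppose $p < q$ and set $w \coloneqq \ell_p \wedge u_q$. Roundedness together with downward-closedness gives $\ell_p = \bigvee_{p < r < q} \ell_r$ and, dually, $u_q = \bigvee_{p < s < q} u_s$ (the remaining indices being absorbed), so it is enough to bound $\ell_r \oplus u_s$ for rationals $r, s \in (p,q)$. If $s \le r$ then $\ell_r \oplus u_s \le \ell_r \oplus u_r \le d$. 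If $r < s$, apply locatedness twice: $\ell_s \vee u_q = 1$ (since $s < q$) gives $\ell_r = \ell_s \vee (\ell_r \wedge u_q)$, and $\ell_p \vee u_r = 1$ (since $p < r$) gives $u_s = (u_s \wedge \ell_p) \vee u_r$. Multiplying these out and absorbing the summands dominated by $\ell_s \oplus u_s$, we obtain
\[ \ell_r \oplus u_s \le (\ell_s \oplus u_s) \vee \bigl((\ell_r \wedge u_q) \oplus (u_s \wedge \ell_p)\bigr) \vee \bigl((\ell_r \wedge u_q) \oplus u_r\bigr). \]
Now $\ell_s \oplus u_s \le d$, and $(\ell_r \wedge u_q) \oplus u_r \le \ell_r \oplus u_r \le d$, while in the middle summand both $\ell_r \wedge u_q \le \ell_p \wedge u_q = w$ and $u_s \wedge \ell_p \le \ell_p \wedge u_q = w$, so it lies in $w \oplus w$. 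Joining over $r, s$ gives $\ell_p \oplus u_q \le w \oplus w \vee d = (\ell_p \wedge u_q) \oplus (\ell_p \wedge u_q) \vee d$, and $(u_p, \ell_q)$ follows from this by the factor-swap symmetry. This exhausts all cases, and I expect the last one to be the only genuinely non-trivial point: the whole argument hinges on combining roundedness of the Dedekind cut with locatedness so as to split the unbounded generators $\ell_r$ and $u_s$ into a part lying inside the bounded interval $w$ and parts witnessing apartness, which therefore sit inside $d$.
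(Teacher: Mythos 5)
Your proof is correct and takes essentially the same approach as the paper: it invokes the same technical lemma and remark, uses the same diagonal complement $d = \bigvee_r (\ell_r \oplus u_r) \vee (u_r \oplus \ell_r)$, and combines roundedness (to shrink the generators) with locatedness (to split a subbasic rectangle into a bounded piece plus witnesses of apartness lying inside $d$). The only difference is bookkeeping: the paper organises the shrinking as a symmetric $\epsilon$-overlap and closes by joining over $\epsilon$, while you use roundedness to index by rationals $r, s \in (p,q)$ and split on whether $s \le r$ or $r < s$; the substance is the same.
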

\begin{proof}
 Recall that $\O \R$ has the following presentation.
 \begin{align*}
  \O \R = \langle &\llround q, \infty \rrround, \llround -\infty, q \rrround,\ q \in \Q \mid {} \\
                  &\llround p, \infty \rrround = \bigvee_{q > p} \llround q, \infty \rrround,\ \llround -\infty, q \rrround = \bigvee_{p < q} \llround -\infty, p \rrround, \\
                  & \bigvee_{q \in \Q} \llround q, \infty \rrround = 1,\ \bigvee_{q \in \Q} \llround -\infty, q \rrround = 1, \\
                  & \llround -\infty, q \rrround \wedge \llround p, \infty \rrround = 0\, \textnormal{ for $p \ge q$},\\
                  & \llround -\infty, q \rrround \vee \llround p, \infty \rrround = 1\, \textnormal{ for $p < q$} \rangle
 \end{align*}
 For convenience we define $\llround p, q \rrround = \llround p, \infty\rrround \wedge \llround -\infty, q\rrround$.
 
 The putative diagonal complement is \[d = \bigvee_{r \in \Q} \llround -\infty, r \rrround \oplus \llround r, \infty \rrround \vee \bigvee_{r \in \Q}\, \llround r, \infty \rrround \oplus \llround -\infty, r \rrround.\]
 To show Hausdorffness we have the following cases to prove:
 \begin{itemize}
  \item $\llround p, \infty\rrround \oplus \llround q, \infty\rrround \le \llround \max(p,q), \infty\rrround \oplus \llround \max(p,q), \infty\rrround \vee d$,
  \item $\llround -\infty, p \rrround \oplus \llround -\infty, p \rrround \le \llround -\infty, \min(p,q) \rrround \oplus \llround -\infty, \min(p,q) \rrround \vee d$,
  \item $\llround p, \infty\rrround \oplus \llround -\infty, q\rrround \le \llround p, q \rrround \oplus \llround p, q \rrround \vee d$,
  \item $\llround -\infty, q\rrround \oplus \llround p, \infty\rrround \le \llround p, q \rrround \oplus \llround p, q \rrround \vee d$.
 \end{itemize}
 We will show the third one as a representative example. The other cases are similar.
 Note that if $p \ge q$ then trivially $\llround p, \infty\rrround \oplus \llround -\infty, q\rrround \le \llround q, \infty\rrround \oplus \llround -\infty, q\rrround \le d$, and so we may assume $p < q$.
 
 The following illustration suggests we try show $\llround p, \infty\rrround \oplus \llround -\infty, q\rrround \le \llround p, q \rrround \oplus \llround p, q \rrround \vee \llround p, \infty \rrround \oplus \llround -\infty, p \rrround \vee \llround q, \infty \rrround \oplus \llround -\infty, q \rrround$.
 \begin{figure}[H]
 \begin{tikzpicture}[scale=1.25]
 \draw[color=cyan!50!black] (0,0) to (4,4);

  \begin{pgfonlayer}{over}
  \draw[color=red, ->] (1,3) node[left,text=black, scale=0.9] {$(p,q)$} to (1,0);
  \draw[color=red, ->] (1,3) to (4,3);
  \end{pgfonlayer}
  \draw[draw=none,fill=red,opacity=0.5] (1,3) rectangle (3,1);
  
  \draw[draw=none,fill=blue,opacity=0.5] (1,1) rectangle (4,0);
  
  \draw[draw=none,fill=blue,opacity=0.5] (3,3) rectangle (4,0);
  
  \draw[dashed,thick,white] (1,1) to (3,1);
  \draw[dashed,thick,white] (3,3) to (3,1);
 \end{tikzpicture}
 \end{figure}
 However, this is not quite right since the white dotted lines are missing from the right-hand side.
 But we can fix this if we shrink the left-hand side by a small amount to allow for an overlap and instead try showing $\llround p+\epsilon, \infty\rrround \oplus \llround -\infty, q-\epsilon\rrround \le \llround p, q \rrround \oplus \llround p, q \rrround \vee \llround p+\epsilon, \infty \rrround \oplus \llround -\infty, p+\epsilon \rrround \vee \llround q-\epsilon, \infty \rrround \oplus \llround -\infty, q-\epsilon \rrround$ as in the illustration below.
 
 \begin{figure}[H]
 \begin{tikzpicture}[scale=1.25]
  \draw[color=cyan!50!black] (0,0) to (4,4);
  \begin{pgfonlayer}{over}
  \draw[color=magenta, ->] (1.2,2.8) to (1.2,0);
  \draw[color=magenta, ->] (1.2,2.8) to (4,2.8);
  \end{pgfonlayer}
  \draw[draw=none,fill=red,opacity=0.5] (1,3) node[left,text=black,text opacity=1,scale=0.9] {$(p,q)$} rectangle (3,1);
  
  \tikzstyle{dotted}= [dash pattern=on 2\pgflinewidth off 2\pgflinewidth]
  \draw[dotted] (1,2) to node[midway,below,scale=0.66] {$\epsilon$} (1.2,2);
  \draw[dotted] (2,2.8) to node[midway,left,scale=0.66] {$\epsilon$} (2,3);
  
  \draw[draw=none,fill=blue,opacity=0.5] (1.2,1.2) rectangle (4,0);
  \draw[draw=none,fill=blue,opacity=0.5] (2.8,2.8) rectangle (4,0);
 \end{tikzpicture}
 \end{figure}
 Let us prove this holds.
 First consider the join of the smaller red square and the part of the blue region below it:
 \begin{align*}
   &\phantom{{}={}} \llround p+\epsilon, q \rrround \oplus \llround p, q-\epsilon \rrround \vee \llround p+\epsilon, q \rrround \oplus \llround -\infty, p+\epsilon \rrround \\
                 &= \llround p+\epsilon, q \rrround \oplus \left[ \llround p, q-\epsilon \rrround \vee \llround -\infty, p+\epsilon \rrround \right] \\
                 &= \llround p+\epsilon, q \rrround \oplus \left[ \llround p, \infty \rrround \wedge \llround -\infty, q-\epsilon\rrround \vee \llround -\infty, p+\epsilon \rrround \right] \\
                 &= \llround p+\epsilon, q \rrround \oplus \left[ \left( \llround p, \infty \rrround \vee \llround -\infty, p+\epsilon \rrround \right) \wedge \llround -\infty, q-\epsilon\rrround \right] \\
                 &= \llround p+\epsilon, q \rrround \oplus \llround -\infty, q-\epsilon\rrround.
 \end{align*}
 
 Now we connect this resulting region to the blue region beside it:
 \begin{align*}
   &\phantom{{}={}} \llround p+\epsilon, q \rrround \oplus \llround -\infty, q-\epsilon\rrround \vee \llround q-\epsilon, \infty \rrround \oplus \llround -\infty, q-\epsilon \rrround \\
                 &= \left[ \llround p+\epsilon, q \rrround \vee \llround q-\epsilon, \infty \rrround \right] \oplus \llround -\infty, q-\epsilon\rrround \\
                 &= \left[ \llround p+\epsilon, \infty \rrround \wedge \llround -\infty, q \rrround \vee \llround q-\epsilon, \infty \rrround \right] \oplus \llround -\infty, q-\epsilon\rrround \\
                 &= \left[ \llround p+\epsilon, \infty \rrround \wedge \left( \llround -\infty, q \rrround \vee \llround q-\epsilon, \infty \rrround \right) \right] \oplus \llround -\infty, q-\epsilon\rrround \\
                 &= \llround p+\epsilon, \infty \rrround \oplus \llround -\infty, q-\epsilon\rrround.
 \end{align*}
 
 In summary, we have the join ${\llround p+\epsilon, q \rrround} \oplus {\llround p, q-\epsilon \rrround} \vee {\llround p+\epsilon, q \rrround} \oplus {\llround -\infty, p+\epsilon \rrround} \vee {\llround q-\epsilon, \infty \rrround} \oplus {\llround -\infty, q-\epsilon \rrround} = \llround p+\epsilon, q \rrround \oplus \llround -\infty, q-\epsilon\rrround \vee \llround q-\epsilon, \infty \rrround \oplus \llround -\infty, q-\epsilon \rrround = \llround p+\epsilon, \infty \rrround \oplus \llround -\infty, q-\epsilon\rrround$.
 Thus, $\llround p+\epsilon, \infty\rrround \oplus \llround -\infty, q-\epsilon\rrround \le \llround p, q \rrround \oplus \llround p, q \rrround \vee d$.
 
 But now taking the join over all sufficiently small $\epsilon > 0$, we have
 \begin{align*}
  \llround p, q \rrround \oplus \llround p, q \rrround \vee d &\ge \bigvee\nolimits_{\!\epsilon\,} \llround p+\epsilon, \infty\rrround \oplus \llround -\infty, q-\epsilon\rrround \\
   &\ge \bigvee\nolimits_{\!\epsilon',\epsilon''\,} \llround p+\epsilon', \infty\rrround \oplus \llround -\infty, q-\epsilon''\rrround \\
   &= \bigvee\nolimits_{\!\epsilon'} \pi_1^*( \llround p+\epsilon', \infty\rrround ) \wedge \bigvee\nolimits_{\!\epsilon''} \pi_2^*( \llround -\infty, q-\epsilon''\rrround ) \\
   & {} = \llround p, \infty\rrround \oplus \llround -\infty, q\rrround,
 \end{align*}
 and this is precisely what we wanted to show.
\end{proof}

Hausdorffness for locales satisfies all the familiar closure properties.
\begin{lemma}
 Sublocales of Hausdorff locales are Hausdorff.
\end{lemma}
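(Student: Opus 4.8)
The plan is to combine the categorical characterisation of Hausdorffness with the fact, recorded above, that preimages of closed sublocales are closed. Let $i\colon S\hookrightarrow X$ be a sublocale with $X$ Hausdorff; we must show that the diagonal $\Delta_S\colon S\to S\times S$ is a closed sublocale. The crucial observation is that $\Delta_S$ is the preimage of the diagonal $\Delta_X\colon X\to X\times X$ along the product map $i\times i\colon S\times S\to X\times X$.

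First I would establish this identification, which is a general fact valid for any monomorphism $i$: the diagonal sublocale of $S\times S$ is the pullback of the diagonal of $X\times X$ along $i\times i$. To check it, test the candidate pullback against an arbitrary locale $T$: a map $a=(a_1,a_2)\colon T\to S\times S$ whose composite with $i\times i$ factors through $\Delta_X$ satisfies $i a_1 = i a_2$, hence $a_1 = a_2$ since $i$ is monic, so $a$ factors (uniquely) through $\Delta_S$; conversely every map of the form $\Delta_S\circ b$ does compose appropriately. Thus the pullback of $\Delta_X$ along $i\times i$ is precisely $\Delta_S$ as a subobject of $S\times S$ — a genuine sublocale, being the preimage of one, or directly because $u\oplus v\mapsto u\wedge v$ is a surjective frame homomorphism $\O S\oplus\O S\to\O S$.

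The conclusion is then immediate. Since $X$ is Hausdorff, $\Delta_X$ is a closed sublocale of $X\times X$; and preimages of closed sublocales along locale morphisms are closed, as recorded above from the description of the induced congruence by generators (the closed congruence $\nabla_d$ of $\Delta_X$ pulls back to the closed congruence $\nabla_{(i\times i)^*(d)}$). Hence $\Delta_S$, being $(i\times i)^{-1}(\Delta_X)$, is closed, so $S$ is Hausdorff.

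The only delicate step I anticipate is the pullback identification $\Delta_S = (i\times i)^{-1}(\Delta_X)$ — that is, checking that forming the preimage along $i\times i$ recovers the diagonal of $S\times S$ exactly, not some larger sublocale; everything else is a direct appeal to earlier results. One can sidestep even this by working with frames instead: given $u,v\in\O S$, lift them along the surjection $i^*$ to $u',v'\in\O X$, apply the technical lemma characterising Hausdorffness to $u',v'$ in $\O X$, and push the resulting inequality forward along the monotone frame homomorphism $(i\times i)^*\colon\O X\oplus\O X\to\O S\oplus\O S$; using the naturality relation $\Delta_S^*\circ(i\times i)^* = i^*\circ\Delta_X^*$ one sees that the pushed-forward diagonal-complement witness is still sent to $0$ by $\Delta_S^*$, so $S$ satisfies the criterion.
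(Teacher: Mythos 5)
Your proof is correct and follows essentially the same route as the paper's: both establish that the diagonal of $S$ is the pullback of the diagonal of $X$ along $i\times i$, and then invoke the pullback-stability of closed sublocales. The only difference is that you spell out the verification of the pullback square (via the monomorphism property of $i$, or alternatively via the frame computation at the end), whereas the paper asserts it with ``it is not difficult to show''.
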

\begin{proof}
 Suppose $i\colon S \hookrightarrow X$ is a sublocale inclusion.
 It is not difficult to show the following commutative square is a pullback.
 \begin{center}
  \begin{tikzpicture}[node distance=2.5cm, auto]
   \node (A) {$S$};
   \node (B) [below of=A] {$X$};
   \node (C) [right of=A] {$S \times S$};
   \node (D) [right of=B] {$X \times X$};
   \draw[right hook->] (A) to node [swap] {$i$} (B);
   \draw[right hook->] (A) to node {$(\id, \id)$} (C);
   \draw[right hook->] (B) to node [swap] {$(\id, \id)$} (D);
   \draw[right hook->] (C) to node {$i \times i$} (D);
   \begin{scope}[shift=({A})]
     \draw +(0.3,-0.6) -- +(0.6,-0.6) -- +(0.6,-0.3);
   \end{scope}
  \end{tikzpicture}
 \end{center}
 So the diagonal map $(\id,\id) \colon S \hookrightarrow S \times S$ is the restriction of the diagonal of $X$ along the sublocale inclusion $i \times i$ and is therefore closed whenever the diagonal of $X$ is closed.
\end{proof}

\begin{proposition}
 Products of Hausdorff locales are Hausdorff.
\end{proposition}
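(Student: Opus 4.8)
The plan is to realise the diagonal sublocale of an arbitrary product as an intersection of closed sublocales, mirroring the pullback argument used just above for sublocale inclusions. Let $(X_\alpha)_{\alpha \in I}$ be Hausdorff locales, write $P = \prod_{\alpha \in I} X_\alpha$ with projections $\pi_\alpha\colon P \to X_\alpha$, and set $p_\alpha = \pi_\alpha \times \pi_\alpha\colon P \times P \to X_\alpha \times X_\alpha$. A routine diagram chase with the universal property of products gives $p_\alpha \circ \Delta_P = \Delta_{X_\alpha} \circ \pi_\alpha$, so $\Delta_P$ factors through the preimage ${\Sloc p_\alpha}^*(\Delta_{X_\alpha})$ for every $\alpha$, and hence through $\bigcap_{\alpha \in I} {\Sloc p_\alpha}^*(\Delta_{X_\alpha})$.

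First I would show that this intersection actually equals $\Delta_P$ as a sublocale of $P \times P$. A locale map $Z \to P \times P$ amounts to a pair $(g,h)$ of maps $Z \to P$, and such a pair factors through ${\Sloc p_\alpha}^*(\Delta_{X_\alpha})$ exactly when $p_\alpha \circ (g,h) = (\pi_\alpha g, \pi_\alpha h)$ factors through the diagonal of $X_\alpha$, i.e.\ when $\pi_\alpha g = \pi_\alpha h$. Thus $(g,h)$ factors through $\bigcap_{\alpha \in I} {\Sloc p_\alpha}^*(\Delta_{X_\alpha})$ iff $\pi_\alpha g = \pi_\alpha h$ for all $\alpha$, iff $g = h$ by the universal property of $P$, iff $(g,h)$ factors through $\Delta_P$. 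Since sublocales are determined by which maps factor through them, the two sublocales coincide.

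It then remains to observe that $\Delta_P = \bigcap_{\alpha \in I} {\Sloc p_\alpha}^*(\Delta_{X_\alpha})$ is closed. Each $\Delta_{X_\alpha}$ is closed in $X_\alpha \times X_\alpha$ by hypothesis, so each ${\Sloc p_\alpha}^*(\Delta_{X_\alpha})$ is closed in $P \times P$ because preimages of closed sublocales are closed. Finally, an arbitrary intersection of closed sublocales is closed: passing to congruences (recall that $\Sloc(P \times P)$ is anti-isomorphic to the complete lattice of congruences on $\O(P \times P)$, so a meet of sublocales is a join of congruences), this is precisely the identity $\bigvee_{\alpha} \nabla_{b_\alpha} = \nabla_{\bigvee_{\alpha} b_\alpha}$, which follows from the earlier lemma that $a \mapsto \nabla_a$ preserves arbitrary joins. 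Hence $\Delta_P$ is closed and $P$ is Hausdorff.

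The only slightly delicate step is the identification $\Delta_P = \bigcap_{\alpha \in I} {\Sloc p_\alpha}^*(\Delta_{X_\alpha})$; everything else is a direct appeal to results already established. One could instead try to verify the inequality of the preceding technical lemma directly on the canonical generating set $\{\pi_\alpha^*(u)\}$ of $\O P$, but the basic opens $\pi_\alpha^*(u) \oplus \pi_\beta^*(v)$ indexed by distinct coordinates $\alpha \neq \beta$ make that route noticeably more awkward, which is why I would carry out the sublocale-theoretic argument above instead.
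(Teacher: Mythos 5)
Your proof is correct and follows essentially the same approach as the paper: both identify the diagonal of the product with the intersection $\bigcap_\alpha {\Sloc(\pi_\alpha\times\pi_\alpha)}^*(\Delta_{X_\alpha})$ of preimages of the factor diagonals (verified via universal properties), note that each preimage is closed since preimages of closed sublocales are closed, and conclude using that closed sublocales are stable under arbitrary intersection.
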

\begin{proof}
 Let $(X_\alpha)_{\alpha \in I}$ be a family of Hausdorff locales.
 For each $\beta \in I$ we can form the pullback.
 \begin{center}
  \begin{tikzpicture}[node distance=2.5cm, auto]
   \node (A) {$E_\beta$};
   \node (B) [below of=A] {$X_\beta$};
   \node (C) [right of=A] {$(\prod_{\alpha \in I} X_\alpha)^2$};
   \node (D) [below of=C] {$X_\beta \times X_\beta$};
   \draw[->] (A) to node [swap] {$p_\beta$} (B);
   \draw[right hook->] (A) to node {$e_\beta$} (C);
   \draw[right hook->] (B) to node [swap] {$(\id, \id)$} (D);
   \draw[->] (C) to node {$(\pi_\beta \pi_1, \pi_\beta \pi_2)$} (D);
   \begin{scope}[shift=({A})]
     \draw +(0.3,-0.6) -- +(0.6,-0.6) -- +(0.6,-0.3);
   \end{scope}
  \end{tikzpicture}
 \end{center}
 Here $E_\beta$ is intuitively the closed sublocale of $(\prod_{\alpha \in I} X_\alpha) \times (\prod_{\alpha \in I} X_\alpha)$ where the two $\beta$ coordinates are equal. (We do not try to express this as $X_\beta \times \prod_{\alpha \ne \beta} X_\alpha$, since $I$ might not have decidable equality). The sublocale is closed, since it is the preimage of the closed sublocale given by the diagonal of $X_\beta$.
 
 Now we claim the diagonal $(\id,\id)\colon \prod_{\alpha \in I} X_\alpha \to (\prod_{\alpha \in I} X_\alpha) \times (\prod_{\alpha \in I} X_\alpha)$ is the intersection of all the sublocales $E_\beta$.
 From this diagonal map $(\id, \id)$ and the projection $\pi_\beta\colon \prod_{\alpha \in I} X_\alpha \to X_\beta$ we obtain a map $d_\beta\colon \prod_{\alpha \in I} X_\alpha \to E_\beta$ for each $\beta \in I$ from the universal property of the pullback. We want to show these maps make $\prod_{\alpha \in I} X_\alpha$ the wide pullback of the $e_\beta$ maps.
 
 Suppose we have maps $h_\beta \colon Y \to E_\beta$ such that $e_\beta h_\beta$ is the same for each $\beta \in I$. Call this common composite $(f,g)\colon Y \to (\prod_{\alpha \in I} X_\alpha)^2$.
 Then by commutativity of the diagram above we have $(\pi_\beta \pi_1 f, \pi_\beta \pi_2 g) = (\pi_\beta \pi_1, \pi_\beta p_2) \circ e_\beta h_\beta = (\id,\id) \circ p_\beta h_\beta = (p_\beta h_\beta, p_\beta h_\beta)$. So the composites of $f$ and $g$ with each projection are equal and hence $f = g$.
 Now from $(\id, \id) \circ f = (f,f) = (f,g)$ it follows that $f\colon Y \to \prod_{\alpha \in I} X_\alpha$ is a map making the following diagram commute.
 It is unique since $(\id,\id)\colon \prod_{\alpha \in I} X_\alpha \to (\prod_{\alpha \in I} X_\alpha)^2$ is monic.
 
 \begin{center}
  \begin{tikzpicture}[node distance=2.5cm, auto]
   \node (A) {$\prod_{\alpha \in I} X_\alpha$};
   \node (B) [below of=A] {$E_{\beta'}$};
   \node (C) [right of=A] {$E_\beta$};
   \node (D) [below of=C] {$(\prod_{\alpha \in I} X_\alpha)^2$};
   \draw[right hook->] (A) to node [swap] {$d_{\beta'}$} (B);
   \draw[right hook->] (A) to node {$d_\beta$} (C);
   \draw[right hook->] (B) to node [swap] {$e_{\beta'}$} (D);
   \draw[right hook->] (C) to node {$e_\beta$} (D);
   
   \path (B) -- node[auto=false]{\rotatebox{45}{\large\ldots}} (C);
   
   \node (X) [above left=1.2cm and 1.2cm of A.center] {$Y$};
   \draw[out=-90,->] (X) to node [swap] {$h_{\beta'}$} (B);
   \draw[out=0,->] (X) to node {$h_\beta$} (C);
   \draw[->, pos=0.55, dashed] (X) to node {$f$} (A);
  \end{tikzpicture}
 \end{center}
 
 Thus, we have shown the diagonal is indeed the intersection of the $E_\beta$ sublocales. Now since the $E_\beta$ sublocales are closed and closed sublocales are closed under arbitrary intersections, we find that the diagonal of $\prod_{\alpha \in I} X_\alpha$ is closed, as required.
\end{proof}

Note that the previous two results are rather intuitive from the verifiability perspective: if we can refute equality in a larger space we can surely also do so in a subspace and to verify that two tuples in a product are different, we need only verify that they differ at some index.

\subsection{Compactness and overtness}\label{subsec:compactness_and_overtness}

Compactness is a notion of central importance in classical topology, but it can take some time before students understand its significance. The intuition of compact spaces being ``like finite spaces'' in some sense is good, but too vague. Yet again, the perspective of verifiability provides a very satisfactory answer: compact spaces are those which verifiable properties can be universally quantified over.

Given a verifiable predicate $\phi$ on the product $X \times Y$, when does $\forall x \in X.\ \phi(x,y)$ give a verifiable predicate on $Y$?
This is easy if $X$ is (Kuratowski-)finite --- just check that $\phi(x,y)$ holds for each element of $X$ in turn. For an infinite set this is impossible to verify in finite time, but 
it may come as a surprise that there are locales with infinitely many points for which this is indeed possible.\footnote{For an interpretation of this idea in a slightly different setting see \citet*[Section 3.11]{escardo2004synthetic}, which gives an algorithm for universal quantification over Cantor space.}

To describe what we mean by universal quantification formally without reference to points it is best to use hyperdoctrines.\footnote{See \citet*{pittsLogic} for a good introduction to categorical logic and \citet*{manuell2019thesis} for some applications to topology. We can formally relate compactness to universal quantification if we use the hyperdoctrine of open sublocales, and dually we can relate it to \emph{existential} quantification if we use the hyperdoctrine of closed sublocales. We note that in the former case we should assume an additional Frobenius reciprocity condition for universal quantification that is usually omitted outside of classical logic.}
However, in these notes we would like to avoid the apparatuses of categorical logic and so we will make do with an informal justification using sets of points and classical logic.

Suppose $U$ is an open subset of $X \times Y$. We want to know if the set $\{y \in Y \mid \forall x \in X.\ (x,y) \in U\}$ is open. Taking complements, this is the same as the set $\{y \in Y \mid \exists x \in X.\ (x,y) \notin U\}$ being closed. But this is just the image of the closed set $U\comp$ under the projection $\pi_2\colon X \times Y \to Y$.
Thus, at least intuitively, we have argued that the universal quantification over $X$ of a verifiable predicate is always verifiable precisely if the images of closed sublocales under $\pi_2\colon X \times Y \to Y$ are always closed for all $Y$.

We will see later that this indeed characterises when $X$ is compact.
However, first let us consider the dual concept: what are the locales which we can \emph{existentially} quantify verifiable propositions over?
Analogously to the compact case we can argue these are the locales $X$ such that the projections $\pi_2\colon X \times Y \to Y$ always send open sublocales to open sublocales.
We call such locales \emph{overt}.

You are probably wondering why, if compactness is such an important concept, you have not encountered the dual concept of overtness before.
This is because classically \emph{every} locale is overt! However, constructively it is nontrivial, and indeed, just as important as compactness.
Overtness is technically a little easier to deal with than compactness, so we will take a look at it first.

\subsubsection{Overtness}

Before we can give a formal definition of overtness we must first define the notion of an open map.
\begin{definition}
 A locale map $f\colon X \to Y$ is \emph{open} if the image map ${\Sloc f}_!\colon \Sloc X \to \Sloc Y$ sends open sublocales to open sublocales.
\end{definition}

\begin{remark}
 Note that a sublocale inclusion is open if and only if it is an open sublocale as defined in \cref{def:open_sublocales}.
\end{remark}

It will be convenient to have an alternative description of open maps.
\begin{lemma}\label{lem:open_map_characterisation}
 A locale map $f\colon X \to Y$ is open if and only if $f^*\colon \O Y \to \O X$ has a left adjoint $f_!\colon \O X \to \O Y$ satisfying $f_!(f^*(b) \wedge a) = b \wedge f_!(a)$.
\end{lemma}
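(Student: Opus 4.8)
The plan is to pass to frame congruences. Recall (see \cref{def:open_sublocales} and the surrounding discussion) that the open sublocales of $X$ are precisely those whose congruence has the form $\Delta_a = \{(u,v) \mid u\wedge a = v\wedge a\}$ for a unique $a\in\O X$, and that the image under ${\Sloc f}_!$ of the open sublocale with congruence $\Delta_a$ has congruence $(f^*\times f^*)^{-1}(\Delta_a)$. Hence $f$ is open if and only if for every $a\in\O X$ the congruence $(f^*\times f^*)^{-1}(\Delta_a)$ is again of the form $\Delta_e$. The key observation is that the equivalence class of $1$ under $(f^*\times f^*)^{-1}(\Delta_a)$ equals $\{d\in\O Y \mid a\le f^*(d)\}$ (since $d\sim 1$ precisely when $f^*(d)\wedge a = a$), whereas the class of $1$ under $\Delta_e$ is the principal up-set ${\uparrow}e$. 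Since $e\mapsto\Delta_e$ is injective, $f$ is open iff each set $\{d \mid a\le f^*(d)\}$ is principal, and then its least element is the associated $e$.

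For the implication ($\Leftarrow$), suppose $f^*$ has a left adjoint $f_!$ with $f_!(f^*(b)\wedge a) = b\wedge f_!(a)$. I would verify directly that $(f^*\times f^*)^{-1}(\Delta_a) = \Delta_{f_!(a)}$ for each $a$: if $f^*(c)\wedge a = f^*(d)\wedge a$, then applying $f_!$ and Frobenius gives $c\wedge f_!(a) = d\wedge f_!(a)$; conversely, if $c\wedge f_!(a) = d\wedge f_!(a)$, then applying $f^*$, meeting with $a$, and using the unit $a\le f^*f_!(a)$ gives $f^*(c)\wedge a = f^*(d)\wedge a$. Thus ${\Sloc f}_!$ carries open sublocales to open sublocales, so $f$ is open.

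For ($\Rightarrow$), the first paragraph produces the left adjoint $f_!(a) \coloneqq \min\{d \mid a\le f^*(d)\}$, characterised by $f_!(a)\le b \iff a\le f^*(b)$, with unit $a\le f^*f_!(a)$. It remains to derive Frobenius. I would establish the congruence identity
\[(f^*\times f^*)^{-1}\bigl(\Delta_{a\wedge f^*(b)}\bigr) \;=\; (f^*\times f^*)^{-1}(\Delta_a)\vee\Delta_b,\]
by observing that a pair $(c,d)$ lies in either side exactly when $(c\wedge b,\,d\wedge b)\in(f^*\times f^*)^{-1}(\Delta_a)$: on the left because $f^*(c\wedge b)\wedge a = f^*(d\wedge b)\wedge a$ merely rearranges the defining condition, and on the right by the description $C\vee\Delta_b = \{(u,v)\mid(u\wedge b, v\wedge b)\in C\}$ of the join with an open congruence (one checks the right-hand set is a congruence containing $C$ and $(b,1)$ and is minimal among such). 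Since $f$ is open, the left side equals $\Delta_{f_!(a\wedge f^*(b))}$ (it is the image of the open sublocale $\Delta_{a\wedge f^*(b)}$), while the right side is $\Delta_{f_!(a)}\vee\Delta_b = \Delta_{f_!(a)\wedge b}$; by injectivity of $e\mapsto\Delta_e$ we conclude $f_!(a\wedge f^*(b)) = f_!(a)\wedge b$.

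The construction of $f_!$ and the ($\Leftarrow$) direction are essentially bookkeeping once everything is expressed through congruences. The main obstacle is obtaining Frobenius in the ($\Rightarrow$) direction: the adjunction by itself does not force it, and one must genuinely use that images of \emph{all} open sublocales --- in particular $\Delta_{a\wedge f^*(b)}$, not merely $\Delta_a$ --- stay open, which is exactly what the displayed congruence identity encodes.
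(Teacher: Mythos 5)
Your argument is correct and proves the lemma, but the treatment of the forward direction is genuinely different from the paper's, and there is one incorrect assertion along the way.

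For $(\Rightarrow)$ you reason at the level of congruences throughout: openness makes each $(f^*\times f^*)^{-1}(\Delta_a)$ of the form $\Delta_e$, you recover $e = f_!(a)$ and the adjunction by comparing equivalence classes of $1$, and then you obtain Frobenius by proving the separate congruence identity $(f^*\times f^*)^{-1}(\Delta_{a\wedge f^*(b)}) = (f^*\times f^*)^{-1}(\Delta_a)\vee\Delta_b$ together with $\Delta_c\vee\Delta_b = \Delta_{c\wedge b}$. The paper is more economical: it unwinds $(f^*\times f^*)^{-1}(\Delta_a) = \Delta_{g(a)}$ once to the equivalence $f^*(u)\wedge a = f^*(v)\wedge a \iff u\wedge g(a) = v\wedge g(a)$, and then both the adjunction (specialise $u=1$) and Frobenius (specialise $v = w\wedge u$) fall out of that single equivalence, with no need for the auxiliary formula $C\vee\Delta_b = \{(u,v)\mid (u\wedge b, v\wedge b)\in C\}$. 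Your version is more structural and clarifies where each half of the characterisation comes from, but carries extra machinery; your $(\Leftarrow)$ direction is essentially the paper's, slightly streamlined by applying $f_!$ and $f^*$ directly rather than splitting the equality into pairs of inequalities.

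The slip: you write ``Since $e\mapsto\Delta_e$ is injective, $f$ is open iff each set $\{d \mid a\le f^*(d)\}$ is principal.'' The ``only if'' is all you use and is correct; the ``if'' is false. Principality of those sets is exactly the existence of a left adjoint to $f^*$, which alone does not yield Frobenius and hence does not yield openness. For instance, the inclusion of the closed point into the Sierpi\'nski locale has $f^*$ preserving all meets (so $f_!$ exists), yet the map is not open and Frobenius fails. Your own closing paragraph states, correctly, that ``the adjunction by itself does not force [Frobenius],'' which contradicts the earlier ``iff.'' Nothing downstream breaks, but that sentence should assert an implication, not an equivalence.
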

\begin{proof}
 If $f$ is open then by the definition of ${\Sloc f}_!$ we have $(f^* \times f^*)^{-1}(\Delta_a) = \Delta_{g(a)}$ for some map $g\colon \O X \to \O Y$.
 This means $f^*(u) \wedge a = f^*(v) \wedge a \iff u \wedge g(a) = v \wedge g(a)$.
 
 Now taking $u = 1$ we find $a \le f^*(v) \iff g(a) \le v$ and so $g$ is left adjoint to $f^*$.
 On the other hand, letting $v = w \wedge u$, the right-hand side becomes $u \wedge g(a) \le w$
 and the left-hand side becomes $f^*(u) \wedge a \le f^*(w)$,
 which is equivalent to $f_!(f^*(u) \wedge a) \le w$.
 Thus, $f_!(f^*(u) \wedge a) = u \wedge g(a) = u \wedge f_!(a)$.
 
 Conversely, suppose $f_!(f^*(b) \wedge a) = b \wedge f_!(a)$ for all $a \in \O X$ and $b \in \O Y$. Then $f^*(b) \wedge a \le f^*(w) \iff b \wedge f_!(a) \le w$.

 Now $f^*(b) \wedge a = f^*(w) \wedge a$ implies $f^*(b) \wedge a \le f^*(w)$ and $f^*(w) \wedge a \le f^*(b)$, yielding $b \wedge f_!(a) \le w$ and $w \wedge f_!(a) \le b$ by the above. Hence, it entails $b \wedge f_!(a) = w \wedge f_!(a)$.
 On the other hand, $b \wedge f_!(a) = w \wedge f_!(a)$ implies $b \wedge f_!(a) \le w$ and $w \wedge f_!(a) \le b$ and hence $f^*(b) \wedge a \le f^*(w)$ and $f^*(w) \wedge a \le f^*(b)$, giving $f^*(b) \wedge a = f^*(w) \wedge a$.
 
 Thus, we have shown $f^*(b) \wedge a = f^*(w) \wedge a \iff b \wedge f_!(a) = w \wedge f_!(a)$,
 which is precisely what it means to have $(f^* \times f^*)^{-1}(\Delta_a) = \Delta_{f_!(a)}$ and so $f$ is open.
\end{proof}

We can now define overtness.

\begin{definition}\label{def:overt}
 A locale $X$ is called \emph{overt} if the unique map ${!}\colon X \to 1$ is open.
 
 Some people call these \emph{open} locales, but to avoid confusion we will not.
\end{definition}
\begin{remark}
 We argued above that a locale should be overt if and only if $\pi_2\colon X \times Y \to Y$ is open for all locales $Y$. \Cref{def:overt} only asserts this for $Y = 1$, but it actually follows for all $Y$, since we will see later (in \cref{thm:open_pullback_stable}) that open maps are stable under pullback.
\end{remark}

\begin{lemma}
 A locale $X$ is overt if and only if the unique frame map ${!}^*\colon \Omega \to \O X$ has a left adjoint $\exists\colon \O X \to \Omega$.
\end{lemma}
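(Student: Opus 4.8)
The plan is to read this off \cref{lem:open_map_characterisation} applied to the terminal map $!\colon X \to 1$, whose associated frame homomorphism is exactly $!^*\colon \Omega \to \O X$ (recall $\O 1 = \Omega$). That lemma says $X$ is overt if and only if $!^*$ has a left adjoint $\exists$ \emph{and} $\exists$ satisfies the Frobenius condition $\exists(!^*(p) \wedge a) = p \wedge \exists(a)$ for all $p \in \Omega$ and $a \in \O X$. One direction of the claim is then immediate. For the converse, since adjoints are unique the only thing to verify is that whenever a left adjoint $\exists$ to $!^*$ exists it \emph{automatically} satisfies Frobenius; overtness then follows.

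To prove this I would invoke the explicit description of the initial frame map established earlier, $!^*(p) = \bigvee\{1 \mid p = \top\}$. Frame distributivity rewrites the left-hand side of the Frobenius identity as $\exists\big(\bigvee\{a \mid p = \top\}\big)$, and since $\exists$ is a left adjoint it preserves arbitrary joins, so this equals $\bigvee\{\exists(a) \mid p = \top\}$. It remains to observe that a join of this shape in $\Omega$ collapses: by the same reasoning used to show $\Omega$ is a frame, $\bigvee\{q \mid p = \top\} = p \wedge q$ for any $p, q \in \Omega$ (using $\bigvee S = \llbracket \top \in S\rrbracket$ together with the identity $q = \llbracket q = \top\rrbracket$). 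Applying this with $q = \exists(a)$ gives exactly $p \wedge \exists(a)$, completing the verification, and hence $X$ is overt.

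The argument is essentially all formal --- uniqueness and join-preservation of adjoints, frame distributivity, and the arithmetic of $\Omega$ --- so I do not anticipate any serious obstacle. The one place to take a little care is the manipulation of the ``propositional'' subsingletons $\{a \mid p = \top\}$ and their joins in $\Omega$, but this is routine given the facts about $\Omega$ already in hand.
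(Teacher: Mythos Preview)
Your proposal is correct and follows essentially the same route as the paper: apply \cref{lem:open_map_characterisation} to ${!}\colon X \to 1$, and for the nontrivial direction verify Frobenius automatically by writing ${!}^*(p) \wedge a = \bigvee\{a \mid p = \top\}$ and using that the left adjoint $\exists$ preserves this join. The paper leaves the final identification $\bigvee\{\exists(a) \mid p = \top\} = p \wedge \exists(a)$ implicit, whereas you spell it out, but the argument is the same.
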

\begin{proof}
 If $X$ is overt then ${!}^*$ has a left adjoint by \cref{lem:open_map_characterisation}. Again by \cref{lem:open_map_characterisation}, for the other direction we need to show that $\exists({!}^*(b) \wedge a) = b \wedge \exists(a)$.
 But ${!}^*(b) \wedge a = \bigvee \{a \mid b = \top\}$ and this join is preserved by the left adjoint $\exists$. Hence, the result follows.
\end{proof}

Let us look more closely at the map $\exists$.
Since it maps into $\Omega$ it is determined by what it sends to $\top$.
Suppose $\exists(u) = \top$. Then if $u \le \bigvee S$ we have $u \le \bigvee \{s \mid s \in S\} \le \bigvee\{1 \mid s \in S\} = {!}^*(\llbracket \exists s \in S\rrbracket)$,
so that $\top = \exists(u) \le \llbracket \exists s \in S\rrbracket$ and hence $S$ is inhabited. We say that such an element $u$ is \emph{positive}.
\begin{definition}
 An open $u$ in a locale $X$ is said to be \emph{positive} (written $u > 0$) if $u \le \bigvee S$ implies $S$ is inhabited.
 We say $X$ is positive if $1 \in \O X$ is positive.
\end{definition}
This is a more useful way to formulate a frame element being nonzero constructively than simply saying $u \ne 0$.
It is easy to see that an open of a discrete locale is positive if and only if the corresponding subset is inhabited.
More generally, any open that contains a point is positive, but the converse does not hold.

\begin{lemma}
 A locale $X$ is positive if and only if the unique map ${!}\colon X \to 1$ is epic.
\end{lemma}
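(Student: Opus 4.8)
The plan is to unwind what ``${!}\colon X \to 1$ is epic'' means and compare it directly with positivity. Since $1$ is the terminal locale, ${!}$ is epic exactly when the unique frame homomorphism ${!}^*\colon \Omega \to \O X$ is a monomorphism in $\Frm$. I would first record that a frame homomorphism is monic if and only if it is injective; the nontrivial direction uses the existence of the free frame on one generator (\cref{sec:free_frames}): frame maps out of it pick out elements of the codomain, so a monomorphism cannot identify two distinct elements. Thus the statement reduces to showing that $X$ is positive if and only if ${!}^*$ is injective.

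For the direction ``positive $\Rightarrow$ injective'', the key observation is that positivity of $1 \in \O X$ makes ${!}^*(p) = 1$ equivalent to $p = \top$ for every $p \in \Omega$: one implication is immediate from the formula ${!}^*(p) = \bigvee\{1 \mid p = \top\}$ (when $p = \top$ this join is $\bigvee\{1\} = 1$), while conversely ${!}^*(p) = 1$ means $1 \le \bigvee\{1 \mid p = \top\}$, so positivity forces the set $\{1 \mid p = \top\}$ to be inhabited, whence $p = \top$. Now if ${!}^*(p) = {!}^*(q)$, substituting into this equivalence gives $(p = \top) \iff (q = \top)$, and since a truth value is determined by whether it equals $\top$ (recall $p = \llbracket p = \top\rrbracket$), we conclude $p = q$.

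For the converse, suppose ${!}^*$ is injective and let $S \subseteq \O X$ satisfy $1 \le \bigvee S$; I must show $S$ is inhabited. Set $p \coloneqq \llbracket \exists y.\ y \in S\rrbracket \in \Omega$. For each $y \in S$, the element $y$ itself witnesses that $S$ is inhabited, so under the hypothesis $y \in S$ we have $p = \top$ and hence $y \le 1 = {!}^*(p)$; therefore ${!}^*(p)$ is an upper bound for $S$ and $1 \le \bigvee S \le {!}^*(p) \le 1$, so ${!}^*(p) = 1 = {!}^*(\top)$. Injectivity then gives $p = \top$, i.e.\ $S$ is inhabited.

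The only real subtlety — and the part deserving the most care — is respecting constructive logic throughout: one cannot decide whether $p = \top$, so the arguments must be arranged to proceed purely positively (introducing the truth value $\llbracket\exists y.\ y \in S\rrbracket$ rather than splitting into cases on whether $S$ is inhabited), and the ``monic $=$ injective'' step must genuinely invoke the free frame on one generator rather than appeal to any underlying set of points.
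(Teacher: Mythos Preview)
Your proof is correct and follows essentially the same approach as the paper's. The paper also reduces epicness of ${!}$ to injectivity of ${!}^*$ (tacitly assuming the ``monic in $\Frm$ = injective'' step that you make explicit via the free frame on one generator), and for the converse direction it forms exactly the truth value $\llbracket \exists s \in S\rrbracket$ and uses injectivity just as you do. The only cosmetic difference is in the forward direction: the paper shows ${!}^*$ reflects order (if ${!}^*(p) \le {!}^*(q)$ then $p \le q$) rather than your equivalent route via ``${!}^*(p) = 1 \iff p = \top$'', but both arguments amount to the same computation with the subsingleton joins $\bigvee\{1 \mid p = \top\}$.
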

\begin{proof}
 Suppose $X$ is positive. We must show ${!}^*\colon \Omega \to \O X$ is injective.
 Suppose $\bigvee\{1 \mid p = \top\} \le \bigvee\{1 \mid q = \top\}$. Then if $p = \top$, we have $\bigvee\{1 \mid q = \top\} \ge 1 > 0$ and hence $q = \top$. Thus, ${!}^*$ reflects order and is therefore injective.
 
 Now suppose ${!}^*$ is injective and consider $\bigvee S = 1 \in \O X$. We must show $S$ is inhabited. Note that $1 \le \bigvee\{s \mid s \in S\} \le \{1 \mid s \in S\} = {!}^*(\llbracket \exists s \in S\rrbracket)$. But then ${!}^*(\llbracket \exists s \in S\rrbracket) = {!}^*(\top)$ and so $\llbracket \exists s \in S\rrbracket = \top$ by injectivity.
\end{proof}
\begin{corollary}\label{cor:image_and_preimage_of_positive_locales}
 If $f\colon X \to Y$ is a map of locales and $X$ is positive, then so is $Y$. The converse holds if $f$ is epic.
\end{corollary}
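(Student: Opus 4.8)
The plan is to reduce both claims to the characterisation of positivity established in the immediately preceding lemma: a locale $Z$ is positive if and only if the unique map ${!}\colon Z \to 1$ is epic. The one extra ingredient is the commuting triangle ${!}_Y \circ f = {!}_X$, which holds automatically by uniqueness of maps into the terminal locale $1$. After that everything is formal category theory: the relevant facts are that in any category a morphism through which an epimorphism factors on the right is itself an epimorphism, and that a composite of epimorphisms is an epimorphism.

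For the first assertion, suppose $X$ is positive, so that ${!}_X\colon X \to 1$ is epic. Since ${!}_X = {!}_Y \circ f$, the morphism ${!}_Y$ has an epimorphism factoring through it on the right, hence ${!}_Y$ is epic and $Y$ is positive. For the converse, assume in addition that $f$ is epic and that $Y$ is positive, so that ${!}_Y$ is epic. Then ${!}_X = {!}_Y \circ f$ is a composite of two epimorphisms and is therefore epic, so $X$ is positive.

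I do not expect any genuine obstacle here; the only thing to be careful about is that we are invoking the cancellation and composition properties of epimorphisms in the category $\Loc$ rather than arguing directly with frame homomorphisms, but since these properties hold in every category there is nothing further to verify once the earlier lemma is in hand.
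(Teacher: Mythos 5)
Your argument is correct and matches the paper's own (much terser) proof, which likewise invokes the preceding lemma and appeals to the standard epimorphism properties in $\Loc$. You have simply made the two uses explicit: right cancellation for the forward direction and closure under composition for the converse.
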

\begin{proof}
 This follows from simple properties of composites of epimorphisms.
\end{proof}

As suggested above the map $\exists$ measures the positivity of opens.
\begin{lemma}
 In an overt locale $\exists(u) = \llbracket u > 0 \rrbracket$.
\end{lemma}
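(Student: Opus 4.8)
The plan is to exploit the fact, established in the discussion of truth values, that every $p \in \Omega$ satisfies $p = \llbracket p = \top\rrbracket$. Since $\exists(u)$ is itself a truth value, it is completely determined by whether $\exists(u) = \top$, so it suffices to prove the logical equivalence $\exists(u) = \top \iff u > 0$. The claimed equality $\exists(u) = \llbracket u > 0\rrbracket$ then follows immediately, because logically equivalent propositions have the same truth value and $\exists(u) = \llbracket \exists(u) = \top\rrbracket$.

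One direction is already available: in the paragraph immediately preceding the statement it was shown that $\exists(u) = \top$ forces $u$ to be positive, which is exactly the implication $\exists(u) = \top \implies u > 0$.

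For the converse I would argue from the adjunction $\exists \dashv {!}^*$. Its unit gives $u \le {!}^*(\exists(u))$, and substituting the explicit description ${!}^*(p) = \bigvee\{1 \mid p = \top\}$ from the lemma identifying $\Omega$ as the initial frame, this becomes $u \le \bigvee S$ with $S = \{1 \mid \exists(u) = \top\} \subseteq \O X$. Assuming $u > 0$, the definition of positivity tells us the set $S$ is inhabited; but $S$ is inhabited precisely when $\exists(u) = \top$. Hence $u > 0 \implies \exists(u) = \top$, and combining the two implications completes the proof.

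There is no real obstacle here: the argument is a direct unwinding of the adjunction together with the definitions of $\exists$ and of positivity. The only point needing a little care is the now-familiar manipulation of the subsingleton join $\bigvee\{1 \mid \exists(u) = \top\}$, and in particular the observation that its index set is inhabited exactly when $\exists(u) = \top$.
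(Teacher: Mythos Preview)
Your proposal is correct and follows essentially the same route as the paper: the paper's proof likewise cites the already-proved implication $\exists(u) = \top \implies u > 0$, and for the converse uses the unit inequality $u \le {!}^*\exists(u) = \bigvee\{1 \mid \exists(u) = \top\}$ together with positivity of $u$ to conclude $\exists(u) = \top$. You are a bit more explicit than the paper in spelling out why the biconditional yields the equality of truth values, but the argument is the same.
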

\begin{proof}
 We have already shown $\exists(u) = \top \implies u > 0$.
 For the converse, suppose $u > 0$. Then $u \le {!}^*\exists(u) = \bigvee\{1 \mid \exists(u) = \top\}$ and hence $\exists(u) = \top$ by positivity.
\end{proof}
\begin{remark}
 Note that if an open $u$ in an overt locale $X$ is \emph{not} positive, then $\exists(u) \le \bot$ and hence $u \le {!}^*(\bot) = 0$.
\end{remark}

We can characterise overt locales in terms of having sufficiently many positive elements. For this reason overt locales are sometimes called \emph{locally positive} locales.
\begin{proposition}\label{prop:locally_positive}
 A locale $X$ is overt if and only if $\O X$ has a base of positive elements.
\end{proposition}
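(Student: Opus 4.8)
The plan is to run both implications through the characterisation established above: a locale $X$ is overt precisely when the unique frame map ${!}^*\colon \Omega \to \O X$, $p \mapsto \bigvee\{1 \mid p = \top\}$, has a left adjoint $\exists\colon \O X \to \Omega$, and in that case $\exists(u) = \llbracket u > 0\rrbracket$. I will also use freely that positivity is upward closed: if $u > 0$ and $u \le v$ then $v > 0$, which is immediate from the definition since any cover of $v$ is a cover of $u$.

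For the forward implication, assume $X$ is overt and fix $a \in \O X$; the goal is to show $a = \bigvee\{u \in \O X \mid u \le a \text{ and } u > 0\}$, where only the inequality ``$\le$'' needs work. The key step is the identity $v = \bigvee\{v \mid v > 0\}$, valid for every $v \in \O X$: the unit of the adjunction gives $v \le {!}^*\exists(v) = \bigvee\{1 \mid v > 0\}$, and meeting with $v$ and applying frame distributivity yields $v = v \wedge \bigvee\{1 \mid v > 0\} = \bigvee\{v \mid v > 0\}$. Now write $a = \bigvee\{v \in \O X \mid v \le a\}$; for each such $v$ the identity gives $v = \bigvee\{v \mid v > 0\}$, and since $v$ itself lies in $\{u \mid u \le a, u > 0\}$ whenever $v > 0$, we obtain $v \le \bigvee\{u \mid u \le a, u > 0\}$. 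Joining over all $v \le a$ gives the required inequality, so the positive opens form a base.

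For the converse, assume $\O X$ has a base $B$ of positive elements; by the characterisation above it suffices to produce a left adjoint to ${!}^*$. I would take $\exists\colon \O X \to \Omega$ defined by $\exists(a) = \llbracket \exists b \in B.\ b \le a\rrbracket$ (clearly monotone) and verify the adjunction $\exists(a) \le p \iff a \le {!}^*(p)$ for all $a \in \O X$, $p \in \Omega$, recalling ${!}^*(p) = \bigvee\{1 \mid p = \top\}$. For ``$\Leftarrow$'': if $a \le {!}^*(p)$ and $\exists(a) = \top$, pick $b \in B$ with $b \le a$; then $b$ is positive and $b \le \bigvee\{1 \mid p = \top\}$, so the set $\{1 \mid p = \top\}$ is inhabited, i.e.\ $p = \top$. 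For ``$\Rightarrow$'': if $\exists(a) \le p$, then since $B$ is a base we have $a = \bigvee\{b \in B \mid b \le a\}$, so it suffices to check $b \le {!}^*(p)$ for each $b \in B$ with $b \le a$; for such a $b$ we have $\exists(a) = \top$, hence $p = \top$ by hypothesis, hence ${!}^*(p) = 1 \ge b$. This establishes $\exists \dashv {!}^*$, so $X$ is overt.

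I expect the only real subtlety — rather than a genuine obstacle — to be handling the ``subsingleton'' joins $\bigvee\{v \mid v > 0\}$ and $\bigvee\{1 \mid p = \top\}$ correctly: one is tempted to split into the cases $v > 0$ and $\neg(v > 0)$, which is unavailable constructively, so instead everything must be pushed through the distributivity identity $v = \bigvee\{v \mid v > 0\}$ and through the definition of positivity applied to these specific joins. The remaining verifications are routine manipulations of the adjunction $\exists \dashv {!}^*$ together with the fact that a base expresses each open as the join of the base elements below it.
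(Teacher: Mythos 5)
Your proof is correct and follows essentially the same path as the paper's: the forward direction rests on the identity $u = \bigvee\{u \mid u > 0\}$, obtained by meeting the unit inequality $u \le {!}^*\exists(u)$ with $u$, and the converse constructs a left adjoint to ${!}^*$ from the base. The only cosmetic difference is that you define $\exists(a) = \llbracket\,\exists b \in B.\ b \le a\,\rrbracket$ and verify the adjunction directly, whereas the paper defines $\exists(u) = \llbracket u > 0\rrbracket$ and checks the unit and counit inequalities; since $B$ consists of positive elements and positivity is upward closed, these two candidate left adjoints coincide, so the arguments are interchangeable.
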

\begin{proof}
 Suppose $X$ is overt. We will show the set of all positive elements forms a base. For $u \in \O X$ we have $u \le {!}^*\exists(u) = \bigvee\{1 \mid u > 0\}$.
 Meeting with $u$ then gives $u = \bigvee\{u \mid u > 0\}$, which is indeed a join of positive elements.
 
 Now suppose $B$ is a base of positive elements for $\O X$. We claim the map $\exists\colon u \mapsto \llbracket u > 0\rrbracket$ is left adjoint to ${!}^*$.
 It is clear that if $\bigvee\{1 \mid p = \top\} > 0$ then $p = \top$ and so $\exists \circ {!}^* \le \id_\Omega$.
 Now take $a \in \O X$. We have $a = \bigvee\{b \in B \mid b \le a\} \le \bigvee\{1 \mid b \in B,\, b \le a\} = {!}^*(\llbracket \exists b \in B.\ b \le a \rrbracket)$.
 But if $a \ge b > 0$ then $a > 0$ and so $a \le {!}^*(\llbracket \exists b \in B.\ b \le a \rrbracket) \le {!}^*\exists(a)$. Thus, $\id_{\O X} \le {!}^*\circ \exists$ and $\exists$ is left adjoint to ${!}^*$.
\end{proof}
\begin{remark}
 If $X$ is overt we can replace any join by one of consisting only of positive elements, since as above $\bigvee A = \bigvee\{\bigvee\{a \mid a > 0\} \mid a \in A\} = \bigvee\{a \in A \mid a > 0 \}$.
\end{remark}

It is now clear that discrete locales are overt, since the singletons form a base of positive elements.
Moreover, classically the set of nonzero elements always form a base and so \emph{every} locale is overt!

\begin{lemma}\label{lem:open_subloc_of_overt}
 Open sublocales of overt locales are overt.
\end{lemma}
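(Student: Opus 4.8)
The plan is to apply the characterisation of overtness from \cref{prop:locally_positive}: a locale is overt precisely when its frame has a base of positive elements. Recall from \cref{def:open_sublocales} that an open sublocale of $X$ is the locale $U$ with $\O U = {\downarrow} a$ for some $a \in \O X$, the quotient frame map $\O X \to {\downarrow} a$ being $u \mapsto u \wedge a$. The key structural fact I will use is that the join of a family of elements lying below $a$ is the same whether computed in ${\downarrow} a$ or in $\O X$ (since $a$ is an upper bound of any such family, the join taken in $\O X$ already lies below $a$).

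The one point needing care is that positivity is a notion relative to the ambient frame, so I would first check that for $b \le a$, the element $b$ is positive in the frame ${\downarrow} a$ if and only if it is positive in $\O X$. The forward direction is immediate since joins in ${\downarrow} a$ are computed as in $\O X$. For the converse, if $b \le \bigvee S$ with $S \subseteq \O X$, then meeting with $a$ and using frame distributivity gives $b = b \wedge a \le \bigvee_{s \in S}(s \wedge a)$, a join of elements of ${\downarrow} a$; positivity of $b$ in ${\downarrow} a$ then forces $\{s \wedge a \mid s \in S\}$, and hence $S$, to be inhabited.

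Now suppose $X$ is overt, so by \cref{prop:locally_positive} the frame $\O X$ has a base $B$ consisting of positive elements. I claim $B_a \coloneqq \{b \in B \mid b \le a\}$ is a base of positive elements for ${\downarrow} a$. For any $u \le a$ we have $u = \bigvee\{b \in B \mid b \le u\}$, and every $b$ in this set satisfies $b \le u \le a$, so it already lies in $B_a$; hence $u = \bigvee\{b \in B_a \mid b \le u\}$ and $B_a$ is a base of ${\downarrow} a$. Each element of $B_a$ is positive in $\O X$ and lies below $a$, so by the observation above it is positive in ${\downarrow} a$. Applying \cref{prop:locally_positive} once more, we conclude that $U$ is overt.

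I do not anticipate any serious difficulty; the only subtlety is the relativisation of positivity in the second paragraph, which is a one-line distributivity argument. (An alternative route would be to note that the inclusion $U \hookrightarrow X$ and the map ${!}\colon X \to 1$ are both open and that open maps compose — the latter being a quick consequence of \cref{lem:open_map_characterisation} — but the base argument above is more self-contained given what has already been established.)
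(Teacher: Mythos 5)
Your proof is correct, but it takes a different route from the paper. The paper's proof is exactly the one you sketch in your final parenthetical: overtness of $X$ means ${!}\colon X \to 1$ is open, open maps compose, so the composite $U \hookrightarrow X \to 1$ is open and $U$ is overt. That is a two-line ``morphism-level'' argument; yours is the ``element-level'' argument via \cref{prop:locally_positive}, made self-contained by checking that positivity relative to ${\downarrow} a$ agrees with positivity relative to $\O X$ for elements below $a$. Both approaches are sound. The composition argument is shorter, sits closer to the definition, and generalises immediately to other classes of maps closed under composition; your argument produces an explicit base of positive elements for the open sublocale, which can be useful for concrete calculations. One small slip: in the relativisation step you have the labels ``forward'' and ``converse'' swapped --- the direction that is immediate from the agreement of joins (and the one the proof actually needs) is that $\O X$-positivity implies ${\downarrow} a$-positivity, while your distributivity argument proves the reverse implication --- but both directions are present and correct, so this is only a matter of labelling.
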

\begin{proof}
 Observe that the composition of open maps is open. Thus, if ${!}\colon X \to 1$ is open and $U \hookrightarrow X$ is open, then so is the composite map from $U$ to $1$.
\end{proof}
\begin{remark}
 This result is in a sense dual to the more familiar result that closed subspaces of compact spaces are compact (see \cref{lem:closed_subloc_of_compact}).
 It is also intuitively clear from the point of view of verifiability, since we can verify $\exists x \in U.\ \phi(x)$ by verifying $\exists x \in X.\ x \in U \land \phi(x)$.
\end{remark}
\begin{lemma}\label{lem:image_of_overt_subloc}
 Images of overt (sub)locales are overt.
\end{lemma}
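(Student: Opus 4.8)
The plan is to reduce the statement to the claim that the codomain of an epimorphism out of an overt locale is again overt. By the construction of images via the (epi, regular mono) factorisation, the image ${\Sloc f}_!(S)$ of an overt sublocale $S \hookrightarrow X$ under a morphism $f\colon X \to Y$ comes equipped with an epimorphism $e\colon S \twoheadrightarrow {\Sloc f}_!(S)$ in $\Loc$: it is precisely the epi part of the factorisation of the composite $S \hookrightarrow X \xrightarrow{f} Y$. So it suffices to show that if $e\colon S \twoheadrightarrow T$ is an epimorphism of locales with $S$ overt, then $T$ is overt.

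First I would record that, since $e$ is epic in $\Loc$, the frame homomorphism $e^*\colon \O T \to \O S$ is monic in $\Frm$ and hence injective (the forgetful functor $\Frm \to \Set$ has a left adjoint). Being an injective frame homomorphism, $e^*$ is in fact an order-embedding: from $e^*(a) \le e^*(b)$ we get $e^*(a) = e^*(a) \wedge e^*(b) = e^*(a \wedge b)$, whence $a = a \wedge b$, i.e.\ $a \le b$. Next, since $S$ is overt, the characterisation of overtness via a left adjoint to the terminal frame map supplies a map $\exists_S\colon \O S \to \Omega$ with $\exists_S \dashv {!}_S^*$. Finally, uniqueness of maps into the terminal locale $1$ gives ${!}_S = {!}_T \circ e$, hence ${!}_S^* = e^* \circ {!}_T^*$.

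The key step is then to verify that $\exists_S \circ e^*\colon \O T \to \Omega$ is left adjoint to ${!}_T^*$. For $b \in \O T$ and $p \in \Omega$ we have $\exists_S(e^*(b)) \le p$ iff $e^*(b) \le {!}_S^*(p) = e^*({!}_T^*(p))$, using $\exists_S \dashv {!}_S^*$ and ${!}_S^* = e^* \circ {!}_T^*$; and since $e^*$ is an order-embedding, this is equivalent to $b \le {!}_T^*(p)$. Thus ${!}_T^*$ has a left adjoint, and applying the characterisation of overtness once more we conclude that $T$, and hence ${\Sloc f}_!(S)$, is overt.

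I do not expect a serious obstacle: the one slightly delicate point is the observation that an epimorphism of locales induces not merely an injection but an order-embedding on frames of opens, which is exactly what makes the adjunction computation go through; everything else is bookkeeping. One could instead try to argue via \cref{prop:locally_positive} by transporting a base of positive elements of $\O S$ along $e^*$ and its right adjoint, but this is more awkward than the adjoint argument above, since that right adjoint need not preserve joins.
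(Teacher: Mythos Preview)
Your proposal is correct and follows essentially the same approach as the paper: reduce to the case of a locale epimorphism (equivalently, a subframe inclusion $e^*\colon \O T \hookrightarrow \O S$), and show that $\exists_S \circ e^*$ is left adjoint to ${!}_T^*$ via the chain $\exists_S(e^*(b)) \le p \iff e^*(b) \le {!}_S^*(p) = e^*({!}_T^*(p)) \iff b \le {!}_T^*(p)$, using that $e^*$ is an order-embedding. You simply spell out a few details (why injective frame maps reflect order, why ${!}_S^* = e^* \circ {!}_T^*$) that the paper leaves implicit.
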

\begin{proof}
 It suffices to show subframes of overt frames are overt.
 Let $e^*\colon \O Y \hookrightarrow \O X$ be a subframe inclusion and suppose ${!}^*\colon \Omega \to \O X$ has a left adjoint $\exists\colon \O X \to \Omega$.
 We claim that $\exists \circ e^*$ is the left adjoint of ${!}^*\colon \O Y \to \Omega$.
 Indeed, observe that $\exists(e^*(u)) \le p \iff e^*(u) \le {!}^*(p) = e^*({!}^*(p)) \iff u \le {!}^*(p)$, since $e^*$ is an order embedding.
\end{proof}

The following theorem is an important result about open maps.
\begin{theorem}\label{thm:open_pullback_stable}
 Open maps are stable under pullback.
\end{theorem}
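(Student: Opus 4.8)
The plan is to use the characterisation of open maps from \cref{lem:open_map_characterisation}: $f$ is open exactly when $f^*$ has a left adjoint $f_!$ with $f_!(f^*(b)\wedge a) = b\wedge f_!(a)$. So fix an open map $f\colon X\to Y$ together with such an $f_!\colon \O X\to \O Y$, fix an arbitrary $g\colon Z\to Y$, and form the pullback $W = X\times_Y Z$ with projections $p\colon W\to X$ and $\pi\colon W\to Z$. I want to produce a left adjoint $\pi_!\colon \O W\to\O Z$ to $\pi^*$ and check that it too satisfies Frobenius reciprocity; by \cref{lem:open_map_characterisation} this shows $\pi$ is open.

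First I would pin down $\O W$. A pullback in $\Loc$ is a pushout of frames, so $\O W$ is the pushout of $f^*$ and $g^*$, i.e.\ the quotient of the coproduct $\O X\oplus\O Z\cong\O X\otimes\O Z$ (\cref{lem:frame_coproduct_as_tensor_product}) by the congruence identifying $\iota_X(f^*(b))$ with $\iota_Z(g^*(b))$ for all $b\in\O Y$; composing the quotient with the $\iota$'s recovers $p^*$ and $\pi^*$. By the coverage theorem (\cref{thm:suplattice_coverage}), applied to a suitable presentation of this pushout, the congruence in question is already the \emph{suplattice} congruence generated by those pairs — so suplattice maps out of $\O W$ correspond to suplattice maps out of $\O X\otimes\O Z$ that coequalise the two $\O Y$-actions. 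Writing $a\otimes c\coloneqq p^*(a)\wedge\pi^*(c)$ in $\O W$, we then have: every open of $W$ is a join of such rectangles; $p^*(a) = a\otimes 1$ and $\pi^*(c) = 1\otimes c$; meets of rectangles are coordinatewise; and commutativity of the pullback square gives the crucial identity $f^*(b)\otimes c = \pi^*(g^*(b)\wedge c) = 1\otimes(g^*(b)\wedge c)$.

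Now I would define $\pi_!$. The map $\O X\times\O Z\to\O Z$, $(a,c)\mapsto g^*(f_!(a))\wedge c$, is bilinear — each of $f_!$, $g^*$ and $({-})\wedge c$ preserves joins — hence it is a suplattice map $\O X\otimes\O Z\to\O Z$. It coequalises the two $\O Y$-actions: it sends $f^*(b)\otimes 1$ to $g^*(f_!(f^*(b)))$ and $1\otimes g^*(b)$ to $g^*(f_!(1))\wedge g^*(b)$, and these agree precisely because Frobenius for $f$ gives $f_!(f^*(b)) = b\wedge f_!(1)$. So it descends to a suplattice map $\pi_!\colon \O W\to\O Z$ with $\pi_!(a\otimes c) = g^*(f_!(a))\wedge c$. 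It remains to verify on rectangles — and hence everywhere, since every map in sight preserves joins — that $\pi_!\dashv\pi^*$ and that $\pi_!(\pi^*(c)\wedge w) = c\wedge\pi_!(w)$. The counit $\pi_!(\pi^*(c)) = g^*(f_!(1))\wedge c\le c$ is immediate; the unit $a\otimes c\le f^*(f_!(a))\otimes c = \pi^*(\pi_!(a\otimes c))$ follows from $a\le f^*(f_!(a))$ together with the crucial identity above; and for Frobenius, $\pi^*(c)\wedge(a\otimes c') = a\otimes(c\wedge c')$ by coordinatewise meets, so both sides of the required equation equal $g^*(f_!(a))\wedge c\wedge c'$.

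The main obstacle is the second step: obtaining a workable description of the pushout frame $\O W$, and in particular the fact that the pushout congruence is already generated at the level of suplattices, so that $\pi_!$ can be constructed via the suplattice universal property without having to worry about meet-closure. This is exactly what the coverage theorem buys; once it is in hand, all the remaining verifications are short, and each identity used is forced by the Frobenius hypothesis on $f$.
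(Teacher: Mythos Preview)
Your approach is exactly the paper's: describe $\O W$ as a suplattice quotient of $\O X\otimes\O Z$ via the coverage theorem, define $\pi_!$ on rectangles by $a\otimes c\mapsto g^*(f_!(a))\wedge c$, and then verify adjointness and Frobenius on rectangles.

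There is one imprecision in the well-definedness step. The coverage theorem only applies once the relations have been made meet-stable, so the suplattice congruence you actually obtain is generated not by the pairs $f^*(b)\otimes 1\sim 1\otimes g^*(b)$ alone but by their meet-closure $(a\wedge f^*(b))\otimes c\sim a\otimes(g^*(b)\wedge c)$ for all $a\in\O X$, $c\in\O Z$ (this is precisely what the paper writes down). Checking only the case $a=c=1$, which uses only the special instance $f_!(f^*(b))=b\wedge f_!(1)$ of Frobenius, does not suffice to show your map descends. The fix is immediate and uses the full Frobenius hypothesis: your map sends $(a\wedge f^*(b))\otimes c$ to $g^*\bigl(f_!(a\wedge f^*(b))\bigr)\wedge c = g^*(f_!(a)\wedge b)\wedge c = g^*(f_!(a))\wedge g^*(b)\wedge c$, which is exactly the image of $a\otimes(g^*(b)\wedge c)$. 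With this correction, everything else goes through as you wrote it.
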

\begin{proof}
 Consider the following pullback diagram in $\Loc$.
 \begin{center}
  \begin{tikzpicture}[node distance=2.5cm, auto]
    \node (A) {$X\times_Z Y$};
    \node (B) [below of=A] {$X$};
    \node (C) [right of=A] {$Y$};
    \node (D) [right of=B] {$Z$};
    \draw[->] (A) to node [swap] {$g'$} (B);
    \draw[->] (A) to node {$f'$} (C);
    \draw[->] (B) to node [swap] {$f$} (D);
    \draw[->] (C) to node {$g$} (D);
    \begin{scope}[shift=({A})]
        \draw +(0.3,-0.6) -- +(0.6,-0.6) -- +(0.6,-0.3);
    \end{scope}
  \end{tikzpicture}
 \end{center}
 The frame $\O (X\times_Z Y)$ can be found as a quotient of the frame coproduct $\O X \oplus \O Y$ by the congruence generated by $\pi_1^* f^*(w) \sim \pi_2^* g^*(w)$ for $w \in \O Z$.
 As in \cref{lem:frame_coproduct_as_tensor_product} we can show that $\O (X\times_Z Y)$ is order isomorphic to the quotient of $\O X \otimes \O Y$
 by the suplattice congruence generated by $(u \wedge f^*(w)) \otimes v \sim u \otimes (g^*(w) \wedge v)$.
 In terms of this description we have $g^{\prime *}(u) = [u \oplus 1]$ and $f^{\prime *}(v) = [1 \oplus v]$.
 
 Now suppose $g$ is an open map and consider the assignment $[u \otimes v] \mapsto u \wedge f^*g_!(v)$.
 Note that $(u \wedge f^*(w)) \otimes v \mapsto u \wedge f^*(w \wedge g_!(v))$ and $u \otimes (g^*(w) \wedge v) \mapsto u \wedge f^*g_!(g^*(w) \wedge v)$,
 which coincide by openness (and \cref{lem:open_map_characterisation}). Thus, using the suplattice presentation above, this defines a suplattice homomorphism $g'_!\colon \O (X\times_Z Y) \to \O X$.
 
 As the notation suggests, $g'_!$ is left adjoint to $g^{\prime *}$. Indeed, $g'_!g^{\prime *}(u) = u \wedge f^*g_!(1) \le u$ and
 $g^{\prime *}g'_!([u \otimes v]) = [(u \wedge f^*g_!(v)) \otimes 1] = [u \otimes g^*g_!(v)] \ge [u \otimes v]$.
 Finally, we have $g'_!(g^{\prime *}(u') \wedge [u \otimes v]) = g'_!([(u' \wedge u) \otimes v]) = u' \wedge u \wedge f^*g_!(v) = u' \wedge g'_!([u \otimes v])$ and hence $g'$ is open.
\end{proof}

We can now finally prove the characterisation of overt locales that we originally motivated intuitively.

\begin{corollary}
 A locale $X$ is overt if and only if for all locales $Y$ the product projection $\pi_2\colon X \times Y \to Y$ is open.
\end{corollary}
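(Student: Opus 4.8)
The plan is to deduce this directly from \cref{thm:open_pullback_stable} together with \cref{def:overt}, observing that each projection $\pi_2\colon X \times Y \to Y$ is nothing but a pullback of the terminal map ${!}\colon X \to 1$.

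First I would record the basic categorical fact that the product $X \times Y$ is the pullback of the cospan $X \xrightarrow{{!}} 1 \xleftarrow{{!}} Y$, and that under this identification the projection $\pi_2\colon X \times Y \to Y$ is precisely the pullback of ${!}\colon X \to 1$ along ${!}\colon Y \to 1$. This is immediate since $1$ is terminal, so no real work is needed here.

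For the forward direction, suppose $X$ is overt, so that ${!}\colon X \to 1$ is open by \cref{def:overt}. Then for any locale $Y$, the projection $\pi_2\colon X \times Y \to Y$ is open by \cref{thm:open_pullback_stable}, being a pullback of an open map. For the converse, I would simply specialise to $Y = 1$: the projection $\pi_2\colon X \times 1 \to 1$ is open by hypothesis, and composing with the canonical isomorphism $X \cong X \times 1$ (which is open, being an isomorphism) shows that ${!}\colon X \to 1$ is open, i.e.\ $X$ is overt.

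I do not expect any genuine obstacle; the statement is essentially a repackaging of \cref{thm:open_pullback_stable}, and indeed the remark following \cref{def:overt} already anticipates it. The only points requiring a modicum of care are the identification of $\pi_2$ as the relevant pullback and the use of the canonical iso $X \cong X \times 1$ in the converse, both of which are routine.
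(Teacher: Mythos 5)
Your proof is correct and takes the same route as the paper: both directions are handled exactly as you describe, with the forward direction following from \cref{thm:open_pullback_stable} applied to the pullback of ${!}\colon X \to 1$ along ${!}\colon Y \to 1$, and the converse by specialising to $Y = 1$. The only cosmetic difference is that you spell out the use of the canonical isomorphism $X \cong X \times 1$, which the paper leaves implicit.
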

\begin{proof}
 The projection condition gives our definition of overtness if we take $Y = 1$. On the other hand, if $X$ is overt we can consider the following pullback diagram.
 \begin{center}
  \begin{tikzpicture}[node distance=2.5cm, auto]
    \node (A) {$X\times Y$};
    \node (B) [below of=A] {$X$};
    \node (C) [right of=A] {$Y$};
    \node (D) [right of=B] {$1$};
    \draw[->] (A) to node [swap] {$\pi_1$} (B);
    \draw[->] (A) to node {$\pi_2$} (C);
    \draw[->] (B) to node [swap] {${!}$} (D);
    \draw[->] (C) to node {${!}$} (D);
    \begin{scope}[shift=({A})]
        \draw +(0.3,-0.6) -- +(0.6,-0.6) -- +(0.6,-0.3);
    \end{scope}
  \end{tikzpicture}
 \end{center}
 \Cref{thm:open_pullback_stable} gives that $\pi_2$ is open since ${!}\colon X \to 1$ is.
 Explicitly, the left adjoint of $\pi_2^*$ is given by $(\pi_2)_!\colon u \otimes v \mapsto {!}^*\exists(u) \wedge v$.
\end{proof}

It is now easy to see that overt locales are closed under Bishop-finite products.
\begin{lemma}\label{lem:product_of_overt_locales}
 If $X$ and $Y$ are overt, then so is $X \times Y$.
\end{lemma}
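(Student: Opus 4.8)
The plan is to deduce this directly from the fact that open maps are stable under pullback (\cref{thm:open_pullback_stable}) together with the fact that a composite of open maps is open (as used in the proof of \cref{lem:open_subloc_of_overt}). The key observation is that the projection $\pi_2 \colon X \times Y \to Y$ is exactly the pullback of ${!}\colon X \to 1$ along ${!}\colon Y \to 1$ — this is the pullback square that already appeared in the proof of the corollary characterising overtness via projections.

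So the steps I would carry out are: first, since $X$ is overt, ${!}\colon X \to 1$ is open, whence \cref{thm:open_pullback_stable} gives that $\pi_2\colon X \times Y \to Y$ is open. Second, since $Y$ is overt, ${!}\colon Y \to 1$ is open. Third, the unique map ${!}\colon X \times Y \to 1$ factors as $X \times Y \xrightarrow{\pi_2} Y \xrightarrow{{!}} 1$, and a composite of open maps is open, so ${!}\colon X \times Y \to 1$ is open; that is precisely the statement that $X \times Y$ is overt. Since the empty product is the terminal locale $1$, which is trivially overt, this binary case also yields closure under arbitrary Bishop-finite products by induction.

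I do not anticipate any real obstacle: all of the work has been done in \cref{thm:open_pullback_stable} and the preceding corollary, which is why the text can say ``it is now easy to see''. If one preferred a hands-on argument one could instead compute the left adjoint $\exists\colon \O(X\times Y) \to \Omega$ explicitly as the composite of $(\pi_2)_!\colon u \otimes v \mapsto {!}^*\exists(u) \wedge v$ with the map $\exists\colon \O Y \to \Omega$, which on basic rectangles sends $u \otimes v$ to $\llbracket u > 0\rrbracket \wedge \llbracket v > 0\rrbracket$; but the abstract argument via pullback-stability and composition is cleaner and avoids any bookkeeping with the suplattice presentation of $\O X \otimes \O Y$.
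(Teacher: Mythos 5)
Your proposal is correct and matches the paper's argument exactly: the paper also factors ${!}\colon X\times Y\to 1$ as $\pi_2$ followed by ${!}\colon Y\to 1$, noting that $\pi_2$ is open by pullback-stability (the corollary immediately preceding) and ${!}\colon Y\to 1$ is open by overtness of $Y$, then concludes by composability of open maps. Your extra remarks on the explicit left adjoint and Bishop-finite induction are fine but beyond what the paper records.
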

\begin{proof}
 The map ${!}\colon X \times Y \to 1$ can be expressed as the composite of $\pi_2\colon X \times Y \to Y$ and ${!}\colon Y \to 1$, which are both open. Thus, it is open too.
\end{proof}
This result can also be understood intuitively: we can existentially quantify over the product by quantifying over each factor in turn.

The following result will help give a Brouwerian counterexample to the claim that all locales are overt.
\begin{lemma}\label{lem:overt_subloc_of_discrete}
 Every overt sublocale of a discrete locale is open.
\end{lemma}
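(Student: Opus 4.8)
The plan is to exhibit $S$ directly as the open sublocale associated with a suitable subset of $X$. Since $X$ is discrete we have $\O X = \Omega^X$, with the singletons $\{x\}$ forming a base, and the sublocale $S$ is given by a frame surjection $q^*\colon \Omega^X \twoheadrightarrow \O S$. Overtness of $S$ means the unique frame map ${!}^*\colon \Omega \to \O S$ has a left adjoint $\exists$, and we know $\exists(a) = \llbracket a > 0\rrbracket$. I would set $U = \{x \in X \mid q^*(\{x\}) > 0\}$, a genuine subset of $X$, and aim to show that $q^*$ is, up to isomorphism, the open quotient $\Omega^X \to {\downarrow}U$, $V \mapsto V \cap U$, from \cref{def:open_sublocales}.

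First I would check that $q^*$ factors through this open quotient. For any $V \subseteq X$ we have $q^*(V) = \bigvee_{x \in V} q^*(\{x\})$, and since $S$ is overt this join may be taken over positive elements only (the remark after \cref{prop:locally_positive}); as $q^*(\{x\}) > 0$ precisely when $x \in U$, this gives $q^*(V) = q^*(V \cap U)$. Hence $q^*$ factors as $\Omega^X \xrightarrow{r^*} {\downarrow}U \xrightarrow{\bar q^*} \O S$, where $r^*(V) = V \cap U$ is the open sublocale quotient and $\bar q^*$ is a frame surjection.

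The crux is to show $\bar q^*$ is injective, so that it is a frame isomorphism and $S$ coincides with the open sublocale of $U$. Suppose $W_1, W_2 \subseteq U$ with $q^*(W_1) \le q^*(W_2)$ and take $x \in W_1$. The key computation uses discreteness in the form $\{x\} \cap \{y\} = {!}^*(\llbracket x = y\rrbracket) \cap \{x\}$ in $\Omega^X$; meeting $q^*(\{x\}) \le q^*(W_2) = \bigvee_{y \in W_2} q^*(\{y\})$ with $q^*(\{x\})$ and applying $q^*$ (and $q^* \circ {!}^* = {!}^*$, by initiality of $\Omega$) yields
\[ q^*(\{x\}) \;=\; \bigvee_{y \in W_2} q^*(\{x\} \cap \{y\}) \;=\; q^*(\{x\}) \wedge {!}^*\bigl(\llbracket x \in W_2\rrbracket\bigr), \]
so $q^*(\{x\}) \le {!}^*(\llbracket x \in W_2\rrbracket)$. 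Applying $\exists$ and using $x \in U$ gives $\top = \exists(q^*(\{x\})) \le \llbracket x \in W_2\rrbracket$, i.e.\ $x \in W_2$. Thus $W_1 \subseteq W_2$; by symmetry $q^*(W_1) = q^*(W_2)$ forces $W_1 = W_2$, so $\bar q^*$ is injective. A bijective frame homomorphism is an isomorphism, and composing the open quotient $r^*$ with an isomorphism yields the same sublocale, so $S$ is the open sublocale determined by $U$.

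The main obstacle is precisely this injectivity step: without it we would only know that $q^*$ factors through \emph{some} open quotient, not that it equals one. The device that makes it work is the identity $\{x\} \cap \{y\} = {!}^*(\llbracket x = y\rrbracket) \cap \{x\}$, which turns the meet of two basic opens of the discrete locale into the initial frame map applied to the truth value of an equality — exactly the sort of expression that the left adjoint $\exists$ can detect. (Alternatively one could repackage the same computation as an explicit left adjoint $g_!\colon \O S \to \Omega^X$, $a \mapsto \{x \mid q^*(\{x\}) \le a \text{ and } q^*(\{x\}) > 0\}$, of $q^*$, verify the Frobenius identity $g_!(q^*(b) \wedge a) = b \wedge g_!(a)$, and conclude via \cref{lem:open_map_characterisation} that the inclusion $S \hookrightarrow X$ is open, hence an open sublocale.)
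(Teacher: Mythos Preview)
Your argument is correct, but it takes a different route from the paper. The paper gives a short categorical proof: it observes that the inclusion $i\colon V \hookrightarrow X$ factors as $\pi_1 \circ (i,\id)$, where $(i,\id)\colon V \to X \times V$ is the pullback of the open diagonal $X \hookrightarrow X \times X$ along $X \times i$ (hence open by \cref{thm:open_pullback_stable} and \cref{cor:open_diagonal_in_discrete}), and $\pi_1\colon X \times V \to X$ is open because $V$ is overt. The intuitive content --- ``verify $x \in V$ by checking $\exists y \in V.\ x = y$'' --- is exactly what your key identity $\{x\} \cap \{y\} = {!}^*(\llbracket x=y\rrbracket) \wedge \{x\}$ encodes, but the paper packages it abstractly rather than computing with singletons.

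Your approach has the merit of being entirely elementary and of explicitly identifying the open set $U = \{x \mid q^*(\{x\}) > 0\}$, which the categorical proof leaves implicit. The paper's approach, on the other hand, is shorter, avoids any calculation specific to powerset frames, and makes transparent that only ``open diagonal'' (not full discreteness) is used --- so the same argument shows that overt sublocales of any locale with open diagonal are open. Your parenthetical alternative (building $g_!$ directly and invoking \cref{lem:open_map_characterisation}) is in fact closer in spirit to what the paper does, just unwound into coordinates.
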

\begin{proof}
 Intuitively, if $V$ is an overt sublocale of the discrete locale $X$, we can verify $x \in V$ by showing $\exists y \in V.\ x = y$.
 More formally, consider the following pullback.
 \begin{center}
  \begin{tikzpicture}[node distance=2.5cm, auto]
   \node (A) {$V$};
   \node (B) [below of=A] {$X$};
   \node (C) [right of=A] {$X \times V$};
   \node (D) [right of=B] {$X \times X$};
   \draw[right hook->] (A) to node [swap] {$i$} (B);
   \draw[->] (A) to node {$(i, \id)$} (C);
   \draw[->] (B) to node [swap] {$(\id, \id)$} (D);
   \draw[right hook->] (C) to node {$X \times i$} (D);
   \begin{scope}[shift=({A})]
     \draw +(0.3,-0.6) -- +(0.6,-0.6) -- +(0.6,-0.3);
   \end{scope}
  \end{tikzpicture}
 \end{center}
 Since the diagonal $(\id, \id)$ is open by \cref{cor:open_diagonal_in_discrete}, so is $(i, \id)$.
 Since $V$ is overt, $\pi_1\colon X \times V \to X$ is open.
 Thus, the composite $i = \pi_1 (i, \id)$ is open.
\end{proof}

\begin{corollary}
 If every closed sublocale of $1$ is overt, then excluded middle holds.
\end{corollary}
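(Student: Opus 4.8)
The plan is to observe first that, for a sublocale $S$ of the terminal locale $1$, the unique locale map $S\to 1$ is precisely the sublocale inclusion, so $S$ is overt exactly when that inclusion is open, i.e.\ exactly when $S$ is an \emph{open} sublocale of $1$ (equivalently, since $1$ is discrete — its frame of opens is $\Omega=\Omega^{\{*\}}$ — this is an instance of \cref{lem:overt_subloc_of_discrete}). Hence the hypothesis says exactly that \emph{every closed sublocale of $1$ is also an open sublocale of $1$}.

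Now fix an arbitrary proposition $\phi$ and set $p=\llbracket\phi\rrbracket\in\Omega=\O 1$. Write $O_p$ and $C_p$ for the open and closed sublocales of $1$ associated to $p$ (\cref{def:open_sublocales,def:closed_sublocales}); these are mutual complements in the distributive lattice $\Sloc 1$. By hypothesis $C_p$ is overt, hence open, so $C_p=O_{p'}$ for some $p'\in\Omega$. Viewing $O_{p'}$ as an open sublocale, its complement in $\Sloc 1$ is the closed sublocale $C_{p'}$; but the complement of $C_p=O_{p'}$ is also $O_p$, and complements in the distributive lattice $\Sloc 1$ are unique, so $O_p=C_{p'}$. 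Thus the open sublocale $O_p$ is itself closed.

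By \cref{cor:clopen_sublocale} applied with $X=1$ (so $\O X=\Omega$), the open sublocale $O_p$ is closed if and only if $p$ is complemented in $\Omega$. So $p$ admits a complement $p'$ with $p\wedge p'=\bot$ and $p\vee p'=\top$; the first equation gives $p'\le\neg p$, whence $\top=p\vee p'\le p\vee\neg p$, i.e.\ $\phi\vee\neg\phi$ holds. Since $\phi$ was arbitrary, excluded middle follows.

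The only point requiring any care is the opening reduction — recognising that ``overt'' for a sublocale of $1$ collapses to ``open sublocale of $1$'' because the structural map to $1$ is the inclusion itself; after that the argument is a short manipulation of complements in $\Sloc 1$ together with the previously established \cref{cor:clopen_sublocale}. (Alternatively one could finish by hand, using $O_a\vee O_b=O_{a\vee b}$, $O_a\cap O_b=O_{a\wedge b}$ and the injectivity of $a\mapsto O_a$ to exhibit the complement of $p$ directly, but routing through \cref{cor:clopen_sublocale} is cleaner.)
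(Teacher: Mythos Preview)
Your proof is correct and follows essentially the same approach as the paper: reduce ``overt sublocale of $1$'' to ``open sublocale of $1$'' and then conclude that every truth value is complemented. Your opening observation that for a sublocale $S\hookrightarrow 1$ the terminal map \emph{is} the inclusion (so overtness immediately means the inclusion is open) is a pleasant direct shortcut compared to invoking \cref{lem:overt_subloc_of_discrete}, and your routing of the final step through \cref{cor:clopen_sublocale} makes explicit what the paper leaves to the reader.
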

\begin{proof}
 By the above this would mean that the closed complement of every truth value is open, and thus every truth value is decidable.
\end{proof}

In fact, overtness is the missing condition we needed to characterise discrete locales.
\begin{proposition}\label{prop:discreteness_characterisation}
 A locale $X$ is discrete if and only if it is overt and has open diagonal.
\end{proposition}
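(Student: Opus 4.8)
The forward implication needs no new work: a discrete locale is overt because its singletons form a base of positive opens (as observed after \cref{prop:locally_positive}), and its diagonal is open by \cref{cor:open_diagonal_in_discrete}. So the plan is to prove the converse. Assume $X$ is overt with open diagonal; write $\exists\colon\O X\to\Omega$ for the left adjoint of ${!}^*$ and $\Delta_!\colon\O X\to\O X\oplus\O X$ for the left adjoint of the (surjective) frame map $\Delta^*\colon u\oplus v\mapsto u\wedge v$. Since $\Delta^*(a\oplus1)=a=\Delta^*(1\oplus a)$, the Frobenius identity of \cref{lem:open_map_characterisation} gives $\Delta_!(a)=(a\oplus1)\wedge d=(1\oplus a)\wedge d$ with $d\coloneqq\Delta_!(1)$, and the unit of the adjunction gives $\Delta^*(d)=1$. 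Call an open $c\in\O X$ \emph{small} if $c\oplus c\le d$; these are the candidate singletons, and I would first record that they are downward closed, that $\Delta_!(c)=(c\oplus1)\wedge(1\oplus c)\wedge d=(c\oplus c)\wedge d=c\oplus c$ for small $c$, and that the small opens form a base. For the last point, write $d=\bigvee_i a_i\oplus b_i$; then $\Delta^*(d)=1$ reads $\bigvee_i a_i\wedge b_i=1$, and each $a_i\wedge b_i$ is small since $(a_i\wedge b_i)\oplus(a_i\wedge b_i)\le a_i\oplus b_i\le d$, so the small opens cover $X$ and, being downward closed, form a base. Intersecting with the base of positive opens supplied by overtness (\cref{prop:locally_positive}) — a sub-open of a small open is small, so every small open is a join of positive small opens below it — I obtain a base $B$ of positive small opens.

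The crux is to show that distinct elements of $B$ are disjoint. For small $c,c'$, using $c\oplus c'\le c\oplus1$ and $c\oplus c'\le1\oplus c'$ together with the Frobenius identity, one computes $(c\oplus c')\wedge d=(c\oplus c')\wedge\Delta_!(c)=(c\oplus c')\wedge(c\oplus c)=c\oplus(c\wedge c')$ and symmetrically $(c\oplus c')\wedge d=(c\wedge c')\oplus c'$, whence $c\oplus(c\wedge c')=(c\wedge c')\oplus c'$. If moreover $c,c'$ are positive, I would apply the two left adjoints $(\pi_1)_!\colon u\oplus v\mapsto u\wedge{!}^*\exists(v)$ and $(\pi_2)_!\colon u\oplus v\mapsto{!}^*\exists(u)\wedge v$ of the projections (which exist because $X$ is overt): using $\exists(c)=\exists(c')=\top$ these yield $c\wedge{!}^*\exists(c\wedge c')=c\wedge c'$ and ${!}^*\exists(c\wedge c')\wedge c'=c\wedge c'$, and if $c\wedge c'>0$ then ${!}^*\exists(c\wedge c')=1$, forcing $c=c\wedge c'=c'$. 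In other words $\exists(c\wedge c')\le\llbracket c=c'\rrbracket$ for all $c,c'\in B$.

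Finally I would show $\O X\cong\Omega^{B}$. Given $c\in B$ and $T\subseteq B$ with $c\le\bigvee T$, distributivity gives $c=\bigvee_{c'\in T}(c\wedge c')$, and since $c\wedge c'\le{!}^*\exists(c\wedge c')\le{!}^*\llbracket c=c'\rrbracket$ (by the unit of $\exists\dashv{!}^*$ and the previous paragraph) and ${!}^*$ preserves joins, we get $c\le\bigvee_{c'\in T}{!}^*\llbracket c=c'\rrbracket={!}^*\llbracket c\in T\rrbracket$; positivity of $c$ then forces $c\in T$. Consequently the join map $\Omega^{B}\to\O X$, $T\mapsto\bigvee T$, is monotone, surjective (as $B$ is a base), and order-reflecting (if $\bigvee T\le\bigvee T'$ then each $c\in T$ lies below $\bigvee T'$ and hence in $T'$), so it is an order isomorphism and therefore an isomorphism of frames, exhibiting $X$ as discrete.

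I expect the main obstacle to be exactly the middle step: teasing out of the two structure maps $\Delta_!$ and $\exists$ the fact that the canonical base $B$ behaves like a set of points, that is, that $c\le\bigvee T$ forces $c\in T$. The symmetric identity $c\oplus(c\wedge c')=(c\wedge c')\oplus c'$ is where the open-diagonal hypothesis and overtness genuinely interact; everything before and after is routine manipulation of the two adjunctions.
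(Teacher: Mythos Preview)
Your proof is correct and follows essentially the same approach as the paper: both define the base of ``positive small'' opens $\{s \mid s\oplus s \le d,\ s > 0\}$, use the open-diagonal Frobenius identity to show that any two such opens with positive meet coincide, and conclude by showing that the join map from the powerset of this base to $\O X$ is an order isomorphism. Your intermediate identity $c\oplus(c\wedge c')=(c\wedge c')\oplus c'$ is a minor variant of the paper's $(s\wedge s')\oplus s=(s\wedge s')\oplus(s\wedge s')$, derived from the same Frobenius reasoning and leading to the same conclusion after projecting.
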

\begin{proof}
 We have already noted that discrete locales are overt and have open diagonal, so we only need to show the converse.
 Let $d$ be the open corresponding to the diagonal inclusion $\Delta\colon X \hookrightarrow X \times X$.
 The idea is to isolate the opens which end up corresponding to singletons.
 We define $S$ to be the set of elements $s \in \O X$ such that $s \oplus s \le d$ and $s > 0$.
 
 We claim that $\O X \cong \Omega^S$. We will show this by giving a bijective suplattice homomorphism $h\colon \Omega^S \to \O X$.
 Since $\Omega^S$ is the free suplattice on $S$, a suplattice map from $\Omega^S$ to $\O X$ corresponds to a function from $S$ to $\O X$. We take this to be the obvious inclusion.
 It remains to show the map $h$ is surjective and injective.
 
 We start by observing that since $\Delta$ is open, $(u \oplus v) \wedge d = (u' \oplus v') \wedge d \iff \Delta^*(u \oplus v) = \Delta^*(u' \oplus v') \iff u \wedge v = u' \wedge v'$.
 Thus, if $u \oplus v \le d$ then $u \oplus v = (u \wedge v) \oplus (u \wedge v)$.
 
 Now since the elements of the form $u \oplus v$ give a base for $X \times X$ we have $d = \bigvee\{u \oplus v \mid u \oplus v \le d\}$.
 By the previous argument we know each $u \oplus v = (u \wedge v) \oplus (u \wedge v)$ and so $d \le \bigvee\{u \oplus u \mid u \oplus u \le d\}$.
 We can project this down to $X$ by applying the left adjoint $(\pi_2)_!$. Note that $\pi_2 \Delta = \pi_2 (\id, \id) = \id$ and so $(\Sloc \pi_2)_!(\Delta\colon X \hookrightarrow X \times X) = X$.
 Thus, we find $1 \le \bigvee\{{!}^*\exists(u) \wedge u \mid u \oplus u \le d\} = \bigvee S$.
 Meeting with $v$ and restricting the join to positive elements (using overtness) we obtain $v \le \bigvee\{s \in S \mid s \le v\}$. Thus, $v = h(S \cap {\downarrow} v)$ and so $h$ is surjective.
 
 We now show $h$ is injective. First note that if $s, s' \in S$ we have $(s \wedge s') \oplus s = (s \wedge s') \oplus (s \wedge s')$ as above.
 So if $s \wedge s' > 0$, projecting onto the second factor we find $s = s \wedge s'$ and so $s \le s'$. Similarly, $s' \le s$ and hence $s = s'$.
 Now suppose $s \le h(T)$. This means $s = \bigvee\{s \wedge t \mid t \in T\} = \bigvee\{s \wedge t \mid t \in T,\, s \wedge t > 0\}$ by overtness. Now $s \wedge t > 0$ means $t = s$ and so $s = \bigvee\{s \mid s \in T\}$ (where $s$ is a fixed element and this denotes a join of a subsingleton, not a join over all of $T$). But this join is inhabited since $s$ is positive and so $s \in T$. It follows that $s \in T \iff s \le h(T)$. Hence, $h(T)$ determines $T$ and $h$ is injective.
\end{proof}

Another important example of an overt locale is given by the reals.
\begin{proposition}
 The locale $\R$ is overt.
\end{proposition}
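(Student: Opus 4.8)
The plan is to apply \cref{prop:locally_positive}: it suffices to exhibit a base of positive elements for $\O\R$. The natural candidate is the set $B$ of bounded rational intervals $\llround p, q\rrround = \llround p, \infty\rrround \wedge \llround -\infty, q\rrround$ with $p < q$ in $\Q$.

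First I would check that $B$ is a base. Using relation (6) of the presentation, $\llround q, \infty\rrround = \llround q, \infty\rrround \wedge \bigvee_{r\in\Q}\llround -\infty, r\rrround = \bigvee_{r\in\Q}\llround q, r\rrround$ (the terms with $r \le q$ vanishing by the disjointness relation), and symmetrically $\llround -\infty, r\rrround = \bigvee_{q\in\Q}\llround q, r\rrround$; so every generator of $\O\R$ is a join of elements of $B$. Moreover $B$ is closed under binary meet in the sense a base needs: $\llround p, q\rrround \wedge \llround p', q'\rrround = \llround \max(p,p'), \min(q,q')\rrround$, which (decidability of order on $\Q$) is either again an element of $B$ or equal to $0 = \bigvee\emptyset$. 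Since every open of $\O\R$ is a join of finite meets of generators, distributivity then shows every open is a join of elements of $B$.

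Next I would show each element of $B$ is positive. Given $p < q$, choose a rational $r$ with $p < r < q$ (say $r = \tfrac{p+q}{2}$). As recalled in \cref{sec:classifying_locales}, $r$ determines a point $\hat r\colon 1 \to \R$, namely the Dedekind cut with $\hat r^*(\llround s, \infty\rrround) = \llbracket s < r\rrbracket$ and $\hat r^*(\llround -\infty, s\rrround) = \llbracket s > r\rrbracket$; since comparison of rationals is decidable and $p < r < q$, we get $\hat r^*(\llround p, q\rrround) = \llbracket p < r\rrbracket \wedge \llbracket r < q\rrbracket = \top$. Now I invoke the fact noted after \cref{def:overt} that any open containing a point is positive: if $\llround p, q\rrround \le \bigvee S$, then applying the frame homomorphism $\hat r^*$ gives $\top = \hat r^*(\llround p, q\rrround) \le \bigvee_{s\in S}\hat r^*(s) = \llbracket \exists s \in S.\ \hat r^*(s) = \top\rrbracket$, so $S$ is inhabited. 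Hence every element of $B$ is positive, $B$ is a base of positive elements, and \cref{prop:locally_positive} yields that $\R$ is overt. I do not expect a serious obstacle here; the only steps needing a little care are the verification that $B$ genuinely is a base (which really uses the covering relations $\bigvee_{q\in\Q}\llround q,\infty\rrround = 1$ and $\bigvee_{q\in\Q}\llround -\infty, q\rrround = 1$, not merely formal lattice manipulation) and the fact that a rational yields a bona fide point of $\R$ (the Dedekind cut axioms, locatedness in particular, holding because order on $\Q$ is decidable), both of which are routine from the presentation of $\O\R$.
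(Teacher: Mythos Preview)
Your proposal is correct and follows essentially the same approach as the paper: both exhibit the rational intervals $\llround p,q\rrround$ with $p<q$ as a base of positive elements (positivity witnessed by the rational midpoint as a point of $\R$) and then appeal to \cref{prop:locally_positive}. Your write-up is slightly more detailed in verifying that these intervals genuinely form a base, but the argument is the same.
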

\begin{proof}
 Observe that the opens $\llround p, q \rrround = \llround p, \infty\rrround \wedge \llround -\infty, q\rrround$ form a base for $\O \R$.
 Furthermore, we have $\llround p, q \rrround = 0$ for $p \ge q$, and so, since the order on $\Q$ is decidable, we can restrict the base to those intervals with $p < q$.
 But it is easy to see that if $p < q$ then $\frac{p+q}{2}$ defines a point of $\R$ that lies in $\llround p, q \rrround$. Hence, $\llround p, q \rrround > 0$ and $\O \R$ has a base of positive elements.
\end{proof}

The following result provides a very general method of showing a locale is overt from a presentation.
\begin{proposition}\label{prop:overtness_from_presentation}
 Let $X$ be a locale with a presentation $\langle G \textup{ $\wedge$-semilattice} \mid R\rangle_\Frm$ of the form used in the coverage theorem (\cref{thm:suplattice_coverage}).
 Then $X$ is overt if and only if there is a set $P \subseteq G$ of candidate positive generators such that
 \begin{enumerate}
  \item $P$ is upward closed,
  \item for each relation $\bigvee S \le \bigvee T$, if $\exists s \in S \cap P$ then $\exists t \in T \cap P$,
  \item for each generator $g \in G$, we have $g \le \bigvee\{ 1 \mid g \in P\}$ in $\O X$.
 \end{enumerate}
\end{proposition}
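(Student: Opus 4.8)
The plan is to work with the concrete model of $\O X$ supplied by the coverage theorem (\cref{thm:suplattice_coverage}) together with \cref{prop:explicit_suplattice_presentation}: since the presentation is of the required form, $\O X$ is order-isomorphic to the set of downsets $D$ of $G$ such that $S \subseteq D$ whenever $T \subseteq D$, for every relation $\bigvee S \le \bigvee T$ in $R$; I will call such downsets \emph{saturated}. In this model the canonical map sends a generator $g$ to $\overline{g} = \bigcap\{D \in \O X \mid g \in D\}$, a join $\bigvee \mathcal{S}$ is computed as $\bigcap\{D \in \O X \mid \bigcup \mathcal{S} \subseteq D\}$, and $\overline{g} \le u$ holds precisely when $g \in u$. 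I will also use \cref{prop:locally_positive} (overtness is equivalent to $\O X$ having a base of positive elements), together with the recorded facts that in an overt locale $\exists(u) = \llbracket u > 0 \rrbracket$, that ${!}^*(p) = \bigvee\{1 \mid p = \top\}$, and that $\exists \dashv {!}^*$ with unit $\id \le {!}^* \exists$.

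For the forward direction I would take $P = \{g \in G \mid \overline{g} > 0\}$. Condition (i) is immediate since $\overline{(-)}$ is monotone and positivity is upward closed. For (ii), given a relation $\bigvee S \le \bigvee T$ and some $s \in S \cap P$, we have $0 < \overline{s} \le \bigvee_{t \in T} \overline{t}$, so $\bigvee_{t \in T} \overline{t}$ is positive; applying $\exists$, which preserves joins and sends each $\overline{t}$ to $\llbracket \overline{t} > 0 \rrbracket$, and using the formula $\bigvee \mathcal{S} = \llbracket \top \in \mathcal{S} \rrbracket$ for joins in $\Omega$, this says exactly that $\exists t \in T \cap P$. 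For (iii), the unit of the adjunction gives $\overline{g} \le {!}^* \exists(\overline{g}) = \bigvee\{1 \mid \overline{g} > 0\} = \bigvee\{1 \mid g \in P\}$.

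For the converse I would first prove the key lemma: if $g \in P$ then $\overline{g}$ is positive. Suppose $\overline{g} \le \bigvee \mathcal{S}$ and set $A = \bigcup \mathcal{S}$, a downset; then $g$ lies in $\bigcap\{D \in \O X \mid A \subseteq D\}$. Now consider $B = \{h \in G \mid h \in P \Rightarrow A \cap P \text{ is inhabited}\}$. Condition (i) makes $B$ a downset, it plainly contains $A$, and condition (ii) makes $B$ saturated; hence $B$ is one of the downsets intersected above, so $g \in B$, and since $g \in P$ this forces $A \cap P$ to be inhabited, whence some member of $\mathcal{S}$ meets $P$ and $\mathcal{S}$ is inhabited. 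With the lemma established, condition (iii) and distributivity give $\overline{g} = \overline{g} \wedge \bigvee\{1 \mid g \in P\} = \bigvee\{\overline{g} \mid g \in P\}$ for every generator $g$, exhibiting $\overline{g}$ as a join of positive elements; since the generators form a base of $\O X$, the positive elements form a base, and \cref{prop:locally_positive} yields overtness.

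I expect the main obstacle to be the lemma in the converse direction, and within it the choice of the downset $B$ and the verification that it is saturated — this is the one place where conditions (i) and (ii) are used in an essential way, and where the explicit saturated-downset description of $\O X$ really pays off. The remaining steps are routine manipulations with the adjunction $\exists \dashv {!}^*$ and the frame structures on $\Omega$ and $\O X$.
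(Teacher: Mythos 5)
Your proof is correct, and your converse direction takes a genuinely different route from the paper's. The paper defines a candidate map $\exists$ directly on generators by $g \mapsto \llbracket g \in P\rrbracket$, notes that condition (i) makes it monotone and condition (ii) makes it respect the relations so that the coverage theorem extends it to a suplattice homomorphism $\O X \to \Omega$, and then verifies the two adjunction inequalities: condition (iii) gives ${!}^*\circ\exists \ge \id_{\O X}$ on generators, while $\exists\circ{!}^* \le \id_\Omega$ is a short computation. You instead stay inside the concrete saturated-downset model from \cref{prop:explicit_suplattice_presentation}, prove the key lemma that $g \in P$ forces $\overline{g}$ to be positive by exhibiting the auxiliary saturated downset $B = \{h \in G \mid h \in P \Rightarrow A \cap P \text{ inhabited}\}$, and then combine condition (iii) with distributivity and \cref{prop:locally_positive}. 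Both arguments are sound and use (i), (ii), (iii) in structurally analogous places, but the paper's is shorter and operates at the level of universal properties, whereas yours is more elementary and makes visible exactly which saturated downset certifies that a given cover of $\overline{g}$ is inhabited — the trade-off being that it leans harder on the explicit description of $\O X$ as a set of saturated downsets. One small remark: in the forward direction the paper contents itself with "easily seen," and your verification of (ii) via $\exists$-preservation of joins is a clean way to spell that out.
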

\begin{proof}
 If $X$ is overt, then the set of positive generators is easily seen to satisfy the hypotheses.
 To prove the converse we will construct a left adjoint to ${!}^*\colon \Omega \to \O X$.
 
 First define a map from $G$ to $\Omega$ by $g \mapsto \llbracket g \in P\rrbracket$. This is monotone by condition (i) and respects the relations by condition (ii). Thus, by the coverage theorem it induces a suplattice homomorphism $\exists\colon \O X \to \Omega$.
 We must show this is indeed a left adjoint to ${!}^*\colon \Omega \to \O X$.
 
 Note that for $g \in G$ we have ${!}^* \exists (g) = \bigvee\{1 \mid g \in P\} \ge g$ by condition (iii) and hence ${!}^*\circ \exists \ge \id_{\O X}$.
 On the other hand, $\exists {!}^*(p) = \bigvee\{ \llbracket 1 \in P \rrbracket \mid p = \top \} = \llbracket 1 \in P \rrbracket \wedge p \le p$ and so $\exists \circ {!}^* \le \id_\Omega$.
 Thus, $\exists$ is left adjoint to ${!}^*$ and $X$ is overt.
\end{proof}

\subsubsection{Compactness}

We now return to the property of compactness. Things proceed similarly to before.

\begin{definition}
 A locale map $f\colon X \to Y$ is \emph{closed} if ${\Sloc f}_!$ maps closed sublocales to closed sublocales.
\end{definition}

\begin{remark}
 Note that a sublocale inclusion is closed if and only if it is a closed sublocale as defined in \cref{def:closed_sublocales}.
\end{remark}

Since a frame map $f^*$ preserves arbitrary joins, it always has a right adjoint $f_*$.

\begin{lemma}\label{lem:closed_map_characterisation}
 A locale map $f\colon X \to Y$ is closed if and only if the frame map $f^*$ and its right adjoint $f_*$ satisfy $f_*({f\,}^*(b) \vee a) = b \vee f_*(a)$.
\end{lemma}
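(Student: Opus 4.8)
The plan is to dualise the proof of \cref{lem:open_map_characterisation} in the obvious way, exchanging meets for joins, left adjoints for right adjoints, and the open congruences $\Delta_a$ for the closed congruences $\nabla_a = \{(u,v) \mid u \vee a = v \vee a\}$ (which, as recorded after \cref{def:closed_sublocales}, equal $\langle(0,a)\rangle$ and represent the closed sublocale attached to $a$). I will also use that the congruence associated to an image ${\Sloc f}_!(S)$ is $(f^*\times f^*)^{-1}(C_S)$, and that $f^*$ preserves finite joins, so in particular $f^*(0) = 0$.

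For the forward direction, assume $f$ is closed. Applying ${\Sloc f}_!$ to the closed sublocale of a given $a \in \O X$ yields a closed sublocale of $Y$, so $(f^*\times f^*)^{-1}(\nabla_a) = \nabla_c$ for some $c \in \O Y$; unwinding the definitions, this says $f^*(u) \vee a = f^*(v) \vee a \iff u \vee c = v \vee c$ for all $u,v \in \O Y$. Setting $v = 0$ and using $f^*(0)=0$ gives $f^*(u) \le a \iff u \le c$, whence $c = f_*(a)$ by uniqueness of adjoints. Now substituting $v = w \vee u$: the right-hand equality becomes $w \le u \vee f_*(a)$, while the left-hand one becomes $f^*(w) \le f^*(u) \vee a$, i.e.\ $w \le f_*(f^*(u) \vee a)$ by $f^* \dashv f_*$; since this equivalence holds for every $w$, we conclude $f_*(f^*(u) \vee a) = u \vee f_*(a)$.

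For the converse, assume $f_*(f^*(b) \vee a) = b \vee f_*(a)$ for all $a \in \O X$ and $b \in \O Y$. The adjunction $f^* \dashv f_*$ turns this hypothesis into the equivalence $f^*(b) \vee a \ge f^*(w) \iff b \vee f_*(a) \ge w$. Feeding the two inequalities that witness $f^*(b) \vee a = f^*(w) \vee a$ through this equivalence and joining with $f_*(a)$ yields $b \vee f_*(a) = w \vee f_*(a)$; running the same argument in reverse and joining with $a$ gives the other implication. Hence $f^*(b) \vee a = f^*(w) \vee a \iff b \vee f_*(a) = w \vee f_*(a)$, which is exactly the statement $(f^*\times f^*)^{-1}(\nabla_a) = \nabla_{f_*(a)}$, so ${\Sloc f}_!$ carries the closed sublocale of $a$ to the closed sublocale of $f_*(a)$ and $f$ is closed.

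The whole argument is routine bookkeeping once this dictionary with the open case is set up; the only point that needs a moment's care — exactly as in \cref{lem:open_map_characterisation} — is checking that the preimage-of-closed-congruence computation really pins the element $c$ down to $f_*(a)$ rather than leaving it arbitrary, and this is dispatched by the $v=0$ specialisation. I do not expect any genuine obstacle.
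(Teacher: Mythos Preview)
Your proposal is correct and is precisely the dualisation the paper has in mind: the paper's own ``proof'' is simply ``Omitted.\ (The proof is very similar to that of \cref{lem:open_map_characterisation}.)'', and you have carried out exactly that similar argument, swapping $\Delta_a$ for $\nabla_a$, $\wedge$ for $\vee$, and the left adjoint $f_!$ for the right adjoint $f_*$. The one step you flag as needing care --- the $v=0$ specialisation pinning down $c$ as $f_*(a)$ --- is indeed the right analogue of the $u=1$ step in the open case, and it goes through because $f^*(0)=0$.
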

\begin{proof}
 Omitted. (The proof is very similar to that of \cref{lem:open_map_characterisation}.)
\end{proof}

At this point our discussion diverges somewhat from the overt case, since unlike open maps, closed maps are not pullback stable.
Before we deal with this let us define compactness in a more familiar way.
\begin{definition}\label{def:compactness}
 A locale $X$ is \emph{compact} if whenever $\bigvee S = 1$ in $\O X$ then there is some Kuratowski-finite subset $F \subseteq S$ such that $\bigvee F = 1$.
\end{definition}
It will be useful to re-express this condition using the following concept from order theory.
\begin{definition}
 A poset $S$ is called \emph{directed} if every Kuratowski-finite subset $F \subseteq S$ has an upper bound $b \in S$.
\end{definition}
\begin{remark}
 It can be shown by induction that $S$ is directed if and only if it is inhabited and for each pair of elements $s,t \in S$ there is a $b \in S$ with $s,t \le b$.
\end{remark}
\begin{lemma}\label{lem:compactness_via_directed_covers}
 A locale $X$ is compact if and only if whenever $\bigvee S = 1$ is a directed cover, $1 \in S$.
\end{lemma}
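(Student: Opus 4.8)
The plan is to prove the two implications directly; the only real content is to pass between an arbitrary cover and an associated directed cover, and everything else is bookkeeping with Kuratowski-finite joins.

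For the forward direction I would assume $X$ is compact and take a directed $S \subseteq \O X$ with $\bigvee S = 1$. Compactness produces a Kuratowski-finite $F \subseteq S$ with $\bigvee F = 1$, and directedness of $S$ supplies an upper bound $b \in S$ of $F$. Since every element of $F$ lies below $b$ we get $1 = \bigvee F \le b$, hence $b = 1$, so $1 \in S$.

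For the converse I would assume that every directed cover contains $1$ and let $\bigvee S = 1$ be arbitrary. Set $\widehat{S} = \{\,\bigvee F \mid F \subseteq S \text{ Kuratowski-finite}\,\}$. I would then check two things. First, $\widehat{S}$ is directed: it is inhabited because $0 = \bigvee \emptyset \in \widehat{S}$, and given $\bigvee F_1, \bigvee F_2 \in \widehat{S}$ the element $\bigvee (F_1 \cup F_2)$ again lies in $\widehat{S}$ (Kuratowski-finite sets are closed under binary unions, as established earlier) and is an upper bound of both, since $\bigvee F_i \le \bigvee (F_1 \cup F_2)$ by monotonicity of joins with respect to inclusion; by the remark preceding the lemma this suffices for directedness. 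Second, $\bigvee \widehat{S} = 1$: each singleton $\{s\}$ is Kuratowski-finite so $S \subseteq \widehat{S}$, giving $\bigvee \widehat{S} \ge \bigvee S = 1$, while clearly $\bigvee \widehat{S} \le 1$. The hypothesis then yields $1 \in \widehat{S}$, i.e. $1 = \bigvee F$ for some Kuratowski-finite $F \subseteq S$, which is exactly \cref{def:compactness}.

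I do not expect a serious obstacle. The only points needing care are the constructively valid facts that Kuratowski-finite subsets are closed under binary unions and that $\bigvee (F_1 \cup F_2) = \bigvee F_1 \vee \bigvee F_2$, both available from \cref{sec:free_frames}, and the discipline of verifying directedness of $\widehat{S}$ through its binary reformulation rather than attempting to bound arbitrary Kuratowski-finite subsets of $\widehat{S}$ directly.
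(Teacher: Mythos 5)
Your proof is correct and follows essentially the same route as the paper: the forward direction applies compactness and then bounds the finite subcover by a directed upper bound, and the converse passes to the auxiliary set of Kuratowski-finite joins and verifies it is a directed cover. The paper establishes directedness by showing closure of this set under binary joins rather than merely exhibiting upper bounds, but this is an immaterial difference.
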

\begin{proof}
 If $X$ is compact and $\bigvee S = 1$ then there is a Kuratowski-finite subset $F \subseteq S$ such that $\bigvee F = 1$. But if $S$ is directed, then there is a $b \in S$ such that $b \ge \bigvee F = 1$ and hence $b = 1$.
 
 On the other hand, suppose ever directed cover of $\O X$ contains $1$ and consider an arbitrary cover $S$ of $\O X$. Set $S' = \{\bigvee F \mid F \in \Pfin(S)\}$. This set is directed since $0 = \bigvee \emptyset \in S'$ and if $\bigvee F_1, \bigvee F_2 \in S'$ then $\bigvee F_1 \vee \bigvee F_2 = \bigvee (F_1 \cup F_2) \in S'$, because Kuratowski-finite subsets are closed under binary joins.
 Also, note that $\bigvee S \le \bigvee S'$ and so $S'$ is a cover. Thus, by assumption $1 \in S'$, so $\bigvee F = 1$ for some $F \in \Pfin(S)$. Consequently, $X$ is compact.
\end{proof}

\begin{corollary}\label{cor:compactness_right_adjoint}
 A locale $X$ is compact if and only if the right adjoint ${!}_*\colon \O X \to \Omega$ preserves directed suprema.
\end{corollary}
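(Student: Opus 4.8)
The plan is to compute the right adjoint ${!}_*$ explicitly and then reduce the statement to the directed-cover characterisation of compactness in \cref{lem:compactness_via_directed_covers}. Since ${!}^*\colon \Omega \to \O X$ is a frame homomorphism it preserves joins, so ${!}_*$ exists and is given by ${!}_*(a) = \bigvee\{p \in \Omega \mid {!}^*(p) \le a\}$. Using that ${!}^*(p) = \bigvee\{1 \mid p = \top\}$, the condition ${!}^*(p) \le a$ is equivalent to $p = \top \implies a = 1$, and taking the join in $\Omega$ of all such $p$ (recall that for $T \subseteq \Omega$ we have $\bigvee T = \llbracket \top \in T\rrbracket$) yields ${!}_*(a) = \llbracket a = 1\rrbracket$; equivalently, ${!}_*(a) = \top$ exactly when $a = 1$.

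With this formula in hand, I would consider a directed subset $S \subseteq \O X$. Since ${!}_*$ is monotone (being a right adjoint) we always have $\bigvee_{s \in S} {!}_*(s) \le {!}_*(\bigvee S)$, so ${!}_*$ preserves the supremum of $S$ if and only if the reverse inequality holds. Evaluating both sides, ${!}_*(\bigvee S) = \llbracket \bigvee S = 1\rrbracket$, while $\bigvee_{s \in S}{!}_*(s) = \bigvee_{s\in S}\llbracket s = 1\rrbracket = \llbracket \exists s \in S.\ s = 1\rrbracket = \llbracket 1 \in S\rrbracket$. Thus ${!}_*$ preserves the join of $S$ precisely when $\llbracket \bigvee S = 1\rrbracket \le \llbracket 1 \in S\rrbracket$, which by the definition of the order on $\Omega$ means precisely that the implication $\bigvee S = 1 \implies 1 \in S$ holds.

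Quantifying over all directed $S$, this says that ${!}_*$ preserves directed suprema if and only if every directed subset $S$ with $\bigvee S = 1$ satisfies $1 \in S$ — which is exactly the condition of \cref{lem:compactness_via_directed_covers}. Hence $X$ is compact if and only if ${!}_*$ preserves directed suprema. The only points needing care are the constructive manipulation of joins in $\Omega$ and the observation that only the non-monotonicity inequality carries any content; there is no substantive obstacle beyond this bookkeeping, which is why the result is phrased as a corollary.
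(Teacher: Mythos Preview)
Your proposal is correct and follows essentially the same approach as the paper: both compute that ${!}_*(a) = \top \iff a = 1$ (you via the explicit adjoint formula, the paper via the adjunction inequality $1 = {!}^*(\top) \le u \iff \top \le {!}_*(u)$) and then reduce preservation of directed suprema to the directed-cover condition of \cref{lem:compactness_via_directed_covers}. The only difference is that the paper invokes the ``maps into $\Omega$ are determined by what they send to $\top$'' principle to shortcut the comparison, whereas you spell out the full formula ${!}_*(a) = \llbracket a = 1\rrbracket$ and compare both sides explicitly; this is a matter of presentation rather than substance.
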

\begin{proof}
 By the usual argument that maps into $\Omega$ are determined by what they send to $\top$ it suffices to show that ${!}_*$ preserves directed covers.
 Note that $1 = {!}^*(\top) \le u \iff \top \le {!}_*(u)$ and so ${!}_*(u) = \top \iff u = 1$.
 Thus, ${!}^*$ preserves directed joins if and only if for $S$ directed, $\bigvee S = 1$ implies $\exists s \in S.\ s = 1$, which is simply the characterisation of compactness from \cref{lem:compactness_via_directed_covers}.
\end{proof}

This motivates the following strengthening of closedness, which will behave more similarly to openness.
\begin{definition}
 A locale map $f\colon X \to Y$ is \emph{proper} if it is closed and $f_*$ preserves directed suprema.
\end{definition}

\begin{proposition}
 A locale $X$ is compact if and only if the unique map ${!}\colon X \to 1$ is proper.
\end{proposition}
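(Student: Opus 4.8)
The plan is to read off both directions from \cref{cor:compactness_right_adjoint}, which already tells us that $X$ is compact if and only if the right adjoint ${!}_*\colon \O X \to \Omega$ preserves directed suprema. Since ``proper'' means by definition that ${!}$ is closed \emph{and} that ${!}_*$ preserves directed suprema, the implication from proper to compact is then immediate: properness hands us the directed-suprema condition, which is exactly compactness.

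For the converse I would assume $X$ compact, so that \cref{cor:compactness_right_adjoint} gives for free that ${!}_*$ preserves directed suprema; it remains only to show that ${!}$ is closed. By \cref{lem:closed_map_characterisation} this reduces to checking the identity ${!}_*({!}^*(b) \vee a) = b \vee {!}_*(a)$ for all $a \in \O X$ and $b \in \Omega$. I would rewrite the argument on the left using ${!}^*(b) = \bigvee\{1 \mid b = \top\}$, obtaining ${!}^*(b) \vee a = \bigvee\bigl(\{a\} \cup \{1 \mid b = \top\}\bigr)$, and then push ${!}_*$ through this join.

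The only step that needs genuine care — and where I expect the one small obstacle to lie — is checking that $\{a\} \cup \{1 \mid b = \top\}$ is directed. It is inhabited by $a$; and whenever one is handed an element of $\{1 \mid b = \top\}$ one thereby knows $b = \top$, in which case $1$ itself belongs to the set and is an upper bound for everything in it, so the pairwise-upper-bound condition of the remark following the definition of directedness is met. Granting this, preservation of directed suprema by ${!}_*$ yields
\[{!}_*({!}^*(b) \vee a) = \bigvee\bigl(\{{!}_*(a)\} \cup \{{!}_*(1) \mid b = \top\}\bigr) = {!}_*(a) \vee \bigvee\{\top \mid b = \top\} = {!}_*(a) \vee b,\]
using ${!}_*(1) = \top$ (right adjoints preserve the top) and the description of joins in $\Omega$. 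This is precisely the identity demanded by \cref{lem:closed_map_characterisation}, so ${!}$ is closed, hence proper. One could instead bypass \cref{lem:closed_map_characterisation} altogether and argue directly: from ${!}_*(u) = \top \iff u = 1$, the identity is equivalent to ``${!}^*(b) \vee a = 1$ implies $a = 1$ or $b = \top$'', and since $\{a\} \cup \{1 \mid b = \top\}$ is a directed cover in that case, \cref{lem:compactness_via_directed_covers} forces $1$ to lie in it, which says exactly that.
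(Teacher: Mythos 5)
Your proposal is correct and matches the paper's own proof in all essentials: both reduce to showing closedness via \cref{cor:compactness_right_adjoint}, rewrite ${!}^*(p) \vee a$ as the directed join $\bigvee(\{a\} \cup \{1 \mid p = \top\})$, and use compactness to dispatch it. Your primary route (pushing ${!}_*$ through the directed join) and your offered ``bypass'' (via \cref{lem:compactness_via_directed_covers}) are both just minor presentational variants of the paper's argument, which supposes ${!}_*({!}^*(p) \vee a) = \top$ and applies compactness directly.
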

\begin{proof}
 By \cref{cor:compactness_right_adjoint} it simply remains to show that if $X$ is compact, then ${!}\colon X \to 1$ is closed.
 
 We must show ${!}_*({!}^*(p) \vee a) \le p \vee {!}_*(a)$. (The reverse inequality always holds.)
 Suppose ${!}_*({!}^*(p) \vee a) = \top$. Then ${!}^*(p) \vee a = 1$. Observe that ${!}^*(p) \vee a = \bigvee\{1 \mid p = \top\} \vee a = \bigvee(\{a\} \cup \{1 \mid p\})$. This join is directed and so by compactness, either $a = 1$ or $p = \top$. But this means $p \vee {!}_*(a) = \top$, as required.
\end{proof}

\begin{remark}
 Inclusions of closed sublocales are another example of proper maps. In fact, in that case the right adjoints (which are of the form $u \mapsto u \vee a$) preserve all inhabited suprema.
\end{remark}

Just like open maps, proper maps are pullback stable. In fact, we have the following characterisation, the proof of which is unfortunately beyond the scope of these notes.
See \citet*{vermeulen1994proper} for details.
\begin{theorem}\label{thm:pullback_stable_closed_maps}
 Proper maps are precisely the pullback-stable closed maps.
\end{theorem}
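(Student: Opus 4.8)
The plan is to prove the two inclusions separately. One direction is essentially formal: a proper map is closed by definition, so it suffices to show that properness is stable under pullback, whence every pullback of a proper map is in particular closed. For the other direction we must show that a closed map all of whose pullbacks are closed automatically satisfies the extra condition that its right adjoint preserves directed suprema. The first half mimics \cref{thm:open_pullback_stable}; the second half is the substantial part, and is where I expect essentially all of the difficulty to lie.

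For the pullback-stability of proper maps I would argue exactly as in the proof of \cref{thm:open_pullback_stable}. Given a pullback square with the map being pulled back written $f'\colon X \times_Z Y \to Y$ (the pullback of $f\colon X \to Z$), present $\O(X \times_Z Y)$ as the quotient of $\O X \otimes \O Y$ by the suplattice congruence generated by $(u \wedge f^*(w)) \otimes v \sim u \otimes (g^*(w) \wedge v)$ as in \cref{lem:frame_coproduct_as_tensor_product}, so that $f'^*(v) = [1 \otimes v]$ and $g'^*(u) = [u \otimes 1]$. Then write down a candidate for the right adjoint $f'_*$ directly in terms of $f_*$ — the closed analogue of the formula $[u \otimes v] \mapsto u \wedge f^* g_!(v)$ used in the open case — check that it descends to the quotient, that it is genuinely right adjoint to $f'^*$, that it satisfies the Frobenius identity of \cref{lem:closed_map_characterisation}, and that it preserves directed joins because $f_*$ does. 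The one genuinely new wrinkle relative to the open case is that right adjoints do not interact with colimit presentations as smoothly as left adjoints did, so the check that the formula descends to the quotient is best done by working with the presentation in $\Inf$ (using that a quotient by a suplattice congruence is computed via the right adjoint of the quotient map) rather than by naively picking representatives.

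For the converse — the heart of the theorem, due to \citet*{vermeulen1994proper} — suppose $f\colon X \to Y$ is closed and every pullback of $f$ is closed; we must show $f_*$ preserves directed suprema. The strategy is to fix a directed subset $S \subseteq \O X$ with $a = \bigvee S$ and to manufacture, from the index poset, an auxiliary locale over $Y$ together with a locale map whose ``generic point'' witnesses the directed supremum; pulling $f$ back along this map and invoking closedness of the resulting map — evaluated on the generic data — should force precisely the inequality $f_*(a) \le \bigvee_{s \in S} f_*(s)$ that is missing, the reverse inequality being automatic by monotonicity. Concretely I would build the auxiliary locale from the ideal completion of $S$, so that its frame of opens carries a canonical directed family with a generic join, and transport $S$ along a suitable map into $Y$; the equivalence in \cref{cor:compactness_right_adjoint} — that preservation of directed joins by a right adjoint into $\Omega$ is exactly compactness — suggests packaging the whole argument as an internal compactness statement over $Y$. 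The main obstacle, and the reason the full argument is beyond the scope of these notes, is pinning down the correct auxiliary locale and map and pushing the (rather delicate) verification through; once that is in place, the deduction of the directed-join condition from closedness of the pullback is short.
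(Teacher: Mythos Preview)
The paper does not actually prove this theorem: it explicitly says the proof is ``beyond the scope of these notes'' and refers the reader to \citet*{vermeulen1994proper}. The only additional information is the remark immediately following, which says that the pullback-stability half ``is similar to that for open maps, but makes use of `preframes' instead of suplattices''. So there is nothing to compare against in detail, but the paper's hint already points to a genuine gap in your sketch.

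For the pullback-stability direction your plan is to mimic \cref{thm:open_pullback_stable} using the suplattice presentation of the pullback, writing down a candidate for $g'_*$ in terms of $g_*$ and checking it descends, with the suggested fix of passing to $\Inf$ when representatives misbehave. This does not work as stated. The candidate right adjoint is not a suplattice homomorphism (it need not preserve arbitrary joins), so it cannot be specified via the suplattice coverage theorem; and it is not an $\Inf$-morphism either (it need not preserve arbitrary meets), so passing to $\Inf$ does not rescue the argument. What $g'_*$ \emph{does} preserve --- precisely because $g$ is proper --- is finite meets and directed joins: it is a \emph{preframe} homomorphism. The correct analogue of \cref{lem:frame_coproduct_as_tensor_product} therefore uses a preframe presentation/tensor of the pullback (via a preframe coverage theorem, as in \citet*{johnstone1991preframe}), and on that the obvious formula in terms of $g_*$ does define a map, is right adjoint to $g'^*$, satisfies the identity of \cref{lem:closed_map_characterisation}, and inherits preservation of directed joins from $g_*$. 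This is exactly what the paper's remark is gesturing at.

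Your outline of the converse --- building an auxiliary locale from the ideal completion of a directed family and pulling back along it so that closedness of the pullback forces $f_*(\bigvee S) \le \bigvee_{s \in S} f_*(s)$ --- is the right shape and matches Vermeulen's argument in spirit, though as you acknowledge the details are where all the work lies.
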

\begin{remark}
 The proof that proper maps are pullback stable is similar to that for open maps, but makes use of `preframes' instead of suplattices. A \emph{preframe} is a poset with finite meets and directed joins in which finite meets distribute over directed joins. Schematically, we have frames = suplattices + finite meets = preframes + finite joins.
\end{remark}

We can now give the characterisation of compactness we motivated in terms of universal quantification.
\begin{corollary}
 A locale $X$ is compact if and only if for all locales $Y$ the product projection $\pi_2\colon X \times Y \to Y$ is closed.
\end{corollary}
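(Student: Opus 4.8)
The plan is to parallel the corollary characterising overt locales, replacing \cref{thm:open_pullback_stable} by \cref{thm:pullback_stable_closed_maps} and the description of overtness via openness of ${!}\colon X \to 1$ by the characterisation of compactness via properness of ${!}\colon X \to 1$ proved just above. The key observation tying everything together is that, since $1$ is terminal, the binary product $X \times Y$ is the pullback $X \times_1 Y$, so the projection $\pi_2\colon X \times Y \to Y$ is precisely the pullback of ${!}\colon X \to 1$ along ${!}\colon Y \to 1$; conversely every pullback of ${!}\colon X \to 1$ arises in this way, because the only morphisms into $1$ are the terminal maps.

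For the forward direction I would suppose $X$ is compact, so that ${!}\colon X \to 1$ is proper. By \cref{thm:pullback_stable_closed_maps} proper maps are closed and stable under pullback, so for any locale $Y$ the pullback $\pi_2\colon X \times Y \to Y$ of ${!}\colon X \to 1$ is again proper, and in particular closed. For the converse, suppose $\pi_2\colon X \times Y \to Y$ is closed for every locale $Y$. By the observation above this says exactly that every pullback of ${!}\colon X \to 1$ is closed, i.e.\ that ${!}\colon X \to 1$ is a pullback-stable closed map; \cref{thm:pullback_stable_closed_maps} then gives that ${!}\colon X \to 1$ is proper, and hence $X$ is compact. (Note that merely taking $Y = 1$ would only yield that ${!}\colon X \to 1$ is closed, which is weaker than proper, so the full strength of \cref{thm:pullback_stable_closed_maps} is genuinely needed here — this is the one place where the argument differs in substance from the overt case.)

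I do not expect any real obstacle: all the difficulty has been absorbed into \cref{thm:pullback_stable_closed_maps} (whose proof the notes omit) and the proposition identifying compactness with properness of the terminal map. The only point requiring a line of justification is the identification of pullbacks of ${!}\colon X \to 1$ with projections $\pi_2\colon X \times Y \to Y$, after which both implications are immediate.
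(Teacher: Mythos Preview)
Your proposal is correct and is exactly the argument the paper has in mind: its proof is the single sentence ``This is simply \cref{thm:pullback_stable_closed_maps} applied to ${!}\colon X \to 1$,'' and you have faithfully unpacked both directions of that application, including the identification of pullbacks of ${!}\colon X \to 1$ with product projections. Your parenthetical remark about why $Y=1$ alone does not suffice is a nice observation that the paper leaves implicit.
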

\begin{proof}
 This is simply \cref{thm:pullback_stable_closed_maps} applied to ${!}\colon X \to 1$.
\end{proof}

Analogous preservation results to those we proved for overtness can now be given for compactness. They are proved in essentially the same way (and have similar intuitive explanations from the perspective of verifiability).

\begin{lemma}\label{lem:closed_subloc_of_compact}
 Closed sublocales of compact locales are compact.
\end{lemma}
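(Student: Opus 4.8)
The plan is to dualise the proof given above for the overt case, using the proper-map characterisation of compactness. Recall that a locale is compact precisely when its unique map to the terminal locale $1$ is proper, and (by the remark that inclusions of closed sublocales are proper maps) that every closed sublocale inclusion is proper. So given a closed sublocale $i\colon C \hookrightarrow X$ of a compact locale $X$, it suffices to show that the composite ${!}\circ i\colon C \to 1$ is proper; this will follow once we observe that proper maps are closed under composition.

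To see that proper maps compose, suppose $f$ and $g$ are composable proper maps. Images of sublocales compose covariantly, so ${\Sloc(g\circ f)}_! = {\Sloc g}_!\circ{\Sloc f}_!$, and hence $g\circ f$ sends closed sublocales to closed sublocales to closed sublocales, i.e.\ is closed. For the directed-join condition, $(g\circ f)^* = f^*\circ g^*$, so the right adjoints satisfy $(g\circ f)_* = g_*\circ f_*$, and a composite of two maps preserving directed suprema again preserves directed suprema. Thus $g\circ f$ is proper. Applying this to ${!}\colon X \to 1$ and $i\colon C\hookrightarrow X$ shows ${!}\circ i$ is proper, so $C$ is compact.

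If one prefers a more elementary argument avoiding the proper-map machinery, one can instead work directly from \cref{lem:compactness_via_directed_covers}. The closed sublocale corresponding to $a\in\O X$ has frame of opens ${\uparrow}a$, with top element $1_{\O X}$, order inherited from $\O X$, and joins of inhabited families computed exactly as in $\O X$. Given a directed cover $S$ of ${\uparrow}a$: since $S$ is directed it is inhabited, so $\bigvee S = 1$ already holds in $\O X$, and $S$ remains directed as a subset of $\O X$; compactness of $X$ in the directed-cover form then gives $1\in S$, so $C$ is compact. The only subtlety worth flagging in either approach --- and essentially the only thing one has to be careful about --- is that the empty join in the quotient frame ${\uparrow}a$ is $a$ rather than $0$; this causes no trouble here precisely because directed covers are inhabited.
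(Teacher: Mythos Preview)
Your first argument is exactly the one the paper intends: the text says these results ``are proved in essentially the same way'' as \cref{lem:open_subloc_of_overt}, i.e.\ by observing that closed-sublocale inclusions are proper and that proper maps compose, so ${!}\circ i$ is proper. Your second, direct argument via \cref{lem:compactness_via_directed_covers} is a correct and slightly more elementary alternative not given in the paper; the point you flag about inhabited joins in ${\uparrow}a$ agreeing with those in $\O X$ is precisely what makes it work.
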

\begin{lemma}
 Images of compact (sub)locales are compact.
\end{lemma}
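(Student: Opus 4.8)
The plan is to follow the template of \cref{lem:image_of_overt_subloc}: reduce the statement to the assertion that a subframe of a compact frame is compact, and then verify this using \cref{cor:compactness_right_adjoint}, which characterises compactness by the right adjoint $!_*$ preserving directed suprema.

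First I would make the reduction. The image of a compact sublocale $S\subseteq X$ along $f\colon X\to Y$ is the image of the composite $S\hookrightarrow X\xrightarrow{f}Y$, so it suffices to treat the image of a compact locale under an arbitrary locale map. Using the (epi, regular mono) factorisation in $\Loc$ (equivalently the (regular epi, mono) factorisation in $\Frm$ from the discussion of images and preimages), such an image fits into a factorisation $X\xrightarrow{e}Z\rightarrowtail Y$ with $X$ compact and $e$ epic, so that $e^*\colon\O Z\to\O X$ is a monomorphism of frames. Since $\Frm$ is monadic over $\Set$, frame monomorphisms are injective, and an injective frame homomorphism is automatically an order embedding; hence its right adjoint $e_*$ satisfies $e_*\circ e^*=\id_{\O Z}$.

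Next I would relate the relevant right adjoints. From ${!}_X={!}_Z\circ e$ we get ${!}_X^*=e^*\circ{!}_Z^*$, and taking right adjoints gives $({!}_X)_*=({!}_Z)_*\circ e_*$; composing on the right with $e^*$ and using $e_*\circ e^*=\id$ yields $({!}_Z)_*=({!}_X)_*\circ e^*$. Now given a directed family $D\subseteq\O Z$, the image $e^*(D)$ is directed in $\O X$ (a monotone map sends a directed set to a directed set) and $e^*$ preserves the join, so
\[({!}_Z)_*\Bigl(\bigvee D\Bigr)=({!}_X)_*\Bigl(\bigvee e^*(D)\Bigr)=\bigvee ({!}_X)_*(e^*(D))=\bigvee({!}_Z)_*(D),\]
where the middle equality is compactness of $X$ via \cref{cor:compactness_right_adjoint}. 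Thus $({!}_Z)_*$ preserves directed suprema and $Z$ is compact, again by \cref{cor:compactness_right_adjoint}.

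The only point that really needs care is this reduction step, namely that $e^*$ is an injective frame homomorphism and hence an order embedding with $e_*\circ e^*=\id$; everything else is then a short adjointness computation. It is worth stressing that the naive alternative — extracting a Kuratowski-finite subcover of a cover of $\O Z$ directly from one of $\O X$ — does not obviously work constructively, since one cannot in general lift a Kuratowski-finite subset of $e^*(S)$ to a Kuratowski-finite subset of $S$ (Kuratowski-finite choice fails). Routing through the right-adjoint characterisation avoids this difficulty entirely, exactly as in the overt case.
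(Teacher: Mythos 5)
Your proof is correct, and it is exactly the intended argument: the paper explicitly declines to spell out this lemma, noting that the compactness preservation results ``are proved in essentially the same way'' as the overtness ones, and your route --- reduce via the (epi, regular mono) factorisation to a frame embedding $e^*\colon\O Z\hookrightarrow\O X$ with $e_*\circ e^*=\id$, then transfer the adjoint by $({!}_Z)_*=({!}_X)_*\circ e^*$ and check preservation of directed suprema --- is precisely the right-adjoint analogue of the computation in the overt case (\cref{lem:image_of_overt_subloc}).

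One small quibble with your closing aside: the ``naive'' subcover argument does in fact work constructively. A Kuratowski-finite $F\subseteq e^*(S)$ comes equipped (existentially) with a surjection $\alpha\colon[n]\twoheadrightarrow F$, and finite choice indexed by the \emph{Bishop}-finite set $[n]$ is constructively valid; one chooses $s_i\in S$ with $e^*(s_i)=\alpha(i)$ for each $i$, sets $F'=\{s_i\mid i\in[n]\}$, and concludes $\bigvee F'=1$ from $e^*(\bigvee F')=1$ by injectivity of $e^*$. (What genuinely fails is choice indexed by a Kuratowski-finite set \emph{itself}, but one never needs that here.) Incidentally, the same finite-choice point is already implicit in your own step that $e^*$ sends directed sets to directed sets --- or one can sidestep it entirely using the pairwise characterisation of directedness from the remark after \cref{def:compactness}. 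So your preference for the adjoint argument is stylistic rather than forced, but it is a clean proof.
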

\begin{lemma}
 Bishop-finite products of compact locales are compact.
\end{lemma}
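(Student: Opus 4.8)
The plan is to mirror the proof of \cref{lem:product_of_overt_locales} verbatim, replacing ``open'' by ``proper'' throughout, and then handle general Bishop-finite products by induction. First I would reduce to the binary case. The empty product is the terminal locale $1$, which is compact: if $\bigvee S = 1 = \top$ in $\O 1 = \Omega$, then $\top \in S$, so the singleton $\{\top\}$ is a Kuratowski-finite subcover. An arbitrary Bishop-finite product is indexed by a set isomorphic to some finite cardinal $[n]$, and since $\N$ (and hence each $[n]$) has decidable equality, we have the splitting $[n+1] \cong [n] \sqcup \{*\}$, which gives $\prod_{i \in [n+1]} X_i \cong \bigl(\prod_{i \in [n]} X_i\bigr) \times X_n$. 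So induction on $n$ reduces the claim to: if $X$ and $Y$ are compact, then $X \times Y$ is compact.

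For the binary case I would use the characterisation that a locale is compact precisely when its unique map to $1$ is proper. The map ${!}\colon X \times Y \to 1$ factors as ${!}\circ\pi_2$, where $\pi_2\colon X\times Y \to Y$. Now $X \times Y = X \times_1 Y$, so $\pi_2$ is the pullback of ${!}\colon X \to 1$ along ${!}\colon Y \to 1$; since $X$ is compact this latter map is proper, and proper maps are pullback stable (\cref{thm:pullback_stable_closed_maps}), so $\pi_2$ is proper. Also ${!}\colon Y \to 1$ is proper, since $Y$ is compact. Finally, proper maps are closed under composition: closed maps compose by a short computation using the adjoint identity of \cref{lem:closed_map_characterisation} (writing $(gf)^* = f^*g^*$ and $(gf)_* = g_*f_*$, one checks $g_*f_*\bigl(f^*g^*(c)\vee a\bigr) = g_*\bigl(g^*(c)\vee f_*(a)\bigr) = c \vee g_*f_*(a)$), and $g_*f_*$ preserves directed suprema whenever both $f_*$ and $g_*$ do, since monotone maps send directed sets to directed sets. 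Hence ${!}\colon X\times Y \to 1$ is proper, so $X \times Y$ is compact.

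The main obstacle is exactly the one encountered for overtness, and it has already been isolated as \cref{thm:pullback_stable_closed_maps} (pullback stability of proper maps), whose proof the text places out of scope; everything else here is routine bookkeeping. The one point requiring a little constructive care is the reduction of a general Bishop-finite product to iterated binary products, which works precisely because finite cardinals have decidable equality so that $[n+1] \cong [n]\sqcup\{*\}$ is legitimate — the same argument would fail for Kuratowski-finite index sets.
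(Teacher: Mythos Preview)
Your proposal is correct and follows exactly the approach the paper intends: the paper does not spell out a proof but says these results ``are proved in essentially the same way'' as the overt analogues, and you have faithfully transcribed the argument of \cref{lem:product_of_overt_locales} with ``proper'' in place of ``open'', supplying the routine details (closure of proper maps under composition, the base case $1$, and the inductive reduction from Bishop-finite to binary products) that the paper leaves implicit.
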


We also have an analogue of \cref{lem:overt_subloc_of_discrete} (the proof of which did not actually use the overtness of the parent locale).
\begin{lemma}
 Every compact sublocale of a Hausdorff locale is closed.
\end{lemma}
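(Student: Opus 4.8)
The plan is to dualise the proof of \cref{lem:overt_subloc_of_discrete}, interchanging open maps with closed (really \emph{proper}) maps, the open diagonal of a discrete locale with the closed diagonal of a Hausdorff locale, and overtness with compactness. So let $i\colon K \hookrightarrow X$ be a compact sublocale of a Hausdorff locale $X$; the goal is to show that the inclusion $i$ is a closed sublocale inclusion.

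First I would set up the pullback square expressing that the ``graph'' of $i$ sits over the diagonal of $X$: with vertices $K$, $K \times X$, $X$ and $X \times X$, the map $(\id, i)\colon K \to K \times X$ along the top, the diagonal $(\id,\id)\colon X \hookrightarrow X \times X$ along the bottom, $i\colon K \hookrightarrow X$ on the left and $i \times \id_X\colon K \times X \to X \times X$ on the right. This is the exact analogue of the square used in \cref{lem:overt_subloc_of_discrete}, and that it is a pullback is verified in the same routine way. The key observation is that $i = \pi_2 \circ (\id, i)$, where $\pi_2\colon K \times X \to X$ is the second projection.

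Now the two substantive inputs. Since $X$ is Hausdorff, its diagonal $(\id,\id)\colon X \hookrightarrow X \times X$ is a \emph{closed} sublocale inclusion, and inclusions of closed sublocales are proper; as proper maps are stable under pullback (\cref{thm:pullback_stable_closed_maps}), the pullback $(\id, i)\colon K \to K \times X$ is proper, hence in particular closed. On the other hand, since $K$ is compact, applying the characterisation of compactness via closed projections (with the parameter locale taken to be $X$) shows that $\pi_2\colon K \times X \to X$ is closed. As composites of closed maps are closed (just as for open maps), the map $i = \pi_2 \circ (\id, i)$ is closed; being a closed sublocale inclusion, $K$ is thus a closed sublocale of $X$.

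The only point that needs care --- and the one place this argument genuinely departs from the open/overt case --- is that closed maps are \emph{not} pullback-stable in general, so one cannot simply pull back the closed diagonal and conclude it remains closed. The remedy is to note that the diagonal of a Hausdorff locale is not merely closed but \emph{proper} (it is a closed sublocale inclusion), which \emph{is} a pullback-stable property; this is the analogue of appealing to \cref{thm:open_pullback_stable} in \cref{lem:overt_subloc_of_discrete}. Everything else is bookkeeping, and indeed (as the parenthetical in the statement anticipates) the proof of \cref{lem:overt_subloc_of_discrete} used discreteness of the ambient locale only through the openness of its diagonal, so no analogue of overtness of $X$ is needed here.
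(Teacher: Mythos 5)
Your proof is correct and is exactly the dualisation the paper intends when it says the result is ``an analogue of \cref{lem:overt_subloc_of_discrete} (the proof of which did not actually use the overtness of the parent locale).'' You have also correctly identified and resolved the one place where the duality is not purely formal: closed maps are not pullback stable, so one must pass through properness of the closed diagonal and \cref{thm:pullback_stable_closed_maps} rather than blindly mirroring the appeal to \cref{thm:open_pullback_stable}.
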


Let us consider some examples of compact locales.
\begin{proposition}
 A set $X$ is compact as a locale if and only if it is Kuratowski-finite.
\end{proposition}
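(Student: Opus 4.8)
The plan is to work directly with the frame $\Omega^X$ of opens of the discrete locale $X$, whose top element is $X$ itself (as a subset of $X$) and whose joins are unions, and to prove the two implications separately.

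For the ``if'' direction, suppose $X$ is Kuratowski-finite, with a surjection $e\colon [n] \twoheadrightarrow X$, and let $S \subseteq \Omega^X$ be a cover, so $\bigcup S = X$. I would show by induction on $m \le n$ that there is a Kuratowski-finite subset $F_m \subseteq S$ with $\{e(i) \mid i < m\} \subseteq \bigcup F_m$. The base case takes $F_0 = \emptyset$; at the inductive step, $e(m) \in X = \bigcup S$ gives some $U \in S$ with $e(m) \in U$, and $F_{m+1} = F_m \cup \{U\}$ works, since a Kuratowski-finite set remains Kuratowski-finite after adjoining one element. Then $F_n$ is the desired Kuratowski-finite subcover, because $\{e(i) \mid i < n\} = X$ by surjectivity. (Alternatively one can invoke \cref{lem:compactness_via_directed_covers} and use directedness in place of building up $F_m$; this just replaces the unrolled finite choice by a different bookkeeping.)

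For the ``only if'' direction, consider $\sigma\colon X \to \Omega^X$, $x \mapsto \{x\}$. This map is injective (from $\{x\} = \{y\}$ and $x \in \{x\}$ one gets $x = y$), so it corestricts to a bijection between $X$ and its image $S = \{\{x\} \mid x \in X\}$; write $\tau\colon S \to X$ for the inverse. Since $\bigcup S = X = 1$, compactness yields a Kuratowski-finite $F \subseteq S$ with $\bigcup F = X$, say with a surjection $g\colon [n] \twoheadrightarrow F$. I claim $\tau \circ g\colon [n] \to X$ is surjective, so $X$ is Kuratowski-finite: given $x \in X = \bigcup F$, choose $U \in F$ with $x \in U$; as $U \in S$ it is a singleton, so $x \in U$ forces $U = \{x\}$ and $\tau(U) = x$, and picking $i$ with $g(i) = U$ gives $(\tau \circ g)(i) = x$.

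The point requiring care is precisely this last passage: from ``a Kuratowski-finite set of singletons covering $X$'' one cannot in general extract the elements of those singletons, so it is essential to use that $\sigma$ is a bijection onto $S$ and hence supplies an honest function $\tau$ defined on all of $S$ to name them. In the ``if'' direction the only subtlety is the routine one of replacing an appeal to finite choice by an induction over $[n]$.
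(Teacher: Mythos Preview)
Your proof is correct and follows essentially the same strategy as the paper: the ``only if'' direction uses the cover by singletons, and the ``if'' direction is an induction over the finite cardinal indexing $X$. There are two minor differences worth noting. First, for the ``if'' direction the paper invokes the earlier lemma that images of compact locales are compact, thereby reducing to showing that $[n]$ itself is compact, and then inducts; you instead induct directly over the indices of the surjection $e\colon [n]\twoheadrightarrow X$, which is more self-contained and avoids appealing to that lemma. Second, for the ``only if'' direction the paper simply says ``$X$ is a Kuratowski-finite join of singletons, hence Kuratowski-finite,'' whereas you spell out the passage via the inverse $\tau$ of the singleton map --- your care here is well placed constructively, since one really does need an honest function $S \to X$ to turn a Kuratowski-finite set of singletons into a Kuratowski-finite listing of elements.
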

\begin{proof}
 Suppose $X$ is compact. We have $X = \bigcup_{x \in X} \{x\}$ and so by compactness, $X$ is a Kuratowski-finite join of singletons. Hence, $X$ is Kuratowski-finite.
 
 Now suppose $X$ is Kuratowski-finite. Then $X$ is the image of some set of the form $[n] = \{m \in \N \mid m < n\}$. Thus, it suffices to show $[n]$ is compact.
 
 We proceed by induction.
 Certainly, $[0]$ is compact.
 Suppose $[n]$ is compact and consider a union $\bigcup \mathscr{S} = [n] \cup \{n\}$.
 Then $[n] \subseteq \bigcup \mathscr{S}$ and so there is a Kuratowski-finite subset $\mathscr{F} \subseteq \mathscr{S}$ such that $[n] \subseteq \mathscr{F}$.
 Moreover, $n \in S$ for some $S \in \mathscr{S}$. Thus, $\mathscr{F} \cup \{S\} \subseteq \mathscr{S}$ is a Kuratowski-finite subcover. So $[n+1]$ is compact.
\end{proof}
\begin{corollary}
 A set $X$ is compact Hausdorff if and only if it is Bishop-finite.
\end{corollary}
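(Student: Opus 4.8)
The plan is to unwind ``compact Hausdorff'' into elementary conditions on the set and then recognise their conjunction. By the preceding proposition, a set $X$ is compact as a locale precisely when it is Kuratowski-finite; and by the observation made just after the definition of Hausdorff locales, $X$ is Hausdorff as a discrete locale precisely when it has decidable equality. Hence the whole task reduces to showing that a set is Bishop-finite if and only if it is Kuratowski-finite and has decidable equality. The forward implication is essentially already recorded: Bishop-finite sets have decidable equality, and they are Kuratowski-finite since an isomorphism $[n] \cong X$ is in particular a surjection onto $X$.

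For the reverse implication I would prove, by induction on $n$, that every set $X$ admitting a surjection $e\colon [n] \twoheadrightarrow X$ and having decidable equality is Bishop-finite. The case $n = 0$ makes $X$ empty. For $n = k+1$, let $X'$ be the image of the restriction of $e$ to $[k] \subseteq [k+1]$; then $X'$ is Kuratowski-finite (being the image of $[k]$) and has decidable equality (as a subset of $X$), so is Bishop-finite by the inductive hypothesis. Moreover, for any $x \in X$ the proposition $x \in X'$ unwinds to the finite disjunction $\exists i < k.\ e(i) = x$ of decidable propositions and is therefore decidable; in particular the alternative $e(k) \in X'$ is decidable. If $e(k) \in X'$ then $X = X'$; otherwise $X = X' \sqcup \{e(k)\}$ is a binary coproduct of Bishop-finite sets. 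Either way $X$ is Bishop-finite, as required.

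Most of this is bookkeeping; the one step that genuinely uses the hypotheses is the decidability of ``$x \in X'$'', which rests on decidable equality of $X$ together with Bishop-finiteness of $[k]$, and which is exactly what makes the final case split admissible — recall from the earlier remark that a definition by cases is only legitimate when the branching condition is decidable. Combining this reverse implication with the two topological characterisations above then gives the proposition.
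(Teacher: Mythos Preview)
Your proof is correct and follows exactly the same route as the paper: reduce ``compact'' to Kuratowski-finite via the preceding proposition, reduce ``Hausdorff'' to decidable equality via the earlier observation, and then invoke the equivalence between Bishop-finiteness and Kuratowski-finiteness plus decidable equality. The only difference is that the paper's proof is a single line recalling this last equivalence (which was stated but not proved in an earlier remark), whereas you supply a complete inductive proof of it; your argument for that lemma is sound.
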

\begin{proof}
 Recall that a set is Bishop-finite if and only if it is Kuratowski-finite and has decidable equality.
\end{proof}

Here we see that our different notions of finiteness again have familiar topological generalisations.
\begin{remark}
 From this perspective it is perhaps less surprising that a subset of a Kuratowski-finite set can fail to be Kuratowski-finite. After all, we are familiar with the idea that an open sublocale of a compact locale need not be compact.
\end{remark}

We can now discuss a slightly more complicated example. Recall that the Cantor space $2^\N$ has the presentation \[\O(2^\N) \cong \langle z_n, u_n,\ n \in \N \mid z_n \wedge u_n = 0, z_n \vee u_n = 1\rangle.\]
If $\bigvee S = 1$ in $\O(2^\N)$ this should be forced by some relations and since every relation involves only finite joins, we might intuitively feel that we should be able to find a finite subset of $S$ which already joins to give $1$. This intuition can be made precise.
\begin{proposition}
 If every join in a frame presentation is finite, then the resulting locale is compact.
\end{proposition}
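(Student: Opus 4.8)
The plan is to make the frame $L$ of the presentation $\langle G \mid R\rangle$ concrete via the Coverage Theorem and then show that its top element is always produced by a \emph{finitary} closure operation, which is precisely compactness. (Throughout I read ``finite'' as ``Kuratowski-finite''.) First I would bring the presentation into the shape required by \cref{thm:suplattice_coverage} by passing to formal finite meets of the generators and adding the appropriate relations, as in the remark following that theorem; the key point is that since every join in $R$ is finite, so is every join in the resulting normalised presentation. \Cref{thm:suplattice_coverage,prop:explicit_suplattice_presentation} then identify $L$, as a poset and hence as a frame, with the collection of \emph{good} downsets of the generating $\wedge$-semilattice $G$ --- those downsets $D$ for which $S \subseteq D$ whenever $T \subseteq D$, for each relation $\bigvee S \le \bigvee T$ of the normalised presentation. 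Under this identification a join in $L$ is computed by applying to the union the closure operator $\mathrm{cl}$ sending a subset of $G$ to the least good downset containing it, the top element is $G$ itself, and the image of a generator $g$ is $\mathrm{cl}({\downarrow} g)$.

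The heart of the proof is the claim that $\mathrm{cl}(A) = \bigcup\{\mathrm{cl}(F) \mid F \subseteq A \text{ finite}\}$ for every $A \subseteq G$. The inclusion $\supseteq$ is just monotonicity of $\mathrm{cl}$, so it suffices to check that the right-hand side $U$ is a good downset containing $A$; being a union of downsets it is a downset, and it contains $A$ since $a \in \mathrm{cl}(\{a\})$. Goodness is the only place finiteness enters: given a relation $\bigvee S \le \bigvee T$ with $T \subseteq U$, the set $T$ is finite, so choosing (by finite choice, over a surjection from some $[n]$ onto $T$) a finite witnessing subset $F_i \subseteq A$ with the $i$-th element of $T$ in $\mathrm{cl}(F_i)$, and setting $F^* = \bigcup_i F_i$, which is again finite because finite sets are closed under finite unions, we get $T \subseteq \mathrm{cl}(F^*)$ and hence $S \subseteq \mathrm{cl}(F^*) \subseteq U$ because $\mathrm{cl}(F^*)$ is good.

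To conclude, suppose $\bigvee \mathcal{S} = 1$ in $\O X$. Writing each $a \in \mathcal{S}$ as its good downset $D_a$ and recalling $a = \bigvee_{g \in D_a} \mathrm{cl}({\downarrow} g)$, the hypothesis reads $\mathrm{cl}(\bigcup_{a \in \mathcal{S}} D_a) = G$, so by the claim the top element of $G$ lies in $\mathrm{cl}(F_0)$ for some finite $F_0 \subseteq \bigcup_{a \in \mathcal{S}} D_a$; since $\mathrm{cl}(F_0)$ is a downset containing the top, $\mathrm{cl}(F_0) = G$, i.e.\ $\bigvee_{g \in F_0} \mathrm{cl}({\downarrow} g) = 1$ in $L$. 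For each $g \in F_0$ pick (finite choice again) some $a_g \in \mathcal{S}$ with $g \in D_{a_g}$, so that $\mathrm{cl}({\downarrow} g) \le a_g$; then $F = \{a_g \mid g \in F_0\}$ is a finite subset of $\mathcal{S}$ with $1 = \bigvee_{g \in F_0} \mathrm{cl}({\downarrow} g) \le \bigvee F$, whence $\bigvee F = 1$. I expect the main obstacle to be purely the constructive bookkeeping: verifying that normalisation into coverage form preserves finiteness of joins, keeping track that ``finite'' means Kuratowski-finite throughout (so that finite unions of finite sets stay finite), and confirming that the only choice invoked is finite choice over finite cardinals, which is constructively available.
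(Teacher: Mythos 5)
Your proof is correct, but it takes a genuinely different (more hands-on) route than the paper. The paper notes that because every join and meet in the presentation is finite, the same generators and relations present a distributive lattice $L$, so the frame in question is the \emph{free frame on $L$}; it then invokes the identification of this free frame with the ideal lattice $\Idl L$, in which directed joins are simply unions, making compactness immediate. You instead stay at the level of the coverage theorem and \cref{prop:explicit_suplattice_presentation}, describing the frame as the good downsets of the $\wedge$-semilattice of generators, and your key lemma is that the induced closure operator $\mathrm{cl}$ is \emph{finitary}, $\mathrm{cl}(A) = \bigcup\{\mathrm{cl}(F) \mid F \subseteq A \textnormal{ finite}\}$ --- which you establish by finite choice over the Kuratowski-finite right-hand side of each relation. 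These are two packagings of the same idea: an ideal is exactly a downset closed under finite joins, so ``directed unions of ideals are ideals'' and ``$\mathrm{cl}$ is finitary'' are the same fact. What you gain is that all the constructive bookkeeping (Kuratowski-finiteness, finite choice over $[n]$, finiteness of finite unions of finite sets) is made explicit rather than delegated to the ``not hard to show'' claim that the free frame on a distributive lattice is $\Idl L$; what you lose is the crisp one-line compactness argument. One small point worth making explicit in your write-up: when you normalise to coverage form by distributing joins over meets, the resulting joins remain Kuratowski-finite because the relevant sets of choice functions are finite products of Kuratowski-finite sets, which are Kuratowski-finite. With that noted, the argument goes through.
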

\begin{proof}
 Since the presentation involves only finite joins and finite meets, it can also be viewed as a presentation for a distributive lattice $L$. The presented frame is then the free frame on this distributive lattice $L$. (This frame is also that obtained in \cref{ex:stone_spectrum}.)
 It is not hard to show that this free frame is given explicitly by the frame $\Idl L$ of lattice ideals on $L$ --- that is, downsets on $L$ which are closed under finite joins.
 Directed joins in $\Idl L$ are simply given by union and hence if $\bigvee_\alpha J_\alpha = {\downarrow} 1$ is a directed join then we must have $1 \in J_\alpha$ for some $\alpha$. Thus, $\Idl L$ is compact.
\end{proof}
\begin{remark}
 In fact, analogously to \cref{prop:overtness_from_presentation} there is a more general result for showing a locale is compact from a presentation. See Theorem 10 of \citet*{vickers2006compactness}.
\end{remark}

We can now use the compactness of $2^\N$ to deduce that the closed real interval $[0,1]$ is also compact by constructing an epimorphism $f\colon 2^\N \to [0,1]$ which, intuitively, interprets the points of $2^\N$ as the binary expansions of real numbers. We omit the details.

\begin{remark}\label{rem:spatial_interval_noncompact}
 You might have heard constructivists claim that Cantor space or the closed real interval might fail to be compact constructively. How can this be reconciled with the above results?
 The crucial point is that those constructivists are not referring to the true localic versions of $2^\N$ or $[0,1]$, but instead to their spaces of points. The failure of these spaces to be compact is simply the failure of these locales to be spatial. This is an example of the localic approach being much better behaved than the point-set approach in the constructive setting. One topos where $2^\N$ and $[0,1]$ fail to be spatial is the effective topos. See \citet*{bauer2006konig} for details.
\end{remark}

\section{Applications}

Now that we have seen the basic ideas behind constructive pointfree topology, we will take a look at some of the advantages of adopting a more topological approach to constructive mathematics. We will also see an example of how constructive topology can be useful even if you only care about proving classical theorems.

\subsection{Ostensibly classical theorems that always hold for locales}

A number of topological results are famously nonconstructive and one might fear that this might make it difficult to do topology or analysis without them.
However, we typically find that some formulation of the results do in fact hold for locales.
Indeed, pointfree results tend to be remarkably robust with respect to precisely which foundational axioms are assumed.

\subsubsection{Tychonoff's theorem}

Perhaps the most famous example of a nonconstructive theorem in topology is Tychonoff's theorem that an arbitrary product of compact spaces is compact.
This is known to be equivalent to the axiom of choice. However, its localic variant is constructively valid. (The discrepancy is resolved by observing that the product might fail to be spatial even if all the factors are.)

\begin{theorem}
 Arbitrary products of compact locales are compact.
\end{theorem}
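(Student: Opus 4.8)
The plan is to reduce the statement to a property of a single right adjoint and then establish that property via \emph{preframe} techniques, which for proper maps and compactness play the role that suplattices played for open maps and overtness. By \cref{cor:compactness_right_adjoint} it suffices to show that the right adjoint ${!}_*\colon \O(\prod_{\alpha \in I} X_\alpha) \to \Omega$ of the unique frame map preserves directed joins; since ${!}_*$ is a right adjoint it preserves finite meets automatically, so this is equivalent to saying that ${!}_*$ is a homomorphism of preframes. Here $\O(\prod_{\alpha \in I} X_\alpha)$ is the frame coproduct $\coprod_{\alpha \in I} \O X_\alpha$, which has a base consisting of the basic rectangles $\bigwedge_{j \in F} \pi_{\alpha_j}^*(u_j)$, and for each $\alpha$ the compactness of $X_\alpha$ says precisely that the right adjoint of ${!}^*\colon \Omega \to \O X_\alpha$ is itself a preframe homomorphism.

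To exploit this I would invoke the \emph{preframe coverage theorem}, the analogue of \cref{thm:suplattice_coverage} with ``suplattice'' replaced by ``preframe'' and ``arbitrary joins'' replaced by ``directed joins'': roughly, it identifies a frame presented by suitable stable directed-cover relations over a lattice of generators with the preframe presented by the same data. Applied to the standard presentation of $\coprod_{\alpha \in I} \O X_\alpha$, this yields a presentation of the coproduct as a preframe whose generators are the finite joins of basic rectangles and whose relations include, for each $\alpha$, the directed-join relations forced by compactness of $X_\alpha$. Granting such a presentation, the remainder is routine: one checks that the assignment sending a finite join of basic rectangles $\bigvee_i \bigwedge_j \pi_{\alpha_{ij}}^*(u_{ij})$ to $\bigvee_i \bigwedge_j \llbracket u_{ij} = 1 \rrbracket$ respects all the preframe relations --- and this is exactly the step that uses compactness of the factors --- so it extends to a preframe homomorphism $\coprod_{\alpha \in I} \O X_\alpha \to \Omega$; verifying that this map is right adjoint to ${!}^*$ identifies it with ${!}_*$, which is therefore a preframe homomorphism, and \cref{cor:compactness_right_adjoint} then gives the compactness of the product.

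The hard part is not this final bookkeeping but the preframe infrastructure itself: constructing free preframes, proving the preframe coverage theorem, and deducing from it the preframe presentation of a frame coproduct, all constructively and, crucially, without any appeal to choice when manipulating the directed joins. This is exactly the work underlying \cref{thm:pullback_stable_closed_maps} --- whose proof, as the remark following it notes, ``makes use of `preframes' instead of suplattices'' --- and it is these structural results, rather than anything special about products, that carry the argument; for the details I would refer to the literature on preframe presentations (Johnstone and Vickers) and to \citet*{vermeulen1994proper}. As with \cref{thm:open_pullback_stable}, all the genuinely constructive content lives in the choice-free treatment of the relevant algebraic structure.
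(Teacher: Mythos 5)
The paper does not give a proof of this theorem: it says the proof is ``somewhat too involved to give here'' and refers to \citet*{vickers2005tychonoff} for the general constructive result, citing \citet*{johnstone1991preframe} for a simpler preframe argument that, in the paper's own words, ``can be made to work constructively if we restrict to indexing sets with decidable equality.'' Your sketch is essentially that latter plan: reduce via \cref{cor:compactness_right_adjoint} to showing ${!}_*$ preserves directed joins, present the coproduct frame as a preframe using a preframe analogue of \cref{thm:suplattice_coverage}, and extend the candidate right adjoint from a base of basic rectangles. The reduction is correct, and you have identified the right structural parallel (preframes play for proper/compact what suplattices play for open/overt), so far as the approach goes.

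However, you characterise everything after the preframe coverage theorem as ``routine bookkeeping'' and locate all the difficulty in the generic preframe infrastructure, whereas for this theorem the genuinely hard constructive content is exactly the step you wave past. When the index set $I$ need not have decidable equality, the finite index sets $F$ appearing in basic rectangles $\bigwedge_{j \in F} \pi_{\alpha_j}^*(u_j)$ are merely Kuratowski-finite, and the usual manipulations that reduce to Bishop-finite sub-products, collapse repeated coordinates, and verify the coverage stability conditions for the infinite coproduct no longer go through by a harmless induction on a list of indices. This is precisely why the paper distinguishes the simple preframe proof (which constructively requires decidable equality of $I$) from Vickers's fully general argument, which is substantially more involved. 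As written, your sketch establishes the theorem only under this extra hypothesis on $I$ and does not explain how to remove it; you should at minimum flag the restriction, or else invoke the more elaborate argument of \citet*{vickers2005tychonoff}.
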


The proof is somewhat too involved to give here, but see \citet*{vickers2005tychonoff} for details. A simple classical proof is also given in \citet*{johnstone1991preframe}, which can be made to work constructively if we restrict to indexing sets with decidable equality.

\subsubsection{The axiom of choice}

It is not only explicitly topological results that become constructively valid with the pointfree approach. Even classical set-theoretic theorems can often become constructive if replace the sets with locales. The axiom of choice itself provides such an example.

Recall that the axiom of choice can be formulated as saying that a product of inhabited sets is inhabited.
Constructively, we still have the following result from \citet*{henry2016corrected}.
\begin{theorem}
 A product of positive overt locales indexed by a set with decidable equality is positive and overt.
\end{theorem}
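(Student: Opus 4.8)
The plan is to establish overtness by exhibiting the left adjoint $\exists\colon\O(\prod_{i\in I}X_i)\to\Omega$ of ${!}^*\colon\Omega\to\O(\prod_{i\in I}X_i)$, using the criterion that a locale is overt exactly when this left adjoint exists; positivity then comes for free, since on an overt locale the left adjoint satisfies $\exists(u)=\llbracket u>0\rrbracket$, so once $\exists(1)=\top$ is checked we get $1>0$. To construct $\exists$ I would present $\O(\prod_{i\in I}X_i)=\bigoplus_{i\in I}\O X_i$ from presentations of the factors and bring it into the form required by the coverage theorem (\cref{thm:suplattice_coverage}), so that, as in \cref{prop:explicit_suplattice_presentation}, the coproduct frame is the suplattice presented by a $\wedge$-semilattice $M$ of generators together with the covering relations inherited from the $\O X_i$. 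The only place the hypothesis on $I$ is used is here: because $I$ has decidable equality, $M$ can be described concretely as the set of finitely supported $I$-tuples $(u_i)_{i\in I}$ with $u_i\in\O X_i$ under pointwise meet, the tuple $(u_i)_i$ standing for the basic open $\bigwedge_i\pi_i^*(u_i)$, and a finite meet of generators $\pi_{i_1}^*(v_1)\wedge\dots\wedge\pi_{i_n}^*(v_n)$ is normalised to such a tuple by grouping together the indices that are equal.

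For each $i$ let $\exists_i\colon\O X_i\to\Omega$ be the left adjoint of ${!}_i^*$ given by overtness of $X_i$, and recall that positivity of $X_i$ is precisely the statement $\exists_i(1)=\top$. Define $\exists\big((u_i)_i\big)=\bigwedge_{i\in I}\exists_i(u_i)$; all but finitely many factors equal $\exists_i(1)=\top$, so this is really a finite meet, and — crucially — it does not depend on how many trivial coordinates are included, which is exactly where positivity of the factors enters. I would then check that $\exists$ is monotone for the pointwise order on $M$ and that it respects every covering relation of the presentation: distributing the finite meet over joins in the frame $\Omega$, each such relation reduces, after taking the meet with a fixed open, to the corresponding covering relation inside a single $\O X_i$, which is respected because $\exists_i$ is monotone and join-preserving. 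By the coverage theorem this data extends to a unique suplattice homomorphism $\exists\colon\O(\prod_{i\in I}X_i)\to\Omega$. (Equivalently, one could package this verification through \cref{prop:overtness_from_presentation}, taking the candidate positive generators to be the tuples all of whose coordinates are positive.)

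It then remains to verify $\exists\dashv{!}^*$. Using ${!}^*=\pi_i^*\circ{!}_i^*$ for each $i$, together with the facts that ${!}^*$ preserves finite meets and arbitrary joins while $\exists$ preserves joins: on a generator $m=(u_i)_i$ we get ${!}^*\exists(m)=\bigwedge_i\pi_i^*\big({!}_i^*\exists_i(u_i)\big)\ge\bigwedge_i\pi_i^*(u_i)=m$ by the unit inequality ${!}_i^*\exists_i\ge\id$ for each factor, and taking joins gives ${!}^*\circ\exists\ge\id$; conversely $\exists\big({!}^*(p)\big)=\exists\big(\bigvee\{1\mid p=\top\}\big)=\bigvee\{\exists(1)\mid p=\top\}=p$, since $\exists$ of the empty tuple is $\top$. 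Hence $\prod_{i\in I}X_i$ is overt, and feeding $1$ into $\exists(u)=\llbracket u>0\rrbracket$ shows it is positive.

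As for where the difficulty lies: the adjunction-chasing of the last paragraph is routine, and the role of each hypothesis is clean — decidable equality of $I$ is used only for the tuple description of $M$, while positivity of the $X_i$ is used both to make $\exists$ well defined on $M$ and because the conclusion genuinely requires it (a single empty overt factor already destroys positivity of the product). The one genuinely fiddly step is setting up the coverage-form presentation of the \emph{infinite} coproduct and confirming that $\exists$ respects all of its covering relations uniformly in the indices; this is the arbitrary-coproduct analogue of \cref{lem:frame_coproduct_as_tensor_product}, and is the point demanding care.
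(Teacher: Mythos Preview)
Your proposal is correct and follows essentially the same route as the paper: both build the $\wedge$-semilattice of basic opens as finite meets of generators $\pi_i^*(u_i)$ (your ``finitely supported tuples'' are exactly the paper's $\bigwedge_{\alpha\in F}\iota_\alpha(u_\alpha)$ after quotienting by $\iota_\beta(1)=1$), use decidable equality of $I$ to ensure the indices appearing in a basic open are genuinely distinct (Bishop-finite rather than merely Kuratowski-finite), define $\exists$ coordinatewise via the $\exists_i$, invoke positivity of the factors for well-definedness, and then apply the coverage theorem before checking the adjunction. The paper spells out the coverage-form relations explicitly where you gesture at them, but you correctly flag that step as the fiddly one, and your verification of $\exists\dashv{!}^*$ and of positivity via $\exists\circ{!}^*=\id$ matches the paper's.
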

\begin{proof}
 Let $(X_\alpha)_{\alpha \in I}$ be a family of positive overt locales.
 Recall that $\O (\prod_{\alpha \in I} X_\alpha)$ has a presentation with a generator $\iota_\alpha(u)$ for each $\alpha \in I$ and $u \in \O X_\alpha$ and relations enforcing the relations holding in each $\O X_\alpha$. We can obtain a set of basic opens for the product by taking (Kuratowski-)finite meets of these generators. Explicitly, the basic opens are of the form $\bigwedge_{\alpha \in F} \iota_\alpha(u_\alpha)$ for $F \in \Pfin(I)$.
 
 If the indexing set $I$ has decidable equality, then so does each Kuratowski-finite subset $F \subseteq I$ and hence such subsets are \emph{Bishop}-finite.
 In other words, each such finite meet involves opens from \emph{distinct} factors.
 
 We now claim that $\bigwedge_{\alpha \in F} \iota_\alpha(u_\alpha)$ is positive if and only if $u_\alpha$ is positive for each $\alpha \in F$.
 (Note that it is important that each $u_\alpha$ comes from a different $X_\alpha$, since if $a$ and $b$ lie in the same factor, their intersection might fail to be positive even if they both are.)
 We will show that the map $\bigwedge_{\alpha \in F} \iota_\alpha(u_\alpha) \mapsto \bigwedge_{\alpha \in F} \llbracket u_\alpha > 0\rrbracket$ defines a suplattice homomorphism that is left adjoint to ${!}^*\colon \Omega \to \O (\prod_{\alpha \in I} X_\alpha)$ as in \cref{prop:overtness_from_presentation}.
 
 We would like to apply the coverage theorem and so we must find a presentation for $\O (\prod_{\alpha \in I} X_\alpha)$ that is of the appropriate form.
 The $\wedge$-semilattice of generators will consist of formal finite meets $\bigwedge_{\alpha \in F} \iota_\alpha(u_\alpha)$ for $F \in \Pfin(I)$. The meet operation is \[\bigwedge_{\alpha \in F_1} \iota_\alpha(u_\alpha) \wedge \bigwedge_{\alpha \in F_2} \iota_\alpha(v_\alpha) = \bigwedge_{\alpha \in F_1 \cup F_2} \iota_\alpha( \bigwedge (\{ u_\alpha \mid \alpha \in F_1 \} \cup \{ v_\alpha \mid \alpha \in F_2 \})).\]
 We should take a quotient so that $\iota_\beta(1) = 1$ for each $\beta \in I$. That is, we have $\iota_\beta(1) \wedge \bigwedge_{\alpha \in F} \iota_\alpha(u_\alpha) = \bigwedge_{\alpha \in F} \iota_\alpha(u_\alpha)$ for all $F \in \Pfin(I)$.
 The relations in the frame presentation then consist of inequalities \[\bigvee_{u \in U} \bigwedge_{\alpha \in F} \iota_\alpha(a_\alpha) \wedge \iota_\beta(u) \le \bigvee_{v \in V} \bigwedge_{\alpha \in F} \iota_\alpha(a_\alpha) \wedge \iota_\beta(v)\] whenever $\bigvee U \le \bigvee V$ for $U,V \subseteq \O X_\beta$.
 It is not too difficult to show that this is indeed a presentation for the product locale.
 
 Note that for the assignment $\bigwedge_{\alpha \in F} \iota_\alpha(u_\alpha) \mapsto \bigwedge_{\alpha \in F} \exists_{X_\alpha}(u_\alpha)$ to be well-defined map on the generators, we require that $\iota_\beta(1) = 1$ for all $\beta \in I$, but this is true since each $X_\beta$ is positive by assumption. Also note that this map is monotone. Now by the coverage theorem, this induces a suplattice homomorphism $\exists\colon \O (\prod_{\alpha \in I} X_\alpha) \to \Omega$ as long as it satisfies the necessary relations, which can easily be seen to hold.
 
 Finally, we show that $\exists$ is left adjoint to ${!}^*\colon \Omega \to \O (\prod_{\alpha \in I} X_\alpha)$.
 We have
 \begin{align*}
  {!}^*\exists (\bigwedge_{\alpha \in F} \iota_\alpha(u_\alpha)) &= \bigwedge_{\alpha \in F} {!}^* \exists_{X_\alpha}(u_\alpha) \\
                                                                 &= \bigwedge_{\alpha \in F} \iota_\alpha {!}_{X_\alpha}^* \exists_{X_\alpha}(u_\alpha) \\
                                                                 &\ge \bigwedge_{\alpha \in F} \iota_\alpha(u_\alpha)
 \end{align*}
 and so ${!}^* \circ \exists \ge \id$. On the other hand, $\exists {!}^*(p) = \bigvee \{ \exists(1) \mid p \} = {\bigvee \{\top \mid p\}} = p$ and so $\exists \circ {!}^* = \id$. Thus, $\exists$ is a left adjoint retraction of ${!}^*$ and hence $\prod_{\alpha \in I} X_\alpha$ is overt and positive, as required.
\end{proof}

Recall that sets are always overt, and a set is positive if and only if is inhabited. Furthermore, under the assumption of excluded middle, every set has decidable equality.
So in this case a product of inhabited sets in $\Loc$ is always a positive locale. The set-theoretic axiom of choice is then equivalent to the claim that every such product locale is not just positive, but has a \emph{point}.

\subsubsection{Excluded middle}

Finally, there is a sense in which excluded middle itself holds from the localic perspective.
Of course, a truth value $p \in \Omega$ is not guaranteed to have a true lattice-theoretic complement and so the closest approximation to such a complement $\neg p$ does not need to satisfy $p \vee \neg p = \top$ or $\neg\neg p = p$.
On the other hand, we have already seen that the open sublocale $P$ of $1$ corresponding to the open $p \in \Omega = \O(1)$ always has a closed complement $P\comp \hookrightarrow 1$ in the lattice of all sublocales of $1$.

It is not that the closed sublocale $P\comp$ fails to exist constructively, but that $P\comp$ is not necessarily an open sublocale (i.e.\ a subset) of $1$. Indeed, the locale $P\comp$ cannot be shown to be a discrete locale at all. The open sublocale given by the intuitionistic negation of $p$ is the \emph{interior} of the closed sublocale $P\comp$ and the `bad behaviour' of constructive negation is simply a reflection of the fact that we are forcing these locales to be sets.

A slightly different perspective on the same phenomenon is given in \citet*{vickers2022}. In $\Set$, exponentials involving subsingletons are given by implication of truth values. In particular, $\emptyset^{\{ {*} \mid p = \top \}}$ is isomorphic to $\{ {*} \mid \neg p = \top \}$. In $\Loc$ on the other hand, the exponential object $0^P = \prod_p 0$ is isomorphic to the compact Hausdorff locale $P\comp$. It can then be shown that $0^{0^P} \cong P$. This is another sense in which double negation elimination holds constructively for locales.

\subsection{An extended example: the extreme value theorem}

Let us conclude this chapter with an example of a result from analysis proved using the constructive pointfree approach. We will then see how it might be applied to deduce a classical theorem in fibrewise topology. 

We will prove a constructive version of the \emph{extreme value theorem}. Classically, this states that if $X$ is a compact topological space, a continuous function $f\colon X \to \R$ attains a maximum value.
Our approach is based on that given in \citet*{taylor2010analysis}.

\subsubsection{One-sided real numbers}

We will need to understand suprema of real numbers. Classically, all inhabited, bounded sets of reals have suprema and infima, but constructively this is not the case.
Suppose the set $\{0\} \cup \{1 \mid p = \top\}$ had a supremum $s$. By the locatedness property of Dedekind reals (see the start of \cref{sec:classifying_locales}) we have $s > 0 \lor s < 1$, but in the first case $p = \top$ and in the second $p \ne \top$, and hence excluded middle follows.

To discuss suprema and infima we must use slightly different variants of the reals that have a distinct constructive nature. Completing $\Q$ under inhabited suprema will give the \emph{lower reals}.
These are given by \emph{one-sided} Dedekind cuts consisting only of the sets $L$ of lower bounds instead of pairs $(L,U)$.
Similarly, completing $\Q$ under inhabited infima gives the \emph{upper reals}, which are constructed from the sets of upper bounds $U$.
Thus, lower reals are approximated from below and upper reals are approximated from above.

The formal axiomatisation of the lower reals $\lowerReals$ involves a basic proposition $\ell_q$ for each $q \in \Q$ and the three families of axioms of Dedekind cuts that refer only to $\ell_q$ ---
namely, the following.
\begin{displaymath}
\begin{array}{l@{\qquad\quad}r@{\hspace{1.5ex}}c@{\hspace{1.5ex}}l@{\quad}@{}l@{\qquad\quad}r@{}}
 (1) & \ell_q &\vdash& \ell_p & \text{ for $p \le q$} & \text{($L$ downward closed)} \\
 (2) & \ell_p &\vdash& \bigvee_{q > p} \ell_q & \text{ for $p \in \Q$} & \text{($L$ rounded)} \\
 (3) & \top &\vdash& \bigvee_{q \in \Q} \ell_q && \text{($L$ inhabited)}
\end{array}
\end{displaymath}
This gives the presentation
\begin{align*}
 \O \lowerReals = \langle \ell_q,\ q \in \Q \mid {} & \ell_p = \bigvee_{q > p} \ell_q, \bigvee_{q \in \Q} \ell_q = 1 \rangle.
\end{align*}
Similarly, for the upper reals $\upperReals$ we have
\begin{align*}
 \O \upperReals = \langle u_q,\ q \in \Q \mid {} & u_q = \bigvee_{p < q} u_p, \bigvee_{q \in \Q} u_q = 1 \rangle.
\end{align*}
Classically, $\lowerReals$ is the space with underlying set $\R \sqcup \{\infty\}$ and the topology of lower semicontinuity. This has a base of opens of the form $\{(q, \infty]\}$ for $q \in \Q$.
Similarly, $\upperReals$ is $\R \sqcup \{-\infty\}$ with the topology of upper semicontinuity.

Of course, the locales $\lowerReals$ and $\upperReals$ are actually isomorphic (by negation), but it will be convenient to distinguish them.
The familiar locale $\R$ of reals (also called \emph{two-sided} or \emph{Dedekind} reals in this context) is a sublocale of $\lowerReals \times \upperReals$.

Since it is cumbersome to work with sets of rationals explicitly, in what follows we will simply use the letter $r$ to denote a lower, upper or two-sided real and implicitly define the sets $L$ or $U$ (as appropriate) by saying when $q < r$ or $r < q$ for rational $q$.

\subsubsection{Suprema over sublocales}

Note that classically the supremum $r$ of a set $X \subseteq \R$ is characterised by $r \le u$ if and only if $u \in \R$ is an upper bound of $X$ (i.e.\ $\forall x \in X.\ x \le u$).
To define the supremum as a lower real we must specify for which $q \in \Q$ we have $q < \sup X$. We then have $q < \sup X \iff \neg(\sup X \le q) \iff \neg(\forall x \in X.\ x \le q) \iff \exists x \in X.\ q < x$.

Since we can existentially quantify over overt locales this suggests we can define the supremum of an overt sublocale $V$ of $\R$ as a lower real $r$ by
\[q < r \iff \exists x \in V.\ q < x.\]
The existential quantifier is interpreted by restricting the open $\llround q, \infty\rrround$ to $V$ (using the underlying frame homomorphism of the inclusion) and then applying $\exists\colon \O V \to \Omega$. If this gives $\top$ we write $V \between \llround q, \infty\rrround$ which is understood to mean that $V$ meets (i.e.\ intersects) $\llround q, \infty\rrround$.
\begin{lemma}
 If $V$ is a positive overt sublocale of $\R$, then
 $q < r \iff V \between \llround q, \infty\rrround$ indeed defines a lower real $r$.
\end{lemma}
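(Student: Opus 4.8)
The plan is to observe that specifying a lower real amounts to giving a point of $\lowerReals$, i.e.\ a subset $L \subseteq \Q$ which is downward closed, rounded and inhabited (the three axioms in the presentation of $\O\lowerReals$), and that the subset in question should be $L = \{q \in \Q \mid V \between \llround q, \infty\rrround\}$. So I would write $i\colon V \hookrightarrow \R$ for the sublocale inclusion, with $i^*\colon \O\R \to \O V$ the associated surjective frame homomorphism, and let $\exists_V\colon \O V \to \Omega$ be the left adjoint of ${!}^*\colon \Omega \to \O V$ supplied by overtness; being a left adjoint, $\exists_V$ is in particular a suplattice homomorphism. Abbreviating $V_q \coloneqq i^*(\llround q, \infty\rrround)$, we have by definition $V \between \llround q, \infty\rrround \iff \exists_V(V_q) = \top$, so it remains to check the three conditions for the map $q \mapsto \exists_V(V_q)$.

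For downward closure: if $p \le q$ then $\llround q, \infty\rrround \le \llround p, \infty\rrround$ in $\O\R$, so $V_q \le V_p$ and hence $\exists_V(V_q) \le \exists_V(V_p)$ by monotonicity. For roundedness: applying the join-preserving map $i^*$ and then the suplattice homomorphism $\exists_V$ to the defining relation $\llround p, \infty\rrround = \bigvee_{q > p}\llround q, \infty\rrround$ yields $\exists_V(V_p) = \bigvee_{q > p} \exists_V(V_q)$ in $\Omega$; since joins in $\Omega$ are computed as $\bigvee S = \llbracket \top \in S\rrbracket$, the value $\exists_V(V_p)$ equals $\top$ precisely when $\exists_V(V_q) = \top$ for some $q > p$, which is exactly what roundedness requires. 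For inhabitedness the same mechanism applies to the relation $\bigvee_{q \in \Q}\llround q, \infty\rrround = 1$: pushing it through $i^*$ and $\exists_V$ gives $\bigvee_{q \in \Q}\exists_V(V_q) = \exists_V(1)$, and since $V$ is positive we have $1 > 0$ in $\O V$, so $\exists_V(1) = \llbracket 1 > 0\rrbracket = \top$ by the description of $\exists$ on overt locales; hence some $\exists_V(V_q) = \top$ and $L$ is inhabited.

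The argument is short and there is no real obstacle; the only point requiring a little care is converting a frame-level cover into an existential statement, and this is precisely where overtness earns its keep — both in giving $\exists_V$ as a genuine join-preserving map and in the fact that a join in $\Omega$ is $\top$ only if one of the joined elements already is. Note that positivity of $V$ is used only in establishing inhabitedness (to force $\exists_V(1) = \top$), which fits the intuition that a non-positive sublocale such as the empty one would have supremum $-\infty$, and $-\infty$ is not a lower real.
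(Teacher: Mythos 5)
Your proposal is correct and matches the paper's proof almost exactly: the paper's shorthand $V \between (-)$ is precisely your composite $\exists_V \circ i^*$, and both arguments push the presentation relations for $\O\lowerReals$ through this join-preserving map, invoking positivity of $V$ only for the inhabitedness axiom. The only cosmetic difference is that you handle downward closure separately via monotonicity, whereas the paper extracts axioms (1) and (2) together from the single equation $p < r \iff \exists q > p.\ q < r$.
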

\begin{proof}
 In $\O\R$ we have $\llround p, \infty\rrround = \bigvee_{q > p} \llround q, \infty\rrround$. Applying the join-preserving map $V \between (-)$ we obtain $V \between \llround p, \infty\rrround \iff \exists q > p.\ V \between \llround q, \infty\rrround$ and so we find $p < r \iff \exists q > p.\ q < r$. Thus, axioms (1) and (2) hold.
 
 Similarly, $\bigvee_{q \in \Q} \llround q, \infty\rrround = 1$ in $\O\R$ gives $\exists q \in \Q.\ q < r \iff V \between 1$. But $V \between 1$ is precisely the claim that $V$ is positive and so axiom (3) follows and $r$ is a lower real.
\end{proof}
\begin{remark}
 Examining the proof it is apparent that we only used the axioms of $\R$ involving the generators of the form $\ell_q$ and so a similar result holds for positive overt sublocales of $\lowerReals$.
\end{remark}

Suprema of compact locales can also be defined, but this time we obtain an \emph{upper} real. We define the supremum $r$ of a compact sublocale $K$ of $\R$ by
\[r < q \iff \forall x \in K.\ x < q.\]
Note that this is similar to the definition in our informal discussion above except that we restrict to rational $q$ and replace $\le$ with strict inequality.

\begin{lemma}
 If $K$ is a compact sublocale of $\R$, then
 $r < q \iff K \le \llround -\infty, q\rrround$ indeed defines an upper real $r$.
\end{lemma}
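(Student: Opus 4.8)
The plan is to restrict the relations of $\O\R$ that involve only the generators $\llround -\infty, q\rrround$ to the sublocale $K$, and then to apply the directed-cover characterisation of compactness (\cref{lem:compactness_via_directed_covers}). Write $i\colon K \hookrightarrow \R$ for the inclusion and set $a_q \coloneqq i^*(\llround -\infty, q\rrround) \in \O K$, so that $r < q$ is by definition the statement $a_q = 1$. Since $i^*$ is a frame homomorphism it preserves arbitrary joins, so the relations $\llround -\infty, q\rrround = \bigvee_{p < q}\llround -\infty, p\rrround$ and $\bigvee_{q \in \Q}\llround -\infty, q\rrround = 1$ in $\O\R$ become $a_q = \bigvee_{p < q} a_p$ and $\bigvee_{q \in \Q} a_q = 1$ in $\O K$. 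The first relation already gives $a_p \le a_q$ whenever $p \le q$, so $(a_q)_{q \in \Q}$ is a directed family, as is $(a_p)_{p < q}$ for each fixed $q$ (using that the order on $\Q$ is decidable, so that rational maxima exist).

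Next I would check the conditions for $r$ to be an upper real one at a time. Inhabitedness: $\bigvee_{q \in \Q} a_q = 1$ is a directed cover of $\O K$, so \cref{lem:compactness_via_directed_covers} supplies a $q \in \Q$ with $a_q = 1$, i.e.\ $r < q$. Roundedness: if $r < q$ then $\bigvee_{p < q} a_p = 1$ is a directed cover, so by \cref{lem:compactness_via_directed_covers} there is $p < q$ with $a_p = 1$, i.e.\ $\exists p < q.\ r < p$; the reverse implication is immediate from $a_p \le a_q$. Upward closure of the cut is subsumed in this same monotonicity. Equivalently, these checks amount to saying that the assignment $u_q \mapsto \llbracket a_q = 1 \rrbracket$ respects the relations of $\O\upperReals$ and hence extends to a frame homomorphism $\O\upperReals \to \Omega$, which is a point of $\upperReals$.

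I do not anticipate any real obstacle; the only step calling for a little care is confirming that the joins in question are genuinely directed before invoking \cref{lem:compactness_via_directed_covers}, which is exactly where the roundedness relation and the decidability of the order on $\Q$ get used. As with the analogous lemma for positive overt sublocales, only the fragment of the presentation of $\O\R$ mentioning the generators $\llround -\infty, q\rrround$ enters the argument, so the same proof covers compact sublocales of $\upperReals$.
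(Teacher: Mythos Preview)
Your argument is correct and matches the paper's proof essentially step for step: both pull the relations $\llround -\infty, q\rrround = \bigvee_{p<q}\llround -\infty, p\rrround$ and $\bigvee_{q}\llround -\infty, q\rrround = 1$ back to $\O K$, observe these joins are directed, and then invoke the directed-cover characterisation of compactness to extract the needed witnesses. Your version is slightly more explicit about notation and about why the joins are directed, and your closing remark that only the $\llround -\infty, q\rrround$ fragment is used (so the result extends to $\upperReals$) is valid, though the paper only spells out the analogous observation in the overt case.
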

\begin{proof}
 Similarly to before we have $\llround -\infty, q\rrround = \bigvee_{p < q} \llround -\infty, p\rrround$ in $\O\R$.
 Thus, $K \le \llround -\infty, q\rrround \iff K \le \bigvee_{p < q} \llround -\infty, p\rrround$.
 Now note that the join is directed and so $K \le \bigvee_{p < q} \llround -\infty, p\rrround$ if and only if $K \le \llround -\infty, p\rrround$ for some $p < q$.
 Thus, we can conclude $r < q \iff \exists p < q.\ r < p$ and hence the upward closure and roundedness axioms for upper reals are satisfied.
 
 Now consider $\bigvee_{q \in \Q} \llround -\infty, q \rrround = 1$ in $\O\R$. We always have $K \le 1$ and so by compactness $K \le \llround -\infty, q \rrround$ for some $q \in \Q$. Thus, $r < q$ for some $q \in \Q$ and the inhabitedness axiom holds.
\end{proof}

\subsubsection{The extreme value theorem}

If $K$ is a sublocale of $\R$ that is positive, overt \emph{and} compact, then the above definitions define the supremum both as a lower real and as an upper real.
As we might expect, these give the two sides of a single Dedekind cut.

To prove this makes the supremum $r$ a Dedekind real it remains to prove the disjointness and locatedness axioms.
Disjointness means that if $p < r$ and $r < q$ then $p < q$. By the definition of $r$, the assumption $p < r$ means $K \between \llround p, \infty\rrround$ and $r < q$ means $K \le \llround -\infty, q\rrround$. Thus, $\llround -\infty, q\rrround \between \llround p, \infty\rrround$, or equivalently $\llround -\infty, q\rrround \wedge \llround p, \infty\rrround > 0$. But by the disjointness axiom of $\O \R$ this implies $p < q$, as required.

Locatedness means that if $p < q$ then either $p < r$ or $r < q$. Suppose $p < q$. We must show $K \between \llround p, \infty\rrround$ or $K \le \llround -\infty, q\rrround$.
By the locatedness axiom in $\O \R$ we know that $\llround p, \infty\rrround \vee \llround -\infty, q\rrround = 1$. Thus, the desired result follows from the following lemma.

\begin{lemma}
 Let $K$ be a compact overt locale and suppose $u \vee v = 1$ in $\O K$.
 Then either $u > 0$ or $v = 1$.
\end{lemma}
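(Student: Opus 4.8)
The plan is to use overtness to replace $u$ by a join of copies of $1$ indexed by the truth value ``$u > 0$'', so that the cover $u \vee v = 1$ becomes a \emph{directed} cover, and then to apply compactness to that directed cover.

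First I would invoke overtness of $K$. The map $\exists\colon \O K \to \Omega$ is left adjoint to ${!}^*$, so the unit of the adjunction gives $u \le {!}^*\exists(u)$; since $\exists(u) = \llbracket u > 0 \rrbracket$ in an overt locale and ${!}^*(p) = \bigvee\{1 \mid p = \top\}$, this says $u \le \bigvee\{1 \mid u > 0\}$.

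Next I would combine this with the hypothesis. Taking the join with $v$,
\[
 1 = u \vee v \le \bigvee\{1 \mid u > 0\} \vee v = \bigvee\bigl(\{v\} \cup \{1 \mid u > 0\}\bigr),
\]
and, everything being below $1$, this is an equality. The set $S \coloneqq \{v\} \cup \{1 \mid u > 0\}$ is directed: it is inhabited, and it is closed under binary joins, since for $s, t \in S$ either both equal $v$, so that $s \vee t = v \in S$, or at least one of them lies in the subsingleton $\{1 \mid u > 0\}$, whence $u > 0$ and therefore $s \vee t = 1 \in \{1 \mid u > 0\} \subseteq S$. Hence \cref{lem:compactness_via_directed_covers} applies to the directed cover $\bigvee S = 1$ and yields $1 \in S$, which unwinds to exactly ``$v = 1$ or $u > 0$'', as desired.

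I do not expect a genuine obstacle here: the only point requiring care is the verification that $S$ is directed, since $\{1 \mid u > 0\}$ is a subsingleton carved out by a proposition rather than by a decidable condition, so the case analysis must proceed by disjunction elimination on membership in the union rather than by excluded middle. If one prefers, one can instead close $S$ under finite joins first, exactly as in the proof of \cref{lem:compactness_via_directed_covers}, and apply compactness to that.
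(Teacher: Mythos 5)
Your proof is correct. It differs from the paper's in which form of compactness it invokes: the paper uses the definition directly (extracting a Kuratowski\mbox{-}finite subcover from $\{u \mid u > 0\} \cup \{v\}$ and then doing a finite case analysis on the subcover), whereas you make the cover $\{v\} \cup \{1 \mid u > 0\}$ directed up front and appeal to \cref{lem:compactness_via_directed_covers}, so that $1 \in S$ pops out immediately. The payoff of your route is that the finite case analysis is confined to checking directedness for a pair of elements, which is completely tame constructively; the paper's route instead does a finite case analysis over a Kuratowski\mbox{-}finite list $c_1, \dots, c_n$ (``either $c_i = v$ for all $i$ or $u > 0$''), which is fine but leans on finite choice and is a bit more delicate to spell out in full. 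Your observation that the case split on membership in $\{v\} \cup \{1 \mid u > 0\}$ proceeds by disjunction elimination on the union, not by excluded middle, is precisely the point that needs care, and you get it right. Your closing remark about closing $S$ under finite joins is harmless but redundant: that is exactly what the proof of \cref{lem:compactness_via_directed_covers} does, so invoking the lemma already amounts to this.
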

\begin{proof}
 By overtness we have $u = \bigvee\{u \mid u > 0\}$. Then by compactness $1 = u \vee v = \bigvee\{u \mid u > 0\} \vee v$ has a Kuratowski-finite subcover.
 So we have $c_1 \vee c_2 \vee \dots \vee c_n = 1$ where for each $i$, either $c_i = v$ or $c_i = u > 0$.
 Since there are finitely many of these, we can conclude that either $c_i = v$ for all $i$ or $u > 0$.
 In the former case $v = 1$ and in the latter case $u > 0$, as required.
\end{proof}

Finally, we note that $r$ lies in the sublocale $K$ and is thus a \emph{maximum}.
This is because by definition whether $r$ lies in a subbasic open is defined purely in terms of that open's restriction to $K$
and hence the map $r^*\colon \O \R \to \Omega$ factors through the frame quotient $\O \R \twoheadrightarrow \O K$.

Therefore, up to taking images, we have proved the following.

\begin{theorem}[Extreme value theorem]\label{thm:extreme_value_theorem}
 If $X$ is any positive compact overt locale and $f\colon X \to \R$, then $f$ has a supremum given by a Dedekind real number $r$ defined by
 $q < r \iff f^* \llround q, \infty\rrround > 0$ and $r < q \iff f^*\llround -\infty, q\rrround = 1$.
 Furthermore, this supremum is a maximum in the sense that ${\Sloc f}^*(\{r\})$ is (compact and) positive.
\end{theorem}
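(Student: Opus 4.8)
The plan is to reduce everything to the image of $f$ and then splice together the facts already established above. First I would set $K := {\Sloc f}_!(X) \hookrightarrow \R$, the image of $f$, factoring $f$ as $X \xrightarrow{e} K \xrightarrow{i} \R$ with $e$ epic and $i$ a sublocale inclusion. Since images of positive, overt, and compact (sub)locales are again positive (\cref{cor:image_and_preimage_of_positive_locales}), overt (\cref{lem:image_of_overt_subloc}), and compact (by the corresponding lemma), $K$ is a positive compact overt sublocale of $\R$. The lemmas on suprema of sublocales of $\R$ then show that $q < r \iff K \between \llround q, \infty\rrround$ defines a lower real (using that $K$ is positive overt) and $r < q \iff K \le \llround -\infty, q\rrround$ defines an upper real (using that $K$ is compact), and the disjointness and locatedness computations performed above --- the latter via the lemma that $u \vee v = 1$ in a compact overt frame forces $u > 0$ or $v = 1$ --- show these are the two halves of a single Dedekind real $r$, which moreover lies in $K$.

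Next I would check that this $r$ matches the description in the statement, which is phrased via $f^*$ rather than $i^*$. Writing $f^* = e^* \circ i^*$ and recalling that $e^*$ is the injective (mono) factor of the image factorisation of $f^*$ in $\Frm$, we get $e^*(u) = 1 \iff u = 1$; and since the left adjoint of ${!}^*\colon \Omega \to \O K$ is $\exists_X \circ e^*$ (exactly as in the proof of \cref{lem:image_of_overt_subloc}), we get $e^*(u) > 0 \iff u > 0$. Hence $f^*\llround q, \infty\rrround > 0 \iff K \between \llround q,\infty\rrround$ and $f^*\llround -\infty, q\rrround = 1 \iff K \le \llround -\infty, q\rrround$, so $r$ is the real described and the first half of the theorem is done.

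For the maximum claim, take $\{r\}$ to be the closed sublocale of $\R$ at $r$, i.e.\ $\nabla_a$ with $a = \bigvee_{q < r} \llround -\infty, q\rrround \vee \bigvee_{q > r} \llround q, \infty\rrround$ (the pointfree complement of $r$; it is closed in any case, since $\R$ is Hausdorff). Then ${\Sloc f}^*(\{r\}) = \nabla_{f^*(a)}$, so it is compact, being a closed sublocale (preimage of a closed sublocale) of the compact locale $X$ (\cref{lem:closed_subloc_of_compact}). For positivity, recall that its frame of opens is ${\uparrow} f^*(a)$ (\cref{def:closed_sublocales}), so I must show that $\bigvee S \vee f^*(a) = 1$ in $\O X$ forces $S$ to be inhabited. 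The point is that $a = \bigvee^{\uparrow}_{p < r < q}\bigl(\llround -\infty, p\rrround \vee \llround q, \infty\rrround\bigr)$ is a directed join over the rationals $p < r < q$; writing $c_{p,q}$ for the bracketed term, $\bigvee S \vee f^*(a) = \bigvee^{\uparrow}_{p < r < q}\bigl(\bigvee S \vee f^*(c_{p,q})\bigr) = 1$ is a directed cover, so \cref{lem:compactness_via_directed_covers} gives $p < r < q$ with $\bigvee S \vee f^*(c_{p,q}) = 1$. Meeting this with $f^*\llround p, q\rrround$ and using the disjointness relations of $\O\R$ (which give $f^*\llround p, q\rrround \wedge f^*(c_{p,q}) = 0$) leaves $f^*\llround p, q\rrround \le \bigvee S$; and $f^*\llround p, q\rrround = f^*\llround p,\infty\rrround \wedge f^*\llround -\infty, q\rrround$ is positive, since $p < r$ gives $f^*\llround p,\infty\rrround > 0$ and $r < q$ gives $f^*\llround -\infty, q\rrround = 1$. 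An open lying above a positive open is positive, so $\bigvee S$ is positive and hence $S$ is inhabited.

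I expect this last step --- positivity of ${\Sloc f}^*(\{r\})$, i.e.\ that the supremum is genuinely attained rather than merely approached --- to be the main obstacle, since it is the one place compactness really bites; the rest is bookkeeping around the image factorisation. A more conceptual route to the same point: as noted above, $r$ lies in $K$, so ${\Sloc f}^*(\{r\})$ is the preimage under the epimorphism $e\colon X \twoheadrightarrow K$ of the point-sublocale of $K$ at $r$; since $X$ is compact and $K$ is Hausdorff, $e$ is a proper surjection, and surjections are stable under pullback, so the preimage is again a surjection onto that (pointed, hence positive) sublocale and is therefore positive by the converse part of \cref{cor:image_and_preimage_of_positive_locales}.
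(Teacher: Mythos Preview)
Your reduction to the image $K={\Sloc f}_!(X)$ and the translation between the $K$-phrasing and the $f^*$-phrasing of $r$ is exactly what the paper does; the paper compresses your second paragraph into the single observation that ${\Sloc f}_!(X)\le u \iff X\le f^*(u)$ (adjointness) and ${\Sloc f}_!(X)\between u \iff X\between f^*(u)$ (a mild extension of \cref{cor:image_and_preimage_of_positive_locales}), which is precisely your $e^*(u)=1\iff u=1$ and $e^*(u)>0\iff u>0$.

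For the maximum claim you give two routes. Your conceptual route (b) is the paper's argument: $\{r\}\hookrightarrow K$ is closed because $K$ is Hausdorff, and the pullback of the epimorphism $e\colon X\twoheadrightarrow K$ along this closed inclusion is again epic, so ${\Sloc f}^*(\{r\})$ surjects onto $\{r\}$ and is positive. One caution: your phrase ``surjections are stable under pullback'' is too strong for locales in general. The paper invokes the sharper fact that \emph{epimorphisms are stable under pullback along closed inclusions}; alternatively, since $e$ is proper (compact domain, Hausdorff codomain), one can appeal to pullback-stability of proper maps together with the observation that a proper map onto $1$ (here $\{r\}\cong 1$) with positive domain is automatically epic. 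Either refinement repairs the sentence.

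Your computational route (a) is not in the paper and is a genuine alternative. It is correct, but you silently use that the closed complement of the point $\{r\}$ in $\R$ is exactly your $a=\bigvee_{q<r}\llround -\infty,q\rrround \vee \bigvee_{q>r}\llround q,\infty\rrround$; a priori you only have $\nabla_a\supseteq\{r\}$. The missing inequality follows from roundedness and locatedness of $r$: for any basic $\llround p',q'\rrround$ inside $\llround p,q\rrround$ one has, by locatedness applied to $p<p'$ and $q'<q$, either $\llround p',q'\rrround\le a$ or $p<r<q$, whence every open $u$ satisfies $u\le a\vee{!}^*r^*(u)$ and so ${\uparrow}a\cong\Omega$. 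With that in hand your directed-cover argument goes through cleanly. This route trades the abstract pullback lemma for a concrete manipulation in $\O\R$; it is longer but entirely self-contained.
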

\begin{proof}
 Simply take $K = {\Sloc f}_!(X)$. The image of a positive compact overt sublocale is positive, compact and overt. The definition of $r$ follows since ${\Sloc f}_!(X) \le u \iff X \le f^*(u)$ by adjointness and ${\Sloc f}_!(X) \between u \iff X \between f^*(u)$ by a simple extension of \cref{cor:image_and_preimage_of_positive_locales}.
 
 Finally, we show ${\Sloc f}^*(\{r\})$ is positive. Since $\R$, and thus $K$, is Hausdorff, the inclusion $\{r\} \hookrightarrow K$ is closed.
 Now note that the map $f$ is epic onto its image and consider the following pullback diagram.
 \begin{center}
 \begin{tikzpicture}[node distance=2.5cm, auto]
  \node (A) {${\Sloc f}^*(\{r\})$};
  \node (B) [below of=A] {$X$};
  \node (C) [right of=A] {$\{r\}$};
  \node (D) [right of=B] {$K$};
  \node (E) [right of=C] {$\{r\}$};
  \node (F) [right of=D] {$\R$};
  \draw[right hook->] (A) to node [swap] {} (B);
  \draw[->>] (A) to node {} (C);
  \draw[->>] (B) to node [swap] {} (D);
  \draw[->, bend right=20] (B) to node [swap] {$f$} (F);
  \draw[right hook->] (C) to node {} (D);
  \draw[double equal sign distance] (C) to node {} (E);
  \draw[right hook->] (E) to node {} (F);
  \draw[right hook->] (D) to node {} (F);
  \begin{scope}[shift=({A})]
   \draw +(0.3,-0.6) -- +(0.6,-0.6) -- +(0.6,-0.3);
  \end{scope}
  \begin{scope}[shift=({C})]
   \draw +(0.3,-0.6) -- +(0.6,-0.6) -- +(0.6,-0.3);
  \end{scope}
 \end{tikzpicture}
 \end{center}
 It is not hard to show that epimorphisms of locales are stable under pullback along closed inclusions, and hence the map ${\Sloc f}^*(\{r\}) \to \{r\}$ is epic.
 It follows that ${\Sloc f}^*(\{r\})$ is positive.
 (It is compact, because it is a closed sublocale of $X$.)
\end{proof}

\begin{remark}\label{rem:no_pointset_EVT}
 As we might have come to expect by this point, the extreme value theorem is \emph{not} a theorem of constructive \emph{point-set} topology. In particular, the positive locale ${\Sloc f}^*(\{r\})$ might fail to contain a point.
 There are still some constructive point-set results about the existence of suprema and infima of certain sets of reals (see \citet*[Section 6.1]{troelstra1988book}), but these can be derived as consequences of our pointfree theorem.
\end{remark}

\begin{remark}
 It is also interesting to consider how the supremum of a positive compact overt sublocale of $\R$ changes as the sublocale is varied.
 We can understand this by using a localic version of the Vietoris hyperspace construction (see \citet*{johnstone1985vietoris}). The points of the (positive) Vietoris hyperlocale $V_{+}(X)$ are the positive compact overt sublocales of $X$ (see \citet*{vickers1997powerlocale}), and the supremum becomes a locale morphism from $V_{+}(\R)$ to $\R$ (by essentially the same argument we gave above).
\end{remark}

\subsubsection{An application to fibrewise topology}\label{sec:fibrewise_application}

Now we can discuss an example of the pay-off we get from proving \cref{thm:extreme_value_theorem} constructively.
As we mentioned in \cref{sec:why_work_constructively}, constructive results can be interpreted in any topos.
In particular if $B$ is a locale, we can interpret the extreme value theorem in the topos $\Sh(B)$ of sheaves over $B$.
Let us see what this gives us.

An in-depth discussion of how to interpret intuitionistic logic in $\Sh(B)$ is beyond the scope of these notes. See \citet*[Part C]{elephant2} for details.
Instead, we provide the following dictionary to translate between internal constructive concepts and their externalisations.
In particular, we will use that there is an equivalence of categories between locales internal to $\Sh(B)$ and the slice category $\Loc/B$.

\begin{table}[H] %
\begin{tabular}{>{\raggedright\arraybackslash}m{7.5cm}>{\raggedright\arraybackslash}m{6.5cm}}
  \toprule
  \textbf{Constructive concept} & \textbf{Interpretation in $\Sh(B)$} \\
  \midrule
  A locale $X$ & A locale map $\chi\colon X \to B$  \\
  \rowcolor{gray!20}
  The locale $\R$ & The projection $\pi_1\colon B \times \R \to B$ \\
  A locale morphism & A morphism in $\Loc/B$ \\
  \rowcolor{gray!20}
  A point of $X$ & A section of $\chi$ \\
  $X$ is overt & $\chi$ is open \\
  \rowcolor{gray!20}
  The map $\exists\colon \O X \to \Omega$ for $X$ overt & The left adjoint $\chi_!\colon \O X \to \O B$ \\
  $X$ is overt and positive & $\chi$ is open and epic \\
  \rowcolor{gray!20}
  $X$ is compact & $\chi$ is proper \\
  The map ${!}_*\colon \O X \to \Omega$ for $X$ compact & The right adjoint $\chi_*\colon \O X \to \O B$ \\
  \rowcolor{gray!20}
  $X$ is compact and positive & $\chi$ is proper and epic \\
  \bottomrule
\end{tabular}
\end{table}

Using this we can now see that interpreting \cref{thm:extreme_value_theorem} in $\Sh(B)$ immediately gives the following result.
\begin{theorem}\label{cor:fibrewise_localic_extreme_value}
Let $\chi\colon X \to B$ be a locale map that is epic, proper and open and let $f\colon X \to \R$ be any locale map.
Then there is a locale map $\rho\colon B \to \R$ defined by
\begin{align*}
 \rho^*\llround q, \infty\rrround &= \chi_!f^* \llround q, \infty\rrround \\
 \rho^*\llround -\infty, q\rrround &= \chi_*f^*\llround -\infty, q\rrround
\end{align*}
taking the fibrewise supremum\footnote{The above specification of $\rho$ is the \emph{definition} of fibrewise supremum by the same argument we gave when defining suprema of real numbers above. See also the proof of \cref{cor:fibrewise_topological_extreme_value} below.} of $f$.
\begin{center}
  \begin{tikzpicture}[node distance=2.5cm, auto]
    \node (X) {$X$};
    \node (R) [right of=X] {$B \times \R$};
    \node (B) [below of=R] {$B$};
    \draw[->>] (X) to node [swap] {$\chi$} (B);
    \draw[->] (X) to node {$(\chi, f)$} (R);
    \draw[->] (R) to node [swap] {$\pi_1$} (B);
    \draw[->,dashed,bend right] (B) to node [swap] {$(\id_B,\rho)$} (R);
  \end{tikzpicture}
\end{center}
Moreover, this is a fibrewise \emph{maximum} in the sense the canonical morphism into $B$ from the sublocale $M$ of $X$ where the maximum is attained is a (proper) epimorphism.
\begin{center}
 \begin{tikzpicture}[node distance=2.5cm, auto]
  \node (A) {$M$};
  \node (B) [below of=A] {$X$};
  \node (C) [right of=A] {$B$};
  \node (D) [right of=B] {$B \times \R$};
  \draw[right hook->] (A) to node [swap] {} (B);
  \draw[->>] (A) to node {} (C);
  \draw[->] (B) to node [swap] {$(\chi,f)$} (D);
  \draw[right hook->] (C) to node {$(\id_B,\rho)$} (D);
  \begin{scope}[shift=({A})]
   \draw +(0.3,-0.6) -- +(0.6,-0.6) -- +(0.6,-0.3);
  \end{scope}
 \end{tikzpicture}
\end{center}
\end{theorem}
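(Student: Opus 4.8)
The plan is to derive \cref{cor:fibrewise_localic_extreme_value} by interpreting the constructive \cref{thm:extreme_value_theorem} in the topos $\Sh(B)$ and reading off each internal notion via the dictionary above, using the standard machinery for the internal language of $\Sh(B)$ (see \citet*[Part C]{elephant2}). So there is essentially nothing to prove beyond carefully matching hypotheses and conclusions across the translation.

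On the hypothesis side: an internal locale in $\Sh(B)$ is exactly a locale map $\chi\colon X\to B$, and the table records that ``$X$ is overt, compact and positive'' internally holds precisely when $\chi$ is open, proper and epic; an internal map $f\colon X\to\R$ is precisely a morphism $X\to B\times\R$ over $B$, i.e.\ an arbitrary $f\colon X\to\R$ externally. Hence the internal hypotheses of \cref{thm:extreme_value_theorem} become exactly those of \cref{cor:fibrewise_localic_extreme_value}.

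On the conclusion side: internally $f$ acquires a supremum $r$, a point of $\R$, which externalises to a section of $\pi_1\colon B\times\R\to B$, i.e.\ a map $(\id_B,\rho)\colon B\to B\times\R$ for a unique $\rho\colon B\to\R$. The internal real $r$ is pinned down by $\llbracket q<r\rrbracket = r^*\llround q,\infty\rrround$ and $\llbracket r<q\rrbracket = r^*\llround -\infty,q\rrround$, which externalise to the opens $\rho^*\llround q,\infty\rrround$ and $\rho^*\llround -\infty,q\rrround$ of $B$. Substituting the internal definitions $q<r\iff f^*\llround q,\infty\rrround>0$ and $r<q\iff f^*\llround -\infty,q\rrround=1$, and using that $\exists\colon\O X\to\Omega$ externalises to $\chi_!$ (so ``$u>0$'' becomes $\chi_!(u)$) and that ${!}_*\colon\O X\to\Omega$ externalises to $\chi_*$ (so ``$u=1$'' becomes $\chi_*(u)$), we obtain precisely $\rho^*\llround q,\infty\rrround=\chi_!f^*\llround q,\infty\rrround$ and $\rho^*\llround -\infty,q\rrround=\chi_*f^*\llround -\infty,q\rrround$. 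For the ``maximum'' clause, note that internally $\{r\}\hookrightarrow\R$ is the image of the point $r$, which externalises to the graph sublocale $(\id_B,\rho)\colon B\hookrightarrow B\times\R$; since external images and preimages of sublocales realise the internal ones, the internal sublocale ${\Sloc f}^*(\{r\})$ of $X$ externalises to the pullback $M$ of $(\id_B,\rho)$ along $(\chi,f)$ drawn in the diagram, and the internal assertion that ${\Sloc f}^*(\{r\})$ is positive and compact translates, by the same dictionary entries applied to $M\to B$, into the claim that $M\to B$ is epic and proper.

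The calculations themselves are trivial; the real work, which I would not carry out but instead cite, lies in the dictionary entries --- that internal overtness/properness/positivity of $X$ correspond to openness/properness/epicness of $\chi$, that internal $\exists$ and ${!}_*$ externalise to $\chi_!$ and $\chi_*$, and that forming images and preimages of sublocales (hence the point $\{r\}$ and its preimage) commutes with externalisation. These are standard but somewhat delicate facts about locales internal to sheaf toposes; I would point to \citet*[Part C]{elephant2} and to the treatment of open and proper maps in \cref{subsec:compactness_and_overtness}.
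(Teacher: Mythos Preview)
Your proposal is correct and follows exactly the paper's approach: the paper's proof is simply the one-line statement that interpreting \cref{thm:extreme_value_theorem} in $\Sh(B)$ via the dictionary table immediately yields the result. If anything, you have been more explicit than the paper in spelling out how each hypothesis and conclusion translates, which is entirely appropriate.
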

So we have obtained a `fibrewise' extreme value theorem for free!

\begin{remark}
 Suppose that contrary to \cref{rem:no_pointset_EVT} the constructive extreme value theorem proved the existence of a \emph{point} achieving the maximum.
 Then in the context of \cref{cor:fibrewise_localic_extreme_value} we would find that the map $\chi$ always has a family of continuous local sections which pick out points that attain the maximum in each fibre.
 But note that if $X = B = \C$, $\chi(x) = x^2$ and $f(x) = \left\vert x \right\vert$, then no such section exists in the neighbourhood of $0$, since there is no continuous choice of square roots.
\end{remark}

While it is nice to obtain results for free, a skeptic might point out that \cref{cor:fibrewise_localic_extreme_value} is still a result of constructive pointfree topology and so what use is it to classical mathematicians? Of course, every constructive result is also a classical result. Moreover, results in point-set topology often follow from their pointfree counterparts. In this case, we have the following corollary (assuming classical logic).

\begin{corollary}\label{cor:fibrewise_topological_extreme_value}
 Let $\chi\colon X \to B$ a proper and open quotient of topological spaces and let $f\colon X \to \R$ be a continuous function.
 Then the function $\rho\colon B \to \R$ defined by $\rho(b) = \max_{x \in \chi^{-1}(b)} f(x)$ is continuous.
\end{corollary}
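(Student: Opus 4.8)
The plan is to derive this classical statement from the pointfree \cref{cor:fibrewise_localic_extreme_value} by translating along the functor $\O\colon\Top\to\Loc$, working in classical logic throughout. First I would check that the hypotheses transfer. Since $\chi$ is a continuous surjection and $B$ carries the quotient topology, $\chi^*\colon\O B\to\O X$ is injective, so it is a monomorphism in $\Frm$ and hence $\chi$ is epic in $\Loc$. As $\chi$ is an open map of spaces, $\chi^*$ has a left adjoint given by direct image, $\chi_!(a)=\chi(a)$, which is open, and one verifies the Frobenius identity $\chi_!(\chi^*(b)\wedge a)=b\wedge\chi_!(a)$ directly; hence $\chi$ is open in the sense of \cref{lem:open_map_characterisation}. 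As $\chi$ is proper it is in particular closed with compact fibres; closedness gives that the right adjoint $\chi_*$ of $\chi^*$ is computed by $\chi_*(a)=B\setminus\chi(X\setminus a)=\{b\mid\chi^{-1}(b)\subseteq a\}$ for $a$ open, from which $\chi_*(\chi^*(b)\vee a)=b\vee\chi_*(a)$ and preservation of directed joins both follow, so $\chi$ is proper in the localic sense (cf.\ \cref{lem:closed_map_characterisation,thm:pullback_stable_closed_maps}). Thus \cref{cor:fibrewise_localic_extreme_value} applies to the locales $\O X$, $\O B$ and the locale map induced by $f$.

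It produces a locale map $\rho\colon B\to\R$ with $\rho^*\llround q,\infty\rrround=\chi_!f^*\llround q,\infty\rrround$ and $\rho^*\llround-\infty,q\rrround=\chi_*f^*\llround-\infty,q\rrround$. Since $\R$ is a sober space, the adjunction between $\Top$ and $\Loc$ converts this into a continuous function $\bar\rho\colon B\to\R$, and it remains to see that $\bar\rho$ is the pointwise maximum. Fix $b\in B$, regarded as a point $b^*\colon\O B\to\Omega$. The fibre $\chi^{-1}(b)$ is nonempty (by surjectivity) and compact (by properness), so $f$ attains a maximum $M_b=\max_{x\in\chi^{-1}(b)}f(x)$ on it. Unwinding the definitions, $q<\bar\rho(b)$ iff $b^*\chi_!f^*\llround q,\infty\rrround=\top$, i.e.\ $b\in\chi(f^{-1}(q,\infty))$, i.e.\ there is $x\in\chi^{-1}(b)$ with $f(x)>q$, which holds iff $M_b>q$; dually, $\bar\rho(b)<q$ iff $\chi^{-1}(b)\subseteq f^{-1}(-\infty,q)$, i.e.\ $f(x)<q$ for every $x\in\chi^{-1}(b)$, which holds iff $M_b<q$. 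Hence $\bar\rho(b)$ and $M_b$ have the same Dedekind cut, so $\bar\rho=\rho$, and therefore $\rho$ is continuous. (The sublocale of \cref{cor:fibrewise_localic_extreme_value} is then the subspace of $X$ on which the fibrewise maximum is attained, and the epimorphism onto $B$ recovers that every fibre realises its maximum.)

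The step requiring the most care is the first: confirming that ``proper open quotient of topological spaces'' really does correspond to an epic, proper, open morphism of locales in the precise senses of \cref{lem:open_map_characterisation,lem:closed_map_characterisation,thm:pullback_stable_closed_maps} — in particular matching the topological and localic notions of properness, which is exactly the point where classical logic enters. Once this dictionary is set up the remainder is a routine computation of $\rho$ on points, using the image and fibre descriptions of $\chi_!$ and $\chi_*$ together with the fact that on a nonempty compact fibre the supremum of $f$ is attained.
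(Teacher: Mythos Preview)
Your proof is correct and follows essentially the same approach as the paper: transfer the hypotheses along $\O\colon\Top\to\Loc$, apply \cref{cor:fibrewise_localic_extreme_value}, and then verify pointwise that the resulting continuous map is the fibrewise maximum. You are in fact more careful than the paper about the hypothesis-transfer step (spelling out why the localic notions of epic, open and proper hold) and about the conversion back via sobriety of $\R$, while the paper only checks the lower half of the Dedekind cut and leaves the rest implicit.
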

\begin{proof}
 It can be shown that if a continuous function between topological spaces is proper or open then the corresponding locale map is too.
 Thus, \cref{cor:fibrewise_localic_extreme_value} applies and we obtain a map from the locale associated to $B$ to $\R$, which in turn induces a continuous map $s\colon B \to \R$.
 
 This map $s$ is equal to $\rho$ by construction, but let us spell out the formal details.
 In spatial terms the map $\chi_!$ is simply the image of $\chi$ restricted to open sets.
 Hence, we have $s^{-1}( (q, \infty) ) = \chi(f^{-1} (q,\infty) )$ and so $s(b) > q \iff b \in \chi(f^{-1} (q,\infty) ) \iff \exists x \in \chi^{-1}(b).\ f(x) > q \iff \sup_{x \in \chi^{-1}(b)} f(x) > q$.
 Thus, $s(b) = \sup_{x \in \chi^{-1}(b)} f(x)$, as required. (Note that this supremum is indeed a maximum by the extreme value theorem and because $\chi^{-1}(\{b\})$ is compact since $\chi$ is proper.)
\end{proof}

\begin{remark}
 This result can be understood as a variant of Berge's maximum theorem (see \citet*[Section 17.5]{charalambos2006analysis}) where the compact-valued correspondence is given by the preimage of a proper and open continuous map.
\end{remark}

Therefore, the constructive approach has yielded a result that should be interesting even to an avowedly classical mathematician. In contrast, we would not have been able to obtain \cref{cor:fibrewise_topological_extreme_value} from the classical extreme value theorem without additional work.

\section{Further reading}

In these notes I have tried to give a taste of constructive pointfree topology, as well as some idea of its advantages over both classical topology and non-topological approaches to constructive mathematics. There is, of course, much that I have not been able to cover.
If you are interested to learn more, I encourage you to look at the following resources.

\begin{itemize}

\item \citet*{picado2012book} is a nice introduction to the classical theory of pointfree topology. It is fairly comprehensive
and can be a good reference.
Of the resources I recommend here, this demands the least of the reader. Unfortunately, the proofs given are usually nonconstructive, but once you become more comfortable with constructive reasoning it will generally not be too hard to constructivise them as necessary.

\item \citet*{manuell2019thesis} is my PhD thesis. The Background chapter will be of most interest. It has some overlap with these notes, but covers certain topics, such as suplattices, in more detail. It also contains a crash course in categorical logic (though see \citet*{pittsLogic} for more details) and later gives examples of how it might be applied to the study of pointfree topology.

\item \citet*{henry2016} provides a constructive proof of the localic Gelfand duality, though probably most relevant is the helpful background material. It can be more difficult than the previous resources, but Henry works constructively throughout, and some of what is discussed cannot be found elsewhere.

\item \citet*{elephant2} is an enormous reference on topos theory, but Part C deals extensively with constructive pointfree topology. It is the most difficult reference of all, but it contains a huge amount of very useful information.
\end{itemize}

For further information on geometric logic, see \citet*{vickers2014continuity,vickers1997powerlocale}.
For a different but related approach to constructive pointfree topology see \citet*{bauer2009ASD} and \citet*{taylor2010analysis} on Abstract Stone Duality.

\bibliography{references}

\begin{thebibliography}{30}
\providecommand{\natexlab}[1]{#1}
\providecommand{\url}[1]{\texttt{#1}}
\expandafter\ifx\csname urlstyle\endcsname\relax
  \providecommand{\doi}[1]{doi: #1}\else
  \providecommand{\doi}{doi: \begingroup \urlstyle{rm}\Url}\fi

\bibitem[Abramsky and Vickers(1993)]{abramsky1993quantales}
S.~Abramsky and S.~Vickers.
\newblock Quantales, observational logic and process semantics.
\newblock \emph{Math. Structures Comput. Sci.}, 3\penalty0 (2):\penalty0
  161--227, 1993.

\bibitem[Aliprantis and Border(2006)]{charalambos2006analysis}
C.~D. Aliprantis and K.~C. Border.
\newblock \emph{Infinite Dimensional Analysis: A Hitchhiker's Guide}.
\newblock Springer, Heidelberg, 2006.

\bibitem[Bauer(2006)]{bauer2006konig}
A.~Bauer.
\newblock K{\"o}nig's lemma and {K}leene tree, 2006.
\newblock Unpublished notes,
  \url{http://math.andrej.com/wp-content/uploads/2006/05/kleene-tree.pdf}.

\bibitem[Bauer(2013)]{bauer2013intuitionistic}
A.~Bauer.
\newblock Intuitionistic mathematics and realizability in the physical world.
\newblock In H.~Zenil, editor, \emph{A Computable Universe: Understanding and
  Exploring Nature as Computation}, pages 143--157. World Scientific, 2013.

\bibitem[Bauer(2017)]{bauer2017stages}
A.~Bauer.
\newblock Five stages of accepting constructive mathematics.
\newblock \emph{Bull. Amer. Math. Soc.}, 54\penalty0 (3):\penalty0 481--498,
  2017.

\bibitem[Bauer and Taylor(2009)]{bauer2009ASD}
A.~Bauer and P.~Taylor.
\newblock The {D}edekind reals in abstract {S}tone duality.
\newblock \emph{Math. Structures Comput. Sci.}, 19\penalty0 (4):\penalty0
  757--838, 2009.

\bibitem[Bell et~al.(2001)Bell, DeVidi, and Solomon]{bell2001logical}
J.~L. Bell, D.~DeVidi, and G.~Solomon.
\newblock \emph{Logical options: An Introduction to Classical and Alternative
  Logics}.
\newblock Broadview Press, Peterborough, 2001.

\bibitem[Escard{\'o}(2004)]{escardo2004synthetic}
M.~Escard{\'o}.
\newblock Synthetic topology of data types and classical spaces.
\newblock \emph{Electron. Notes Theor. Comput. Sci.}, 87:\penalty0 21--156,
  2004.

\bibitem[Gabbay and Rodrigues(2016)]{gabbay2016elementary}
D.~M. Gabbay and O.~T. Rodrigues.
\newblock \emph{Elementary Logic with Applications: A Procedural Perspective
  for Computer Scientists}.
\newblock Studies in Logic. College Publications, Milton Keynes, 2016.

\bibitem[Henry(2016)]{henry2016}
S.~Henry.
\newblock Localic metric spaces and the localic {G}elfand duality.
\newblock \emph{Adv. Math.}, 294:\penalty0 634--688, 2016.

\bibitem[Henry(2023)]{henry2016corrected}
S.~Henry.
\newblock Localic metric spaces and the localic {G}elfand duality.
\newblock arXiv version,
  \href{https://arxiv.org/abs/1411.0898v2}{arXiv:1411.0898v2}, 2023.

\bibitem[Johnstone and Vickers(1991)]{johnstone1991preframe}
P.~Johnstone and S.~Vickers.
\newblock Preframe presentations present.
\newblock In A.~Carboni, M.~C. Pedicchio, and G.~Rosolini, editors,
  \emph{Category theory: Proceedings of the International Conference held in
  Como}, volume 1488 of \emph{Lecture Notes in Mathematics}, pages 193--212,
  Berlin, 1991. Springer.

\bibitem[Johnstone(1985)]{johnstone1985vietoris}
P.~T. Johnstone.
\newblock Vietoris locales and localic semilattices.
\newblock In R.-E. Hoffmann and K.~H. Hofmann, editors, \emph{Continuous
  lattices and their applications}, volume 101 of \emph{Lecture Notes in Pure
  and Applied Mathematics}, pages 155--180. Marcel Decker, 1985.

\bibitem[Johnstone(2002)]{elephant2}
P.~T. Johnstone.
\newblock \emph{Sketches of an Elephant: A Topos Theory Compendium}, volume~2.
\newblock Oxford University Press, Oxford, 2002.

\bibitem[Joyal and Tierney(1984)]{galoisTheoryGrothendieck}
A.~Joyal and M.~Tierney.
\newblock An extension of the {G}alois theory of {G}rothendieck.
\newblock \emph{Mem. Amer. Math. Soc.}, 51\penalty0 (309), 1984.

\bibitem[Manuell(2019)]{manuell2019thesis}
G.~Manuell.
\newblock \emph{Quantalic spectra of semirings}.
\newblock PhD thesis, University of Edinburgh, 2019.

\bibitem[Moerdijk(1988)]{moerdijk1988classifying}
I.~Moerdijk.
\newblock The classifying topos of a continuous groupoid {I}.
\newblock \emph{Trans. Amer. Math. Soc.}, 310\penalty0 (2):\penalty0 629--668,
  1988.

\bibitem[Picado and Pultr(2012)]{picado2012book}
J.~Picado and A.~Pultr.
\newblock \emph{Frames and Locales: Topology without Points}.
\newblock Frontiers in Mathematics. Springer, Basel, 2012.

\bibitem[Pitts(1991)]{pittsLogic}
A.~M. Pitts.
\newblock Notes on categorical logic, 1991.
\newblock Unpublished lecture notes,
  \url{https://www.cl.cam.ac.uk/~amp12/papers/notcl/notcl.pdf}.

\bibitem[Riehl(2017)]{riehl2017category}
E.~Riehl.
\newblock \emph{Category theory in context}.
\newblock Dover Publications, Mineola, New York, 2017.

\bibitem[Taylor(2010)]{taylor2010analysis}
P.~Taylor.
\newblock A lambda calculus for real analysis.
\newblock \emph{J. Log. Anal.}, 2\penalty0 (5):\penalty0 1--115, 2010.

\bibitem[Troelstra and van Dalen(1988)]{troelstra1988book}
A.~S. Troelstra and D.~van Dalen.
\newblock \emph{Constructivism in Mathematics: An Introduction}, volume~1.
\newblock Elsevier, Amsterdam, 1988.

\bibitem[van Dalen(2001)]{vanDalen2001intuitionistic}
D.~van Dalen.
\newblock Intuitionistic logic.
\newblock In L.~Goble, editor, \emph{The Blackwell Guide to Philosophical
  Logic}. Blackwell, Oxford, 2001.

\bibitem[Vermeulen(1994)]{vermeulen1994proper}
J.~J. Vermeulen.
\newblock Proper maps of locales.
\newblock \emph{J. Pure Appl. Algebra}, 92\penalty0 (1):\penalty0 79--107,
  1994.

\bibitem[Vickers(1989)]{vickers1989book}
S.~Vickers.
\newblock \emph{Topology via logic}.
\newblock Cambridge University Press, Cambridge, 1989.

\bibitem[Vickers(1997)]{vickers1997powerlocale}
S.~Vickers.
\newblock Constructive points of powerlocales.
\newblock \emph{Math. Proc. Cambridge Philos. Soc.}, 122\penalty0 (2):\penalty0
  207--222, 1997.

\bibitem[Vickers(2005)]{vickers2005tychonoff}
S.~Vickers.
\newblock Some constructive roads to {T}ychonoff.
\newblock In L.~Crosilla and P.~Schuster, editors, \emph{From Sets and Types to
  Topology and Analysis: Towards practicable foundations for constructive
  mathematics}. Oxford University Press, 2005.

\bibitem[Vickers(2006)]{vickers2006compactness}
S.~Vickers.
\newblock Compactness in locales and in formal topology.
\newblock \emph{Ann. Pure Appl. Logic}, 137\penalty0 (1-3):\penalty0 413--438,
  2006.

\bibitem[Vickers(2014)]{vickers2014continuity}
S.~Vickers.
\newblock Continuity and geometric logic.
\newblock \emph{J. Appl. Log.}, 12\penalty0 (1):\penalty0 14--27, 2014.

\bibitem[Vickers(2022)]{vickers2022}
S.~Vickers.
\newblock Generalized point-free spaces, pointwise.
\newblock \href{https://arxiv.org/abs/2206.01113}{arXiv:2206.01113}, 2022.

\end{thebibliography}

\end{document}